\pdfoutput=1
\documentclass[11pt]{amsart}

\usepackage[T1]{fontenc}
\usepackage[utf8]{inputenc}
\usepackage{amsmath,amssymb,amsthm,mathtools}
\usepackage{stmaryrd}
\usepackage{graphicx}
\usepackage{tikz}
\usetikzlibrary{positioning,calc}
\usepackage{tikz-cd}
\usepackage{xcolor}
\usepackage{etoolbox}
\usepackage{hyperref}
\hypersetup{
  colorlinks = true,
  linkcolor  = blue!60!black,
  citecolor  = blue!60!black,
  urlcolor   = blue!60!black
}
\usepackage{booktabs}
\usepackage{array}
\newcolumntype{L}[1]{>{\raggedright\arraybackslash}p{#1}}

\usepackage[section]{placeins}
\usepackage{cleveref}

\usepackage{newunicodechar}
\newunicodechar{¬}{\ensuremath{\neg}}
\newunicodechar{×}{\ensuremath{\times}}
\newunicodechar{Δ}{\ensuremath{\Delta}}
\newunicodechar{Κ}{\ensuremath{K}}
\newunicodechar{Π}{\ensuremath{\Pi}}
\newunicodechar{Σ}{\ensuremath{\Sigma}}
\newunicodechar{α}{\ensuremath{\alpha}}
\newunicodechar{ε}{\ensuremath{\varepsilon}}
\newunicodechar{ι}{\ensuremath{\iota}}
\newunicodechar{ν}{\ensuremath{\nu}}
\newunicodechar{ω}{\ensuremath{\omega}}
\newunicodechar{ᵀ}{\ensuremath{^{\mathrm{T}}}}
\newunicodechar{⁺}{\ensuremath{^{+}}}
\newunicodechar{₀}{\ensuremath{_{0}}}
\newunicodechar{₁}{\ensuremath{_{1}}}
\newunicodechar{₂}{\ensuremath{_{2}}}
\newunicodechar{₃}{\ensuremath{_{3}}}
\newunicodechar{ₒ}{\ensuremath{_{o}}}
\newunicodechar{𝓤}{\ensuremath{\mathcal{U}}}
\newunicodechar{ℓ}{\ensuremath{\ell}}
\newunicodechar{ℕ}{\ensuremath{\mathbb{N}}}
\newunicodechar{ℤ}{\ensuremath{\mathbb{Z}}}
\newunicodechar{↑}{\ensuremath{\uparrow}}
\newunicodechar{→}{\ensuremath{\rightarrow}}
\newunicodechar{↗}{\ensuremath{\nearrow}}
\newunicodechar{∃}{\ensuremath{\exists}}
\newunicodechar{∈}{\ensuremath{\in}}
\newunicodechar{∑}{\ensuremath{\Sigma}}
\newunicodechar{∘}{\ensuremath{\circ}}
\newunicodechar{∙}{\ensuremath{\bullet}}
\newunicodechar{≃}{\ensuremath{\simeq}}
\newunicodechar{⊣}{\ensuremath{\dashv}}
\newunicodechar{↓}{\ensuremath{\downarrow}}
\newunicodechar{⊲}{\ensuremath{\vartriangleleft}}
\newunicodechar{⊴}{\ensuremath{\trianglelefteq}}
\newunicodechar{⟦}{\ensuremath{\llbracket}}
\newunicodechar{⟧}{\ensuremath{\rrbracket}}
\newunicodechar{𝓚}{\ensuremath{\mathcal{K}}}
\newunicodechar{𝓟}{\ensuremath{\mathcal{P}}}
\newunicodechar{𝓼}{\ensuremath{\mathfrak{s}}}
\newunicodechar{¹}{\ensuremath{^{1}}}
\newunicodechar{Ω}{\ensuremath{\Omega}}
\newunicodechar{η}{\ensuremath{\eta}}
\newunicodechar{𝕋}{\ensuremath{\mathbb{T}}}
\newunicodechar{𝟘}{\ensuremath{\mathbf{0}}}
\newunicodechar{𝟙}{\ensuremath{\mathbf{1}}}
\newunicodechar{𝟚}{\ensuremath{\mathbf{2}}}
\newunicodechar{∞}{\ensuremath{\infty}}
\newunicodechar{∥}{\ensuremath{\Vert}}
\newunicodechar{♯}{\ensuremath{\sharp}}
\newunicodechar{⊥}{\ensuremath{\bot}}
\newunicodechar{⊤}{\ensuremath{\top}}
\newunicodechar{𝕊}{\ensuremath{\mathbb{S}}}
\newunicodechar{𝓢}{\ensuremath{\mathcal{S}}}
\newunicodechar{𝔽}{\ensuremath{\mathbb{F}}}
\newunicodechar{𝔻}{\ensuremath{\mathbb{D}}}
\newunicodechar{𝔹}{\ensuremath{\mathbb{B}}}
\newunicodechar{𝓑}{\ensuremath{\mathcal{B}}}
\newunicodechar{𝓕}{\ensuremath{\mathcal{F}}}
\newunicodechar{≺}{\ensuremath{\prec}}
\newunicodechar{ₛ}{\ensuremath{_{s}}}
\newunicodechar{θ}{\ensuremath{\theta}}
\newunicodechar{τ}{\ensuremath{\tau}}
\newunicodechar{＝}{\ensuremath{=}}
\newunicodechar{⟨}{\ensuremath{\langle}}
\newunicodechar{⟩}{\ensuremath{\rangle}}
\newunicodechar{ᵒ}{\ensuremath{^{o}}}
\newunicodechar{≼}{\ensuremath{\preccurlyeq}}
\newunicodechar{⊏}{\ensuremath{\sqsubset}}
\newunicodechar{⊑}{\ensuremath{\sqsubseteq}}
\newunicodechar{↔}{\ensuremath{\leftrightarrow}}
\newunicodechar{ℚ}{\ensuremath{\mathbb{Q}}}
\newunicodechar{ℝ}{\ensuremath{\mathbb{R}}}
\newunicodechar{⁻}{\ensuremath{^{-}}}
\newunicodechar{³}{\ensuremath{^{3}}}
\newunicodechar{⌜}{\ensuremath{\ulcorner}}
\newunicodechar{⌝}{\ensuremath{\urcorner}}

\newcommand{\UU}{\mathcal{U}}
\newcommand{\VV}{\mathcal{V}}
\newcommand{\WW}{\mathcal{W}}

\newcommand{\Zero}{\mathbf{0}}
\newcommand{\One}{\mathbf{1}}
\newcommand{\Two}{\mathbf{2}}
\newcommand{\pt}{\star}
\newcommand{\N}{\mathbb{N}}
\newcommand{\NInf}{\mathbb{N}_{\infty}}
\newcommand{\sqlt}{\mathrel{\sqsubset}}
\newcommand{\sqleq}{\mathrel{\sqsubseteq}}
\newcommand{\Cantor}{\Two^{\N}}
\newcommand{\Head}{\mathrm{Head}}
\newcommand{\Tail}{\mathrm{Tail}}
\newcommand{\Consinv}{\Cons^{-1}}
\newcommand{\Cons}{\mathrm{Cons}}
\newcommand{\us}[1]{#1}

\newcommand{\LPO}{\mathrm{LPO}}
\newcommand{\WLPO}{\mathrm{WLPO}}
\newcommand{\LLPO}{\mathrm{LLPO}}

\newcommand{\selection}{\varepsilon}

\newcommand{\apart}{\mathrel{\sharp}}

\newcommand{\tsreflection}[1]{\mathbb{T}\,#1}
\newcommand{\etaT}{\eta^{\mathbb{T}}}

\newcommand{\fib}[2]{#1^{-1}(#2)}

\newcommand{\Pext}{\mathbin{/}}

\newcommand{\ordsub}[1]{\ifblank{#1}{}{_{#1}}}
\newcommand{\Ord}[1][\UU]{\mathrm{Ord}\ordsub{#1}}
\newcommand{\OrdT}[1][\UU]{\mathrm{Ord}^{\top}\ordsub{#1}}
\newcommand{\OrdThree}[1][\UU]{\mathrm{Ord}_{3\ifblank{#1}{}{,#1}}}
\newcommand{\lt}[1]{\mathrel{<_{#1}}}
\newcommand{\gt}[1]{\mathrel{>_{#1}}}
\newcommand{\down}[2]{#1 \mathbin{\downarrow} #2}
\newcommand{\iseq}{\simeq_{o}}

\newcommand{\oemb}{\mathrel{\leq}}
\newcommand{\olt}{\mathrel{<}}
\newcommand{\oembdown}{\mathrel{\rotatebox[origin=c]{-90}{$\leqslant$}}}

\newcommand{\oplusO}{\mathbin{+}}
\newcommand{\otimesO}{\mathbin{\times}}
\newcommand{\oplusT}{\mathbin{+}}

\newcommand{\OneO}{\One}
\newcommand{\OneT}{\One}
\newcommand{\OmegaO}{\Omega}
\newcommand{\TwoO}{\Two}
\newcommand{\TwoT}{\Two}
\newcommand{\ZeroO}{\Zero}
\newcommand{\omegaO}{\omega}
\newcommand{\NInfO}{\NInf}
\newcommand{\succT}[1]{#1 \oplusO \One}

\newcommand{\osum}[2]{\Sigma_{#1} #2}
\newcommand{\summap}[2]{\Sigma(#1,#2)}
\newcommand{\osumthree}[2]{\Sigma_{#1} #2}
\newcommand{\ssumone}[1]{\Sigma^{1} #1}
\newcommand{\ssumsub}[1]{\Sigma_{1} #1}
\newcommand{\ssupone}[1]{\sup\nolimits^{1} #1}
\newcommand{\extend}{\Pext}

\newcommand{\Brouwer}{\mathrm{B}}
\newcommand{\bZ}{\mathsf{Z}}
\newcommand{\bS}{\mathsf{S}}
\newcommand{\bL}{\mathsf{L}}
\newcommand{\stdint}[1]{\llbracket #1 \rrbracket_{\mathrm{sup}}}
\newcommand{\compactsepint}[1]{\llbracket #1 \rrbracket_{\Sigma^{1}}}
\newcommand{\compactint}[1]{\llbracket #1 \rrbracket_{\mathrm{sup}^{1}}}
\newcommand{\trichint}[1]{\llbracket #1 \rrbracket_{\Sigma}}
\newcommand{\alttrichint}[1]{\llbracket #1 \rrbracket_{\Sigma_{1}}}

\newcommand{\Disc}{\Delta}
\newcommand{\Comp}{\mathrm{K}}
\newcommand{\iemb}{\iota}
\newcommand{\cmap}[1][\upsilon]{\mathfrak{c}_{#1}}

\newcommand{\Sier}{\mathbb{S}}
\newcommand{\sier}{\mathfrak{s}}
\newcommand{\botS}{\bot_{\Sier}}
\newcommand{\topS}{\top_{\Sier}}
\newcommand{\dom}{\delta}
\newcommand{\ltS}{\mathrel{<_{\Sier}}}
\newcommand{\extfam}{\bar\alpha}

\newcommand{\Ecodes}{\mathrm{E}}
\newcommand{\eOne}{\ulcorner\One\urcorner}
\newcommand{\eOmegaPlusOne}{\ulcorner\omega{+}\One\urcorner}
\newcommand{\eAdd}{\mathbin{\ulcorner{+}\urcorner}}
\newcommand{\eMul}{\mathbin{\ulcorner{\times}\urcorner}}
\newcommand{\bemb}{\mathfrak{e}}
\newcommand{\eSig}{\ulcorner\Sigma\urcorner}
\newcommand{\limitfn}{\ell}

\newcommand{\id}{\mathrm{id}}
\newcommand{\eps}{\varepsilon_0}

\newcommand{\holds}[1]{\textcolor{green!60!black}{#1}}
\newcommand{\failsc}[1]{\textcolor{red!70!black}{#1}}

\definecolor{agdablue}{HTML}{5E8CB8}

\newif\ifagdarefs
\agdarefsfalse
\newcommand{\AgdaRefs}[1]{\ifagdarefs{\footnotesize\color{agdablue}(#1)}\else\unskip\fi}

\newcommand{\claim}[1]{%
  \ifagdarefs{\footnotesize\color{agdablue}[#1]}\else\unskip\fi
  }

\newcommand{\Agda}[2]{{\footnotesize\color{agdablue}\textsf{#1}%
  \ifblank{#2}{}{\texttt{.}\allowbreak\texttt{#2}}}}

\makeatletter
\let\ttt@plain\texttt
\renewcommand{\texttt}[1]{{\footnotesize\color{agdablue}\ttt@plain{#1}}}
\makeatother

\usepackage{aliascnt}
\newcommand{\sharedtheorem}[3]{%
  \newaliascnt{#1}{theorem}%
  \newtheorem{#1}[#1]{#2}%
  \aliascntresetthe{#1}%
  \crefname{#1}{#2}{#3}%
  \Crefname{#1}{#2}{#3}%
}

\theoremstyle{plain}
\newtheorem{theorem}{Theorem}[section]
\crefname{theorem}{Theorem}{Theorems}
\Crefname{theorem}{Theorem}{Theorems}

\crefname{table}{Table}{Tables}
\Crefname{table}{Table}{Tables}
\crefname{section}{Section}{Sections}
\Crefname{section}{Section}{Sections}
\crefname{subsection}{Section}{Sections}
\Crefname{subsection}{Section}{Sections}
\crefname{subsubsection}{Section}{Sections}
\Crefname{subsubsection}{Section}{Sections}
\sharedtheorem{proposition}{Proposition}{Propositions}
\sharedtheorem{lemma}{Lemma}{Lemmas}
\sharedtheorem{corollary}{Corollary}{Corollaries}

\theoremstyle{definition}
\sharedtheorem{definition}{Definition}{Definitions}
\sharedtheorem{example}{Example}{Examples}
\sharedtheorem{examples}{Examples}{Examples}
\sharedtheorem{remark}{Remark}{Remarks}
\sharedtheorem{notation}{Notation}{Notations}
\sharedtheorem{question}{Question}{Questions}

\title{Compact totally separated types}
\author{Mart\'in H\"otzel Escard\'o}
\date{\today}
\dedicatory{Dedicated to Gordon Plotkin on the occasion of his 80th birthday}

\begin{document}

\begin{abstract}
  Perhaps surprisingly, there are infinite types that can be
  exhaustively searched mechanically in finite time. We use ideas from
  topology to build plenty of them, referring to searchable types as
  \emph{compact types}, and we use ordinals to measure their logical
  complexity. We consider two systems of ordinal notations under which
  a single notation denotes both a discrete ordinal and a compact one,
  with an embedding of the former into the latter whose image has
  empty complement. A boolean valued function decides which points in
  the image of the embedding are isolated and which are topological
  limit points. The first system consists of the
  traditional Brouwer codes and the second is an inductive-recursive
  universe generalizing them. The discrete ordinals so obtained are
  trichotomous, and the compact ones have the least element property
  for complemented subsets, but these two desirable properties cannot
  be fulfilled simultaneously in a constructive setting. The compact ordinals
  obtained from Brouwer codes further enjoy a boolean Leibniz
  principle, which has the notion of total separatedness as its
  topological counterpart. This extends previous work from G\"odel's
  system~$T$ to intensional Martin-L\"of type theory with univalent universes,
  and is formalized in Agda in the
  \href{https://github.com/martinescardo/TypeTopology}{\texttt{TypeTopology}}
  repository.
\end{abstract}

\maketitle

\begingroup
\setcounter{tocdepth}{1}
\makeatletter
\@startsection{}\@M\z@{\linespacing\@plus\linespacing}%
  {.5\linespacing}{\centering\contentsnamefont}{\contentsname}%
\@input{\jobname.toc}
\contentsline{section}{\tocsection{}{}{%
  \hyperref[toc:full]{Full table of contents}}}{\pageref{toc:full}}{}
\makeatother
\endgroup

\newpage

\section{Introduction}
\label{sec:intro}

We consider the following search problem for a type $X$.  Given a
function $p : X \to \Two$ into the type with elements $0$ and $1$, we
want to either exhibit a point $x : X$ with $p\,x = 0$, that is, a
\emph{root} of~$p$, or else determine that~$p$ has no root. This
problem amounts to the type
\[
  \textstyle\prod_{p\,:\,X\to\Two}\,
  \Big(\big(\textstyle\Sigma_{x:X} p\,x = 0\big)
  \;+\;
  \big(\textstyle\prod_{x:X} p\,x = 1\big)\Big).
\]
For a finite type $X$ this can always be done, by trying the finitely
many points in turn. The question investigated in this paper is which
\emph{infinite} types $X$ admit such a procedure.  Three related
subjects view this requirement in different ways.
\begin{enumerate}
\item \emph{Computer science} reads it as a \emph{search problem}. Can
  an infinite type be exhaustively searched mechanically in finite
  time?  Perhaps surprisingly, there are plenty of infinite types that
  satisfy this searchability
  condition~\cite{EscardoSynthetic2004,EscardoFastSearch2007,EscardoExhaustible2008,EscardoOmniscientJSL2013,LongleyNormann2015}.
\item \label{view:topology} In \emph{topology}, this turns out to be a
  \emph{compactness problem}~\cite{EscardoExhaustible2008,LongleyNormann2015}. In
  Johnstone's topological topos~\cite{Johnstone1979}, which we take as
  a guiding model, the types of our theory are interpreted as spaces,
  functions into $\Two$ classify the clopen subspaces, and for a closed
  subtype of a \emph{simple type}, the requirement above holds exactly when
  the corresponding space is compact in the sense of
  topology~\cite{EscardoExhaustible2008,LongleyNormann2015}.
\item In \emph{logic} it amounts to a \emph{choice problem}, because the
  type displayed above asks for a root to be produced rather than
  asserted to exist. Which infinite instances of choice can be
  proved without being taken as axioms?
\end{enumerate}
In classical mathematics, excluded middle decides
whether $p$ has a root, and a choice principle produces one when it
does. Asking every type $X$ of a universe $\UU$
to satisfy the above is equivalent to \emph{global choice}, the
principle
\[
  \textstyle\prod_{X:\UU}\,\big(\neg\neg X \to X\big),
\]
which is stronger than the axiom of choice and says that we can pick
an element of every non-empty type~\cite[Theorem~3.2.2 and
Remark~3.2.5]{HoTTBook}.

In this paper we work constructively in an intensional Martin-L\"of
type theory, rather than classically in set theory as in our previous
paper~\cite{EscardoExhaustible2008}. As in that paper, we take
inspiration from topology to develop examples and counterexamples of
such types, as well as their general theory, although we do not assume
previous familiarity with topology, except for the sake of
motivation~\cite{Smyth1983,Smyth1992,EscardoSynthetic2004}. Because of
this and by~(\labelcref{view:topology}) above, we refer to the types
that satisfy the searchability condition as \emph{compact}.

Moreover, we adopt the univalent point of view~\cite{HoTTBook}, due to
its direct way of capturing various notions of extensionality, as well
as the distinction between data and property. Alternatively, in
principle everything discussed here could be redeveloped using
setoids~\cite{Hofmann1995Extensional,Hofmann1997Extensional,pitts2026setoidsintensionaltypetheory}
instead, but we leave such a reworking to interested readers.

The natural numbers fail to be compact constructively. There are
nonetheless many infinite compact types, as mentioned above.  Perhaps
the simplest example is the type of decreasing binary sequences,
\[
  \NInf \;:\equiv\; \textstyle\Sigma_{\alpha\,:\,\Cantor}\,
  \textstyle\prod_{i:\N}\, \alpha\,(i{+}1) \le \alpha\,i ,
\]
which satisfies the universal property of the conatural numbers, and
whose topological reading is the one-point compactification of the
discrete space of natural numbers with a new point at infinity. It is
compact in our type theoretic sense, and we had already shown it to be
so in a system as weak as G\"odel's~$T$~\cite{EscardoOmniscientJSL2013}.

The compactness of $\NInf$ has found applications outside the concerns
of this paper.  Normann and Tait~\cite{NormannTait2017} use its form
in G\"odel's $T$ to fill a gap in an unpublished but widely circulated
manuscript of Tait from 1958 on the computability of the fan
functional. Pradic and Brown~\cite{PradicBrown2019} use it to prove
that the Cantor--Bernstein theorem implies excluded middle.

Compactness as defined above in type theory is uninformative for a
type with too few functions into the type $\Two$ of booleans. The real
numbers are a standard example, where it is consistent that all
functions into the booleans are constant, rendering the search problem
trivial. We say that a type $X$ is \emph{totally separated} when the
functions into $\Two$ separate the points, in the positive sense that
\[
  \textstyle\prod_{x,y:X}\,
  \big(\textstyle\prod_{p\,:\,X\to\Two} p\,x = p\,y\big) \to x = y .
\]
This is a boolean Leibniz principle, whose classical topological
reading is that the clopens separate the points. Our type theoretic
notion of compactness is faithful to the topological notion with the
same name only for types that are totally separated. We may envision
obtaining a general type theoretic notion of compactness by changing
the booleans to a Sierpi\'nski-like type using
dominances~\cite{EscardoKnapp2017} and changing the searchability
condition slightly (cf.~\cite{EscardoExhaustible2008}), but this is
not investigated in the present work.

Totally separated types are sets in the sense of HoTT/UF, and they are
closed under $\Pi$, $+$, $\times$ and retracts, and include the
discrete types and hence the simple types, but they fail to be closed
under $\Sigma$ constructively in general, and we investigate
particular cases of totally separated $\Sigma$ types of interest.

All compact types we are able to construct turn out to come equipped
with well-orders. Moreover, the constructions that preserve
compactness discussed in the body of the paper also preserve
well-orderability. We adopt the HoTT Book~\cite{HoTTBook} definition
of ordinal, namely a type equipped with a proposition valued order
that is transitive, extensional and well-founded. (The HoTT Book also
requires the underlying type to be a set, but we observe that this
follows automatically from the extensionality condition.)

Trichotomy, namely the principle that $x < y$ or $x = y$ or $x > y$
for all elements $x$ and $y$ of an ordinal, is equivalent to the
principle of excluded middle when required for all ordinals, but there
are many trichotomous ordinals without assuming excluded
middle. Any trichotomous ordinal is discrete (has decidable equality),
but e.g.\ in the topological topos, discrete compact objects are
necessarily finite, so that asking for trichotomy together with
compactness gives only finite examples.

Another important classical property of ordinals is that every
non-empty subset has a least element, which is known to fail in
general in a constructive setting. But we do obtain the least element
property constructively for a large class of compact totally separated
ordinals, replacing \emph{subset} by \emph{complemented subset}
(classically, every subset is complemented, and, topologically,
complemented subsets correspond to clopen subsets).

Although the above phenomena exhibit a dichotomy between trichotomy
and compactness, the two notions are related.  In fact, a further
phenomenon becomes visible in a constructive setting. Two ordinals
that are seen to be the same classically can behave in vastly
different ways constructively. An example is the classical ordinal
$\omega + 1$, which has two manifestations (among others). In one, we
equip the type $\N + \One$ with its natural order, which gives rise to
a trichotomous ordinal. In the other, we equip $\NInf$ again with its
natural order, which gives rise to a compact totally separated ordinal
with the least element property for complemented subsets, which fails
to be trichotomous constructively.  In the presence of the principle
of excluded middle, these two ordinals are the same (and $\LPO$
suffices). But, by the above discussion, in the topological topos,
these two ordinals are patently different, one being discrete and the
other compact totally separated (i.e.\ a Stone
space~\cite{johnstone:stonespaces}), and so are they in Hyland's
effective topos. We have the natural inclusion
\[
\begin{array}{rcl}
  \N + \One & \hookrightarrow & \NInf \\
  \mathrm{inl}\,n & \mapsto & 1^n0^\omega \\
  \mathrm{inr}\,\pt & \mapsto & 1^\omega,
\end{array}
\]
which is an injection whose image has empty complement, but it is a
bijection if and only if $\LPO$ holds.  That every decreasing binary
sequence is of one of the forms $1^n0^\omega$ and $1^\omega$ fails
constructively, being equivalent to $\LPO$, while that there is no
decreasing binary sequence other than these always holds.

We generalize the above situation by working with ordinal codes and
interpreting them in two ways, so that one interpretation gives
trichotomous ordinals and a second one gives compact totally separated
ordinals with the least element property for complemented subsets. We
always have an embedding of the first interpretation into the second,
whose image has empty complement. We also discuss the standard
interpretation, which fails to produce either trichotomous or compact
totally separated ordinals, along with two further ones that arise
naturally, one of which also fails to be totally separated.

We consider two kinds of ordinal codes to obtain the above
results. The first ones are Brouwer codes, inductively defined by
constructors for zero, successor and countable limits. The second ones
are defined together with their interpretations, by
induction-recursion in the sense
of~\cite{Dybjer2000,DybjerSetzer1999,dybjersetzer:IndexedInductionRecursion:2006},
so that a limit constructor is indexed by any previously constructed
ordinal, rather than just~$\omega$, at the cost of total separatedness
of the compact interpretation, which is no longer guaranteed. A crucial ingredient of the
compact totally separated interpretation is a certain \emph{squashed
  sum}, first defined in~\cite[Section~10]{EscardoOmniscientJSL2013}
and generalized here as an \emph{extended sum} for our purposes, with
the aid of the theory of injective types~\cite{EscardoInjective2021,DeJong:Escardo:2026}.

We would like to emphasize an important methodological difference with
our previous
work~\cite{EscardoFastSearch2007,EscardoExhaustible2008}. There we
worked classically using set theory, with externally defined models to
establish our results. Here, instead, we work constructively,
reasoning internally in our type theory, where the beginning of this
transition took place in~\cite{EscardoOmniscientJSL2013}, informally
within constructive mathematics without specifying any particular
formal foundation.

Finally, although the fact that computable or constructively definable
functions are continuous~\cite{LongleyNormann2015,Troelstra1973} plays
a crucial role in the link of topology with computer science and
constructive mathematics, here, as in~\cite{EscardoOmniscientJSL2013},
we do not assume Brouwerian continuity
principles~\cite{Troelstra1973,Beeson1985}. In particular, we make no
continuity assumption for the map $p : X \to \Two$ in
the definition of compactness of the type $X$, as opposed to our
previous work~\cite{EscardoFastSearch2007,EscardoExhaustible2008}, so
that our results hold in \emph{all} topos models of (univalent) type
theory~\cite{shulman2019infty1toposesstrictunivalentuniverses} and are
consistent with classical principles, including excluded middle and
choice (but \emph{not} with \emph{global} choice, due to our use of
univalence).

This is a full version of the four-page
abstract~\cite{EscardoTypes2019Abstract}, with further results, and
builds on~\cite{EscardoOmniscientJSL2013}.
All mathematics reported here has been formalized in Agda~\cite{Agda}, in
\textsf{TypeTopology}~\cite{TypeTopology}, and a companion Agda
file~\cite{EscardoCompactCompanion} lists each numbered item of this
paper together with what establishes it.

\subsection*{Organization}

\Cref{sec:foundations}~describes our type theory and
introduces notation and terminology.
\Cref{sec:discrete-tot-sep}~defines and discusses discrete and
totally separated types, covering their closure properties, isolated
points and the totally separated reflection.
\Cref{sec:compact-types-theory}~does the same for compact types, and
discusses the interplay of the three notions, a certain
micro-Tychonoff theorem, and the extension of a family of compact types along
an embedding.
\Cref{sec:ordinals-prelim}~fixes our conventions for ordinals and their
arithmetic, and constructs extended sums and suprema.
\Cref{sec:brouwer-standard}~discusses five interpretations of Brouwer codes
as ordinals satisfying various combinations of compactness,
discreteness, total separation and trichotomy.
\Cref{sec:inductive-recursive}~generalizes Brouwer codes to an
inductive-recursive universe in which the branching of a sum
may be indexed by any previously constructed discrete ordinal, rather than just $\omegaO$, with two interpretations, one trichotomous and the other compact, and gives a decision procedure for telling whether a point in the image of the embedding of the former into the latter is isolated or a topological limit point.
\Cref{sec:discussion}~summarizes the results, discusses the reach of
these systems and describes the formalization.

\section{Preliminaries}
\label{sec:foundations}

We work constructively and we adopt the foundation described in the
HoTT Book~\cite{HoTTBook}.  In this section, which can be consulted on
demand, we fix terminology and notation, prove some auxiliary facts,
and state basic notions from constructive mathematics, such as
omniscience taboos.

\subsection{Our type theory}

We work in intensional Martin-L\"of type theory with an empty type
$\Zero$, a one-element type $\One$ with point $\pt$, a two-element
type~$\Two$ with points $0,1$, a type $\N$ of natural numbers, the
type formers $+$ (binary sum or coproduct), $\Pi$ (product), $\Sigma$
(sum or disjoint union), $\mathsf{W}$ types, identity types
denoted by the equality sign~$=$, and a hierarchy of universes
$\UU,\VV,\WW,\dots$ closed under them, with $\UU_0$ the first and
$\UU_1$ the next. Binary products, written $X \times Y$, arise both as
general products with index type~$\Two$ and as sums of constant
families, and we choose the latter incarnation by default. We often
write a function type $X \to Y$ in the exponential form $Y^X$, as in
the Cantor type $\Cantor$.

For a family $A : X \to \UU$ and an identification $p : x = y$, we
define $\mathrm{transport}^{A}\,p : A\,x \to A\,y$ by induction on
$p$, taking it to be the identity of $A\,x$ when $p$ is
$\mathrm{refl}$, with the family $A$ omitted when it is understood from the context. We
say that a point $a : A\,x$ is transported along $p : x = y$ to get a
point of the type~$A\,y$.

\begin{definition}[Emptiness, negation and decidability]
\label{def:decidable}
We define
\[
\text{$X$ is empty} \;:\equiv\; (X \to \Zero).
\]
When we think of the type $X$ as a mathematical statement, the
emptiness of $X$ amounts to its negation and is written~$\neg X$.
We say that a type $X$ is \emph{decidable} if we can decide whether it
is pointed or empty:
\[
  \text{$X$ is decidable} \;:\equiv\;  X + \neg X .
\]
We remark that sometimes in the literature the
terminology ``$X$ is decidable'' is used to mean that $X$ has
decidable equality instead. Here we say that $X$ is \emph{discrete} to mean
that it has decidable equality (\Cref{sec:discrete-tot-sep}).
\AgdaRefs{\Agda{MLTT.Negation}{is-decidable}.}
\end{definition}

\begin{definition}[Logical equivalence]
\label{def:logical-equivalence}
Two types $X$ and $Y$ are \emph{logically equivalent} if each maps
into the other:
\[
  \text{$X$ and $Y$ are logically equivalent} \;:\equiv\;
  (X \to Y) \times (Y \to X).
\]
\AgdaRefs{\Agda{Notation.General}{\_↔\_}.}
\end{definition}
When the types $X$ and $Y$ are mathematical statements, we often just
say ``equivalent'' rather than ``logically equivalent'', leaving the
context to resolve the ambiguity with the notion of type
equivalence (\Cref{def:equivalence}).

\subsection{Univalent foundations}
\label{sec:univalent-foundations}

To this type theory we add the univalence
axiom~\cite{HoTTBook,Voevodsky2015} and set quotients. Univalence
gives function extensionality and propositional extensionality, and
set quotients give propositional truncations and set replacement.
Function extensionality says that pointwise equal functions are equal,
where we write $f \sim g$ for the pointwise equality
$\prod_{x:X}\,f\,x = g\,x$ of functions $f, g : \prod_{x:X} A\,x$.
\AgdaRefs{\Agda{MLTT.Id}{\_∼\_}; \Agda{UF.FunExt}{funext},
  \texttt{dfunext}.}
The
Agda development behind this paper~\cite{EscardoCompactCompanion} records
where each of the above assumptions is used, taking them as explicit
hypotheses wherever they are needed. Full univalence in this paper is
assumed from~\Cref{sec:ordinals-prelim} onwards, although not all
results need it, the earlier sections using only its extensionality
consequences.

A \emph{proposition}, or \emph{subsingleton}, is a type with at most
one element.  We write $\|X\|$ for the propositional truncation of a
type $X$, and $\exists$ and $\vee$ for the truncations of $\Sigma$ and
$+$. A type is \emph{pointed} when it is equipped with a point,
\emph{inhabited} when its truncation is pointed, and \emph{non-empty}
when its emptiness is contradictory, that is, when $\neg\neg X$ holds,
whereas in the constructive mathematics
literature~\cite{bishop_bridges_1985} ``non-empty'' is often used for
inhabitation.
We also read truncation in words, saying that something holds \emph{in
  an unspecified way}, or a similar mode of expression, when what is
asserted is the truncation of the type of data establishing it. For
example a type is compact in an unspecified way when the type of its
compactness data is inhabited. When the disjuncts of a sum are
mutually exclusive propositions, the sum is already a proposition, so
the truncation makes no difference and we write ``or'' for either, as
e.g.\ in the trichotomy of~\Cref{def:trichotomy}.

\begin{definition}[Singletons]
\label{def:singleton}
A type $X$ is a \emph{singleton} or \emph{contractible} if it has a
point $c : X$, its \emph{center of contraction}, to which every point
is equal:
\[
  \text{$X$ is a singleton} \;:\equiv\; \textstyle\Sigma_{c:X}\,\prod_{x:X}\, c = x .
\]
For a family $A : X \to \VV$, there is a unique $x : X$ with $A\,x$
when the sum of $A$ is a singleton:
\[
  \exists!\,x:X,\,A\,x \;:\equiv\;
  \text{the type $\Sigma_{x:X}\,A\,x$ is a singleton}.
\]
A singleton is a proposition, a pointed proposition is a singleton,
and being a singleton is a property.
\AgdaRefs{\Agda{UF.Subsingletons}{is-singleton}, \texttt{center};
  \texttt{singletons-are-props},
  \texttt{pointed-props-are-singletons}; \texttt{∃!}.}
\end{definition}

\begin{definition}[Homotopy isolated points and sets]
\label{def:h-isolated}
A point $x : X$ is \emph{homotopy isolated} if the identity type
$x = y$ is a proposition for every $y : X$:
\[
  \text{$x$ is homotopy isolated} \;:\equiv\; \textstyle\prod_{y:X}\,(\text{$x = y$ is a proposition}) .
\]
A type is a \emph{set} if all of its points are homotopy isolated.
\AgdaRefs{\Agda{UF.Sets}{is-h-isolated}, \texttt{is-set}.}
\end{definition}
Call a function \emph{weakly constant} if any two of its values are
equal.
\begin{theorem}[Hedberg~\cite{Hedberg1998}]
\label{thm:hedberg}
If the identity type $x = y$ admits a weakly constant
endofunction for every $y : X$, then $x$ is homotopy isolated. Hence if
this holds at every $x : X$, then $X$ is a set.
\AgdaRefs{\Agda{UF.Hedberg}{local-hedberg},
\texttt{Id-collapsibles-are-sets}; \texttt{wconstant},
\texttt{collapsible} and \texttt{Id-collapsible} for the hypothesis.}
\end{theorem}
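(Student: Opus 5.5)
Fix $x : X$ and suppose we are given, for each $y : X$, a weakly constant endofunction $f_y : x = y \to x = y$. The plan is to show that every path $p : x = y$ is determined by $f_y\,p$ through a formula that does not depend on $p$ except via $f_y\,p$. Weak constancy then forces any two paths $p, q : x = y$ to be equal. Concretely, we aim for the identity
\[
  p \;=\; (f_x\,\mathrm{refl}_x)^{-1} \cdot f_y\,p
  \qquad\text{for all } y : X \text{ and } p : x = y .
\]

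To establish this, we first prove the more general statement that for all $y, z : X$ and $q : y = z$,
\[
  f_z\,(p \cdot q) \;=\; f_y\,p \cdot q .
\]
Indeed, by path induction on $q$ both sides reduce to $f_y\,p$, up to the definitional unit laws for concatenation. Specializing to $y :\equiv x$ and $p :\equiv \mathrm{refl}_x$, we get $f_z\,q = f_x\,\mathrm{refl}_x \cdot q$ for every $q : x = z$. Rearranging with the groupoid laws for identity types gives $q = (f_x\,\mathrm{refl}_x)^{-1} \cdot f_z\,q$, which is the displayed identity.

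Now let $p, q : x = y$. Weak constancy of $f_y$ gives $f_y\,p = f_y\,q$, so
\[
  p = (f_x\,\mathrm{refl}_x)^{-1} \cdot f_y\,p = (f_x\,\mathrm{refl}_x)^{-1} \cdot f_y\,q = q .
\]
Hence $x = y$ is a proposition for every $y$, that is, $x$ is homotopy isolated in the sense of \Cref{def:h-isolated}. The second assertion follows immediately: if the hypothesis holds at every $x : X$, then every point of $X$ is homotopy isolated, which is the definition of $X$ being a set.

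The only delicate step is the generalized naturality identity. Path induction cannot be applied directly to $p : x = y$ with $x$ fixed and $f$ indexed by $y$. The fix is to generalize over the second path $q$ with a free endpoint $z$, so that based path induction applies, and the statement specializes correctly at $\mathrm{refl}_x$. Everything else is routine groupoid algebra on identity types.
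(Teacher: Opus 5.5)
Your proof is correct and has the same structure as the paper's: derive $p = (f_x\,\mathrm{refl}_x)^{-1}\cdot f_y\,p$ for every $p : x = y$, then apply weak constancy of $f_y$. The only difference is how you obtain that identity. The paper uses a single based path induction on $p$ with its endpoint $y$ free. This is legitimate because the motive $p = (f_x\,\mathrm{refl}_x)^{-1}\cdot f_y\,p$ is a family over $(y,p)$, and at $(x,\mathrm{refl}_x)$ it reduces to $\mathrm{refl}_x = c^{-1}\cdot c$ with $c = f_x\,\mathrm{refl}_x$. So your closing claim that path induction cannot be applied directly is mistaken, and your naturality lemma $f_z\,(p\cdot q) = f_y\,p\cdot q$, while valid, is an unnecessary detour.
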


\begin{proof}
For each $y : X$ write $f_y$ for the given weakly constant
endofunction of $x = y$, and let $c : x = x$ be $f_x\,\mathrm{refl}$.
We first show that every $p : x = y$ is the composite of $c^{-1}$
followed by $f_y\,p$, by induction on $p$, whose endpoint $y$ varies
with it: for $p = \mathrm{refl}$ this is $c^{-1}$ followed by $c$,
which is $\mathrm{refl}$. Given $p, q : x = y$, both are then
composites of $c^{-1}$ with values of $f_y$, and those two values are
equal by weak constancy, so $p = q$. Thus $x = y$ is a proposition for
every $y : X$, that is, $x$ is homotopy isolated.
\end{proof}

This formulation of the theorem is due to Kraus, Escard\'o, Coquand
and
Altenkirch~\cite{KrausEscardoCoquandAltenkirch2013,KrausEscardoCoquandAltenkirch2017}.

\begin{definition}[The type of propositions]
\label{def:Omega}
We write $\Omega_{\UU}$ for the type of propositions in a universe
$\UU$:
\[
  \Omega_{\UU} \;:\equiv\;
  \textstyle\Sigma_{P : \UU}\,(\text{$P$ is a proposition}) ,
\]
whose two canonical elements are $\bot :\equiv \Zero$ and
$\top :\equiv \One$. Propositional extensionality makes two elements of
$\Omega_{\UU}$ equal exactly when they are logically equivalent, and
$\Omega_{\UU}$ is a set. We omit the universe subscript when it is understood from the context.
\AgdaRefs{\Agda{UF.SubtypeClassifier}{Ω}, \texttt{⊥}, \texttt{⊤},
\texttt{Ω-extensionality}, \texttt{holds-gives-equal-⊤};
\Agda{UF.SubtypeClassifier-Properties}{Ω-is-set}.}
\end{definition}

\begin{definition}[Base and fiber]
\label{def:base-fiber}
For a family $Y : X \to \VV$ we call $X$ the \emph{base} or
\emph{index type} of the sum $\Sigma Y$, that is,
$\Sigma_{x:X}\,Y\,x$, and $Y\,x$ the \emph{fiber} over~$x : X$. The
\emph{fiber} of a map $f : X \to Y$ over a point $y : Y$ is the type
of points that $f$ sends to $y$:
\[
  \fib f y \;:\equiv\; \textstyle\Sigma_{x:X}\, f\,x = y .
\]
The two usages agree up to type equivalence, the fiber of the first
projection $\Sigma_{x:X}\,Y\,x \to X$ over $x$ being equivalent to $Y\,x$.
\AgdaRefs{\Agda{Notation.General}{fiber}.}
\end{definition}

\begin{definition}[Type equivalences]
\label{def:equivalence}
A map $f : X \to Y$ is an \emph{equivalence} if its fibers are all
singletons:
\[
  \text{$f$ is an equivalence} \;:\equiv\;
  \textstyle\prod_{y:Y}\,(\text{$\fib f y$ is a singleton}).
\]
With the above terminology, this says that for every $y : Y$ there is
a unique $x : X$ with $f\,x = y$.  We write $X \simeq Y$ for the type
of equivalences from~$X$ to~$Y$:
\[
  X \simeq Y \;:\equiv\;
  \textstyle\Sigma_{f:X\to Y}\,(\text{$f$ is an equivalence}).
\]
\AgdaRefs{\Agda{UF.Equiv}{is-vv-equiv}.}
\end{definition}

\begin{definition}[Surjections and induction along them]
\label{def:surjection}
A map $f : X \to Y$ is a \emph{surjection} if its fibers are all
inhabited:
\[
  \text{$f : X \to Y$ is a surjection} \;:\equiv\; \textstyle\prod_{y:Y}\, \| \fib f y \| .
\]
This amounts to saying that $\prod_{y:Y}\, \exists_{x : X} \, f\,x = y$.
A surjection supports \emph{surjection induction}: in order to prove a
proposition valued $P : Y \to \VV$ at every $y : Y$, it is enough to
prove $P\,(f\,x)$ at every $x : X$. Conversely, a map supporting this
principle is a surjection.
\AgdaRefs{\Agda{UF.ImageAndSurjection}{is-surjection},
\texttt{Surjection-Induction}, \texttt{surjection-induction},
\texttt{surjection-induction-converse}.}
\end{definition}

\begin{definition}[Embeddings]
\label{def:embedding}
A map $j : X \to Y$ is an \emph{embedding}, written
$j : X \hookrightarrow Y$, if its fibers are all propositions:
\[
  \text{$j : X \to Y$ is an embedding} \;:\equiv\; \textstyle\prod_{y:Y}\, (\text{$\fib jy$ is a proposition}) .
\]
\AgdaRefs{\Agda{UF.Embeddings}{is-embedding}.}
\end{definition}

An embedding is left cancellable, in the sense that $j\,x = j\,x'$
implies $x = x'$, and conversely a left-cancellable map into a
\emph{set} is an embedding, so that the two notions agree for maps
between sets.
\claim{embeddings-and-left-cancellability}
\AgdaRefs{\Agda{UF.Embeddings}{embeddings-are-lc},
  \texttt{lc-maps-into-sets-are-embeddings}.}

We remark that embeddings in this sense are \emph{homotopical}
embeddings rather than \emph{topological} embeddings. For example, the
type of rationals is embedded into the type of Dedekind reals, but not
topologically. In the topological topos the rationals get interpreted
as a discrete space, and the Dedekind reals as the real line with its
usual topology, and the discrete rationals are not a subspace of the
real line. Moreover, the Dedekind reals are not the homotopy type of
the reals, which would be a singleton, but rather a set in the above sense.
\claim{rationals-embed-into-reals}
\AgdaRefs{\Agda{DedekindReals.Order}{ℚ-to-ℝ-is-embedding};
  \Agda{DedekindReals.Type}{ℝ-is-set}.}

\begin{definition}[Complemented families]
\label{def:complemented}
A family $A : X \to \VV$ is \emph{complemented} if $A\,x$ is decidable
for every $x : X$. A complemented family of propositions on $X$ amounts
to a function $X \to \Two$, its characteristic function.
\AgdaRefs{\Agda{NotionsOfDecidability.Complemented}{is-complemented},
  \texttt{characteristic-function}.}
\end{definition}

\begin{definition}[Subsets]
\label{def:subset}
A \emph{subset} of a type $X$ is a map from $X$ to the type of
propositions of~\Cref{def:Omega}:
\[
  S : X \to \Omega_{\UU} .
\]
Equivalently, a subset is a proposition valued map $X \to \UU$. We
write $x \in S$, and say that $x$ is a \emph{member} of $S$, when the
proposition $S\,x$ holds.
\AgdaRefs{\Agda{UF.Powerset-MultiUniverse}{𝓟}, \texttt{\_∈\_},
  \texttt{∈-is-prop}.}
\end{definition}
A proposition valued family $A : X \to \VV$ is equivalently an
embedding $Y \hookrightarrow X$, the family given by its fibers, and
when the family is complemented the embedding is classified by a map
$X \to \Two$, as illustrated by the following pullback diagram:
\[
\begin{tikzcd}[row sep=large, column sep=large]
  Y \arrow[r] \arrow[d, hook, "\text{complemented}"'] & \One \arrow[d, "1"] \\
  X \arrow[r] & \Two.
\end{tikzcd}
\]
In models such as Johnstone's topological topos, complemented subsets
get interpreted as clopens.
\begin{definition}[Image]
\label{def:image}
A point $y : Y$ is in the \emph{image} of a map $f : X \to Y$ when some
$x : X$ is sent to $y$ in an unspecified way, and the \emph{image} of
$f$ is the type of the points of $Y$ that are in it:
\[
  y \in \mathrm{image}\,f \;:\equiv\; \|\fib f y\|,
  \qquad
  \mathrm{image}\,f \;:\equiv\; \textstyle\Sigma_{y:Y}\,(y \in \mathrm{image}\,f) .
\]
The \emph{image} of a subset $S$ of $X$ along $f$ is the subset of $Y$
whose members are the points that some member of $S$ is sent to in an
unspecified way:
\[
  y \in \mathrm{image}\,f\,S \;:\equiv\;
  \exists_{x:X}\,\big(x \in S \times (f\,x = y)\big) .
\]
The \emph{corestriction} of $f$ is the map $X \to \mathrm{image}\,f$
that sends $x$ to $f\,x$ together with a witness that it is in the
image, and it is a surjection.
\AgdaRefs{\Agda{UF.ImageAndSurjection}{image}, \texttt{\_∈image\_},
  \texttt{corestriction}, \texttt{corestrictions-are-surjections};
  \Agda{UF.Powerset-MultiUniverse}{𝓟-image} for the image of a subset.}
\end{definition}

\begin{definition}[Small and locally small types]
\label{def:small}
A type $X$ is \emph{$\UU$-small} if it is equivalent to a type in the
universe $\UU$, and it is \emph{locally $\UU$-small} if the identity
type $x = y$ is $\UU$-small for every $x, y : X$.
\AgdaRefs{\Agda{UF.Size}{is-small}, \texttt{is-locally-small}.}
\end{definition}
Being small is a proposition if and only if univalence holds.
\claim{smallness-and-univalence}
\AgdaRefs{\Agda{UF.Size}{being-small-is-prop} for the direction that
  univalence gives, and \Agda{UF.Yoneda}{univalence-via-singletons←}
  for the converse, with smallness in the same universe.}
\begin{definition}[Set replacement]
\label{def:set-replacement}
Set replacement is the principle that, for a map $f : X \to Y$, if $X$
is $\UU$-small and $Y$ is a locally $\VV$-small set, then the image
of~$f$ (\Cref{def:image}) is small in the least universe containing
both $\UU$ and $\VV$.
\AgdaRefs{\Agda{UF.Size}{Set-Replacement}.}
\end{definition}
This is the particular case for sets of a more general replacement
principle first identified and studied by
Rijke~\cite{rijke2017joinconstruction,rijke_2025}.
\begin{definition}[Set quotients]
\label{def:set-quotients}
Set quotients exist when, for every type $X : \UU$ and every
$\VV$-valued equivalence relation $\approx$ on $X$, there is a set
$X/\approx$, the \emph{set quotient} of $X$ by $\approx$, with a map
$\eta : X \to X/\approx$ that identifies related
points and is universal: every map $f : X \to Y$ into a set $Y$ that
identifies related points factors as $f = \bar f \circ \eta$ for a
unique $\bar f : {X/ \approx} \to Y$.
\AgdaRefs{\Agda{Quotient.Type}{set-quotients-exist}.}
\end{definition}

Set replacement is equivalent to the existence of set quotients.
\claim{set-replacement-and-quotients}
\AgdaRefs{\Agda{Quotient.GivesSetReplacement}{set-replacement-from-set-quotients-and-funext};
\Agda{Quotient.FromSetReplacement}{set-quotients-from-set-replacement}.}

\subsection{Constructive mathematics}
\label{sec:constructive-mathematics}
Although the general principle of double negation elimination is
equivalent to excluded middle, it does hold constructively in many
particular examples of interest.
\begin{definition}[$\neg\neg$-separatedness]
\label{def:nn-separated}
A type $X$ is \emph{$\neg\neg$-separated} if identifications in it are
$\neg\neg$-stable:
\[
  \text{$X$ is $\neg\neg$-separated} \;:\equiv\; \textstyle\prod_{x,y:X}\, \neg\neg(x = y) \to x = y .
\]
A type with decidable equality is $\neg\neg$-separated, because decidable
types are $\neg\neg$-stable.
\AgdaRefs{\Agda{UF.DiscreteAndSeparated}{is-¬¬-separated};
\texttt{discrete-is-¬¬-separated}.}
\end{definition}

\begin{lemma}
\label{lem:nn-separated-is-set}
Every $\neg\neg$-separated type is a set.
\AgdaRefs{\Agda{UF.NotNotStablePropositions}{¬¬-separated-types-are-sets},
via \texttt{¬¬-separated-types-are-Id-collapsible}.}
\end{lemma}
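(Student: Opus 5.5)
We apply Hedberg's theorem (\Cref{thm:hedberg}): it suffices, for each $x,y : X$, to give a weakly constant endofunction of the identity type $x = y$. Given $\neg\neg$-separatedness $s : \prod_{x,y:X}\,\neg\neg(x=y) \to x=y$, define
\[
  f_{x,y} : (x = y) \to (x = y), \qquad f_{x,y}\,p :\equiv s\,x\,y\,(\lambda u.\,u\,p),
\]
that is, we first double-negate $p$ and then apply the separatedness witness.

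The key observation is that the type $\neg\neg(x=y)$ is a proposition. It is a function type into $\Zero$, and $\Zero$ is a proposition, so by function extensionality any two functions $(\neg(x=y)) \to \Zero$ are equal. Hence for any $p, q : x = y$ the elements $\lambda u.\,u\,p$ and $\lambda u.\,u\,q$ of $\neg\neg(x=y)$ are equal. Applying $s\,x\,y$ to both sides gives $f_{x,y}\,p = f_{x,y}\,q$, so $f_{x,y}$ is weakly constant.

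Hedberg's theorem then shows that every $x$ is homotopy isolated, and by \Cref{def:h-isolated} this says that $X$ is a set. The only non-syntactic ingredient is function extensionality, which is used to see that negations are propositions. It is available by our standing assumptions in \Cref{sec:univalent-foundations}, and so there is no real obstacle. The one point needing care is to factor the endofunction through the proposition $\neg\neg(x=y)$ rather than to try to compare $p$ and $q$ directly.
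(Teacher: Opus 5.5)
Your proposal is correct and is essentially the paper's proof: compose the map $x = y \to \neg\neg(x=y)$ with the separatedness witness to get an endofunction of $x = y$. That endofunction is weakly constant because $\neg\neg(x=y)$, being a negation, is a proposition (via function extensionality), and \Cref{thm:hedberg} then gives that the type is a set.
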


\begin{proof}
  Compose the natural map $x = y \to \neg\neg(x = y)$ with the given
  $\neg\neg(x = y) \to x = y$, obtaining an endofunction of $x =
  y$. It is weakly constant, because $\neg\neg(x = y)$ is a negation
  and hence a proposition, so that any two intermediate values are
  equal. The result then follows by~\Cref{thm:hedberg}.
\end{proof}

\begin{definition}[Apartness and tightness]
\label{def:apartness}
An \emph{apartness} on a type $X$ is a proposition valued relation
$\apart$ which is
\begin{enumerate}
\item \emph{irreflexive}: $\neg(x \apart x)$,
\item \emph{symmetric}: $x \apart y \to y \apart x$, and
\item \emph{cotransitive}: $x \apart y \to (x \apart z) \vee
  (y \apart z)$.
\end{enumerate}
It is \emph{tight} if $\neg(x \apart y)$ implies $x = y$, for all
$x,y : X$.
\AgdaRefs{\Agda{Apartness.Definition}{is-apartness}, \texttt{is-tight},
\texttt{is-prop-valued}, \texttt{is-irreflexive}, \texttt{is-symmetric},
\texttt{is-cotransitive}; \texttt{is-strong-apartness} for the variant
whose cotransitivity is untruncated.}
\end{definition}

An apartness is the positive counterpart of the negation of equality,
in that $x \apart y$ asserts a difference where $\neg(x = y)$ only
denies equality~\cite{BridgesVita2006}.  Irreflexivity makes an
apartness imply negation of equality, and hence makes equality imply
the negation of apartness. Tightness asks for the converse of the
latter. It follows that a type carrying a tight apartness is
$\neg\neg$-separated, and hence a set
by~\Cref{lem:nn-separated-is-set}.  \claim{tight-apartness-gives-set}
\AgdaRefs{\Agda{Apartness.Definition}{apartness-is-irreflexive'},
  \texttt{tight-types-are-¬¬-separated'},
  \texttt{tight-types-are-sets'}.}

\begin{definition}[Extreme density]
\label{def:dense}
For a map $f : X \to Y$ the following are equivalent.
\begin{enumerate}
\item\label{item:dense-image} The complement of the image of $f$ is
  empty:
\[
  \neg\,\Sigma_{y:Y}\,\neg\,(y \in \mathrm{image}\,f).
\]
\item\label{item:dense-fiber} No point of $Y$ lies outside the image
  of $f$:
\[
  \neg\,\Sigma_{y:Y}\,\neg\,\Sigma_{x:X}\, f\,x = y.
\]
\end{enumerate}
A map satisfying either, and hence both, of these conditions is called
\emph{extremely dense}, and extreme density is a proposition. An
alternative terminology, not adopted here, is \emph{$\neg\neg$-density}.
\AgdaRefs{\Agda{TypeTopology.Density}{is-dense}, which
  is~(\labelcref{item:dense-fiber}),
  \texttt{density-characterization},
  \texttt{density-characterization-≃}, \texttt{being-dense-is-prop};
  \Agda{UF.ImageAndSurjection}{complement-of-image},
  \texttt{\_∈image\_} for~(\labelcref{item:dense-image}).}
\end{definition}

\begin{proof}
For any type $A$ the negations $\neg\|A\|$ and $\neg A$ are logically
equivalent, so the truncation in $y \in \mathrm{image}\,f$ can be
dropped under the inner negation. Extreme density is a proposition,
being a negation.
\end{proof}
In the presence of the principle of excluded middle, a map is
extremely dense if and only if it is a surjection.
\claim{em-dense-surjection}
\AgdaRefs{\Agda{TypeTopology.DenseMapsProperties}{surjections-are-dense}
  for the direction that needs no assumption, and the entry
  \texttt{Prose-em-dense-surjection'} of the companion
  file~\cite{EscardoCompactCompanion} for the converse.}  Extreme density is
stronger than density in the topological sense. The rationals are
dense in the reals in the usual sense, but the inclusion of the
rationals into the Dedekind reals is not extremely dense, since
irrational numbers lie outside its image.

\begin{lemma}
\label{lem:dense-into-nn-separated-rc}
An extremely dense map is right-cancellable with respect to maps into a
family of $\neg\neg$-separated types: if $j : X \to Y$ is extremely
dense, if
$Z : Y \to \UU$ has $Z\,y$ $\neg\neg$-separated for every $y : Y$,
and if $f, g : \prod_{y:Y} Z\,y$ satisfy $f\,(j\,x) = g\,(j\,x)$
for every $x : X$, then $f = g$.
\AgdaRefs{\Agda{TypeTopology.Density}{dense-maps-into-¬¬-separated-types-are-rc'};
\texttt{dense-maps-into-¬¬-separated-types-are-rc} for the constant
family.}
\end{lemma}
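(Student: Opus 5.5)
By function extensionality it suffices to prove $f\,y = g\,y$ for every $y : Y$. Fix $y : Y$. Since $Z\,y$ is $\neg\neg$-separated, I reduce further to showing $\neg\neg(f\,y = g\,y)$. The plan is to obtain this double negation from the extreme density of $j$ in the form~(\labelcref{item:dense-fiber}) of \Cref{def:dense}.

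Assume $u : \neg(f\,y = g\,y)$; I must derive a contradiction. By extreme density it is enough to show $\neg\,\Sigma_{x:X}\, j\,x = y$, because then the pair consisting of $y$ and this proof gives an element of $\Sigma_{y:Y}\,\neg\,\Sigma_{x:X}\,j\,x = y$, which density forbids. So let $(x, p)$ with $p : j\,x = y$. The hypothesis gives $h : f\,(j\,x) = g\,(j\,x)$, an identification in $Z\,(j\,x)$.

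The only delicate step is the dependency: $h$ lives in $Z\,(j\,x)$, while $u$ refers to $Z\,y$. I handle it by path induction on $p$, or equivalently by applying the dependent action on paths. One has $\mathrm{apd}\,f\,p : \mathrm{transport}^{Z}\,p\,(f\,(j\,x)) = f\,y$, and likewise for $g$. Combining these with $\mathrm{ap}\,(\mathrm{transport}^{Z}\,p)\,h$ yields $f\,y = g\,y$. Feeding this to $u$ gives the required contradiction, which completes the argument.

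In the special case of a constant family this is immediate, since no transport is needed. The main obstacle, such as it is, is only this bookkeeping with transport. Stating the auxiliary claim as $\prod_{y'}\,(j\,x = y') \to f\,y' = g\,y'$ and doing induction on the identification makes it routine.
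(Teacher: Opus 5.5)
Your proof is correct and follows essentially the same approach as the paper: reduce by function extensionality and $\neg\neg$-separatedness to $\neg\neg(f\,y = g\,y)$, then observe that $f\,y \neq g\,y$ would put $y$ outside the image of $j$, since any $p : j\,x = y$ transports $f\,(j\,x) = g\,(j\,x)$ to $f\,y = g\,y$, contradicting extreme density. Your explicit use of $\mathrm{apd}$ with $\mathrm{ap}\,(\mathrm{transport}^{Z}\,p)$ just spells out the transport step that the paper leaves implicit.
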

In particular, taking $Z$ constant, two maps $Y \to Z$ into a
$\neg\neg$-separated type that agree after composition with $j$ are
equal.
\begin{proof}
For $y : Y$ it is enough to prove $\neg\neg(f\,y = g\,y)$,
$\neg\neg$-separatedness of $Z\,y$ supplying $f\,y = g\,y$ and
function extensionality supplying $f = g$. If $f\,y \neq g\,y$ then
no $x : X$ satisfies $j\,x = y$, since an identification
$p : j\,x = y$ transports the hypothesis $f\,(j\,x) = g\,(j\,x)$
to $f\,y = g\,y$, so $y$ lies outside the image of $j$, contradicting
extreme density.
\end{proof}

\begin{lemma}
\label{lem:two-to-Omega-dense}
The embedding $\Two \to \Omega_{\UU}$ that sends $0$ to $\bot$ and $1$
to $\top$ is extremely dense.
\AgdaRefs{\Agda{UF.SubtypeClassifier-Properties}{𝟚-to-Ω},
  \texttt{𝟚-to-Ω-is-embedding}, \texttt{𝟚-to-Ω-fiber};
  \Agda{UF.SubtypeClassifier}{no-truth-values-other-than-⊥-or-⊤}.}
\end{lemma}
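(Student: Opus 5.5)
The statement has two parts, that the map is an embedding and that it is extremely dense, and I will treat them in that order; I call the map $\chi : \Two \to \Omega_{\UU}$, so $\chi\,0 = \bot$ and $\chi\,1 = \top$.

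\emph{Embedding.} Both $\Two$ and $\Omega_{\UU}$ are sets, the latter by \Cref{def:Omega}. By the remark after \Cref{def:embedding}, it therefore suffices to show that $\chi$ is left cancellable. Argue by cases on $b, b' : \Two$. The diagonal cases are trivial. In the off-diagonal cases, an identification $\bot = \top$ transports the point $\pt : \One$ to a point of $\Zero$, which is a contradiction. From this absurdity $0 = 1$ (or $1 = 0$) follows.

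\emph{Extreme density.} I use formulation~(\labelcref{item:dense-fiber}) of \Cref{def:dense}. Suppose given $P : \Omega_{\UU}$ together with a proof $h$ that no $b : \Two$ has $\chi\,b = P$; the goal is to derive $\Zero$.
\begin{enumerate}
\item First show $\neg P$. Given $p : P$, the proposition $P$ is pointed, hence a singleton, hence logically equivalent to $\One$. By propositional extensionality $P = \top$, so $(1, \text{that identification})$ is a point of the fiber over $P$, contradicting $h$.
\item Then, from $\neg P$, the proposition $P$ is logically equivalent to $\Zero$. Propositional extensionality gives $P = \bot$, so $(0, \ldots)$ lies in the fiber, again contradicting $h$.
\end{enumerate}
This is the usual argument that there is no truth value other than $\bot$ or $\top$.

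The only delicate points are bookkeeping. One is the orientation of the identifications: the fiber asks for $\chi\,b = P$, so I take inverses where needed. The other is that propositional extensionality must be used at the level of $\Omega_{\UU}$, where equality of pairs reduces to equality of first components because being a proposition is itself a proposition. Neither step is a real obstacle.
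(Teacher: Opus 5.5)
Your proof is correct and is essentially the paper's. The density argument is the same: a $P$ outside the image yields $\neg P$, since $P$ would give $P = \top$, and $\neg P$ in turn gives $P = \bot$, a contradiction. The only difference is the embedding part, where the paper identifies the fiber over $P$ with $\neg P + P$, a sum of two mutually exclusive propositions, while you use left cancellability into the set $\Omega_{\UU}$; both work.
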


\begin{proof}
The map is an embedding because its fiber over $P$ is $\neg P + P$,
a sum of two propositions that exclude each other. Let
$P : \Omega_{\UU}$ lie outside the image, so that $P \neq \bot$ and
$P \neq \top$, these being the two values of the map. Logically
equivalent propositions are equal, so $\neg P$ would give
$P = \bot$, and $P$ would give $P = \top$. Hence $\neg\neg P$ and
$\neg P$, which is a contradiction. So no point of $\Omega_{\UU}$
lies outside the image.
\end{proof}

The fiber of this map over $P$ is the decidability of $P$, so its
extreme density says that no proposition can fail to be decidable,
whereas excluded middle would say that every proposition is decidable.

\begin{definition}[Global choice]
\label{def:global-choice}
\emph{Global choice} for a universe $\UU$ is the principle that every
type in $\UU$ is decidable, so that given any type $X : \UU$ we can
either produce a point of $X$ or establish that $X$ is empty, that
is,
\[
  \textstyle\prod_{X:\UU}\,(X + \neg X).
\]
\AgdaRefs{\Agda{UF.ClassicalLogic}{Global-Choice}.}
\end{definition}

Equivalently, global choice says that we can choose a point of every
non-empty type in $\UU$,
\[
  \textstyle\prod_{X:\UU}\,(\neg\neg X \to X),
\]
since a decidable type is $\neg\neg$-stable and, conversely, the type
$X + \neg X$ is non-empty for every $X$.
\claim{global-choice-reformulation}
\AgdaRefs{\Agda{UF.ClassicalLogic}{Global-Choice'};
  \texttt{Global-Choice-gives-Global-Choice'};
  \texttt{Global-Choice'-gives-Global-Choice}.}  It is a classical
principle, stronger than the axiom of choice, and we do not assume it,
but it arises as a so-called \emph{constructive taboo} in the
conclusion of some statements.

\begin{definition}[Excluded middle]
\label{def:excluded-middle}
Excluded middle for a universe $\UU$ says that every proposition in
$\UU$ is decidable:
\[
  \textstyle\prod_{P:\UU}\,
  \big(\text{$P$ is a proposition} \to P + \neg P\big).
\]
\AgdaRefs{\Agda{UF.ClassicalLogic}{EM}.}
\end{definition}

The following taboos also arise in some of the mathematical questions
considered in this paper.
\begin{definition}[Bishop's principles of omniscience~\cite{Bishop1967}]
\label{def:lpo-wlpo}
Bishop's $\LPO$ (limited principle of omniscience) decides whether a
binary sequence takes the value~$0$ somewhere, and $\WLPO$ (weak
limited principle of omniscience) decides whether it has
value~$1$ everywhere:
\begin{align*}
  \LPO &:\equiv \textstyle\prod_{p:\Cantor}\,
  \big(\text{the type $\Sigma_{n:\N}\, p\,n = 0$ is decidable}\big),
  \\
  \WLPO &:\equiv \textstyle\prod_{p:\Cantor}\,
  \big(\text{the type $\prod_{n:\N}\, p\,n = 1$ is decidable}\big).
\end{align*}
\AgdaRefs{\Agda{Taboos.LPO}{LPO}, \texttt{LPO'};
\Agda{Taboos.WLPO}{WLPO}, \texttt{WLPO-traditional},
\texttt{WLPO-gives-WLPO-traditional},
\texttt{WLPO-traditional-gives-WLPO}.}
\end{definition}
\noindent
Equivalently, the principle $\LPO$ says that the finiteness of every
conatural number is decidable, and $\WLPO$ says that it is decidable
whether a conatural number is the point at infinity
(\Cref{ex:NInf}). We use whichever formulation is convenient. In the
vocabulary of this paper, $\Sigma$-compactness of $\N$ is $\LPO$ and
$\Pi$-compactness of $\N$ is $\WLPO$ (\Cref{ex:compact-basic}).
\claim{compactness-of-N-is-LPO}
\AgdaRefs{\Agda{Taboos.LPO}{LPO-gives-compact-ℕ},
  \texttt{compact-ℕ-gives-LPO}.}

Excluded middle gives $\LPO$, which gives $\WLPO$. The converse of the
latter follows from Markov's principle (namely that if a complemented family
over $\N$ has non-empty sum then its sum is pointed).  All of these
are independent of our type theory, Markov's principle
by~\cite{CoquandMannaa2017}. Excluded middle, $\LPO$ and $\WLPO$ hold
under classical logic and fail in models where all functions are
continuous or computable, such as Johnstone's topological topos~\cite{Johnstone1979} or Hyland's
effective topos~\cite{Hyland1982} respectively. We neither assume nor
deny them. When we say that something fails constructively, we mean
that it implies a taboo.  Saying instead that it is false would be an
implication into the empty type, which is a stronger statement. The
same applies to the doubly negated forms, such as $\neg\neg\WLPO$,
which are weaker but still independent.  Global choice
(\Cref{def:global-choice}) is stronger than all of them, since it
decides every type.  \claim{EM-LPO-WLPO}
\AgdaRefs{\Agda{Taboos.LPO}{EM-gives-LPO}, \texttt{LPO-gives-WLPO};
  \Agda{Taboos.MarkovsPrinciple}{MP}, \texttt{MP-and-WLPO-give-LPO}.}

We do not assume any Brouwerian continuity principle. Assuming continuity
would settle the above taboos negatively, since it refutes $\WLPO$ and
hence $\LPO$, whereas we keep them undecided in order to stay
compatible with classical mathematics, so that our results hold in
every model of our foundation, in particular
$\infty$-toposes~\cite{shulman2019infty1toposesstrictunivalentuniverses},
the boolean ones included.
\begin{remark}[Weak topological toposes]
  \label{rem:weak-topological-topos}
  The statement $\neg \WLPO$ is independent of our type theory and can
  be seen as a weak continuity
  principle~\cite{EscardoContinuity2015}. We have that $\WLPO$ is
  equivalent to the existence of a function $\NInf \to \N$ that is not
  continuous, and $\neg\WLPO$ to every function $\NInf \to \N$ being
  $\neg\neg$-continuous, the double negation being removable under
  Markov's principle (which we are not assuming). We think of toposes
  in which $\neg\WLPO$ holds as \emph{weak topological toposes}.
  \claim{noncontinuity-and-WLPO}
  \AgdaRefs{\Agda{TypeTopology.DecidabilityOfNonContinuity}{continuous},
    \texttt{\_is-modulus-of-continuity-of\_},
    \texttt{noncontinuous-map-gives-WLPO},
    \texttt{WLPO-gives-noncontinous-map},
    \texttt{¬WLPO-iff-all-maps-are-¬¬-continuous},
    \texttt{MP-and-¬WLPO-give-that-all-functions-are-continuous}.}
\end{remark}
This remark is the inspiration behind~\Cref{def:limit-point}.

\subsection{Retracts and maps of sums}

\begin{definition}[Retracts]
\label{def:retract}
A type $Y$ is a \emph{retract} of a type $X$ if there are specified maps
$r : X \to Y$ and $s : Y \to X$ with $r \circ s$ the identity. We call
$r$ a \emph{retraction} and $s$ a \emph{section}.
\AgdaRefs{\Agda{UF.Retracts}{retract\_of\_}, \texttt{retraction},
\texttt{section}, \texttt{has-section}, \texttt{is-section}.}
\end{definition}

\begin{definition}[Maps of sums]
\label{def:sum-map}
Let $A : X \to \UU$ and $B : Y \to \UU$ be families of types, let
$f : X \to Y$ be a map, and let $g\,x : A\,x \to B\,(f\,x)$ be a map
for every $x : X$. The map of the sums induced by $f$ and $g$ applies
$f$ to the index and $g$ to the fiber component:
\[
  \summap fg : \Sigma A \to \Sigma B, \qquad
  \summap fg\,(x,a) \;=\; (f\,x,\, g\,x\,a).
\]
\AgdaRefs{\Agda{UF.PairFun}{pair-fun}.}
\end{definition}

\begin{lemma}
\label{lem:sum-map}
The map $\summap fg$ is
\begin{enumerate}
\item\label{item:sum-map-equiv} an equivalence if $f$ is one and every
  $g\,x$ is,
\item\label{item:sum-map-embedding} an embedding if $f$ is one and
  every $g\,x$ is,
\item\label{item:sum-map-dense} extremely dense if $f$ is and every
  $g\,x$ is,
\item\label{item:sum-map-retraction} a retraction if $f$ is the
  identity and every $g\,x$ is.
\end{enumerate}
\AgdaRefs{\Agda{UF.PairFun}{pair-fun-is-equiv}, via
  \texttt{pair-fun-is-vv-equiv}; \texttt{pair-fun-is-embedding};
  \texttt{pair-fun-dense}; \Agda{UF.Retracts}{Σ-retract} for the
  last, which is \Agda{UF.PairFun}{pair-fun} with the identity on the
  index.}
\end{lemma}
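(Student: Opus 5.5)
We treat the four items in turn. In each case we work fiberwise, using the equivalence of the fiber of $\summap fg$ over a point $(y,b)$ with a type assembled from the fibers of $f$ and of the maps $g\,x$. Concretely, we will first establish that
\[
  \fib{(\summap fg)}{(y,b)} \;\simeq\;
  \textstyle\Sigma_{(x,p) : \fib f y}\,
  \fib{(g\,x)}{(\mathrm{transport}\,p^{-1}\,b)} .
\]
This is proved by rearranging $\Sigma$s and by path induction on the identification $p : f\,x = y$. We use the standard characterization of identifications in a $\Sigma$ type, namely a pair of an identification of first components and an identification over it.

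For item~(\labelcref{item:sum-map-equiv}), if the fibers of $f$ and of each $g\,x$ are singletons, the right-hand side is a $\Sigma$ of singletons over a singleton, hence a singleton. For item~(\labelcref{item:sum-map-embedding}), the right-hand side is a $\Sigma$ of propositions over a proposition, hence a proposition. So both items follow directly from the displayed equivalence, since being a singleton and being a proposition are invariant under equivalence.

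For item~(\labelcref{item:sum-map-retraction}), no fiber analysis is needed. With $f$ the identity and sections $s\,x$ of $g\,x$, define the section $(x,b) \mapsto (x, s\,x\,b)$. Its composite with $\summap{\id}{g}$ is $(x, g\,x\,(s\,x\,b)) = (x,b)$, using the pointwise identifications and $\mathrm{ap}$ on the second component.

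Item~(\labelcref{item:sum-map-dense}) is where we expect the main care to be needed. Suppose $(y,b)$ lies outside the image of $\summap fg$; we must derive a contradiction. By extreme density of $f$ it suffices to show that $y$ lies outside the image of $f$. So assume $(x,p) : \fib f y$ and transport $b$ back to $b' : B\,(f\,x)$ along $p$. By extreme density of $g\,x$ it then suffices to show that $b'$ lies outside the image of $g\,x$. Indeed, any $(a,q) : \fib{(g\,x)}{b'}$ yields, via the displayed equivalence, a point of the fiber of $\summap fg$ over $(y,b)$, contradicting the hypothesis. Hence $b'$ is outside the image of $g\,x$, contradicting its density. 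This refutes the existence of $(x,p)$, so $y$ is outside the image of $f$, contradicting the density of $f$.

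The only delicate points are the transport bookkeeping in the fiber equivalence and the fact that in item~(\labelcref{item:sum-map-dense}) we need only negative information, so no truncations or choice intervene. Everything else is routine.
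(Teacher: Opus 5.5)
Your proposal is correct and follows essentially the same route as the paper: the same description of the fiber of $\summap fg$ as a sum over $\fib f y$ for the equivalence and embedding items, the same section $\summap{\id}{s}$ for the retraction item, and the same contradiction argument for extreme density. For density you phrase the argument as nested contrapositives, where the paper drops double negations inside a proof of a contradiction, which is only a difference of presentation.
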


\begin{proof}
The fiber of $\summap fg$ over a point $(y,b)$ of $\Sigma B$ is the
sum, over the points $(x,p)$ of the fiber of $f$ over $y$, of the fiber
of $g\,x$ over the point of $B\,(f\,x)$ obtained from $b$ by transport
along $p$ backwards. If the fibers of $f$ and of every $g\,x$ are singletons,
then so are these, a sum of singleton types over a singleton type
being a singleton, which gives
(\labelcref{item:sum-map-equiv}). If they are propositions, then so are
these, a sum of propositions over a proposition being a proposition,
which gives (\labelcref{item:sum-map-embedding}).

For (\labelcref{item:sum-map-dense}), let $(y,b)$ be a point of
$\Sigma B$ and assume that no point of $\Sigma A$ is sent to it. Since
we are deriving a contradiction, we may drop double negations, so
extreme density of $f$ supplies $x : X$ with $p : f\,x = y$. Transporting $b$ along $p$ backwards gives a
point of $B\,(f\,x)$, and extreme density of $g\,x$ supplies
$a : A\,x$ mapped to it, so that $\summap fg\,(x,a) = (y,b)$, which is
the required contradiction.

For (\labelcref{item:sum-map-retraction}), let $s\,x$ be a section of
$g\,x$ for every $x$. Then $\summap{\mathrm{id}}{s}$ is a section of
$\summap{\mathrm{id}}{g}$, the two composing to
$\summap{\mathrm{id}}{g \circ s}$, which is the identity.
\end{proof}

\section{Totally separated types}
\label{sec:discrete-tot-sep}

This section studies the notions of discrete type, isolated point,
and totally separated type.
\AgdaRefs{\Agda{UF.DiscreteAndSeparated}{},
  \Agda{TypeTopology.SigmaDiscrete}{},
  \Agda{TypeTopology.TotallySeparated}{}}
Compactness, which interacts with all of them, is discussed
in~\Cref{sec:compact-types-theory}.

The following type, already mentioned in~\Cref{sec:intro}, plays a
major role in this paper and also illustrates many concepts.
\begin{example}
\label{ex:NInf}
The type $\NInf$ of conaturals, equivalently the final coalgebra of
$X \mapsto \One + X$, is the type of decreasing binary sequences,
\[
  \NInf \;:\equiv\; \Sigma_{\alpha\,:\,\Cantor}\,
  \textstyle\prod_{i:\N}\, \alpha\,(i{+}1) \le \alpha\,i .
\]
\AgdaRefs{\Agda{CoNaturals.UniversalProperty}{PRED-is-the-homotopy-final-coalgebra}.}
\AgdaRefs{\Agda{TypeTopology.GenericConvergentSequence}{ℕ∞},
\texttt{is-decreasing}.}
Being decreasing is a proposition, so the first projection embeds
$\NInf$ into the Cantor type $\Cantor$, and we write $u_i$
for the $i$-th digit of $u : \NInf$. This embedding is a section of the
map $\beta \mapsto \big(i \mapsto \min\{\beta\,0,\dots,\beta\,i\}\big)$
that forces a binary sequence to be decreasing and leaves the decreasing
sequences unchanged, so $\NInf$ is a retract of the Cantor type.
\AgdaRefs{\Agda{TypeTopology.GenericConvergentSequence}{ℕ∞-retract-of-Cantor},
via \texttt{force-decreasing}, \texttt{force-decreasing-unchanged}.}
Write
$\iota : \N \hookrightarrow \NInf$ for the inclusion and $\underline k
= \iota\,k$ for its value at $k$, the sequence of $k$ ones
followed by zeros,
\[
  \underline k \;=\;
  \big(i \mapsto \text{$1$ if $i < k$, and $0$ otherwise}\big),
\]
so that $\underline 0$ is constantly~$0$ and $\underline{k{+}1}$
prefixes a~$1$ to~$\underline k$. We let $\infty : \NInf$ be the
constantly~$1$ sequence. For $n : \N$ and $u : \NInf$ we write $n \sqlt
u$ to say that the $n$-th digit of $u$ is $1$, and $u \sqleq n$ to say
that it is $0$, so that the two are negations of each other. We say that
$u$ is \emph{finite} when $u = \underline k$ for some $k$, equivalently
when $u \sqleq i$ for some $i$.
\AgdaRefs{\Agda{TypeTopology.GenericConvergentSequence}{\_⊏\_},
\texttt{\_⊑\_}; \texttt{ℕ-to-ℕ∞},
defined by $\underline 0 = \texttt{Zero}$ and
$\underline{k{+}1} = \texttt{Succ}\,\underline k$. The Agda writes this
map as \texttt{ι}, the overloaded canonical-map notation of
\Agda{Notation.CanonicalMap}{}, through the instance
\texttt{Canonical-Map-ℕ-ℕ∞}.}
\end{example}

\subsection{Discrete types and isolated points}
\label{sec:discrete-isolated}

\begin{definition}[Isolatedness and discreteness]
\label{def:isolated}
A point $x : X$ is \emph{isolated} if we can decide, for every
$y : X$, whether $x$ and $y$ are equal:
\[
  \text{$x$ is isolated} \;:\equiv\; \textstyle\prod_{y:X}\,(\text{$x = y$ is decidable}) .
\]
A type $X$ is \emph{discrete} if every point of $X$ is isolated, which
amounts to saying that $X$ has decidable equality.
\AgdaRefs{\Agda{UF.DiscreteAndSeparated}{is-isolated};
  \texttt{is-discrete}.}
\end{definition}
\begin{examples}
\label{ex:isolated}
  \leavevmode
  \begin{enumerate}
  \item The types $\Zero$, $\One$ and $\Two$ are discrete, and so is
    every proposition.
  \item The type of natural numbers is discrete.
  \item\label{item:NInf-discrete-wlpo} The type $\NInf$ is discrete if
    and only if $\WLPO$ holds.
  \item\label{item:finite-isolated} The finite points $\underline n$ of
    $\NInf$ are isolated.
  \item \label{item:infty-isolated-wlpo} The point $\infty$ of $\NInf$
    is isolated if and only if $\WLPO$ holds.
  \item\label{item:iota1-embedding-dense} The map $\N+\One \to \NInf$
    sending $\mathrm{inl}\,n$ to $\underline n$ and the added point to
    $\infty$ is an extremely dense embedding (\Cref{def:dense}).
  \end{enumerate}
\AgdaRefs{\Agda{UF.DiscreteAndSeparated}{ℕ-is-discrete};
  \Agda{Taboos.WLPO}{ℕ∞-discrete-gives-WLPO},
  \texttt{WLPO-gives-ℕ∞-discrete};
  \Agda{TypeTopology.GenericConvergentSequence}{finite-isolated};
  \Agda{UF.DiscreteAndSeparated}{props-are-discrete},
  \texttt{𝟚-is-discrete};
  \Agda{TypeTopology.GenericConvergentSequence}{ι𝟙-is-embedding},
  \texttt{ι𝟙-dense}.}
\end{examples}

\begin{proof}
Every proposition is discrete, since any two of its points are equal.
The types $\Zero$ and $\One$ are propositions, and $\Two$ is discrete,
its two points being distinct.

The natural numbers are discrete, with equality decided by induction.

A finite point $\underline n$ is isolated, because $u = \underline n$
holds exactly when $u_i = 1$ for $i < n$ and $u_n = 0$, which is decided
by inspecting finitely many digits.

If $\NInf$ is discrete then in particular $\infty$ is isolated, and if
$\infty$ is isolated then $\WLPO$ holds, because a binary sequence $p$
gives the conatural number $u$ with digits
$u_i = \min\{p\,0,\dots,p\,i\}$ (\Cref{ex:NInf}), and $u = \infty$
exactly when $p$ is constantly~$1$. Conversely, $\WLPO$ makes $\NInf$
discrete, because being decreasing is a proposition, so that two points
are equal exactly when their digits agree, which is to say that the
sequence $i \mapsto (\text{$u_i$ equals $v_i$})$ is constantly~$1$.

The map $\N+\One \to \NInf$ is an embedding, since its restrictions to
$\N$ and to $\One$ are embeddings with disjoint images, $\infty$ not
being finite. The restriction to $\N$ is the embedding
$\underline{(-)}$, and the restriction to $\One$ is the constant map at
$\infty$, which is an embedding because $\NInf$ is a set. The map is
extremely dense, because a point of $\NInf$ outside its image would be
neither finite nor equal to $\infty$, which is impossible.
\end{proof}

\begin{lemma}
\label{lem:discrete-sets}
\leavevmode
\begin{enumerate}
\item\label{item:discrete-types-are-sets} Every discrete type is a set.
  \AgdaRefs{\Agda{UF.DiscreteAndSeparated}{discrete-is-Id-collapsible};
  \texttt{discrete-types-are-sets}.}
\item\label{item:isolated-gives-h-isolated} Every isolated point is
  homotopy isolated (\Cref{def:h-isolated}).
  \AgdaRefs{\Agda{UF.DiscreteAndSeparated}{isolated-points-are-h-isolated}.}
\item\label{item:being-isolated-is-prop} Being isolated and being
  discrete are propositions.
  \AgdaRefs{\Agda{UF.DiscreteAndSeparated}{being-isolated-is-prop};
  \texttt{being-discrete-is-prop}.}
\end{enumerate}
\end{lemma}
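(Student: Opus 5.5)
We prove the three items in order, with item~(\labelcref{item:isolated-gives-h-isolated}) as the central one, since item~(\labelcref{item:discrete-types-are-sets}) follows from it immediately. To show that an isolated point $x : X$ is homotopy isolated, we use the pointwise formulation of Hedberg's theorem (\Cref{thm:hedberg}). For each $y : X$ we need a weakly constant endofunction of $x = y$. We take $d_y : (x = y) + \neg(x = y)$ from isolatedness and define $f_y\,p$ by cases on $d_y$. If $d_y = \mathrm{inl}\,q$, we set $f_y\,p :\equiv q$. If $d_y = \mathrm{inr}\,\nu$, we eliminate $\nu\,p : \Zero$ to produce an element of $x = y$. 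This map is weakly constant: in the first case it is literally constant, and in the second the domain $x = y$ is empty, so any two values trivially agree. \Cref{thm:hedberg} then gives that $x$ is homotopy isolated. Since a discrete type has every point isolated, every point is homotopy isolated, which is exactly the definition of a set (\Cref{def:h-isolated}), and this proves item~(\labelcref{item:discrete-types-are-sets}).

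For item~(\labelcref{item:being-isolated-is-prop}), being isolated is the type $\prod_{y:X}\,((x = y) + \neg(x = y))$. By function extensionality, it suffices to show that each factor is a proposition, under the assumption that $x$ is isolated. Being a proposition is defined by quantifying over two elements, so we may assume an inhabitant while proving it. Given such an assumption, item~(\labelcref{item:isolated-gives-h-isolated}) tells us that $x = y$ is a proposition. The type $\neg(x = y)$ is a proposition by function extensionality, since $\Zero$ is one. The two summands exclude each other, so their sum is a proposition. A product of propositions is a proposition, again by function extensionality. Being discrete is $\prod_{x:X}(\text{$x$ is isolated})$, which is therefore a product of propositions and hence a proposition.

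The only subtle step is the one in item~(\labelcref{item:being-isolated-is-prop}): we must not claim outright that $(x = y) + \neg(x = y)$ is a proposition, since this needs $x = y$ to be a proposition, which is not available a priori. The remedy is the standard ``inhabited implies proposition, hence proposition'' pattern. Everything else is routine.
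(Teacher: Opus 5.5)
Your proposal is correct and takes essentially the same approach as the paper. For items (1) and (2), both you and the paper feed the endofunction that returns the fixed identification in the positive case and is vacuous in the negative case into the pointwise Hedberg theorem. For item (3), both use the ``assume a witness to prove propositionhood'' pattern, so that each $x = y$ is a proposition and the sum of two mutually exclusive propositions is a proposition.
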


\begin{proof}
From decidability of $x = y$ we get an endofunction of $x = y$ that
returns a fixed identification in the positive case and is vacuous in
the negative one, hence is weakly constant, so Hedberg's~\Cref{thm:hedberg}
applies, at a single point
for~(\labelcref{item:isolated-gives-h-isolated}) and at every point
for~(\labelcref{item:discrete-types-are-sets}).

For the last item, being isolated at $x$ is a $\Pi$-type of decidability
statements about identity types, and to show that it is a proposition
we may assume a witness, since a type is a proposition as soon as it is
one under the assumption of a point. That assumption makes each
type $x = y$ a proposition,
by~(\labelcref{item:isolated-gives-h-isolated}), so each decidability
statement is a binary sum of two propositions that exclude each other,
hence a proposition, and so is the $\Pi$-type. Being discrete is then a
$\Pi$-type of propositions.
\end{proof}

\begin{remark}
\label{rem:isolated-vs-h-isolated}
Classically, isolatedness and homotopy isolatedness
(\Cref{def:h-isolated}) coincide. One direction is constructive, every
isolated point being homotopy isolated
(\Cref{lem:discrete-sets}(\labelcref{item:isolated-gives-h-isolated})),
and for the other, excluded middle makes the proposition $x = y$
decidable.
Their coincidence fails constructively, because $\NInf$ is a set, so
that every point of it is homotopy isolated, whereas $\infty$ is
isolated if and only if $\WLPO$ holds
(\Cref{ex:isolated}(\labelcref{item:infty-isolated-wlpo})).
\AgdaRefs{\Agda{TypeTopology.GenericConvergentSequence}{ℕ∞-is-set}.}
\end{remark}

\begin{proposition}
\label{prop:discrete-closure}
\leavevmode
\begin{enumerate}
\item\label{item:discrete-plus} Isolatedness is preserved by
  $\mathrm{inl}$ and $\mathrm{inr}$, and hence discreteness is closed
  under binary sums.
  \AgdaRefs{\Agda{UF.DiscreteAndSeparated}{inl-is-isolated};
    \texttt{inr-is-isolated}; \texttt{+-is-discrete}.}
\item\label{item:discrete-retract} Discreteness is closed under
  retracts.
  \AgdaRefs{\Agda{UF.DiscreteAndSeparated}{retract-is-discrete}.}
\item\label{item:discrete-sigma} A subtype of a discrete type is
  discrete, and, more generally, at a discrete index type the
  $\Sigma$-types of discrete types are discrete, so that in particular
  binary products are.
\item\label{item:discrete-sigma-isolated} At any index type $X$, if
  $x:X$ and $y:Y\,x$ are isolated then so is $(x,y)$, and conversely,
  isolatedness of $(x,y)$ gives isolatedness of $y$ when $X$ is a set,
  and for products also of $x$, with no hypothesis. (Isolatedness of
  the first component of a general sum needs the fibers to be compact
  (\Cref{lem:sigma-isolated}).)
  \AgdaRefs{\Agda{TypeTopology.SigmaDiscrete}{Σ-isolated};
    \texttt{Σ-isolated-right} (needs $X$ a set); \texttt{×-isolated};
    \texttt{×-isolated-left}; \texttt{×-isolated-right}.}
\item\label{item:discrete-reflected} Left-cancellable maps, embeddings
  and equivalences reflect isolatedness, and equivalences also preserve
  isolatedness, and preserve and reflect discreteness.
\end{enumerate}
\AgdaRefs{\Agda{UF.DiscreteAndSeparated}{subtype-is-discrete};
\Agda{TypeTopology.SigmaDiscrete}{Σ-is-discrete}; \texttt{×-is-discrete};
\Agda{UF.DiscreteAndSeparated}{lc-maps-reflect-isolatedness};
\texttt{lc-maps-reflect-discreteness};
\texttt{embeddings-reflect-isolatedness};
\texttt{embeddings-reflect-discreteness};
\texttt{equivs-reflect-isolatedness};
\texttt{equivs-preserve-isolatedness};
\texttt{equivs-preserve-discreteness}; \texttt{equiv-to-discrete}.}
\end{proposition}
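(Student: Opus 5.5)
The plan is to prove the five items in turn, relying only on elementary properties of identity types in sums and coproducts together with \Cref{lem:discrete-sets}. Throughout, I use that decidability is preserved by logical equivalence, since $(A\to B)\times(B\to A)$ maps $A+\neg A$ to $B+\neg B$.

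For (\labelcref{item:discrete-plus}), let $x:X$ be isolated and decide $\mathrm{inl}\,x = z$ by cases on $z$. If $z = \mathrm{inl}\,x'$, then $\mathrm{inl}\,x = \mathrm{inl}\,x'$ is logically equivalent to $x = x'$, by left cancellability of $\mathrm{inl}$ and $\mathrm{ap}$, and the latter is decidable. If $z = \mathrm{inr}\,y$, then the identity is empty, since constructors of $+$ are disjoint (a map into $\Zero$ via a family defined by cases on $+$). The case of $\mathrm{inr}$ is symmetric. Discreteness of $X+Y$ follows by case analysis on the first argument.

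For (\labelcref{item:discrete-retract}) and (\labelcref{item:discrete-reflected}), the key observation is that if $f$ is left-cancellable and $f\,x$ is isolated, then $x$ is isolated. Indeed, $x = x'$ is logically equivalent to $f\,x = f\,x'$, using $\mathrm{ap}\,f$ and cancellation. Embeddings and equivalences are left-cancellable, so they reflect isolatedness. Equivalences also preserve it, because their inverses are equivalences (hence left-cancellable), and $x = f^{-1}(f\,x)$ lets us reflect along $f^{-1}$. For a retraction $r$ with section $s$, the section $s$ is left-cancellable, as $r\circ s = \mathrm{id}$, so discreteness of $X$ reflects to $Y$.

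For (\labelcref{item:discrete-sigma-isolated}) and then (\labelcref{item:discrete-sigma}), let $x$ and $y$ be isolated and let $(x',y')$ be given. First decide $x = x'$. If this fails, $(x,y)\ne(x',y')$ via $\mathrm{ap}\,\mathrm{pr}_1$. If $p : x = x'$, then by path induction (generalizing $x'$ and $y'$) we may take $p = \mathrm{refl}$. Now $(x,y) = (x,y')$ is equivalent to $\Sigma_{q:x=x}\,\mathrm{transport}\,q\,y = y'$. Since $x$ is homotopy isolated by \Cref{lem:discrete-sets}(\labelcref{item:isolated-gives-h-isolated}), $x=x$ is a proposition, so $q=\mathrm{refl}$ and this type is equivalent to $y = y'$, which is decidable. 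This is the main obstacle: we must use homotopy isolatedness of $x$ so that no nontrivial loop can interfere with the fiber comparison.

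For the converse, when $X$ is a set, $\mathrm{pair}\,x : Y\,x \to \Sigma Y$ is left-cancellable, because pair equality gives a loop at $x$ that is $\mathrm{refl}$. Hence reflection yields isolatedness of $y$. For products, the maps $x'\mapsto(x',y)$ and $y'\mapsto(x,y')$ are left-cancellable via the projections with no hypothesis, so both components are isolated. Item (\labelcref{item:discrete-sigma}) follows pointwise; subtypes are the special case of proposition fibers, which are discrete.
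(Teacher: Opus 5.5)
Your proposal is correct and takes essentially the same route as the paper. In both, the key step is to decide equality of first components and then use that an isolated point is homotopy isolated, so that second components are compared with no nontrivial transport, while reflection of isolatedness along left-cancellable maps handles the converse directions and the equivalence cases. The only differences are organisational: you obtain the retract case from the left cancellability of the section, and the subtype case as the $\Sigma$ case with proposition fibers, whereas the paper argues these two cases directly.
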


\begin{proof}
(\labelcref{item:discrete-plus}) If $x$ is isolated then so is
$\mathrm{inl}\,x$, because an element of $X+Y$ is either of the form
$\mathrm{inl}\,x'$, and then equality with $\mathrm{inl}\,x$ reduces to
$x'=x$, or of the form $\mathrm{inr}\,y$, and then the two are distinct,
and symmetrically for $\mathrm{inr}$. Discreteness of $X+Y$ follows by
cases. (\labelcref{item:discrete-retract}) For a retraction $r$ with
section $s$, decide $s\,y = s\,y'$ in the discrete type. Equality of
these two values gives $y = y'$ after applying the retraction $r$ and
using $r\circ s \sim \id$, and their inequality gives $y \neq y'$.

(\labelcref{item:discrete-sigma}) A proposition valued subtype of a
discrete type is discrete because equality of pairs reduces to equality
of first components, the second being a proposition. For $\Sigma$-types
with a discrete index and discrete fibers, both components of every
point are isolated, so every point is isolated
by~(\labelcref{item:discrete-sigma-isolated}).

(\labelcref{item:discrete-sigma-isolated}) Let $x : X$ and $y : Y\,x$
be isolated. To decide whether $(x,y)$ equals a given pair, decide first
components using the isolatedness of $x$. If they differ, the pairs
differ. If they agree, the identification between them is
$\mathrm{refl}$, an isolated point being homotopy isolated
(\Cref{lem:discrete-sets}(\labelcref{item:isolated-gives-h-isolated})),
so that the second components are compared without transport, using
the isolatedness of $y$.
Conversely, from isolatedness of $(x,y)$ we recover isolatedness of $y$
by pairing an arbitrary $y'$ with $x$, for which we need the
identification of first components to be unique, which holds when $X$
is a set. For $\times$ the set hypothesis is unnecessary, the
fiber not depending on the first component, and there isolatedness of
$x$ is recovered as well, by pairing an arbitrary $x'$ with $y$.

(\labelcref{item:discrete-reflected}) A left-cancellable $f$ reflects
isolatedness, since to decide $x' = x$ we decide $f\,x' = f\,x$ and
cancel.
Embeddings and equivalences are left cancellable. An equivalence also
preserves isolatedness, since its inverse is left cancellable and so
reflects isolatedness.
\end{proof}

\begin{proposition}
\label{prop:discrete-exponential}
\leavevmode
\begin{enumerate}
\item If a type $Y$ has two distinct points, then discreteness of
  $Y^X$ implies that the emptiness of $X$ is decidable.
  \AgdaRefs{\Agda{UF.DiscreteAndSeparated}{discrete-exponential-has-decidable-emptiness-of-exponent}.}
\item A map out of a proposition into homotopy isolated points, in
  particular into isolated points, is an embedding.
  \AgdaRefs{\Agda{UF.DiscreteAndSeparated}{maps-of-props-into-h-isolated-points-are-embeddings};
  \texttt{maps-of-props-into-isolated-points-are-embeddings};
  \texttt{global-point-is-embedding}.}
\end{enumerate}
\end{proposition}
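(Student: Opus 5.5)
For the first item, I will fix two distinct points $y_0 \neq y_1$ of $Y$ and use discreteness of $Y^X$ to decide whether the two constant functions $\lambda x.\,y_0$ and $\lambda x.\,y_1$ are equal. If they are equal, we get $\neg X$: given any $x : X$, evaluating the identification of functions at $x$ (happly) yields $y_0 = y_1$, contradicting distinctness. If they are not equal, we get $\neg\neg X$: assuming $\neg X$, any two functions out of $X$ are pointwise equal vacuously, so function extensionality makes the two constant functions equal, a contradiction. Hence $\neg X + \neg\neg X$, which is exactly decidability of the emptiness $\neg X$. The only non-trivial ingredient is function extensionality in the second case, which the paper assumes.

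For the second item, I will let $P$ be a proposition and $f : P \to X$ be a map whose values $f\,p$ are homotopy isolated. I need every fiber $\fib f x$ to be a proposition. Given two points $(p,u)$ and $(p',u')$ of the fiber over $x$, with $u : f\,p = x$ and $u' : f\,p' = x$, I first identify $p = p'$ because $P$ is a proposition. After transporting along this identification, the second components become two elements of the type $f\,p' = x$. This type is a proposition because $f\,p'$ is homotopy isolated (\Cref{def:h-isolated}). So the transported second components are equal, and the pairs are equal.

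To keep the argument tidy, I will instead note that since $P$ is a proposition we may assume a point $p_0 : P$, because a type is a proposition as soon as it is one under the assumption of a point. Then $P$ is a singleton, and the fiber is equivalent to $f\,p_0 = x$, which is a proposition by homotopy isolatedness. The special case of isolated points follows from \Cref{lem:discrete-sets}(\labelcref{item:isolated-gives-h-isolated}). A global point $\One \to X$ at a homotopy isolated point is then the instance $P = \One$.

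I expect no real obstacle here. The only care needed is the transport bookkeeping in the second item, which the singleton reduction avoids.
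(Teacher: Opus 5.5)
Your proof is correct and follows the paper's argument in both items. For the first, you decide equality of the two constant maps exactly as the paper does; for the second, you compare fiber elements componentwise, using that $P$ is a proposition for the first component and homotopy isolatedness for the second. The singleton reduction is a harmless repackaging of that fiber argument, with one small point about its justification: the principle you cite licenses assuming a point of $\fib f x$, and that point is what supplies $p_0 : P$.
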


\begin{proof}
For the first claim, let $y_0 \neq y_1$ in $Y$ and decide, in the
discrete type $Y^X$, whether the two constant maps
$\lambda x.\,y_0$ and $\lambda x.\,y_1$ are equal. If they are, then
$y_0 = y_1$ at every $x : X$, so $X$ must be empty. If they are not,
then $X$ cannot be empty, since over an empty $X$ any two maps are
equal.

For the second, let $P$ be a proposition and $f : P \to Y$ land in
homotopy isolated points. The fiber of $f$ over $y$ is
$\Sigma_{p:P} (f\,p = y)$, and any two of its elements have equal first
components, since $P$ is a proposition, and equal second components,
since $f\,p = y$ is a proposition by homotopy isolatedness. So the
fibers are propositions, and hence $f$ is an embedding. Isolated
points are homotopy isolated
(\Cref{lem:discrete-sets}(\labelcref{item:isolated-gives-h-isolated})),
which gives the statement for maps into isolated points.
\end{proof}

\subsection{Totally separated types}
\label{sec:totally-separated-types}
The following is a boolean Leibniz principle, whose classical topological
reading is that the clopens separate the points.
\begin{definition}[Total separatedness]
\label{def:tot-sep}
A type $X$ is \emph{totally separated} if the maps $X \to \Two$
separate the points of $X$, in the positive sense that for all
$x,y:X$, if $p\,x = p\,y$ for every $p : X \to \Two$, then $x = y$.
\AgdaRefs{\Agda{TypeTopology.TotallySeparated}{is-totally-separated}.}
Write \[ x =_2 y \,:\equiv\, \prod_{p:\Two^X} p\,x = p\,y \] for the
equivalence relation that $x$ and $y$ are indistinguishable by
$\Two$-valued maps, so that total separatedness amounts to
$x =_2 y \to x = y$.
\end{definition}

This notion is classically uninteresting, because excluded middle
implies that every set is totally separated, in view of
\Cref{lem:tot-sep-basic}(\labelcref{item:disc-gives-ts}).

\begin{lemma}
\label{lem:tot-sep-basic}
\leavevmode
\begin{enumerate}
\item\label{item:disc-gives-ts} Every discrete type is totally
  separated.
  \AgdaRefs{\Agda{TypeTopology.TotallySeparated}{discrete-types-are-totally-separated}.}
\item\label{item:ts-gives-nnsep} Every totally separated type is
  $\neg\neg$-separated.
  \AgdaRefs{\Agda{TypeTopology.TotallySeparated}{totally-separated-types-are-¬¬-separated}.}
\item\label{item:ts-is-set} Every totally separated type is a set.
  \AgdaRefs{\Agda{TypeTopology.TotallySeparated}{totally-separated-types-are-sets}.}
\item \label{item:being-ts-is-prop} Being totally separated is a
  proposition.
  \AgdaRefs{\Agda{TypeTopology.TotallySeparated}{being-totally-separated-is-prop}.}
\end{enumerate}
\end{lemma}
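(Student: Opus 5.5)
We prove the four items in order, each feeding the next.

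For (\labelcref{item:disc-gives-ts}), let $X$ be discrete and suppose $x =_2 y$. Since $x$ is isolated, we get a map $p : X \to \Two$ with $p\,z = 1$ when $x = z$ and $p\,z = 0$ otherwise, defined by cases on the decision of $x = z$. Then $p\,x = 1$, and the hypothesis gives $p\,y = p\,x = 1$. Unfolding the definition of $p$ at $y$, the case $x \neq y$ would have produced $0$, so we are in the case $x = y$, which is what we want. To make this formal we can argue by cases on the decision for $x = y$: in the negative case $p\,y = 0 \neq 1$ is a contradiction.

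For (\labelcref{item:ts-gives-nnsep}), let $X$ be totally separated and assume $\neg\neg(x = y)$. By total separatedness it suffices to show $p\,x = p\,y$ for every $p : X \to \Two$. Now $\Two$ is discrete and hence $\neg\neg$-separated (\Cref{def:nn-separated}), so it is enough to prove $\neg\neg(p\,x = p\,y)$. This follows by applying $\mathrm{ap}\,p$ under the double negation: from $\neg(p\,x = p\,y)$ we get $\neg(x = y)$, contradicting the hypothesis. Item (\labelcref{item:ts-is-set}) is then immediate from (\labelcref{item:ts-gives-nnsep}) and \Cref{lem:nn-separated-is-set}.

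For (\labelcref{item:being-ts-is-prop}), total separatedness is a $\Pi$-type over $x, y$ and the hypothesis $x =_2 y$, with values in $x = y$. By function extensionality it is a proposition as soon as each $x = y$ is a proposition. As in the proof of \Cref{lem:discrete-sets}(\labelcref{item:being-isolated-is-prop}), we may assume a witness of total separatedness when proving that the type is a proposition. Under that assumption, $X$ is a set by (\labelcref{item:ts-is-set}), so every identity type is a proposition and the $\Pi$-type is one too.

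The only mildly delicate point is this last step, where we use the trick of assuming an inhabitant to obtain set-hood before concluding propositionality. Everything else is routine.
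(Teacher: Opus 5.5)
Your proof is correct and follows the paper's argument item by item. For (1) you use the characteristic function of the isolated point $x$, sending $x$ to $1$ where the paper sends it to $0$, which is immaterial. For (2) you apply $p$ under the double negation and use that $\Two$ is $\neg\neg$-separated, for (3) you invoke \Cref{lem:nn-separated-is-set}, and for (4) you assume a witness to obtain set-hood, so that the $\Pi$-type of identity types is a proposition.
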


\begin{proof}
If $X$ is discrete and $x =_2 y$, apply the hypothesis to the
characteristic function of $x$, which sends $x$ to $0$. It must then
send $y$ to $0$ as well, which for that particular function means
$x = y$.

For $\neg\neg$-separatedness, suppose $\neg\neg(x=y)$. For each
$p : X \to \Two$ we get $\neg\neg(p\,x = p\,y)$ by applying $p$, and
$\Two$ is $\neg\neg$-separated because equality in it is decidable. So
$x =_2 y$, and total separatedness gives $x = y$. Being a set then
follows, since $\neg\neg$-separated types are sets
by~\Cref{lem:nn-separated-is-set}.

For the last item, given a witness of total separatedness the type is
a set by~(\labelcref{item:ts-is-set}), so total separatedness is a
$\Pi$-type of identity types of a set, hence a proposition, and a type
is a proposition as soon as it is one under the assumption of a point
of it.
\end{proof}

\begin{examples}
\label{ex:tot-sep-failures}
  \leavevmode
  \begin{enumerate}
\item\label{item:circle-not-tot-sep} The homotopical circle fails to be
  totally separated for homotopical reasons, because it is not a
  set~\cite[Section~6.4]{HoTTBook}, and totally separated types are sets
  by~\Cref{lem:tot-sep-basic}(\labelcref{item:ts-is-set}).
  \AgdaRefs{\Agda{SyntheticHomotopyTheory.Circle.Construction}{Tℤ},
    the circle as the type of torsors over the integers, and
    \texttt{loops-at-base-equivalent-to-ℤ}, which gives that it is not a
    set.}
\item\label{item:Omega-tot-sep-gives-EM} The type $\Omega_{\UU}$ of
  propositions fails to be totally separated for constructive reasons:
  if $\Omega_{\UU}$ is $\neg\neg$-separated (in particular, if it is
  totally separated), then excluded middle holds for~$\UU$
  (\Cref{def:excluded-middle}).
  \AgdaRefs{\Agda{TypeTopology.TotallySeparated}{Ω-¬¬-separated-gives-DNE};
    \texttt{Ω-¬¬-separated-gives-EM};
    \texttt{Ω-totally-separated-gives-EM}.}
\item\label{item:two-infinities-not-tot-sep} More relevantly for us,
  the type $\Sigma_{x:\NInf}\,\Two^{x=\infty}$ fails to be totally
  separated constructively, because its total separatedness implies
  $\neg\neg\WLPO$. This example
  is~\cite[Theorem~2.7]{EscardoStreicherUniverses2016},
  \AgdaRefs{\Agda{TypeTopology.FailureOfTotalSeparatedness}{ℕ∞₂},
    \texttt{ℕ∞₂-is-not-totally-separated-in-general}.}  and is the
  prototype here for the failure of total separatedness proved
  in~\Cref{sec:brouwer-four} below.
  \end{enumerate}
We can visualize the type $\Sigma_{x:\NInf}\,\Two^{x=\infty}$ of item~(\labelcref{item:two-infinities-not-tot-sep}) as a
sequence that converges to two distinct points at infinity:
  \[
    \begin{tikzpicture}[scale=0.8, every node/.style={font=\footnotesize}]
  \fill (0,0.3) circle (1.5pt) node[above=2pt] {$0$};
  \fill (1.3,0.3) circle (1.5pt) node[above=2pt] {$1$};
  \fill (1.9,0.3) circle (1.5pt) node[above=2pt] {$2$};
  \fill (2.5,0.3) circle (1.5pt) node[above=2pt] {$3$};
  \fill (3.1,0.3) circle (1.5pt) node[above=2pt] {$4$};
  \node at (3.7,0.3) {$\cdots$};
  \fill (5.0,0.55) circle (1.5pt) node[right=2pt] {$\infty_0$};
  \fill (5.0,0.05) circle (1.5pt) node[right=2pt] {$\infty_1$};
\end{tikzpicture}
\]
In classical topology, in a Hausdorff space, and hence in a totally
separated space (the clopens separate the points), the intersection of
two compact subsets is compact. The type
$\Sigma_{x:\NInf}\,\Two^{x=\infty}$ gives a counterexample:
\[
\begin{tikzpicture}[every node/.style={font=\footnotesize}]
  \node (lhs) {$\{0,1,\ldots,\infty_0\}$};
  \node[right=4pt of lhs] (cap) {$\cap$};
  \node[right=4pt of cap] (rhs) {$\{0,1,\ldots,\infty_1\}$};
  \node[right=4pt of rhs] (eq) {$=$};
  \node[right=4pt of eq] (nat) {$\N$};
  \node[below=10pt of cap] (cpt) {compact};
  \node[below=6pt of nat] (ncpt) {not compact};
  \draw[->] (cpt.north west) to (lhs.south);
  \draw[->] (cpt.north east) to (rhs.south);
  \draw[->] (ncpt.north) -- (nat.south);
\end{tikzpicture}
\]
\end{examples}
\begin{proof}[Proof of~\Cref{ex:tot-sep-failures}(\labelcref{item:Omega-tot-sep-gives-EM},\labelcref{item:two-infinities-not-tot-sep})]
(\labelcref{item:Omega-tot-sep-gives-EM}) Let $P$ be a proposition with
$\neg\neg P$, and regard it as a point $p : \Omega_{\UU}$. Then $P$
holds if and only if $p = \top$, so $\neg\neg P$ gives
$\neg\neg(p = \top)$, and $\neg\neg$-separatedness of $\Omega_{\UU}$
gives $p = \top$, hence $P$. That is double negation elimination, which
is equivalent to excluded middle. Total separatedness gives
$\neg\neg$-separatedness
by~\Cref{lem:tot-sep-basic}(\labelcref{item:ts-gives-nnsep}).

(\labelcref{item:two-infinities-not-tot-sep}) The type has two copies
of the point at infinity, namely the pairs
$\infty_0 = (\infty,\lambda\_.\,0)$ and
$\infty_1 = (\infty,\lambda\_.\,1)$, which are distinct because $\NInf$
is a set, so that the fibers over $\infty$ are compared without
transport. A map $p$ separating them gives the two maps
$u \mapsto p\,(u,\lambda\_.\,0)$ and $u \mapsto p\,(u,\lambda\_.\,1)$
from $\NInf$ to $\Two$, which agree at every finite point, the type
$\underline n = \infty$ being empty, and differ at $\infty$. Comparing
their values at $u : \NInf$ then decides whether $u = \infty$. If the
values agree, then $u \neq \infty$, and if they differ, then $u$ is not
finite, and hence is $\infty$, since a conatural number that is not
finite is $\infty$. That is $\WLPO$ in the formulation
of~\Cref{def:lpo-wlpo}.
\AgdaRefs{\Agda{Taboos.BasicDiscontinuity}{disagreement-taboo}.}
So $\neg\WLPO$ leaves the two points
indistinguishable by $\Two$-valued maps, and total separatedness would
then identify them, so total separatedness of the sum gives
$\neg\neg\WLPO$.
\end{proof}

Write \[ x \apart_2 y :\equiv \exists_{p:\Two^X}\, p\,x \neq p\,y \] for
the relation that some $\Two$-valued map separates $x$
from $y$. For every type $X$, whether or not it is totally separated,
this is an apartness. Irreflexivity says that $p\,x \neq p\,x$ is
impossible, symmetry is clear, and for cotransitivity, given $p$ with
$p\,x \neq p\,y$ and any $z$, decide $p\,x = p\,z$ in $\Two$, which
gives $p\,z \neq p\,y$ in one case and $p\,x \neq p\,z$ in the other.
\claim{boolean-apartness}
\AgdaRefs{\Agda{TypeTopology.TotallySeparated}{\_♯₂\_},
  \texttt{♯₂-is-apartness}.}

\begin{proposition}
\label{prop:tot-sep-equiv}
The following are equivalent for any type $X$.
\begin{enumerate}
\item\label{item:ts-definition} $X$ is totally separated, meaning that
  $x =_2 y \to x = y$.
\item\label{item:ts-quasicomponent} The \emph{quasi-component}
  $\Sigma_y\,x=_2 y$ of every point $x:X$ is a singleton.
\item\label{item:ts-eval-embedding} The evaluation function
  $\mathrm{eval}_X : X \to \Two^{\Two^X}$ that maps $x : X$ to the
  function $(p \mapsto p\,x) : \Two^X \to \Two$ is an embedding.
\item\label{item:ts-apartness-tight} The apartness relation $\apart_2$ is
  tight.
\end{enumerate}
\AgdaRefs{\Agda{TypeTopology.TotallySeparated}{is-totally-separated};
\texttt{is-totally-separated₁};
\texttt{totally-separated-gives-totally-separated₁};
\texttt{totally-separated₁-types-are-totally-separated}; \texttt{eval};
\texttt{is-totally-separated₂};
\texttt{totally-separated-gives-totally-separated₂};
\texttt{totally-separated₂-gives-totally-separated}; module
\texttt{total-separatedness-via-apartness}, \texttt{\_♯₂\_};
\texttt{♯₂-is-apartness}; \texttt{is-totally-separated₃};
\texttt{totally-separated₃-gives-totally-separated};
\texttt{totally-separated-gives-totally-separated₃}.}
\end{proposition}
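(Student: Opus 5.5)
I plan to prove each of (\labelcref{item:ts-quasicomponent}), (\labelcref{item:ts-eval-embedding}) and (\labelcref{item:ts-apartness-tight}) equivalent to (\labelcref{item:ts-definition}), rather than going around a cycle, since each equivalence is short.

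For (\labelcref{item:ts-definition})$\Leftrightarrow$(\labelcref{item:ts-quasicomponent}) I would argue as follows. The quasi-component of $x$ has the center $(x, \lambda p.\,\mathrm{refl})$. Given total separatedness, $X$ is a set by \Cref{lem:tot-sep-basic}(\labelcref{item:ts-is-set}). Also $x =_2 y$ is a proposition, being a $\Pi$-type of identity types of the set $\Two$ (using function extensionality). So the quasi-component is equivalent to a subtype $\Sigma_y\,(x=_2y)$ whose first components all equal $x$, hence it is a singleton. Conversely, if every quasi-component is a singleton, then $(x,\lambda p.\,\mathrm{refl})$ and $(y,h)$ for $h : x=_2y$ are equal. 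Applying the first projection gives $x = y$.

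For (\labelcref{item:ts-definition})$\Leftrightarrow$(\labelcref{item:ts-eval-embedding}), note that the codomain $\Two^{\Two^X}$ is a set, by function extensionality and discreteness of $\Two$. By the remark after \Cref{def:embedding}, $\mathrm{eval}_X$ is then an embedding if and only if it is left cancellable. By function extensionality, $\mathrm{eval}_X\,x = \mathrm{eval}_X\,y$ is logically equivalent to $x =_2 y$. So left cancellability is exactly total separatedness. For the converse direction we need no set hypothesis on $X$, since embeddings are always left cancellable.

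For (\labelcref{item:ts-definition})$\Leftrightarrow$(\labelcref{item:ts-apartness-tight}), it suffices to show that $\neg(x \apart_2 y)$ and $x =_2 y$ are logically equivalent; then tightness and total separatedness coincide. If $x =_2 y$, then no $p$ has $p\,x \neq p\,y$, so the truncated existential is refuted, because refuting a truncation amounts to refuting the untruncated type. Conversely, assume $\neg(x \apart_2 y)$ and let $p$ be given. Then $\neg\neg(p\,x = p\,y)$, and decidability of equality in $\Two$ gives $p\,x = p\,y$. This last step is the only place where a constructive subtlety appears: it is where we need that $\Two$ is discrete, hence $\neg\neg$-separated. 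I expect no real obstacle beyond keeping track of where function extensionality and the set property are used.
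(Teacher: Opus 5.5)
Your proposal is correct and follows the paper's proof essentially step by step: it proves each of (2), (3) and (4) equivalent to (1), with the same arguments in each case. The differences are cosmetic. For (1)$\Leftrightarrow$(3) you route the fiber argument through the fact that a map into a set is an embedding if and only if it is left cancellable, where the paper unfolds the fibers directly. In (1)$\Rightarrow$(2) your appeal to $X$ being a set is harmless but unnecessary, since the propositionality of $x =_2 y$ alone makes the quasi-component a subtype whose first components all equal $x$.
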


\begin{proof}
(\labelcref{item:ts-definition})$\Leftrightarrow$%
(\labelcref{item:ts-quasicomponent}).  The quasi-component of a point
$x$ has the point $(x,\mathrm{refl}_2)$, where $\mathrm{refl}_2$
witnesses the reflexivity of $=_2$. Assuming~%
(\labelcref{item:ts-definition}), any point $(y,e)$ of the
quasi-component has $y = x$, and the second components then agree
because $x =_2 y$ is a proposition, being a $\Pi$-type of identity
types in the set $\Two$. So the quasi-component of $x$ is a singleton,
with center $(x,\mathrm{refl}_2)$. Conversely, assume~%
(\labelcref{item:ts-quasicomponent}) and let $e$ witness $x =_2 y$.
The points $(x,\mathrm{refl}_2)$ and $(y,e)$ of the quasi-component of
$x$ are equal, and their first components give $x = y$.

(\labelcref{item:ts-definition})$\Leftrightarrow$%
(\labelcref{item:ts-eval-embedding}).  An embedding is a map whose
fibers are propositions, and the fiber of $\mathrm{eval}_X$ over a
point $\Phi$ of $\Two^{\Two^X}$ is the type
\[
  \Sigma_{x : X}\,(\mathrm{eval}_X\,x = \Phi).
\]
By function extensionality, an identification
$\mathrm{eval}_X\,x = \mathrm{eval}_X\,y$ amounts to $x =_2 y$.
Assuming~(\labelcref{item:ts-eval-embedding}), if $x =_2 y$ then the
points $(x,\mathrm{refl})$ and $(y,e)$, where $e$ is the
identification that $x =_2 y$ gives in this way, lie in the fiber over
$\mathrm{eval}_X\,x$, which is a proposition, so they are equal and
their first components give $x = y$. Conversely, assuming~%
(\labelcref{item:ts-definition}), any two points of a fiber have equal
first components, and their second components then agree because
$\Two^{\Two^X}$ is a set, being a product of copies of the set $\Two$.

(\labelcref{item:ts-definition})$\Leftrightarrow$%
(\labelcref{item:ts-apartness-tight}).  Tightness of $\apart_2$ says
that $\neg(x \apart_2 y)$ implies $x = y$, and the hypothesis says
that no $p$ separates $x$ from $y$. The truncation in $\apart_2$ can
be dropped, because the statement is negated and hence a proposition.
Since $p\,x \neq p\,y$ is decidable in $\Two$, the hypothesis is
equivalent to $p\,x = p\,y$ for every~$p$, that is, to $x =_2 y$. So
tightness of $\apart_2$ and total separatedness are the same
assertion.
\end{proof}

\begin{proposition}
\label{prop:tot-sep-closure}
Total separatedness is closed under
\begin{enumerate}
\item\label{item:tot-sep-retract} retracts, hence under
  equivalence,
  \AgdaRefs{\Agda{TypeTopology.TotallySeparated}{retract-of-totally-separated};
  \texttt{equiv-to-totally-separated}.}
\item\label{item:tot-sep-product} binary products,
  \AgdaRefs{\Agda{TypeTopology.TotallySeparated}{×-totally-separated}.}
\item\label{item:tot-sep-sigma} sums when the index type is discrete, and, as the special case
  of this indexed by $\Two$, under binary sums,
  \AgdaRefs{\Agda{TypeTopology.TotallySeparated}{Σ-is-totally-separated-if-index-type-is-discrete};
    \texttt{+-totally-separated}.}
\item\label{item:tot-sep-pi} arbitrary products.
  \AgdaRefs{\Agda{TypeTopology.TotallySeparated}{Π-is-totally-separated}.}
\end{enumerate}
\end{proposition}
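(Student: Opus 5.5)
Each of the four items comes down to transferring the hypothesis $x =_2 y$ along suitable $\Two$-valued maps. In each case I would prove the defining implication $x =_2 y \to x = y$ directly.

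For (\labelcref{item:tot-sep-retract}), let $r : X \to Y$ be a retraction with section $s$, and let $X$ be totally separated. Given $y =_2 y'$ in $Y$, I would show $s\,y =_2 s\,y'$: any $p : X \to \Two$ gives $p \circ s : Y \to \Two$, so $p\,(s\,y) = p\,(s\,y')$. Total separatedness of $X$ then gives $s\,y = s\,y'$, and applying $r$ gives $y = y'$. Closure under equivalence follows, since an equivalence exhibits each type as a retract of the other.

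For (\labelcref{item:tot-sep-product}) and (\labelcref{item:tot-sep-pi}), I would compose with projections. Given $f =_2 g$ in $\prod_{x:X} Y\,x$, for each $x$ and each $q : Y\,x \to \Two$, the map $h \mapsto q\,(h\,x)$ gives $q\,(f\,x) = q\,(g\,x)$. Hence $f\,x =_2 g\,x$, so $f\,x = g\,x$, and function extensionality gives $f = g$. Binary products are the same argument with the two projections, pairing the component identifications.

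For (\labelcref{item:tot-sep-sigma}), let $X$ be discrete and each $Y\,x$ totally separated, and suppose $(x,y) =_2 (x',y')$. First I would obtain $x = x'$. The characteristic function $\chi_x$ of the isolated point $x$ (\Cref{lem:tot-sep-basic}(\labelcref{item:disc-gives-ts})) composed with the first projection takes the same value at both pairs, so $x = x'$. Since $X$ is a set, by transporting I may assume $x' \equiv x$ with the identification $\mathrm{refl}$, so it remains to show $y = y'$ in $Y\,x$. Given $q : Y\,x \to \Two$, I must extend it to $\bar q : \Sigma Y \to \Two$. I would define $\bar q\,(z,w)$ by deciding $x = z$: if $e : x = z$, return $q$ applied to $w$ transported back along $e$, and otherwise return $0$. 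At $(x,w)$ the decision yields some identification $x = x$, which is $\mathrm{refl}$ because isolated points are homotopy isolated (\Cref{lem:discrete-sets}). Hence $\bar q\,(x,w) = q\,w$, so $q\,y = q\,y'$. This gives $y =_2 y'$, hence $y = y'$, and so $(x,y) = (x',y')$.

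The main obstacle is this last item. The transport bookkeeping in defining $\bar q$ needs care, and so does the reduction of the decided identification to $\mathrm{refl}$, which uses homotopy isolatedness of $x$. Binary sums are then the instance of (\labelcref{item:tot-sep-sigma}) indexed by the discrete type $\Two$, with $+$ realized as a $\Sigma$-type over $\Two$ and closure under equivalence from (\labelcref{item:tot-sep-retract}).
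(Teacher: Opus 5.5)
Your proposal is correct and follows essentially the same route as the paper. Retracts are handled by testing through the section, and products by testing through projections. For $\Sigma$ over a discrete index, you extend $q : Y\,x \to \Two$ by deciding equality with $x$, using that the decided self-identification of $x$ is $\mathrm{refl}$, exactly as the paper does. The only difference is cosmetic: the paper proves binary sums directly, first testing against the map that is $0$ on one summand and $1$ on the other, whereas you derive them from the $\Two$-indexed $\Sigma$ case together with closure under equivalence, which is equally valid.
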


\begin{proof}
For retracts, let $r : X \to Y$ be a retraction with section $s$, and
let $y =_2 y'$ in $Y$. Testing $y$ and $y'$ against the maps of
the form $q \circ s$ with $q : X \to \Two$ gives $s\,y =_2 s\,y'$,
hence $s\,y = s\,y'$ by the total separatedness of $X$, and applying
$r$ gives $y = y'$.

For sums over a discrete index, let $(a,b) =_2 (x,y)$ in
$\Sigma\,Y$. Testing against the maps that ignore the second
component gives $a =_2 x$ in the index, and hence $a = x$ by
discreteness
and~\Cref{lem:tot-sep-basic}(\labelcref{item:disc-gives-ts}). Let $b'$
be the transport of $b$ along this identification. Given
$q : Y\,x \to \Two$, define a function on $\Sigma\,Y$ by deciding,
for each index $u$, whether $u = x$, which discreteness allows, and
sending $(u,v)$ to the value of $q$ at the transport of $v$ along the
identification $u = x$ when there is one, and to $0$ when there is
not. A discrete type is a set
(\Cref{lem:discrete-sets}(\labelcref{item:discrete-types-are-sets})),
so the identification decided at $x$ itself is $\mathrm{refl}$, and the
function takes the value $q\,y$ at $(x,y)$. Testing $(a,b)$ and
$(x,y)$ against it gives $q\,b' = q\,y$. So $b' =_2 y$, and the total
separatedness of $Y\,x$ gives $b' = y$, which together with $a = x$ gives
$(a,b) = (x,y)$.

For binary sums, testing against the map that is $0$ on $X$ and $1$ on
$Y$ shows that two points indistinguishable by $\Two$-valued maps lie
in the same summand, and for $\mathrm{inl}\,x$ and $\mathrm{inl}\,x'$
testing against the maps that apply some $q : X \to \Two$ on $X$ and
are constantly~$0$ on $Y$ gives $x =_2 x'$, and symmetrically for
$\mathrm{inr}$.

For products, let $\varphi =_2 \psi$ in $\prod_x Y\,x$, and fix
$x : X$ and $q : Y\,x \to \Two$. Testing $\varphi$ and $\psi$ against
the map $\lambda\gamma.\,q\,(\gamma\,x)$ gives
$q\,(\varphi\,x) = q\,(\psi\,x)$, so that $\varphi\,x =_2 \psi\,x$,
and the total separatedness of $Y\,x$ gives $\varphi\,x = \psi\,x$.
Since $x$ was arbitrary, the functions $\varphi$ and $\psi$ agree at
every point and hence are equal. For binary products, testing $(a,b)$
and $(x,y)$ against the maps that ignore one of the components gives
$a =_2 x$ and $b =_2 y$.
\end{proof}

\begin{examples}
\label{ex:tot-sep}
\leavevmode
\begin{enumerate}
\item\label{item:ts-not-sigma} Total separatedness fails
  constructively to be closed under $\Sigma$. The index type and the
  fibers of the type $\Sigma_{x:\NInf}\,\Two^{x=\infty}$ are totally
  separated, whereas its own total separatedness implies
  $\neg \neg \WLPO$
  (\Cref{ex:tot-sep-failures}(\labelcref{item:two-infinities-not-tot-sep})).
\item\label{item:simple-types} The \emph{simple types} are the
  smallest collection of types containing $\N$ and closed under
  function types, and the Baire type $\N^\N$ is one of them. Every
  simple type is totally separated and pointed, and has~$\N$ as a
  retract. More generally, every retract of~$\N$, such as the type~$\Two$,
  is a retract of every simple type.
  \AgdaRefs{\Agda{TypeTopology.SimpleTypes}{simple-type},
\texttt{simple-types-are-totally-separated},
\texttt{simple-types-pointed}, \texttt{simple-types-r},
\texttt{ℕ-is-retract-of-any-simple-type};
\Agda{TypeTopology.TotallySeparated}{ℕ-is-totally-separated},
\texttt{Baire-is-totally-separated}.}
\item\label{item:cantor-not-discrete} The converse
  of~\Cref{lem:tot-sep-basic}(\labelcref{item:disc-gives-ts}) fails
  constructively, since the Cantor type~$\Cantor$ is totally separated,
  and its discreteness implies~$\WLPO$.
  \AgdaRefs{\Agda{TypeTopology.TotallySeparated}{𝟚-is-totally-separated},
    \texttt{Cantor-is-totally-separated};
    \Agda{UF.DiscreteAndSeparated}{retract-is-discrete};
    \Agda{Taboos.WLPO}{ℕ∞-discrete-gives-WLPO}.}
\end{enumerate}
\end{examples}
\noindent Total separatedness also fails constructively to be closed under
surjective images, as we will see in \Cref{ex:tot-sep-counterexample}.
\begin{proof}
(\labelcref{item:ts-not-sigma}) The index type $\NInf$ is a retract
of~$\Cantor$ (\Cref{ex:NInf}), and the Cantor type and the fibers
$\Two^{x=\infty}$ are products of copies of~$\Two$, which is totally
separated, being discrete
(\Cref{lem:tot-sep-basic}(\labelcref{item:disc-gives-ts})). So the
index type and the fibers are totally separated, because total
separatedness is closed under products and retracts
(\Cref{prop:tot-sep-closure}).

(\labelcref{item:simple-types}) All three claims are proved by
induction on the construction of the simple types. The type $\N$ is
discrete and hence totally separated, and total separatedness is
closed under $\Pi$
(\Cref{prop:tot-sep-closure}(\labelcref{item:tot-sep-pi})).
Pointedness holds at the base type and passes to function types by
taking constant functions. For the retract claim, the induction step
is that if $A$ is a retract of $Y$ and $X$ is pointed, then $A$ is a
retract of $Y^X$, sending a point of $A$ to the constant function at
its image in $Y$, and recovering it from a function $X \to Y$ by
evaluating at the point of $X$ and retracting.
\AgdaRefs{\Agda{UF.Retracts}{retracts-compose};
\Agda{UF.Retracts-FunExt}{codomain-is-retract-of-function-space-with-pointed-domain}.}
A retract of $\N$ is a
retract of the base type, and hence of every simple type, and the type
$\Two$ is a retract of $\N$.

(\labelcref{item:cantor-not-discrete}) The Cantor type~$\Cantor$ is a
product of copies of $\Two$, and so is totally separated by the same
reasoning as in~(\labelcref{item:ts-not-sigma}). It has $\NInf$ as a
retract (\Cref{ex:NInf}), so its discreteness gives that of $\NInf$
(\Cref{prop:discrete-closure}(\labelcref{item:discrete-retract})),
which implies $\WLPO$
(\Cref{ex:isolated}(\labelcref{item:NInf-discrete-wlpo})).
\end{proof}
We have seen that totally separated types are not in general closed
under sums. The following is a sufficient condition for a sum to be
totally separated, which will be useful for our purposes
(\Cref{thm:four-interp-props}(\labelcref{item:compactsep-interp})).
\begin{lemma}
\label{lem:sigma-NInf-tot-sep}
For any family $A : \NInf \to \VV$ with $A\,u$ totally separated for
every $u$ and with $A\,\infty$ a proposition, its sum $\Sigma A$ is totally
separated.
\AgdaRefs{\Agda{TypeTopology.SigmaTotallySeparated}{Σ-indexed-by-ℕ∞-is-totally-separated-if-family-at-∞-is-prop}.}
\end{lemma}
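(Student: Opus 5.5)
The plan is to take two points $(u,a)$ and $(v,b)$ of $\Sigma A$ with $(u,a) =_2 (v,b)$ and prove $(u,a) = (v,b)$ in three steps: first identify the indices, then show the transported fiber components are equal up to double negation, and finally remove the double negation.

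\emph{Identifying the indices.} Testing against maps of the form $(w,c) \mapsto p\,w$ with $p : \NInf \to \Two$ gives $u =_2 v$ in $\NInf$. Now $\NInf$ is a retract of $\Cantor$ (\Cref{ex:NInf}), and $\Cantor$ is totally separated (\Cref{ex:tot-sep}), so $\NInf$ is totally separated by \Cref{prop:tot-sep-closure}. Hence we get an identification $e : u = v$. Let $a' : A\,v$ be the transport of $a$ along $e$. It then suffices to prove $a' = b$.

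\emph{Fiber components up to double negation.} Each $A\,v$ is totally separated, hence $\neg\neg$-separated by \Cref{lem:tot-sep-basic}(\labelcref{item:ts-gives-nnsep}). So it is enough to prove $\neg\neg(a' = b)$. Because the goal is a negation, I may assume the doubly negated disjunction ``$v$ is finite or $v = \infty$'' as an actual disjunction. That disjunction is available because a conatural that is not finite is $\infty$, as used in the proof of \Cref{ex:tot-sep-failures}. There are two cases.
\begin{enumerate}
\item If $v = \infty$, then $A\,v$ is a proposition, being equivalent to $A\,\infty$ by transport, and so $a' = b$ outright.
\item If $v = \underline n$, then $v$ is isolated by \Cref{ex:isolated}(\labelcref{item:finite-isolated}). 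Given $q : A\,v \to \Two$, I define a test map on $\Sigma A$ as follows: for $(w,c)$, decide whether $w = v$. If so, return $q$ applied to the transport of $c$ along that identification; otherwise return $0$. This is the same construction as in the discrete-index case of \Cref{prop:tot-sep-closure}.
\end{enumerate}

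In the finite case, $\NInf$ is a set (or, alternatively, $v$ is homotopy isolated by \Cref{lem:discrete-sets}). So the identification decided at $v$ is $\mathrm{refl}$, and the test map takes the value $q\,b$ at $(v,b)$. At $(u,a)$, the identification $u = v$ found by the test coincides with $e$, again because $\NInf$ is a set, so the test map takes the value $q\,a'$ there. The hypothesis $(u,a) =_2 (v,b)$ then yields $q\,a' = q\,b$ for every $q$, that is, $a' =_2 b$. Total separatedness of $A\,v$ gives $a' = b$.

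\emph{Conclusion and main obstacle.} With $\neg\neg(a' = b)$ established, $\neg\neg$-separatedness of $A\,v$ gives $a' = b$, and together with $e$ this gives $(u,a) = (v,b)$. I expect the main obstacle to be the case split on $v$. The statement ``$v$ is finite or $v = \infty$'' is only available under double negation (it is $\LPO$ otherwise), so the argument must be arranged to aim at the negative goal $\neg\neg(a' = b)$ and only then use $\neg\neg$-separatedness. A secondary, routine point is the transport bookkeeping in the finite case, which relies on $\NInf$ being a set.
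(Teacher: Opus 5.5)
Your proof is correct and follows the paper's argument. You identify the indices via the total separatedness of $\NInf$. You then split on ``$v$ finite or $v=\infty$'' under a negative goal, using the extended test map at the isolated finite point and the propositionality of $A\,\infty$ at $\infty$. The only difference is where the double negation is discharged: the paper fixes a test $r : A\,v \to \Two$ and uses the $\neg\neg$-separatedness of $\Two$ to obtain $r\,a' = r\,b$, whereas you use the $\neg\neg$-separatedness of $A\,v$ (\Cref{lem:tot-sep-basic}) to obtain $a' = b$ directly.
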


\begin{proof}
Let $(u,a) =_2 (v,b)$. Testing against the maps that ignore the
second component gives $u =_2 v$, and $\NInf$ is totally separated,
being a retract (\Cref{ex:NInf}) of the totally separated type
$\Cantor$ (\Cref{ex:tot-sep}(\labelcref{item:cantor-not-discrete})),
so $u = v$.
Transport $a$ along this identification to $a' : A\,v$, so that
$(u,a) = (v,a')$, and it remains to show that $a' = b$, for which,
$A\,v$ being totally separated, it is enough to show that $a' =_2 b$.
So let $r : A\,v \to \Two$. As $\Two$ is $\neg\neg$-separated it is
enough to rule out $r\,a' \neq r\,b$, so suppose that it holds.
If $v$ were a finite point $\underline n$, then, that point being
isolated (\Cref{ex:isolated}(\labelcref{item:finite-isolated})), $r$
would extend to a function on
$\Sigma A$ by deciding, for each index $w$, whether
$w = \underline n$, applying $r$ after transport when it is and
taking the value $0$ when it is not, as in the proof
of~\Cref{prop:tot-sep-closure} for a discrete index, and testing
$(u,a)$ and $(v,b)$ against it would give $r\,a' = r\,b$. So $v$ is
not a finite point. If $v$ were $\infty$, then $a' = b$, since
$A\,\infty$ is a proposition, and again $r\,a' = r\,b$.  So
$v \neq \infty$. But a conatural number that is not finite is
$\infty$, which is a contradiction.
\end{proof}
Every type can be turned into a totally separated type in a universal
way.
\begin{theorem}
\label{thm:ts-reflection}
Every type $X$ has a \emph{totally separated reflection}
$\tsreflection X$, constructed as the image of the evaluation map
$\mathrm{eval}_X : X \to \Two^{\Two^X}$ defined in
\Cref{prop:tot-sep-equiv}(\labelcref{item:ts-eval-embedding}),
together with a reflector $\etaT : X \to \tsreflection X$, such that
\begin{enumerate}
\item\label{item:ts-reflection-surjection} $\tsreflection X$ is totally
  separated, and $\etaT$ is a surjection,
\item\label{item:ts-reflection-universal} for every totally separated
  $A$ and $f:X\to A$, there is a unique $\bar f : \tsreflection X \to A$
  with $\bar f \circ \etaT = f$, or, equivalently, precomposition with
  $\etaT$ is an equivalence $A^{\tsreflection X} \simeq A^X$,
\item\label{item:ts-reflection-kernel} the reflector identifies $x$
  and $y$ if and only if $x =_2 y$.
\end{enumerate}
\AgdaRefs{\Agda{TypeTopology.TotallySeparated}{module totally-separated-reflection};
\texttt{𝕋}; \texttt{𝕋-is-totally-separated}; \texttt{ηᵀ};
\texttt{ηᵀ-is-surjection}; \texttt{ηᵀ-induction};
\texttt{totally-separated-reflection};
\texttt{totally-separated-reflection'};
\texttt{totally-separated-reflection''}; \texttt{extᵀ}; \texttt{ext-ηᵀ};
\texttt{𝕋-functor}; \texttt{𝕋-natural};
\texttt{ηᵀ-relates-identified-points};
\texttt{ηᵀ-identifies-related-points}.}
\end{theorem}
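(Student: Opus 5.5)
We take $\tsreflection X :\equiv \mathrm{image}\,\mathrm{eval}_X = \Sigma_{\phi : \Two^{\Two^X}}\,\|\fib{\mathrm{eval}_X}{\phi}\|$. Let $\etaT$ be the corestriction of $\mathrm{eval}_X$ (\Cref{def:image}), which is a surjection. The plan is to prove item~(\labelcref{item:ts-reflection-kernel}) first, then total separatedness, and finally the universal property.

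For item~(\labelcref{item:ts-reflection-kernel}), note that equality in the image is equality of first components, since membership in the image is a proposition. So $\etaT\,x = \etaT\,y$ is equivalent to $\mathrm{eval}_X\,x = \mathrm{eval}_X\,y$. By function extensionality this is $\prod_p p\,x = p\,y$, that is, $x =_2 y$.

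For total separatedness, $\tsreflection X$ is a subtype of $\Two^{\Two^X}$ via an embedding (the first projection). That type is totally separated by \Cref{prop:tot-sep-closure}(\labelcref{item:tot-sep-pi}), being a product of copies of the discrete type $\Two$. A subtype of a totally separated type is totally separated: if $s =_2 t$ in the subtype, then composing tests with the inclusion gives $\iota\,s =_2 \iota\,t$, hence $\iota\,s = \iota\,t$, and the embedding is left cancellable. This is the same argument as for retracts in \Cref{prop:tot-sep-closure}, using left cancellability in place of the retraction. This completes item~(\labelcref{item:ts-reflection-surjection}).

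For item~(\labelcref{item:ts-reflection-universal}), uniqueness is easy. If two maps $\tsreflection X \to A$ agree after precomposition with $\etaT$, they agree everywhere by surjection induction (\Cref{def:surjection}), because $A$ is a set (\Cref{lem:tot-sep-basic}(\labelcref{item:ts-is-set})) and pointwise equality is therefore a proposition. Function extensionality then gives equality of the maps.

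Existence is the main step. The idea is to define $\bar f$ on a point $(\phi, w)$ as the unique $a : A$ with $\exists_{x} (\mathrm{eval}_X\,x = \phi \times f\,x = a)$. By unique choice, it suffices to show that the type $\Sigma_{a:A}\,\exists_x(\mathrm{eval}_X\,x = \phi \times f\,x = a)$ is a proposition and is inhabited.
\begin{itemize}
\item[] \emph{Inhabitation.} Since we are proving a proposition, we may use the truncated witness $w$ to obtain some $x$ with $\mathrm{eval}_X\,x = \phi$, and take $a = f\,x$.
\item[] \emph{Proposition.} This reduces to the key lemma: $f$ sends $=_2$-related points to equal points. Indeed, if $x =_2 y$, then for every $q : A \to \Two$ we get $q(f\,x) = q(f\,y)$ by testing with $q \circ f$. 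Hence $f\,x =_2 f\,y$ in $A$, so $f\,x = f\,y$ by total separatedness of $A$. Given two elements $(a,\dots)$ and $(a',\dots)$, untruncating (allowed, since equality in the set $A$ is a proposition) yields $x$ and $x'$ with $\mathrm{eval}_X\,x = \phi = \mathrm{eval}_X\,x'$. Thus $x =_2 x'$, and so $a = f\,x = f\,x' = a'$.
\end{itemize}
The equation $\bar f \circ \etaT = f$ then holds by construction. The equivalent formulation, that precomposition with $\etaT$ is an equivalence $A^{\tsreflection X} \simeq A^X$, is just the statement that the fibers of precomposition are singletons, which is existence together with uniqueness.

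I expect the main obstacle to be this existence step: eliminating the truncation correctly via unique choice, with the well-definedness lemma doing the real work.
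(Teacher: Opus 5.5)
Your proof is correct and shares the paper's skeleton: the image of $\mathrm{eval}_X$, the corestriction as reflector, uniqueness by surjection induction into the set $A$, and existence by untruncating $w$ into a proposition. The difference is which proposition you untruncate into.

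The paper uses the fiber of $\mathrm{eval}_A$ over $\lambda q.\,\varphi\,(q\circ f)$. This fiber depends only on $\varphi$, and it is a proposition because total separatedness of $A$ makes $\mathrm{eval}_A$ an embedding (\Cref{prop:tot-sep-equiv}). Moreover $f\,x$ lies in it for any $x$ extracted from $w$. This gives the explicit characterization $q\,(\bar f\,(\varphi,w)) = \varphi\,(q\circ f)$ for all $q : A \to \Two$. So $\bar f$ is the double-dual map $\varphi \mapsto \lambda q.\,\varphi\,(q\circ f)$, restricted to the image and read back through $\mathrm{eval}_A$, a description well suited to the functoriality and naturality of $\tsreflection$.

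You instead use the graph $\Sigma_{a:A}\,\exists_x\,(\mathrm{eval}_X\,x = \phi \times f\,x = a)$. Its propositionality rests on your key lemma that $f$ sends $=_2$-related points to equal points. This is the standard argument that the image of a map into a set is the set quotient by its kernel, which here is $=_2$ by item~(3). It makes $\bar f \circ \etaT = f$ immediate. It also proves slightly more: any map into a set that identifies $=_2$-related points factors uniquely through $\etaT$. However, it yields no closed formula for $\bar f$.

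Your total separatedness argument, via the subtype of $\Two^{\Two^X}$, is just a repackaging of the paper's direct test against $x' \mapsto \mathrm{pr}_1\,x'\,p$.

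One small point on the equivalence formulation: singleton fibers of precomposition also need the identification component to be unique, which holds because $A^X$ is a set, $A$ being one.
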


\begin{proof}
Put $\tsreflection X :\equiv \mathrm{image}(\mathrm{eval}_X)$, a
subtype of $\Two^{\Two^X}$, and let $\etaT$ be the corestriction of
$\mathrm{eval}_X$, which is a surjection onto its image, so that
surjection induction (\Cref{def:surjection}) is available for $\etaT$.

(\labelcref{item:ts-reflection-surjection}) The type $\tsreflection X$
is totally separated, since its elements are pairs $(\varphi,w)$ with
$w$ a truncated witness of being in the image, hence a proposition, so two
elements are equal as soon as their first components are. Given
$(\varphi,w) =_2 (\gamma,w')$, test against the maps $x'
\mapsto \mathrm{pr}_1 x'\,p$ for each $p : X \to \Two$, which gives
$\varphi\,p = \gamma\,p$ for every $p$, so $\varphi = \gamma$.

(\labelcref{item:ts-reflection-universal}) Let $A$ be totally separated
and $f : X \to A$. Uniqueness is immediate from $\etaT$ being a
surjection and $A$ a set
(\Cref{lem:tot-sep-basic}(\labelcref{item:ts-is-set})). For existence,
note that by~\Cref{prop:tot-sep-equiv}(\labelcref{item:ts-eval-embedding})
the map $\mathrm{eval}_A$ is an embedding, so the fibers
$\Sigma_a\,\mathrm{eval}_A\,a = \gamma$ are propositions. Hence, to find
a point in the fiber over $\lambda q.\,\varphi(q\circ f)$, we may
untruncate $w$ to some $x : X$ with $\mathrm{eval}_X\,x = \varphi$, and
then $f\,x$ is such a point, because $q\,(f\,x) = \varphi\,(q\circ f)$
for every $q : A \to \Two$. Define $\bar f(\varphi,w)$ to be the unique
$a$ in this fiber. By construction,
$q\,(\bar f\,(\varphi,w)) = \varphi\,(q\circ f)$ for every
$q : A \to \Two$, so that $q\,(\bar f\,(\etaT\,x)) = q\,(f\,x)$ at
every $x : X$, and the total separatedness of $A$ gives
$\bar f\circ\etaT = f$.

(\labelcref{item:ts-reflection-kernel}) If $\etaT\,x = \etaT\,y$ then
their first components agree, which is exactly $x =_2 y$. Conversely $x
=_2 y$ gives equality of first components, and the second components are
propositions.
\end{proof}

A special case of interest of
\Cref{thm:ts-reflection}(\labelcref{item:ts-reflection-universal}) is
$A = \Two$, which shows that the boolean valued functions on $X$ are
in canonical bijection with those on~$\tsreflection X$.

\section{Compact types}
\label{sec:compact-types-theory}

This section studies the notion of compact type, including its closure
properties, its interplay with the notions of discreteness and total
separatedness, as well as the extension of families of compact types along
embeddings, which is crucial for the purposes of \Cref{sec:brouwer-standard,sec:inductive-recursive}.  \AgdaRefs{\Agda{TypeTopology.CompactTypes}{};
  \Agda{TypeTopology.WeaklyCompactTypes}{};
  \Agda{TypeTopology.MicroTychonoff}{}}

\subsection{Definitions and first examples}
\label{sec:prelim-types}

\begin{definition}[Compactness]
\label{def:compact}
A type $X$ is \emph{$\Sigma$-compact}, or \emph{compact} for short, if
it is decidable, for every $p : X \to \Two$, whether $p$ has a root:
\[
  \textstyle\prod_{p : \Two^X}\,
  \big(\text{the type $\Sigma_{x:X}\, p\, x = 0$ is decidable}\big).
\]
We also consider the weaker notions of \emph{$\exists$-compactness}
and \emph{$\Pi$-compactness}, obtained by replacing
$\Sigma_{x:X}\, p\,x = 0$ by $\exists_{x:X}\, p\,x = 0$ and by
$\Pi_{x:X}\, p\,x = 1$ respectively.
\AgdaRefs{\Agda{TypeTopology.CompactTypes}{is-compact}, which unfolds
  to \texttt{is-Σ-compact};
  \Agda{TypeTopology.WeaklyCompactTypes}{is-∃-compact},
  \texttt{is-Π-compact}.}
\end{definition}

The same notions can be equivalently formulated with an arbitrary
complemented family in place of a $\Two$-valued map.

\begin{lemma}
\label{lem:compact-formulations}
For any type $X$ and any universe $\VV$, the compactness of $X$ is
logically equivalent to the statement that the sum
$\Sigma_{x:X}\, A\,x$ is decidable for every complemented family
$A : X \to \VV$, and also to the restriction of this to
proposition valued families~$A$.
\AgdaRefs{\Agda{TypeTopology.CompactTypes}{is-Σ-Compact},
  i.e.~\texttt{is-Compact}, and \texttt{Σ-Compact'} for the
  proposition valued restriction;
  \texttt{compact-types-are-Compact};
  \texttt{Compact-types-are-compact}; \texttt{Compact-resize};
  \texttt{Compact'-types-are-compact}.}
\end{lemma}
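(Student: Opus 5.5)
We prove a cycle of three implications: compactness of $X$ implies decidability of $\Sigma_{x:X} A\,x$ for every complemented family $A : X \to \VV$; that statement implies its restriction to proposition valued families, trivially; and the restricted statement implies compactness.

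For the first implication, let $A$ be complemented, with a decision $d_x : A\,x + \neg A\,x$ at each $x$. Define $p : X \to \Two$ by cases on $d_x$, taking $p\,x = 0$ when $d_x$ is a left injection and $p\,x = 1$ when it is a right injection. By construction $p\,x = 0$ gives $A\,x$, since in that case $d_x$ carries a point of $A\,x$, and $A\,x$ gives $p\,x = 0$, since the case $\neg A\,x$ would be contradictory. Hence $\Sigma_{x:X} p\,x = 0$ and $\Sigma_{x:X} A\,x$ are logically equivalent, by applying these fiberwise maps on second components. Compactness decides the former. Decidability transfers along logical equivalence: a point maps across, and emptiness maps across by precomposition. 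This gives the decision for $\Sigma_{x:X} A\,x$.

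For the last implication, given $p : \Two^X$, take $A\,x :\equiv (p\,x = 0)$. This lands in $\UU_0$, since $\Two$ lives in $\UU_0$, whereas the hypothesis only provides families valued in $\VV$. This universe mismatch is the only real obstacle. We resolve it by defining instead $A\,x$ by cases on the boolean $p\,x$: it is $\One$ in $\VV$ when $p\,x = 0$, and $\Zero$ in $\VV$ when $p\,x = 1$. Every universe has copies of $\Zero$ and $\One$ by the closure assumptions, or one can lift the $\UU_0$ versions. Then $A\,x$ is a proposition and is decidable, by the case split. A case analysis on $p\,x$ shows that $A\,x$ is logically equivalent to $p\,x = 0$, using $0 \neq 1$ in the case $p\,x = 1$. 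As before, $\Sigma_{x:X} A\,x$ and $\Sigma_{x:X} p\,x = 0$ are then logically equivalent, so the decision supplied by the hypothesis transfers to the latter. This gives compactness of $X$.

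Everything else is routine bookkeeping. Note that no function extensionality or univalence is needed, since only logical equivalences of sums are used, never identifications of types.
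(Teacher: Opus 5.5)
Your proposal is correct and follows essentially the same route as the paper: the characteristic function read off from the decisions in one direction, the $\One$/$\Zero$-valued family defined by cases on $p\,x$ (which lives in any universe) in the other, and the transfer of decidability along fiberwise logical equivalence of the $\Sigma$-types. Your cycle of three implications is the paper's argument laid out explicitly, including the universe point that the paper mentions only briefly.
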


\begin{proof}
A complemented family $A$ has a characteristic function $p : X \to
\Two$, with $p\,x = 0$ giving $A\,x$ and $p\,x = 1$ refuting it, and
then $A\,x$ and $p\,x = 0$ are logically equivalent for every $x$,
because $A\,x$ rules out $p\,x = 1$. In the other direction, a
map $p : X \to \Two$ gives the family sending $x$ to $\One$ when
$p\,x = 0$ and to $\Zero$ when $p\,x = 1$, which is complemented and
proposition valued, and which lives in any universe, because $\Zero$
and $\One$ do. Pointwise logically equivalent families
have logically equivalent $\Sigma$-types, and decidability is invariant
under logical equivalence.
\end{proof}

The notions of $\exists$-compactness and $\Pi$-compactness, and the
pointed compactness of~\Cref{lem:compact-pt} below, can be treated by
the same argument.
In view of~\Cref{lem:compact-formulations}, we say simply that a type
is compact, using whichever of these formulations is convenient for
proofs, and saying which one when this matters. In particular,
compactness does not depend on the universe $\VV$.

\begin{lemma}
\label{lem:compact-decidable}
Every compact type is decidable, and every decidable proposition is
compact.
\AgdaRefs{\Agda{TypeTopology.CompactTypes}{compact-types-are-decidable};
\texttt{decidable-propositions-are-compact}.}
\end{lemma}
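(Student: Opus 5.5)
The plan is to prove the two claims separately, each directly from \Cref{def:compact}, using the constant function at~$0$ for the first and a case split on the decidability witness for the second.

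\emph{Compact types are decidable.} Let $X$ be compact and apply compactness to the constant function $p :\equiv \lambda x.\,0 : X \to \Two$. This decides the type $\Sigma_{x:X}\, p\,x = 0$. In the positive case we have a pair $(x, e)$, and its first projection $x$ is a point of $X$. In the negative case, any $x : X$ would give the element $(x,\mathrm{refl})$ of that sum, since $p\,x$ is definitionally $0$. Hence $X$ is empty. Either way $X + \neg X$ holds. Equivalently, by \Cref{lem:compact-formulations}, one may use the constant family at $\One$, which is complemented and whose sum is logically equivalent to $X$.

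\emph{Decidable propositions are compact.} Let $P$ be a proposition with $d : P + \neg P$, and let $p : P \to \Two$ be given. We split on $d$.
\begin{enumerate}
\item If $\neg P$, then $\Sigma_{x:P}\, p\,x = 0$ is empty, because its first projection would give a point of $P$. So the negative alternative of decidability holds.
\item If we have $x_0 : P$, we decide $p\,x_0 = 0$, using that $\Two$ is discrete, or simply by case analysis on the boolean $p\,x_0$.
\begin{enumerate}
\item If $p\,x_0 = 0$, the pair $(x_0, \cdot)$ witnesses the sum.
\item If $p\,x_0 = 1$, the sum is empty. Given $(x, e)$ with $p\,x = 0$, the fact that $P$ is a proposition gives $x = x_0$, so transporting $e$ yields $p\,x_0 = 0$. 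This contradicts $p\,x_0 = 1$, since $0 \neq 1$ in $\Two$.
\end{enumerate}
\end{enumerate}
This last subcase is the only place where the proposition hypothesis is used. It is the only mildly delicate step, and it is a one-line transport.

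There is no real obstacle here. The only care needed is to note that the first claim does not require $X$ to be a proposition, while the second needs propositionality exactly to identify an arbitrary root with the chosen point $x_0$.
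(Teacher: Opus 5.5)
Your proposal is correct and follows essentially the same route as the paper. The paper applies compactness to the constantly $0$ map for the first claim, and for the second splits on the decidability of $X$ and then on the value of $p\,x$, using that $X$ is a proposition to conclude $p\,y = 1$ for every $y$. Your transport argument is the same use of propositionality.
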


\begin{proof}
For the first claim, apply compactness of $X$ to the constantly~$0$
map. Either it has a root, which in particular supplies a point
of $X$, or else $0 = 1$ for every point of $X$, which
gives $\neg X$.

For the second, let $X$ be a decidable proposition and let
$p : X \to \Two$. If $\neg X$ then $p\,x = 1$ for every $x : X$
vacuously. If $x : X$, decide the value of $p\,x$. If $p\,x = 0$ then
$p$ has a root. If $p\,x = 1$ then, since $X$ is a proposition, every
$y : X$ is equal to $x$, so $p\,y = 1$.
\end{proof}

\begin{proposition}
\label{prop:global-choice}
Every type in a universe $\UU$ is compact if and only if global choice
holds for $\UU$ (\Cref{def:global-choice}).
\AgdaRefs{\Agda{TypeTopology.CompactTypes}{all-types-compact-gives-global-choice};
  \texttt{global-choice-gives-all-types-compact}.}
\end{proposition}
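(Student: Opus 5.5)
Both directions are short, and I would reduce them to \Cref{lem:compact-decidable} and to the compactness of decidable types.

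For the forward direction, assume every type in $\UU$ is compact. Given $X : \UU$, I apply the first part of \Cref{lem:compact-decidable}, which says that every compact type is decidable. This immediately gives $X + \neg X$, which is global choice in the form of \Cref{def:global-choice}. Concretely, the proof of that lemma searches the constantly~$0$ map on $X$: a root supplies a point of $X$, and the absence of a root gives $\neg X$.

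For the converse, assume global choice for $\UU$ and let $X : \UU$ and $p : X \to \Two$. I need to decide the type $\Sigma_{x:X}\, p\,x = 0$. This type lives in $\UU$, since $X : \UU$ and identity types of $\Two$ live in $\UU_0$ and hence in every universe. Applying global choice to it yields exactly the required decision, so $X$ is compact in the sense of \Cref{def:compact}.

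The only point needing care is this universe bookkeeping: the $\Sigma$-type must lie in $\UU$ so that global choice for $\UU$ applies to it. This holds because universes are closed under $\Sigma$ and contain $\Two$ and its identity types. Alternatively, one could use \Cref{lem:compact-formulations} with complemented families in $\UU$. Nothing else seems to be an obstacle: no truncation is involved, since global choice decides arbitrary types, not just propositions.
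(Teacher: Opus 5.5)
Your proposal is correct and follows essentially the same argument as the paper. The paper also obtains the forward direction from \Cref{lem:compact-decidable}, and for the converse it applies global choice to the type $\Sigma_{x:X}\,p\,x = 0$, observing, as you do, that this type lies in $\UU$.
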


\begin{proof}
Assume global choice, let $X : \UU$ and let $p : X \to \Two$. Then
$\Sigma_{x:X}\,p\,x = 0$ is a type in $\UU$, so it is decidable by
global choice, which amounts to the compactness of $X$.
Conversely, if every type in $\UU$ is compact then every type in $\UU$
is decidable by~\Cref{lem:compact-decidable}.
\end{proof}

Global choice is not only constructively unacceptable, but also
inconsistent with univalence~\cite[Corollary~3.2.7]{HoTTBook}.
\claim{global-choice-univalence}
\AgdaRefs{\Agda{UF.ClassicalLogic}{Global-Choice-is-inconsistent-with-univalence},
  via \texttt{Global-Choice-gives-sethood} and
  \Agda{UF.Universes}{universes-are-not-sets}.}  Nevertheless, in this
paper we build many compact types in a constructive type theory
compatible with univalence.  We start from a trivial example and some
basic but important counterexamples.

\begin{examples}
\label{ex:compact-basic} \leavevmode
\begin{enumerate}
\item\label{item:finite-compact} Every finite type is compact, a
  \emph{finite type} being one of
  the form $\One + \cdots + \One$, which includes the base case $\Zero$
  in the obvious inductive definition.
  \AgdaRefs{\Agda{Fin.Topology}{Fin-Compact}, for \Agda{Fin.Type}{Fin}.}
\item\label{item:N-compact-lpo} $\N$ is compact if and only if
  $\LPO$ holds.
  \AgdaRefs{\Agda{Taboos.LPO}{compact-ℕ-gives-LPO},
  \texttt{LPO-gives-compact-ℕ}.}
\item\label{item:N-exists-compact-lpo} $\N$ is $\exists$-compact if and
  only if $\LPO$ holds.
  \AgdaRefs{\Agda{TypeTopology.WeaklyCompactTypes}{∃-compact-ℕ-gives-LPO},
  \texttt{LPO-gives-∃-compact-ℕ}.}
\item\label{item:N-pi-compact} $\N$ is $\Pi$-compact if and
  only if $\WLPO$ holds.
  \AgdaRefs{\Agda{TypeTopology.WeaklyCompactTypes}{Π-compact-ℕ-gives-WLPO},
  \texttt{WLPO-gives-Π-compact-ℕ}.}
\end{enumerate}
\end{examples}

\begin{proof}
(\labelcref{item:finite-compact}) follows
from~\Cref{thm:compact-closure}(\labelcref{item:compact-zero-one},\labelcref{item:compact-plus})
below, by induction on the number of summands.
Items~(\labelcref{item:N-compact-lpo})
and~(\labelcref{item:N-pi-compact}) hold by definition
(\Cref{def:lpo-wlpo}). For~(\labelcref{item:N-exists-compact-lpo}),
$\LPO$ gives the compactness of $\N$
by~(\labelcref{item:N-compact-lpo}), and hence its
$\exists$-compactness
(\Cref{prop:weakly-compact-basic}(\labelcref{item:compact-gives-exists-compact})
below). Conversely, if $\exists_n\,p\,n = 0$, then the least $n$ with
$p\,n = 0$ exists in an unspecified way, and, being unique, can be
found. So the $\exists$-compactness of $\N$ gives its compactness,
which is $\LPO$ by~(\labelcref{item:N-compact-lpo}).
\end{proof}

Compact types are closed under retracts, finite disjoint
unions, images, and, when the index type is compact, under
arbitrary sums (\Cref{sec:compact-closure}).
Compactness, discreteness and total separatedness interact with
function types much as their topological namesakes do, with arbitrary
functions of type theory playing the role that continuous maps play in
topology, but without any Brouwerian continuity principle
(\Cref{sec:compact-interplay}). Under classical logic in the
metatheory, the notions coincide with their topological counterparts
in the
model of simple types given by Kleene--Kreisel
spaces~\cite{EscardoExhaustible2008,Kleene1959}.

\subsection{Compact pointed types}
\label{sec:pointed-compactness}
For reasons pertaining to constructivity, a number of definitions and
theorems need to assume that the compact types under consideration are
pointed. Moreover, it is convenient to work with the following
characterization of compact pointed types, which we apply tacitly
in what follows.
\begin{lemma}
\label{lem:compact-pt}
The following are equivalent for any type $X$.
\begin{enumerate}
\item\label{item:compact-and-pointed} $X$ is compact and pointed.
\item\label{item:universal-witness} For every $p : X \to \Two$ we can
  find $x_0 : X$ such that $p\,x_0 = 1$ implies $p\,x=1$ for all
  $x:X$:
\[
\prod_{p:\Two^X} \Sigma_{x_0:X}\,
  \big(p\,x_0 = 1 \to \forall x. \, p\,x = 1\big).
\]
\end{enumerate}
We call such an $x_0$ a \emph{universal witness} for $p$. It is a root
of $p$ if and only if~$p$ has some root.
\AgdaRefs{\Agda{TypeTopology.CompactTypes}{compact-pointed-types-are-compact∙};
\texttt{compact∙-types-are-compact}; \texttt{compact∙-types-are-pointed};
\texttt{putative-root}.}
\end{lemma}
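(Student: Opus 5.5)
We prove the two directions separately and then the final remark about roots.

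For (\labelcref{item:compact-and-pointed})$\Rightarrow$(\labelcref{item:universal-witness}), let $x_\ast : X$ be the given point and let $p : X \to \Two$. By compactness (\Cref{def:compact}) we decide $\Sigma_{x:X}\,p\,x = 0$.
\begin{itemize}
\item[] In the positive case we obtain some $x_1$ with $p\,x_1 = 0$ and take $x_0 :\equiv x_1$. The implication $p\,x_0 = 1 \to \forall x.\,p\,x = 1$ then holds vacuously, because $0 \neq 1$ in $\Two$.
\item[] In the negative case we take $x_0 :\equiv x_\ast$. Now for every $x$ we have $p\,x \neq 0$, hence $p\,x = 1$ by case analysis on $\Two$, so the required implication holds with its hypothesis unused.
\end{itemize}
The point $x_\ast$ is needed only in the negative case. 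That is precisely why pointedness appears in the statement: an empty type is compact but has no universal witness to offer.

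For (\labelcref{item:universal-witness})$\Rightarrow$(\labelcref{item:compact-and-pointed}), pointedness follows by applying the hypothesis to any map, say the constantly $1$ map, and taking the first component. For compactness, given $p$, take a universal witness $x_0$ and decide $p\,x_0$ in $\Two$.
\begin{itemize}
\item[] If $p\,x_0 = 0$, then $(x_0,\text{that identification})$ is a point of $\Sigma_{x:X}\,p\,x = 0$.
\item[] If $p\,x_0 = 1$, then $p\,x = 1$ for all $x$, so no $x$ satisfies $p\,x = 0$, which refutes the sum.
\end{itemize}
Either way the sum is decidable.

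For the remark, suppose first that $p$ has some root $x$. If we had $p\,x_0 = 1$, then universality would give $p\,x = 1$, contradicting $p\,x = 0$. So $p\,x_0 \neq 1$, and hence $p\,x_0 = 0$, again by case analysis on $\Two$. Conversely, if $x_0$ is a root, then $p$ has a root, namely $x_0$ itself.

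Nothing here is a real obstacle: every step is a case split on $\Two$ or on the decision supplied by compactness. The only point needing care is the role of the point of $X$, which must supply the witness in the negative case of the first direction.
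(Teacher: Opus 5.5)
Your proof is correct and matches the paper's argument step for step. It uses the same case split on the compactness decision, with the given point used only in the negative case, and the same case split on $p\,x_0$ (reading pointedness off an arbitrary map, where you use the constantly~$1$ map and the paper the constantly~$0$ one, which is immaterial), and for the final claim the same contrapositive of universality.
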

We call a function $\selection : \Two^X \to X$ assigning a universal
witness to each~$p$ a \emph{selection function}~\cite{EscardoOliva2010} for $X$.
In~(\labelcref{item:universal-witness}), a candidate root is
produced before it is known whether a root exists.
\begin{proof}
\emph{(\labelcref{item:compact-and-pointed}) implies
(\labelcref{item:universal-witness}).} Let $X$ be compact and
$x_\star : X$, and let $p : X \to \Two$. By
compactness, either $p$ has a root $x$, and we take $x_0 = x$, so
that the implication $p\,x_0 = 1 \to \forall x.\,p\,x=1$ holds
vacuously because $p\,x_0 = 0$, or else $p\,x = 1$ for every $x$, and
we take $x_0 = x_\star$, the conclusion of the implication being
the assumption of this case.

\emph{(\labelcref{item:universal-witness}) implies
  (\labelcref{item:compact-and-pointed}).} Let $p : X \to \Two$, with
universal witness $x_0$. If $p\,x_0 = 0$ then $x_0$ is a root of $p$.
If $p\,x_0 = 1$ then $p\,x = 1$ for every $x$ by the universality of
$x_0$. This is precisely the case distinction required by
compactness. For pointedness, apply the hypothesis to e.g.\ the
constantly~$0$ map, which gives a point of $X$.

\emph{A universal witness is a candidate root.} With $x_0$ a universal
witness for $p$, one direction is trivial, since if $x_0$ is a root
then $p$ has a root. For the other, suppose $p\,x = 0$ for some
$x$. Then it is not the case that $p\,x' = 1$ for every $x'$, since
that would give $p\,x = 1$, contradicting $p\,x = 0$. By the
contrapositive of the universality of $x_0$, we conclude
$p\,x_0 \neq 1$ and so $x_0$ is a root.
\end{proof}

\begin{examples}
\label{ex:compact-pt-basic}
\leavevmode
\begin{enumerate}
\item The type $\Two$ is compact pointed, with a selection function
  given by $\selection\,p = p\,0$ for any $p : \Two^\Two$.
  \AgdaRefs{\Agda{TypeTopology.CompactTypes}{𝟚-is-compact∙}.}
\item The type $\Omega_{\UU}$ of propositions in a universe $\UU$ is
  compact pointed too, despite excluded middle being undecided in our
  type theory.
  \AgdaRefs{\Agda{TypeTopology.CompactTypes}{Ω-is-compact∙}.}
\item \label{item:NInf-compact} The type $\NInf$ of~\Cref{ex:NInf} is an infinite compact type, with a
root selection function
$\selection\,p = (n \mapsto \min\{p(\underline k) \mid k \le n\})$.
Moreover,
$\selection\,p$ is the infimum of the set of roots of $p$, in the order
of $\NInf$ as an ordinal (\Cref{ex:NInf-ordinal}), which
may or may not be a root, but is a root if and only if $p$ has
some root.
\AgdaRefs{\Agda{TypeTopology.GenericConvergentSequenceCompactness}{ℕ∞-compact∙};
\Agda{TypeTopology.ConvergentSequenceHasInf}{ℕ∞-has-inf} for the
infimum of the set of roots.}
\end{enumerate}
\end{examples}

\begin{proof}
For $\Two$, take $x_0 = p\,0$ and suppose $p\,(p\,0) = 1$. We first get
$p\,0 = 1$ by cases on $p\,0$: it is immediate when $p\,0 = 1$, and when
$p\,0 = 0$, substituting that equation into $p\,(p\,0) = 1$ gives
$p\,0 = 1$, a contradiction. Substituting $p\,0 = 1$ into
$p\,(p\,0) = 1$ then gives $p\,1 = 1$, so $p$ is constantly~$1$.

For $\Omega_{\UU}$, decide the value of $p\,\bot$. If $p\,\bot = 0$,
take $\bot$ as a universal witness, the implication holding
vacuously. If $p\,\bot = 1$, take $\top$, and we must show that if also
$p\,\top = 1$ then $p$ is constantly~$1$. Two maps
$\Omega_{\UU} \to \Two$ that agree at $\bot$ and $\top$ agree
everywhere, because the map $\Two \to \Omega_{\UU}$ is extremely dense
(\Cref{lem:two-to-Omega-dense}) and an extremely dense map is
right-cancellable with respect to maps into a $\neg\neg$-separated type
(\Cref{lem:dense-into-nn-separated-rc}), such as $\Two$, which is
discrete. Applying this to $p$
and the map constantly~$1$ gives $p\,P = 1$ for every $P$.
\AgdaRefs{\Agda{UF.DiscreteAndSeparated}{⊥-⊤-density}, which packages
  the same argument directly.}

For $\NInf$, let $p : \NInf \to \Two$ and $u = \selection\,p$, so that
$u_n = 1$ precisely when $p\,(\underline k) = 1$ for every $k \le n$.
Suppose that $p\,u = 1$. If $u$ were a finite point $\underline n$,
then $u_n = 0$ while $u_k = 1$ for $k < n$, which by the definition of
$u$ means $p\,(\underline n) = 0$, and so $p\,u = p\,(\underline n) =
0$. So $u$ is not finite, and hence is $\infty$, since a conatural
number that is not finite is $\infty$. This says that
$p\,(\underline k) = 1$ for every $k$, and also
$p\,\infty = p\,u = 1$. So $p$ agrees with the constantly~$1$ map on
the image of the extremely dense map $\N + \One \to \NInf$
(\Cref{ex:isolated}(\labelcref{item:iota1-embedding-dense})), and hence
is constantly~$1$ by~\Cref{lem:dense-into-nn-separated-rc}, $\Two$
being $\neg\neg$-separated. For the claim about the infimum,
see~\cite{EscardoOmniscientJSL2013} or the companion formalization
file~\cite{EscardoCompactCompanion}.
\end{proof}

\subsection{Closure properties of compact types}
\label{sec:compact-closure}

\begin{theorem}
\label{thm:compact-closure}
\leavevmode
\begin{enumerate}
\item\label{item:compact-zero-one} The types $\Zero$ and $\One$ are
  compact, and so is every singleton.
  \AgdaRefs{\Agda{TypeTopology.CompactTypes}{𝟘-is-Compact}; \texttt{𝟙-is-Compact};
  \texttt{singletons-are-Compact}.}
\item\label{item:compact-plus} If $X$ and $Y$ are compact, so is $X+Y$.
  \AgdaRefs{\texttt{+-is-Compact}.}
\item\label{item:compact-sigma} If $X$ is compact and $Y\,x$ is compact
  for every $x : X$, then $\Sigma\,Y$ is compact. In particular, the
  product $X\times Y$ is compact when both $X$ and $Y$ are.
  \AgdaRefs{\texttt{Σ-is-Compact}; \texttt{×-is-Compact}.}
\item\label{item:compact-retract} If $Y$ is a retract of $X$ and $X$ is
  compact, then so is $Y$. In particular, compactness is invariant under
  equivalence.
  \AgdaRefs{\texttt{Compact-closed-under-retracts}; \texttt{Compact-closed-under-≃}.}
\item\label{item:compact-surjection} If $f : X \to Y$ is a surjection
  and $X$ is compact, then so is $Y$, and hence so is the image of any
  map out of a compact type.
  \AgdaRefs{\texttt{CompactTypesPT.surjection-Compact}; \texttt{CompactTypesPT.image-Compact}.}
\end{enumerate}
\end{theorem}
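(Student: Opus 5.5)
I will work throughout with the formulation of compactness from \Cref{lem:compact-formulations}: a type is compact when $\Sigma_{x:X}\,A\,x$ is decidable for every complemented family $A$ on $X$. This makes the closure properties compose cleanly, because decidability of a sum can be fed back in as the complemented family at the next stage.

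For (\labelcref{item:compact-zero-one}), $\Zero$ and $\One$ are decidable propositions, hence compact by \Cref{lem:compact-decidable}. A singleton is equivalent to $\One$, so it will follow once (\labelcref{item:compact-retract}) is proved.

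For (\labelcref{item:compact-plus}), given $p : X+Y \to \Two$, I apply compactness of $X$ to $p\circ\mathrm{inl}$ and of $Y$ to $p\circ\mathrm{inr}$. A root of either gives a root of $p$. Otherwise $p$ is constantly~$1$ on both summands, and hence everywhere by case analysis on $X+Y$.

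For (\labelcref{item:compact-sigma}), let $A : \Sigma Y \to \VV$ be complemented.
\begin{itemize}
\item[(a)] For each $x$, the family $y \mapsto A\,(x,y)$ on $Y\,x$ is complemented, so compactness of $Y\,x$ makes $B\,x :\equiv \Sigma_{y:Y\,x} A\,(x,y)$ decidable. Thus $B$ is a complemented family on $X$.
\item[(b)] Compactness of $X$ then decides $\Sigma_{x:X} B\,x$.
\item[(c)] This type is equivalent to $\Sigma_{z:\Sigma Y}\,A\,z$ by reassociating sums, and decidability transfers along logical equivalence.
\end{itemize}
Products are the special case of a constant family. This is the step most dependent on choosing the complemented-family formulation. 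With $\Two$-valued maps one would have to build a characteristic function out of a decision, which is routine but clumsier.

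For (\labelcref{item:compact-retract}), let $r : X \to Y$ be a retraction with section $s$, and let $p : Y \to \Two$. I decide whether $p\circ r$ has a root. A root $x$ gives the root $r\,x$ of $p$. Conversely, if $p\circ r$ is constantly~$1$, then $p\,y = p\,(r\,(s\,y)) = 1$ for every $y$. Invariance under equivalence follows, since an equivalence gives a retraction.

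The main obstacle is (\labelcref{item:compact-surjection}), because a surjection has no section to transport roots back along. The plan is the same as for retracts: decide whether $p\circ f$ has a root.
\begin{itemize}
\item[(a)] If $x$ is a root of $p\circ f$, then $f\,x$ is a root of $p$.
\item[(b)] If $p\,(f\,x) = 1$ for every $x$, I conclude $p\,y = 1$ for every $y$ by surjection induction (\Cref{def:surjection}). This is legitimate because $p\,y = 1$ is a proposition, $\Two$ being a set.
\end{itemize}
The truncation in the definition of surjection causes no difficulty: case (a) produces the root explicitly, and case (b) only needs a propositional conclusion. For the final claim, the corestriction of any map onto its image is a surjection (\Cref{def:image}), so the image of a map out of a compact type is compact.
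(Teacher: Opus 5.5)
Your proposal is correct and follows the paper's proof essentially step by step. The $\Sigma$ case is the same reassociation argument, and your treatments of $+$, retracts and surjections are $\Two$-valued versions of the paper's single pattern: pull the family back along $f$, decide the pulled-back sum, and use the hypothesis on $f$ only to transfer emptiness (the paper also gets singletons directly as decidable propositions via \Cref{lem:compact-decidable}, rather than through equivalence with $\One$).
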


\begin{proof}
By~\Cref{lem:compact-formulations} we may take the given data to be a
complemented family $A$ on the type under consideration rather than a
$\Two$-valued map, so that the task is to decide the sum of $A$.

Items~(\labelcref{item:compact-plus}),
(\labelcref{item:compact-retract})
and~(\labelcref{item:compact-surjection}) follow one pattern, which we
state first. If $f : X \to Y$ is a map and $A$ is a complemented
family on $Y$, then the composite $A \circ f$ is a complemented family
on $X$, and a point $(x,a)$ of $\Sigma\,(A \circ f)$ gives the point
$(f\,x, a)$ of $\Sigma A$. So when $X$ is compact, the sum of
$A \circ f$ is decidable, and what remains is to show that $\Sigma A$
is empty in the case that $\Sigma\,(A \circ f)$ is, for which the
hypothesis on $f$ is used.

(\labelcref{item:compact-zero-one}) The types $\Zero$ and $\One$ are
decidable propositions, and so is any singleton, so this is an instance
of~\Cref{lem:compact-decidable}.

(\labelcref{item:compact-plus}) Apply the pattern to the injections
$\mathrm{inl} : X \to X + Y$ and $\mathrm{inr} : Y \to X + Y$, whose
domains are compact by hypothesis. If either of
$\Sigma\,(A \circ \mathrm{inl})$ and $\Sigma\,(A \circ \mathrm{inr})$
has a point, then so does $\Sigma A$. If both are empty, then so is
$\Sigma A$, because every point of $X+Y$ is of the form
$\mathrm{inl}\,x$ or $\mathrm{inr}\,y$.

(\labelcref{item:compact-sigma}) For each $x : X$, the family
$y \mapsto A\,(x,y)$ on $Y\,x$ is complemented, and so the compactness
of the fibers makes the family
\[
  B\,x \;:\equiv\; \textstyle\Sigma_{y : Y\,x}\, A\,(x,y)
\]
on $X$ complemented. The compactness of $X$ then decides the sum of
$B$, which is the sum of $A$ up to reassociation, and decidability is
invariant under logical equivalence. Binary products are the case of a
constant family $Y$.

(\labelcref{item:compact-retract}) Apply the pattern to the retraction
$r : X \to Y$, whose section $s : Y \to X$ satisfies $r\,(s\,y) = y$
for every $y : Y$. A point $(y,a)$ of $\Sigma A$ gives the point
$(s\,y, a')$ of $\Sigma\,(A \circ r)$, where $a'$ is obtained from $a$
by transport along the identification $y = r\,(s\,y)$, and so
$\Sigma A$ is empty when
$\Sigma\,(A \circ r)$ is. An equivalence is in particular a retraction.

(\labelcref{item:compact-surjection}) Apply the pattern to the
surjection $f$. If $\Sigma\,(A \circ f)$ is empty, then
$\neg\,A\,(f\,x)$ holds for every $x : X$, and hence $\neg\,A\,y$ holds
for every $y : Y$ by surjection induction (\Cref{def:surjection}),
which is available because negations are propositions. Therefore
$\Sigma A$ is empty. The image of a map $g$ out of a compact type is
covered by applying this to the corestriction of $g$, which is a
surjection.
\end{proof}

\begin{proposition}
\label{prop:compact-reflection}
If $X$ is compact, then so is its totally separated
reflection~$\tsreflection X$ constructed in \Cref{thm:ts-reflection}.
\AgdaRefs{\Agda{TypeTopology.CompactTypes}{CompactTypesPT.surjection-Compact},
  applied to \Agda{TypeTopology.TotallySeparated}{ηᵀ-is-surjection}.}
\end{proposition}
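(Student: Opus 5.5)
The plan is to obtain the statement as a direct instance of \Cref{thm:compact-closure}(\labelcref{item:compact-surjection}), which says that the codomain of a surjection out of a compact type is compact. By \Cref{thm:ts-reflection}(\labelcref{item:ts-reflection-surjection}), the reflector $\etaT : X \to \tsreflection X$ is a surjection. So if $X$ is compact, that closure result immediately gives that $\tsreflection X$ is compact.

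Equivalently, $\tsreflection X$ was built as the image of $\mathrm{eval}_X$, and the second clause of \Cref{thm:compact-closure}(\labelcref{item:compact-surjection}) makes the image of any map out of a compact type compact. The reflector is exactly the corestriction used there, so the two readings agree.

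For completeness, I would unfold the argument once. Take a complemented family $A$ on $\tsreflection X$, using \Cref{lem:compact-formulations}. Compactness of $X$ decides the sum of $A \circ \etaT$. A point of that sum gives a point of $\Sigma A$. If instead $\neg A(\etaT\,x)$ holds for all $x$, then surjection induction (\Cref{def:surjection}) extends this to $\neg A\,y$ for every $y : \tsreflection X$, since negations are propositions.

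There is no real obstacle. The only thing to check is that surjection induction is applied to a proposition valued predicate, and $\neg A\,y$ is one. No property of total separatedness is needed beyond the surjectivity of $\etaT$.
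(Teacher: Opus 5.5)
Your proposal is correct and is exactly the paper's proof: the reflector $\etaT$ is a surjection by \Cref{thm:ts-reflection}(\labelcref{item:ts-reflection-surjection}), and compactness passes to the codomain of a surjection by \Cref{thm:compact-closure}(\labelcref{item:compact-surjection}). Your unfolded argument, using surjection induction on the proposition $\neg A\,y$, is the paper's proof of that closure item applied to $\etaT$.
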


\begin{proof}
The reflector $\etaT$ is a surjection
(\Cref{thm:ts-reflection}(\labelcref{item:ts-reflection-surjection})),
and compactness passes to surjective images
(\Cref{thm:compact-closure}(\labelcref{item:compact-surjection})).
\end{proof}

\begin{corollary}
\label{cor:compact-pt-closure}
Pointed compactness is closed under
\begin{enumerate}
\item\label{item:pt-retract} retracts, and hence equivalence
  \AgdaRefs{\Agda{TypeTopology.CompactTypes}{retract-is-compact∙};
  \texttt{compact∙-types-are-closed-under-equiv}},
\item\label{item:pt-sigma} sums, and hence binary products and coproducts
  \AgdaRefs{\Agda{TypeTopology.CompactTypes}{Σ-is-compact∙};
  \texttt{binary-Tychonoff}; \texttt{×-is-compact∙}; \texttt{+-is-compact∙};
  \texttt{𝟙+𝟙-is-compact∙}},
\item\label{item:pt-image} surjective images
  \AgdaRefs{\Agda{TypeTopology.CompactTypes}{codomain-of-surjection-is-compact∙};
  \texttt{image-is-compact∙}}.
\end{enumerate}
\end{corollary}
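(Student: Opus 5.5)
The plan is to reduce everything to \Cref{thm:compact-closure} by way of \Cref{lem:compact-pt}, which says that pointed compactness is the conjunction of compactness and pointedness. Compactness is already known to be preserved by each construction, so in every case the only remaining work is to produce a point of the resulting type from points of the given ones.

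For retracts (\labelcref{item:pt-retract}), let $r : X \to Y$ be a retraction. Compactness of $Y$ follows from \Cref{thm:compact-closure}(\labelcref{item:compact-retract}), and a point of $Y$ is $r\,x_\star$ for the given point $x_\star : X$. Since an equivalence is in particular a retraction, closure under equivalence follows.

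For sums (\labelcref{item:pt-sigma}), suppose $X$ is compact pointed and each $Y\,x$ is compact pointed. Then $\Sigma\,Y$ is compact by \Cref{thm:compact-closure}(\labelcref{item:compact-sigma}). For a point, take $(x_\star, y_\star)$, where $x_\star : X$ is the point of $X$ and $y_\star$ is the point of $Y\,x_\star$ supplied by the hypothesis. Binary products are the constant-family instance of this. For binary coproducts $X + Y$, compactness comes from \Cref{thm:compact-closure}(\labelcref{item:compact-plus}), and $\mathrm{inl}\,x_\star$ is a point. Only one summand needs to be pointed, but assuming both are is harmless.

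For surjective images (\labelcref{item:pt-image}), let $f : X \to Y$ be a surjection. Compactness of $Y$ is \Cref{thm:compact-closure}(\labelcref{item:compact-surjection}), and $f\,x_\star$ is a point of $Y$. For the image of an arbitrary map $g$ out of a compact pointed type, apply this to the corestriction of $g$, which is a surjection, taking the point $g\,x_\star$ together with its membership witness.

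The only place requiring care is that pointedness in the sum case asks for a point of the fiber over the chosen base point. The hypothesis must therefore be read as fiberwise pointed compactness, $\prod_{x:X}(\text{$Y\,x$ is compact pointed})$, and under that reading the step is immediate. As an alternative, one could build selection functions directly from \Cref{lem:compact-pt}(\labelcref{item:universal-witness}). For example, a selection function for $\Sigma\,Y$ is obtained by choosing $x_0$ as a universal witness for $x \mapsto p\,(x, \varepsilon_x(\lambda y.\,p\,(x,y)))$ and then the corresponding $y$. This is unnecessary given the equivalence of \Cref{lem:compact-pt}, so I would use the reduction described above.
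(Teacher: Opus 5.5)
Your proof is correct and is essentially the paper's: reduce to \Cref{thm:compact-closure} via \Cref{lem:compact-pt}, then exhibit a point in each case. The one small deviation is for binary coproducts, where you use \Cref{thm:compact-closure}(\labelcref{item:compact-plus}) directly with the point $\mathrm{inl}\,x_\star$, whereas the paper writes $X_0 + X_1$ as a sum over the compact pointed type $\Two$ and appeals to invariance under equivalence from item~(\labelcref{item:pt-retract}); your route has the minor advantage of needing only one summand to be pointed.
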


\begin{proof}
A compact pointed type is a compact type with a point
(\Cref{lem:compact-pt}), and each of the three constructions preserves
compactness by~\Cref{thm:compact-closure}. What remains is to produce a
point in each case.

(\labelcref{item:pt-retract}) A retraction $X \to Y$ sends a point of
$X$ to a point of $Y$, and an equivalence is in particular a
retraction.

(\labelcref{item:pt-sigma}) A point $x$ of the base together with a
point of the fiber $Y\,x$ is a point of $\Sigma\,Y$. Binary products
are the case of a constant family. Binary coproducts are the case of
the family on $\Two$ with fibers $X_0$ and $X_1$, since $\Two$ is
compact pointed (\Cref{ex:compact-pt-basic}) and $X_0 + X_1$ is
equivalent to the sum of that family, pointed compactness being
invariant under equivalence by~(\labelcref{item:pt-retract}).

(\labelcref{item:pt-image}) A surjection $X \to Y$ sends a point of $X$
to a point of $Y$.
\end{proof}

\begin{proposition}
\label{prop:compact-complemented}
Let $X$ be a compact type.
\begin{enumerate}
\item\label{item:complemented-subtype-compact} If $A$ is a
  complemented, proposition valued family on $X$, then
  the subtype $\Sigma_{x:X}\,A\,x$ is compact.
  \AgdaRefs{\Agda{TypeTopology.CompactTypes}{complemented-subset-of-compact-type}.}
\item\label{item:decide-sum-or-product} If $A$ and $B$ are
  families on $X$ and
  $\prod_{x:X}\,(A\,x + B\,x)$ holds, then we can decide between
  $\Sigma_{x:X}\,A\,x$ and $\prod_{x:X}\,B\,x$.
  \AgdaRefs{\Agda{TypeTopology.CompactTypes}{compact-gives-Σ+Π}.}
\item\label{item:complemented-choice} If $A$ is a complemented
  family on $X$ and $\Sigma_{x:X}\,A\,x$
  is non-empty, then it is pointed.
  \AgdaRefs{\Agda{TypeTopology.CompactTypes}{Complemented-choice},
  \texttt{Σ-Compactness-gives-Complemented-choice}.}
\end{enumerate}
\end{proposition}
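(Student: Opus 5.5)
All three items follow from the complemented-family formulation of compactness in \Cref{lem:compact-formulations}, applied to suitable families and followed by a short case analysis. I would prove them in the order (\labelcref{item:decide-sum-or-product}), (\labelcref{item:complemented-choice}), (\labelcref{item:complemented-subtype-compact}).

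For (\labelcref{item:decide-sum-or-product}), let $h : \prod_{x:X}(A\,x + B\,x)$. Define $p : X \to \Two$ by $p\,x = 0$ when $h\,x$ is of the form $\mathrm{inl}$, and $p\,x = 1$ when it is of the form $\mathrm{inr}$. Compactness of $X$ then decides whether $p$ has a root.
\begin{itemize}
\item If $x$ is a root, then $h\,x$ is of the form $\mathrm{inl}\,a$, because the $\mathrm{inr}$ case would give $p\,x = 1$. This yields the point $(x,a)$ of $\Sigma_{x:X} A\,x$.
\item If $p\,x = 1$ for every $x$, then each $h\,x$ is of the form $\mathrm{inr}\,b$, and collecting these gives $\prod_{x:X} B\,x$.
\end{itemize}
The only mild point of care is to do the case analysis on $h\,x$ together with the equation $p\,x = \cdots$, so that the impossible cases are discharged by $0 \neq 1$.

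For (\labelcref{item:complemented-choice}), use \Cref{lem:compact-formulations} to decide $\Sigma_{x:X}\,A\,x$. If it is pointed we are done. If it is empty, this contradicts the non-emptiness hypothesis, and we conclude by ex falso. Alternatively, apply (\labelcref{item:decide-sum-or-product}) with $B\,x :\equiv \neg A\,x$, using complementedness of $A$.

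For (\labelcref{item:complemented-subtype-compact}), let $q$ be a $\Two$-valued map on $\Sigma_{x:X}A\,x$. Since $A\,x$ is a decidable proposition, define the family
\[
  C\,x \;:\equiv\; \textstyle\Sigma_{a:A\,x}\, q\,(x,a) = 0
\]
on $X$. It is complemented: first decide $A\,x$. If $a : A\,x$, decide $q\,(x,a)$; any other $a'$ equals $a$ because $A\,x$ is a proposition, so this settles $C\,x$. If $\neg A\,x$, then $C\,x$ is empty. Compactness of $X$ now decides $\Sigma_x C\,x$, which is $\Sigma_{(x,a)} q\,(x,a)=0$ up to reassociation. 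Decidability is invariant under this equivalence, which gives compactness of the subtype.

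Equivalently, (\labelcref{item:complemented-subtype-compact}) is the compactness of the sum of the family $A$ over $X$ by \Cref{thm:compact-closure}(\labelcref{item:compact-sigma}), since each fiber $A\,x$ is a decidable proposition and hence compact by \Cref{lem:compact-decidable}. I would give this one-line argument in the write-up.

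No step is a real obstacle. The only subtlety is in (\labelcref{item:complemented-subtype-compact}): the proposition hypothesis on $A$ is what makes each fiber compact.
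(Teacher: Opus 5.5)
Your proposal is correct. Items (\labelcref{item:decide-sum-or-product}) and (\labelcref{item:complemented-choice}) follow the paper's proof: an indicator map and one application of compactness for the first, and deciding $\Sigma_{x:X}A\,x$ and using $\neg\neg$-stability of decidable types for the second.

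For item (\labelcref{item:complemented-subtype-compact}), the paper gives the direct argument you sketched first, with a complemented family $B$ on the subtype in place of your $q$. It forms $C\,x :\equiv \Sigma_{a:A\,x} B\,(x,a)$ and shows it is complemented by deciding $A\,x$ and then $B\,(x,a)$. The one-line route you intend to write up instead, via \Cref{thm:compact-closure}(\labelcref{item:compact-sigma}) and \Cref{lem:compact-decidable}, is also valid. It is not circular, since both results are proved earlier without this proposition.

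It is really the same computation factored through existing lemmas. The step ``decide $B\,(x,a)$ and use that $A\,x$ is a proposition'' is exactly the proof that a decidable proposition is compact. Deciding the sum of $C$ is exactly the $\Sigma$-closure step.

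Your version is shorter and makes visible that proposition-valuedness is used only to make the fibers compact. Without that hypothesis the statement fails constructively: the constant family at $\N$ over $\One$ is complemented, but its sum is compact only under $\LPO$. The paper's version has the advantage of being self-contained.
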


\begin{proof}
(\labelcref{item:complemented-subtype-compact}) Let $B$ be a
complemented family on the subtype $\Sigma_{x:X}\,A\,x$, so that what
has to be decided is the sum of $B$. The family
\[
  C\,x \;:\equiv\; \textstyle\Sigma_{a : A\,x}\, B\,(x,a)
\]
on $X$ is complemented. Indeed, decide $A\,x$. If $A\,x$ is empty then
so is $C\,x$. If $a : A\,x$, decide $B\,(x,a)$. A point $b$ of
$B\,(x,a)$ gives the point $(a,b)$ of $C\,x$, and if $B\,(x,a)$ is
empty then so is $C\,x$, because $A\,x$ is a proposition, so that any
$a' : A\,x$ is equal to $a$ and a point of $B\,(x,a')$ transports to a
point of $B\,(x,a)$. The compactness of $X$ now decides the sum of $C$,
which is the sum of $B$ up to reassociation.

(\labelcref{item:decide-sum-or-product}) Write $q$ for the given point
of $\prod_{x:X}\,(A\,x + B\,x)$ and let $p : X \to \Two$ be its
indicator, with $p\,x = 0$ when $q\,x$ lands in $A\,x$ and $p\,x = 1$
when it lands in $B\,x$. The compactness of $X$ decides whether $p$ has
a root. A root $x$ comes with a point of $A\,x$, and so gives a point
of $\Sigma_{x:X}\,A\,x$. If there is no root, then $p\,x = 1$, and
hence $B\,x$, for every $x$.

(\labelcref{item:complemented-choice}) The compactness of $X$ decides
the sum of $A$, and a decidable type is $\neg\neg$-stable.
\end{proof}

\begin{remark}
\label{rem:tychonoff-independence}
Tychonoff's theorem in classical topology says that arbitrary products
of compact spaces are compact. In our case,
\Cref{thm:compact-closure}(\labelcref{item:compact-sigma}) gives this
for binary, and hence finite, products of compact types. Without
further axioms, it is not possible to prove an infinite generalization
in our type theory, not even a countable one. With our type
theoretical counterparts of the topological notions, the compactness
of the Cantor type $\Cantor$, the countable product of copies of
$\Two$, is independent, as it holds in the
topological topos and in the Kleene--Vesley
topos~\cite{vanOosten2008}, but fails in the effective
topos~\cite{Hyland1982} due to the presence of Kleene trees (infinite
binary trees with no computable paths, which give recursive
counterexamples to K\"onig's Lemma and hence to the compactness of the
Cantor space~\cite{KleeneVesley1965}).
\end{remark}
However, under the Brouwerian principle that all maps of the Cantor
type~$\Cantor$ into a discrete type are uniformly continuous, which we
are not adopting here (cf.~\Cref{sec:constructive-mathematics}), the
Cantor type becomes compact in our sense. We say that a number
$n : \N$ is a \emph{modulus of uniform continuity} of a map $p$ of
$\Cantor$ into a discrete type if $p\,\alpha = p\,\beta$ whenever the
sequences $\alpha$ and $\beta$ agree at their first $n$ digits, and
when such a number is given we say that $p$ is \emph{uniformly
  continuous}.
\claim{uniform-continuity-gives-cantor-compact}
\AgdaRefs{\Agda{TypeTopology.Cantor}{\_is-a-modulus-of-uc-of\_},
  \texttt{uniformly-continuous};
  \Agda{TypeTopology.CantorSearch}{having-root-is-decidable}, applied
  to the modulus the principle gives.}
\begin{proposition}
\label{prop:cantor-uniform-search}
For any uniformly continuous map $p : \Cantor \to \Two$ we can find a
universal witness for $p$, in the sense of~\Cref{lem:compact-pt}, and
hence decide whether $p$ has a root.
\AgdaRefs{\Agda{TypeTopology.CantorSearch}{Cantor-uniformly-searchable},
  \texttt{having-root-is-decidable}.}
\end{proposition}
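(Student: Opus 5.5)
The plan is to argue by induction on the modulus $n$, proving the stronger statement: for every $n : \N$ and every $p : \Cantor \to \Two$ with modulus of uniform continuity $n$, we can find a universal witness $\alpha_0$ for $p$, that is, $p\,\alpha_0 = 1$ implies $p\,\alpha = 1$ for all $\alpha$. By~\Cref{lem:compact-pt}, such a witness is a root if and only if $p$ has a root. Deciding whether $p$ has a root is then done by inspecting the boolean $p\,\alpha_0$.

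\emph{Base case, $n = 0$.} Any two sequences agree at their first $0$ digits, so $p$ is constant. We take $\alpha_0$ to be any sequence, say constantly $0$. If $p\,\alpha_0 = 1$, then $p\,\alpha = p\,\alpha_0 = 1$ for every $\alpha$.

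\emph{Inductive step, modulus $n+1$.} For $b : \Two$ write $b\alpha$ for the sequence with head $b$ and tail $\alpha$, and define $p_b\,\alpha :\equiv p\,(b\alpha)$. Each $p_b$ has modulus $n$: if $\alpha,\beta$ agree at their first $n$ digits, then $b\alpha$ and $b\beta$ agree at their first $n+1$. The induction hypothesis gives universal witnesses $\alpha_b$ for $p_b$. We now use the selection function of $\Two$ (\Cref{ex:compact-pt-basic}) on the map $q\,b :\equiv p\,(b\,\alpha_b)$, which yields $b_0 = q\,0$, and we set $\alpha_0 :\equiv b_0\,\alpha_{b_0}$.

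To verify $\alpha_0$, suppose $p\,\alpha_0 = 1$, that is, $q\,b_0 = 1$. Universality of $b_0$ for $q$ gives $q\,0 = q\,1 = 1$, so $p_b\,\alpha_b = 1$ for both $b$. Universality of each $\alpha_b$ then gives $p\,(b\alpha) = 1$ for all $\alpha$ and both $b$. Finally, every $\gamma : \Cantor$ equals $(\gamma\,0)(\text{tail of }\gamma)$ by function extensionality, and a case split on $\gamma\,0$ gives $p\,\gamma = 1$.

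The main obstacle is mostly bookkeeping. The induction must quantify over all $p$ together with their modulus, so that the hypothesis applies to the new maps $p_b$. We also need to check that this is a genuine construction (a $\Sigma$, not an $\exists$), which it is, since the modulus is given as data. The binary case is exactly the pointed compactness of $\Sigma$-types over $\Two$ (\Cref{cor:compact-pt-closure}), so one could alternatively phrase the step as a product $\Two \times \Cantor$ restricted to maps of smaller modulus.
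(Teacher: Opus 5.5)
Your proof is correct and follows the paper's argument: the same induction on the modulus, the same split $p_b\,\alpha = p\,(b\alpha)$, and the same use of $\Two$'s selection function to choose $b_0$ before setting the witness to $b_0\,\alpha_{b_0}$. The only cosmetic difference is in how the induction is organized. The paper first defines the candidate $\varepsilon_n\,p$ by recursion on $n$ for every $p$, with no reference to a modulus. It then proves, by a separate induction, that $\varepsilon_n\,p$ is a universal witness whenever $n$ is a modulus of $p$. You instead bundle the construction and its verification into a single induction.
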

\begin{proof}
For $b : \Two$ and $\alpha : \Cantor$, write $b\alpha$ for the sequence
that begins with~$b$ and continues as~$\alpha$, and write $p_b$ for the
map $\alpha \mapsto p\,(b\alpha)$. If $n{+}1$ is a modulus of
uniform continuity of $p$, then $n$ is one of $p_b$, because $b\alpha$
and $b\beta$ agree at their first $n{+}1$ digits whenever $\alpha$ and
$\beta$ agree at their first $n$ digits.

Define $\varepsilon_n\,p : \Cantor$ for every $p$ by induction on $n$.
For the base case $\varepsilon_0\,p$ we can take any sequence we
please, for example the constantly~$0$ one, and for the induction step
we define
\[
  \varepsilon_{n+1}\,p = b_0\,(\varepsilon_n\,p_{b_0}),
\]
where $b_0$ is the value at~$0$ of the function
$b \mapsto p_b\,(\varepsilon_n\,p_b)$ on~$\Two$, which is a universal
witness for it (\Cref{ex:compact-pt-basic}).

We show by induction on $n$ that, for every $p$, if $n$ is a modulus
of uniform
continuity of $p$ and $p\,(\varepsilon_n\,p) = 1$, then
$p\,\alpha = 1$ for every $\alpha$. For $n = 0$, the map $p$ is
constant, and so it takes the value $p\,(\varepsilon_0\,p) = 1$
everywhere. For $n{+}1$, the definition gives
$p\,(\varepsilon_{n+1}\,p) = p_{b_0}\,(\varepsilon_n\,p_{b_0})$, so
that the universality of $b_0$ gives $p_b\,(\varepsilon_n\,p_b) = 1$
for both values of $b$. The induction hypothesis, applied to the
maps $p_b$ with modulus $n$, then gives $p\,(b\alpha) = 1$ for
every $b$ and $\alpha$, and every sequence is of the form $b\alpha$.
So $\varepsilon_n\,p$ is a universal witness for $p$.
\end{proof}

\subsection{\texorpdfstring{$\exists$}{∃}-compactness and \texorpdfstring{$\Pi$}{Π}-compactness}
\label{sec:weakly-compact}

\begin{proposition}
\label{prop:weakly-compact-basic}
\leavevmode
\begin{enumerate}
\item\label{item:weak-compact-are-props} Each of $\exists$-compactness
  and $\Pi$-compactness is a proposition.
  \AgdaRefs{\Agda{TypeTopology.WeaklyCompactTypes}{∃-compactness-is-prop};
  \texttt{Π-compactness-is-prop}.}
\item\label{item:compact-gives-exists-compact} Any compact type is
  $\exists$-compact.
  \AgdaRefs{\Agda{TypeTopology.WeaklyCompactTypes}{compact-types-are-∃-compact}.}
\item\label{item:exists-gives-pi-compact} Any $\exists$-compact type is
  $\Pi$-compact.
  \AgdaRefs{\Agda{TypeTopology.WeaklyCompactTypes}{∃-compact-types-are-Π-compact}.}
\item\label{item:exists-compact-stability} If $X$ is
  $\exists$-compact then $\exists_{x:X}\, p\,x = 0$ is
  $\neg\neg$-stable.
  \AgdaRefs{\Agda{TypeTopology.WeaklyCompactTypes}{∃-compactness-gives-Markov}.}
\item\label{item:pi-compact-isolated} A type $X$ is $\Pi$-compact if
  and only if the constant map $\lambda x.\,1$ is isolated
  in~$\Two^X$.
  \AgdaRefs{\Agda{TypeTopology.WeaklyCompactTypes}{is-Π-compact'};
  \texttt{Π-compact'-types-are-Π-compact}; \texttt{Π-compact-types-are-Π-compact'}.}
\item\label{item:weakly-compact-reflection} A type $X$ is
  $\exists$-compact, respectively $\Pi$-compact, if and only if its
  totally separated reflection $\tsreflection X$
  constructed in \Cref{thm:ts-reflection} is.
  \AgdaRefs{\Agda{TypeTopology.WeaklyCompactTypes}{∃-compact-types-are-∃-compact-𝕋},
  \texttt{∃-compact-𝕋-types-are-∃-compact},
  \texttt{Π-compact-types-are-Π-compact-𝕋},
  \texttt{Π-compact-𝕋-types-are-Π-compact}.}
\end{enumerate}
\end{proposition}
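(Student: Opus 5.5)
We take the six items in turn. The only one with real content is the last, and it rests on the remark after \Cref{thm:ts-reflection} that $\etaT$ induces a bijection between $\Two^X$ and $\Two^{\tsreflection X}$.

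For (\labelcref{item:weak-compact-are-props}), note that $\exists_{x:X}\,p\,x=0$ and $\prod_{x:X}\,p\,x=1$ are propositions. The first is a truncation. The second is a $\Pi$-type of identity types in the set $\Two$. Each pair of disjuncts is mutually exclusive, since a root contradicts being constantly~$1$. So each decidability statement is a proposition, and so are the $\Pi$-types over $p$ built from them, by function extensionality.

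Items (\labelcref{item:compact-gives-exists-compact}) and (\labelcref{item:exists-gives-pi-compact}) follow by mapping cases. A point of the $\Sigma$-type gives one of its truncation, and the emptiness of the $\Sigma$-type gives that of the truncation, because $\neg A$ and $\neg\|A\|$ are equivalent. For (\labelcref{item:exists-gives-pi-compact}), if $\exists_x\,p\,x=0$ holds then $\prod_x\,p\,x=1$ fails. If instead $\neg\exists_x\,p\,x=0$, then for each $x$ we have $p\,x\neq 0$, hence $p\,x=1$ because $\Two$ is discrete.

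For (\labelcref{item:exists-compact-stability}), decide $\exists_x\,p\,x=0$. The positive case is what we want. The negative case contradicts the double negation hypothesis.

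For (\labelcref{item:pi-compact-isolated}), use function extensionality: $p=\lambda x.\,1$ is equivalent to $\prod_x\,p\,x=1$. Decidability of one is therefore decidability of the other, and the two directions are immediate.

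For (\labelcref{item:weakly-compact-reflection}), we use that every $q:\tsreflection X\to\Two$ has the form $\bar p$ for a unique $p:X\to\Two$ with $\bar p\circ\etaT = p$ (\Cref{thm:ts-reflection}(\labelcref{item:ts-reflection-universal})), and that $\etaT$ is a surjection. For $\Pi$-compactness, we show that $\prod_{y}\,\bar p\,y = 1$ is logically equivalent to $\prod_x\,p\,x = 1$. One direction is precomposition with $\etaT$. The other is surjection induction, which is available because the target is a proposition. For $\exists$-compactness, we show that $\exists_y\,\bar p\,y=0$ is equivalent to $\exists_x\,p\,x=0$. From left to right we use surjectivity: from $y$ with $\bar p\,y = 0$ we get an $x$ with $\etaT\,x = y$, in an unspecified way, which suffices because the goal is truncated. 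From right to left we map $x$ to $\etaT\,x$.

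With these equivalences in hand, decidability transfers in both directions. Given a map on one side, we pass to the corresponding map on the other side ($p\mapsto\bar p$, or $q\mapsto q\circ\etaT$), decide there, and transport the answer back along the logical equivalence. In the direction from $\tsreflection X$ to $X$ we apply the hypothesis to $\bar p$. In the other direction we write $q$ as $\overline{q\circ\etaT}$, using uniqueness in the universal property.

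The main obstacle is this last item, specifically keeping the truncation correct in the $\exists$ case. Surjectivity only yields truncated preimages, which is exactly why the statement is made for $\exists$- and $\Pi$-compactness and not for $\Sigma$-compactness. The converse of \Cref{prop:compact-reflection} would require an untruncated root in $X$.
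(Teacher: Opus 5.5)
Your proposal is correct and matches the paper's proof item by item, including the treatment of (6): you reduce it to the two logical equivalences via the correspondence $q \leftrightarrow q\circ\etaT$, use surjection induction in the $\Pi$ case, and use truncated preimages in the $\exists$ case. One small quibble concerns (1): the two disjuncts of each decidability statement are a proposition and its negation, so they exclude each other for that reason alone, and you do not need the observation that a root contradicts constancy.
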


\begin{proof}
(\labelcref{item:weak-compact-are-props}) For each $p$, the types
$\exists_x p\,x=0$ and $\prod_x p\,x=1$ are propositions, and so is the
decidability of a proposition.

(\labelcref{item:compact-gives-exists-compact}) Compactness gives, for
each map $p$, either a root, whose truncation
gives~$\exists_x p\,x=0$, or a proof that $p$ is constantly~$1$, which
refutes it.

(\labelcref{item:exists-gives-pi-compact}) Decide $\exists_x p\,x = 0$.
In the positive case, $\prod_x p\,x=1$ is refuted, because a root of
$p$ contradicts it, and this conclusion, being a negation, may be drawn
from the truncated existence. In the negative case, no $x$ has
$p\,x=0$, and hence $p\,x=1$ for every $x$, as $\Two$ has only two
points.

(\labelcref{item:exists-compact-stability}) The proposition
$\exists_x p\,x=0$ is decidable, and decidable types are
$\neg\neg$-stable.

(\labelcref{item:pi-compact-isolated}) The $\Pi$-compactness of $X$
says that $\prod_x p\,x=1$ is decidable for every~$p$, while the
isolatedness of $\lambda x.\,1$ says that $p = \lambda x.\,1$ is
decidable for every~$p$. The type $\prod_x p\,x=1$ is pointwise
equality of $p$ with the constant map, so the two types are equivalent
and decidability is invariant under logical equivalence.

(\labelcref{item:weakly-compact-reflection}) Let $q$ and
$p = q \circ \etaT$ be corresponding functions on $\tsreflection X$
and on~$X$, using
\Cref{thm:ts-reflection}(\labelcref{item:ts-reflection-universal}) with
$A = \Two$. Decidability transfers along a logical equivalence, so it
is enough to prove
\[
  \exists_{x:X}\, p\,x = 0 \iff \exists_{y:\tsreflection X}\, q\,y = 0,
  \qquad
  \prod_{x:X}\, p\,x = 1 \iff \prod_{y:\tsreflection X}\, q\,y = 1 .
\]
A root $x$ of $p$ gives the root $\etaT\,x$ of $q$. Conversely, the
reflector is a surjection, so a root of $q$ has a preimage in an
unspecified way, and any such preimage is a root of $p$. If $q\,y = 1$
for every $y$, then $p\,x = 1$ for every $x$ by composition with the
reflector. Conversely, if $p\,x = 1$ for every $x$, then $q\,y = 1$ for
every $y$ by surjection induction along the reflector, the equations
being propositions.
\end{proof}

The following are some closure properties of the two weaker notions of
compactness.
\begin{proposition}
\label{prop:weakly-compact-closure}
\leavevmode
\begin{enumerate}
\item\label{item:weak-compact-retracts} Both $\exists$-compactness and
  $\Pi$-compactness pass to surjective images, and hence to retracts.
  \AgdaRefs{\Agda{TypeTopology.WeaklyCompactTypes}{codomain-of-surjection-is-∃-compact};
    \texttt{image-is-∃-compact}; \texttt{retract-∃-compact};
    \texttt{codomain-of-surjection-is-Π-compact};
    \texttt{image-is-Π-compact}; \texttt{retract-is-Π-compact}.}
\item\label{item:weak-compact-sigma} If $X$ is $\Pi$-compact and
  $Y\,x$ is $\Pi$-compact for every $x : X$, then the sum $\Sigma\,Y$
  is $\Pi$-compact.
  \AgdaRefs{\Agda{TypeTopology.WeaklyCompactTypes}{Π-compact-closed-under-Σ}.}
\item\label{item:weak-compact-subtype} Both notions pass to
  complemented subtypes.
  \AgdaRefs{\Agda{TypeTopology.WeaklyCompactTypes}{detachable-subset-∃-compact};
  \texttt{complemented-subtype-is-Π-compact}.}
\end{enumerate}
\end{proposition}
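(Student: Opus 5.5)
We treat the three items separately, reusing the pattern from the proof of \Cref{thm:compact-closure}: transport the relevant decision along a map, and use surjection induction (\Cref{def:surjection}) whenever a propositional conclusion has to be pulled back.

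For~(\labelcref{item:weak-compact-retracts}), let $f : X \to Y$ be a surjection and $q : Y \to \Two$, and put $p = q \circ f$.

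\begin{itemize}
\item[] \emph{$\exists$-compactness.} We show that $\exists_{x:X}\,p\,x = 0$ and $\exists_{y:Y}\,q\,y = 0$ are logically equivalent. A root $x$ of $p$ gives the root $f\,x$ of $q$. Conversely, a root $y$ of $q$ has, in an unspecified way, a preimage $x$ with $f\,x = y$, which is a root of $p$. Since the target is a proposition, we may untruncate both the existential and the preimage.
\item[] \emph{$\Pi$-compactness.} If $q\,y = 1$ for every $y$, then $p\,x = 1$ for every $x$ by composition. Conversely, $p\,x = 1$ for every $x$ gives $q\,y = 1$ for every $y$ by surjection induction, the equations being propositions.
\end{itemize}

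Decidability transfers along these logical equivalences. A retraction $r$ with section $s$ is a surjection, since $y$ is the image of $s\,y$; so retracts are covered. This is exactly the argument used for~\Cref{prop:weakly-compact-basic}(\labelcref{item:weakly-compact-reflection}), with a general surjection in place of $\etaT$.

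For~(\labelcref{item:weak-compact-sigma}), let $p : \Sigma\,Y \to \Two$. For each $x$, the $\Pi$-compactness of $Y\,x$ decides $\prod_{y}\,p\,(x,y) = 1$. Define $q : X \to \Two$ by $q\,x = 1$ in the positive case and $q\,x = 0$ in the negative case, so that $q\,x = 1$ is logically equivalent to $\prod_y\,p\,(x,y)=1$. The $\Pi$-compactness of $X$ then decides $\prod_x\,q\,x = 1$. This is logically equivalent to $\prod_{(x,y)}\,p\,(x,y) = 1$ by currying, which settles the claim. The only care needed is to define $q$ by case analysis on the decision rather than as a proposition, so that it lands in $\Two$.

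For~(\labelcref{item:weak-compact-subtype}), let $A$ be complemented and proposition valued, with characteristic function $\chi$, so that $\chi\,x = 0$ exactly when $A\,x$ holds. Given $p$ on $\Sigma_x A\,x$, extend it to $\bar p : X \to \Two$ by deciding $A\,x$: in the positive case take $\bar p\,x = p\,(x,a)$, and otherwise take the neutral value. The neutral value is $1$ for the $\Pi$ case and for the $\exists$ case alike, so that only genuine roots count.

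\begin{itemize}
\item[] \emph{Well-definedness.} Because $A\,x$ is a proposition, $\bar p\,(x) = p\,(x,a)$ for every $a : A\,x$, so $\bar p$ does not depend on the witness chosen.
\item[] \emph{Transfer.} Roots of $\bar p$ correspond to roots of $p$, and $\bar p$ is constantly $1$ iff $p$ is. Hence the decision for $\bar p$ on $X$, given by $\exists$- or $\Pi$-compactness of $X$, transfers to $p$.
\end{itemize}

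The main obstacle, modest as it is, is this bookkeeping in~(\labelcref{item:weak-compact-subtype}): establishing that $\bar p$ is well defined and agrees with $p$ on the subtype. I would handle it by the same "decide, then transport" device as in the proof of \Cref{prop:compact-complemented}(\labelcref{item:complemented-subtype-compact}).
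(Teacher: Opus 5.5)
Your items~(\labelcref{item:weak-compact-retracts}) and~(\labelcref{item:weak-compact-sigma}) are the paper's arguments, and so is the $\Pi$ half of~(\labelcref{item:weak-compact-subtype}): surjection induction for $\Pi$, untruncating preimages for $\exists$, and the fiberwise decision $q$ followed by currying. The one genuine difference is the $\exists$ half of~(\labelcref{item:weak-compact-subtype}), and your argument there is correct.

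\emph{The paper's route.} It applies $\exists$-compactness of $X$ to the characteristic function of the subset, which decides whether the subtype is inhabited. If it is not, the subtype is empty and hence trivially $\exists$-compact. If it is, a point of the subtype makes it a retract of $X$, sending points outside the subset to that point, and then item~(\labelcref{item:weak-compact-retracts}) applies. The point can be used under the truncation because $\exists$-compactness is a proposition.

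\emph{Your route.} You extend $p$ by the value $1$ off the subtype and observe that $\exists_{x}\,\bar p\,x = 0$ and $\exists_{(x,a)}\,p\,(x,a) = 0$ are logically equivalent. A root of $\bar p$ must lie in the subtype, since $\bar p$ is $1$ elsewhere. A root $(x,a)$ of $p$ is a root of $\bar p$, since $A\,x$ is a proposition.

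\emph{Comparison.} Your route is more direct and uniform: one device serves both notions, with no case split, no appeal to item~(\labelcref{item:weak-compact-retracts}), and no use of the propositionality of $\exists$-compactness. The paper's route instead obtains the $\exists$ case as a corollary of closure under retracts, and along the way exhibits the structural fact that an inhabited complemented subtype is a retract of the ambient type. Both routes use proposition-valuedness of the family at the same point. For you it gives the equation $\bar p\,x = p\,(x,a)$ for every $a$. For the paper it makes $x \mapsto (x,a_x)$ an actual retraction.
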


\begin{proof}
(\labelcref{item:weak-compact-retracts}) Let $f : X \to Y$ be a
surjection and let $q : Y \to \Two$, and apply the compactness
hypothesis on $X$ to the map $q \circ f$.

For $\exists$-compactness, a root of $q \circ f$ gives a root of $q$,
and so the truncated existence of the former gives that of the latter.
If $q \circ f$ has no root, then neither has $q$, because a root of $q$
has a preimage under $f$ in an unspecified way, and that preimage is a
root of $q \circ f$. This conclusion, being a negation, may be drawn
from the truncated existence.

For $\Pi$-compactness, if $q\,(f\,x) = 1$ for every $x : X$, then
$q\,y = 1$ for every $y : Y$ by surjection induction along $f$, the
equations being propositions, and conversely by composition with $f$.
So the two universal statements are logically equivalent, and
decidability transfers along a logical equivalence.

Images are the case of the corestriction, which is a surjection, and a
retraction is a surjection.

(\labelcref{item:weak-compact-sigma}) Let $p : \Sigma\,Y \to \Two$. For
each $x : X$, the $\Pi$-compactness of the fiber $Y\,x$ decides
$\prod_y p\,(x,y) = 1$, and so there is a map $q : X \to \Two$
with $q\,x = 1$ if and only if $\prod_y p\,(x,y) = 1$. The
$\Pi$-compactness of $X$ decides $\prod_x q\,x = 1$, which is
logically equivalent to $\prod_x \prod_y p\,(x,y) = 1$, and this is
$\prod_{(x,y)} p\,(x,y) = 1$ after currying.

(\labelcref{item:weak-compact-subtype}) Let $S$ be a complemented
subset of $X$, with subtype $\Sigma_{x:X}\,x \in S$.

If $X$ is $\exists$-compact, apply $\exists$-compactness to the
characteristic function of $S$, which decides whether the subtype is
inhabited. If it is not, then the subtype is empty, and an empty type
is $\exists$-compact. If it is, then a point of the subtype makes it a
retract of $X$, the points outside $S$ being sent to that point, and so
the subtype is $\exists$-compact
by~(\labelcref{item:weak-compact-retracts}). This conclusion may be
drawn from the inhabitedness of the subtype, because
$\exists$-compactness is a proposition
(\Cref{prop:weakly-compact-basic}%
(\labelcref{item:weak-compact-are-props})).

If $X$ is $\Pi$-compact, a function on the subtype extends to a
function on $X$ that takes the value $1$ outside $S$, and the
extension is constantly~$1$ if and only if the given function is. So
the $\Pi$-compactness of $X$ decides whether the extension is
constantly~$1$, and hence whether the given function is.
\end{proof}

\begin{proposition}
\label{prop:weakly-compact-props}
\leavevmode
\begin{enumerate}
\item\label{item:exists-compact-prop-decidable} A proposition is
  $\exists$-compact if and only if it is decidable.
  \AgdaRefs{\Agda{TypeTopology.WeaklyCompactTypes}{∃-compact-propositions-are-decidable};
  \texttt{decidable-propositions-are-∃-compact}.}
\item\label{item:exists-compact-support} The truncation $\|X\|$ of an
  $\exists$-compact type $X$ is decidable.
  \AgdaRefs{\texttt{∃-compact-types-have-decidable-support}.}
\item\label{item:non-empty-exists-compact} A non-empty
  $\exists$-compact type is inhabited.
  \AgdaRefs{\texttt{∃-compact-non-empty-types-are-inhabited}.}
\item\label{item:pi-compact-negation} A $\Pi$-compact type has decidable
  negation.
  \AgdaRefs{\texttt{negations-of-Π-compact-types-are-decidable}.}
\end{enumerate}
\end{proposition}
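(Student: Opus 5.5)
The four items follow from unfolding the definitions of $\exists$-compactness and $\Pi$-compactness, applying them to well-chosen constant maps and using \Cref{prop:weakly-compact-basic}. I would prove the items in order, each feeding the next.

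For~(\labelcref{item:exists-compact-support}), I would apply $\exists$-compactness of $X$ to the constantly~$0$ map $\lambda x.\,0$. This decides $\exists_{x:X}\, 0 = 0$, and that type is logically equivalent to $\|X\|$ because the condition $0 = 0$ always holds. Decidability transfers along logical equivalence, which gives the claim.

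Item~(\labelcref{item:non-empty-exists-compact}) is then immediate. By~(\labelcref{item:exists-compact-support}) the proposition $\|X\|$ is decidable, hence $\neg\neg$-stable. The hypothesis $\neg\neg X$ gives $\neg\neg\|X\|$, because $\neg X$ and $\neg\|X\|$ are logically equivalent. Stability then yields $\|X\|$.

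For~(\labelcref{item:exists-compact-prop-decidable}), one direction uses~(\labelcref{item:exists-compact-support}): for a proposition $P$ we have $\|P\| \simeq P$, so $\exists$-compactness of $P$ makes $P$ decidable. For the converse, a decidable proposition is compact by \Cref{lem:compact-decidable}, and compact types are $\exists$-compact by \Cref{prop:weakly-compact-basic}(\labelcref{item:compact-gives-exists-compact}).

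For~(\labelcref{item:pi-compact-negation}), I would apply $\Pi$-compactness to the constantly~$1$ map, or more usefully to the constantly~$0$ map $\lambda x.\,0$. This decides $\prod_{x:X} 0 = 1$. That statement is logically equivalent to $\neg X$: from $x : X$ it yields $0=1$, which is absurd, and conversely $\neg X$ gives the product vacuously. So $\neg X$ is decidable. I expect no real obstacle here. The only care needed is the bookkeeping of truncations in~(\labelcref{item:non-empty-exists-compact}), namely passing between $\neg X$ and $\neg\|X\|$.
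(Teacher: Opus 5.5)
Your proposal is correct and follows the paper's proof essentially step by step. Applying $\exists$-compactness to the constantly~$0$ map decides $\|X\|$, which gives~(\labelcref{item:exists-compact-support}), the forward half of~(\labelcref{item:exists-compact-prop-decidable}), and~(\labelcref{item:non-empty-exists-compact}) via $\neg\neg$-stability; applying $\Pi$-compactness to the constantly~$0$ map decides $\prod_x 0=1$, which is equivalent to $\neg X$, and this gives~(\labelcref{item:pi-compact-negation}). The only cosmetic difference is the converse of~(\labelcref{item:exists-compact-prop-decidable}): you compose \Cref{lem:compact-decidable} with \Cref{prop:weakly-compact-basic}(\labelcref{item:compact-gives-exists-compact}), whereas the paper reruns the argument of \Cref{lem:compact-decidable} with $\exists$ in place of $\Sigma$, and both are valid.
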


\begin{proof}
(\labelcref{item:exists-compact-prop-decidable})
and~(\labelcref{item:exists-compact-support}) Apply
$\exists$-compactness to the constantly~$0$ map. Every point of
$X$ is a root of it, so that what is decided is the truncation
$\|X\|$, which gives~(\labelcref{item:exists-compact-support}). When
$X$ is a proposition, the truncation is equivalent to $X$, and so an
$\exists$-compact proposition is decidable. Conversely, a decidable
proposition is $\exists$-compact, by the argument
of~\Cref{lem:compact-decidable} with $\exists$ in place of~$\Sigma$.

(\labelcref{item:non-empty-exists-compact}) The truncation $\|X\|$ is
decidable by~(\labelcref{item:exists-compact-support}). Its negation is
logically equivalent to $\neg X$, which is ruled out by the
non-emptiness of $X$, and so the truncation holds, which says that $X$
is inhabited.

(\labelcref{item:pi-compact-negation}) Apply $\Pi$-compactness to the
constantly~$0$ map. What is decided is $\prod_x 0 = 1$, which is
logically equivalent to $\neg X$, because the type $0 = 1$ is empty.
\end{proof}
The following truncated form of
\Cref{lem:compact-pt}(\labelcref{item:universal-witness}), being a
product of propositions, is a proposition.
\begin{proposition}
\label{prop:exists-compact-pt}
The following are equivalent for any type $X$.
\begin{enumerate}
\item\label{item:exists-compact-inhabited} $X$ is $\exists$-compact
  and inhabited.
\item\label{item:truncated-universal-witness} Every $p : \Two^X$ has a
  universal witness in an unspecified way:
  \[
    \textstyle\prod_{p:\Two^X}\, \exists_{x_0:X}\,
    \big(p\,x_0 = 1 \to \forall x.\, p\,x = 1\big).
  \]
\end{enumerate}
Moreover, the type $X$ is $\exists$-compact if and only if it is
empty or satisfies~(\labelcref{item:truncated-universal-witness}).
\AgdaRefs{\Agda{TypeTopology.WeaklyCompactTypes}{is-∃-compact∙};
\texttt{∃-compactness∙-is-prop};
\texttt{∃-compact∙-types-are-inhabited-and-compact};
\texttt{inhabited-and-compact-types-are-∃-compact∙};
\texttt{∃-compact∙-or-empty-types-are-∃-compact};
\texttt{∃-compact-types-are-∃-compact∙-or-empty}.}
\end{proposition}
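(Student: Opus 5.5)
The plan mirrors the proof of \Cref{lem:compact-pt}, with the extra care that every step now happens under a truncation. The target in (\labelcref{item:truncated-universal-witness}) is a product of propositions. Hence, when proving it, we may freely untruncate any truncated hypothesis, in particular an inhabitant of $X$.

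\emph{(\labelcref{item:exists-compact-inhabited}) implies (\labelcref{item:truncated-universal-witness}).} Fix $p$. Since the goal is a proposition, we first untruncate the inhabitation of $X$ to get a point $x_\star$. Then $\exists$-compactness decides $\exists_x\,p\,x = 0$, and we split into two cases.
\begin{enumerate}
\item In the positive case we untruncate once more to obtain a root $x$. This $x$ is a universal witness vacuously, and we truncate it back.
\item In the negative case no $x$ has $p\,x = 0$, so $p\,x = 1$ for every $x$, because $\Two$ has only two points. Then $x_\star$ is a universal witness.
\end{enumerate}

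\emph{(\labelcref{item:truncated-universal-witness}) implies (\labelcref{item:exists-compact-inhabited}).} Inhabitation comes from applying the hypothesis to the constantly~$0$ map and forgetting everything but the truncated point. For $\exists$-compactness, fix $p$. The decidability of $\exists_x\,p\,x=0$ is a proposition, by \Cref{prop:weakly-compact-basic}(\labelcref{item:weak-compact-are-props}), so we may untruncate a universal witness $x_0$ and then inspect $p\,x_0$.
\begin{enumerate}
\item If $p\,x_0 = 0$, this gives $\exists_x\,p\,x=0$.
\item If $p\,x_0 = 1$, universality gives $p\,x = 1$ for all $x$, which refutes the existence of a root.
\end{enumerate}

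\emph{The final claim.} Suppose first that $X$ is $\exists$-compact. By \Cref{prop:weakly-compact-props}(\labelcref{item:exists-compact-support}), $\|X\|$ is decidable. If $\|X\|$ holds we are in case (\labelcref{item:exists-compact-inhabited}), hence in (\labelcref{item:truncated-universal-witness}) by the first part. If $\|X\|$ fails, then $X$ is empty. Conversely, an empty type is $\exists$-compact, since for every $p$ the proposition $\exists_x\,p\,x=0$ is refuted, and (\labelcref{item:truncated-universal-witness}) gives $\exists$-compactness by the second part.

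The only delicate point is the bookkeeping of truncations: each elimination must happen while the current goal is a proposition. This holds throughout, because both (\labelcref{item:truncated-universal-witness}) and $\exists$-compactness are products of propositions. I expect no step to be a real obstacle beyond checking this.
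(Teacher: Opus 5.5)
Your proposal is correct and follows essentially the same argument as the paper, including deciding $\|X\|$ via \Cref{prop:weakly-compact-props} for the final claim. The only differences are cosmetic: you untruncate the inhabitant of $X$ up front rather than only in the negative case, and the fact you need in the converse direction is just that the decidability of a proposition is a proposition, which is established in the proof of the item you cite rather than stated by it.
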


\begin{proof}
\emph{(\labelcref{item:truncated-universal-witness}) implies
(\labelcref{item:exists-compact-inhabited}).} Taking $p$ constantly~$0$
gives a point of $X$ in an unspecified way, so that $X$ is inhabited.
For $\exists$-compactness, let $p : X \to \Two$ and let $x_0$ be a
universal witness for $p$, available in an unspecified way. Deciding
the value of $p\,x_0$ either produces a root of $p$ or shows that $p$
is constantly~$1$, and so decides $\exists_x p\,x = 0$. This decision
may be made under the truncation, because the decidability of a
proposition is a proposition.

\emph{(\labelcref{item:exists-compact-inhabited}) implies
(\labelcref{item:truncated-universal-witness}).} Let $p : X \to \Two$.
By $\exists$-compactness, either $p$ has a root in an unspecified way,
and that root is a universal witness, the implication holding
vacuously, or $p$ is constantly~$1$, and then any point of $X$ is a
universal witness, one being available in an unspecified way from the
inhabitedness of~$X$.

\emph{The final claim.} An $\exists$-compact type has decidable
truncation by~\Cref{prop:weakly-compact-props}%
(\labelcref{item:exists-compact-support}), and so is either inhabited,
in which case it satisfies~(\labelcref{item:truncated-universal-witness})
by the equivalence just proved, or empty. Conversely, a type
satisfying~(\labelcref{item:truncated-universal-witness}) is
$\exists$-compact by that equivalence, and so is an empty type, no
function on it having a root.
\end{proof}

\begin{proposition}
\label{prop:pi-compact-infs}
A type $X$ is $\Pi$-compact if and only if every $p : X \to \Two$ has
an infimum, that is, a greatest $n : \Two$ such that $n \leq p\,x$ for
every $x : X$.
\AgdaRefs{\Agda{TypeTopology.WeaklyCompactTypes}{has-infs};
\texttt{Π-compact-has-infs}; \texttt{has-infs-Π-compact}; \texttt{inf};
\texttt{inf-property}; \texttt{inf₁}; \texttt{inf₁-converse}.}
\end{proposition}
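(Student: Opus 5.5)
Both directions go through the observation that, on $\Two$ with $0 \le 1$, the infimum of $p$ is $1$ exactly when $p$ is constantly~$1$. So deciding $\prod_{x:X} p\,x = 1$ and computing an infimum carry the same information.

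\emph{$\Pi$-compactness gives infima.} Given $p : X \to \Two$, use $\Pi$-compactness to decide $\prod_{x:X} p\,x = 1$. In the positive case take $n = 1$: it is a lower bound, and it is greatest because every element of $\Two$ is $\le 1$. In the negative case take $n = 0$, which is trivially a lower bound. To see it is greatest, let $m$ be any lower bound and show $m \le 0$ by cases on $m$. If $m = 1$, then $1 \le p\,x$ gives $p\,x = 1$ for every $x$, contradicting the negative case, so $m = 0$.

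\emph{Infima give $\Pi$-compactness.} Let $n$ be the infimum of $p$ and decide the value of $n$, which is possible because $\Two$ is discrete. If $n = 1$, then $1 \le p\,x$ for every $x$, hence $\prod_x p\,x = 1$. If $n = 0$, suppose $\prod_x p\,x = 1$. Then $1$ is a lower bound, so $1 \le n = 0$, a contradiction, and hence $\neg\prod_x p\,x = 1$.

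I expect no real obstacle. The only care needed is fixing the order on $\Two$, namely that $m \le k$ means $m = 1$ implies $k = 1$, together with its two elementary facts: $1 \le k$ gives $k = 1$, and everything is $\le 1$. One should also note that the greatest lower bound is unique, since $\le$ is antisymmetric. Hence having infima is a proposition, which is consistent with $\Pi$-compactness being a proposition (\Cref{prop:weakly-compact-basic}(\labelcref{item:weak-compact-are-props})), although the logical equivalence does not need this.
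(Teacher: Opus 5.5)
Your proof is correct and follows the paper's argument step for step. In one direction, $\Pi$-compactness decides whether $p$ is constantly~$1$, which yields the infimum $1$ or $0$; in the other, case analysis on the value of the infimum decides $\prod_{x} p\,x = 1$, with the case $0$ refuted because $1$ would otherwise be a lower bound (your remark on uniqueness of infima is a harmless extra).
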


\begin{proof}
Let $X$ be $\Pi$-compact and let $p : X \to \Two$. Decide whether $p$
is constantly~$1$. If it is, then $1$ is a lower bound of its values,
and it is the greatest element of $\Two$, so that it is the infimum. If
it is not, then $0$ is a lower bound, being the least element of
$\Two$, and $1$ is not a lower bound, as that would make $p$
constantly~$1$, so that $0$ is the infimum.

Conversely, suppose that every $p : X \to \Two$ has an infimum, and
consider the two possible values of $\inf p$. If it is $1$, then
$1 \leq p\,x$, and hence $p\,x = 1$, for every $x$. If it is $0$, then
$p$ is not constantly~$1$, because that would make $1$ a lower bound of
its values and so force $1 \leq \inf p$.
\end{proof}

The infimum operator sends $p$ to a boolean $\inf p$ that is $1$
precisely when $p$ has value $1$ everywhere, so that it defines a
boolean universal quantifier $\mathsf{A} : \Two^X \to \Two$ with
$\mathsf{A}\,p = \inf p$. This is compactness in the sense of
synthetic topology with dominance $\Two$. In fact, writing
$\mathsf{K} : \Two \to \Two^X$ for the map sending a boolean to the
constant function at it, an infimum operator is precisely an order
theoretic right adjoint $\mathsf{A}$ to $\mathsf{K}$, since the
adjunction says that $\mathsf{K}\,n \le p$ pointwise if and
only if $n \leq \mathsf{A}\,p$, which is to say that $\mathsf{A}\,p$
is the greatest lower bound of the values of $p$.
\claim{right-adjoint-characterization}
\AgdaRefs{\Agda{TypeTopology.WeaklyCompactTypes}{Κ⊣-charac};
  \texttt{Π-compact-iff-Κ-has-right-adjoint}.}

In classical topology a space $X$ is compact if and only if every
projection $Y\times X \to Y$ is a closed map. The notion appropriate
here is that of a \emph{clopen} map, one sending complemented subsets
to complemented subsets, with the image of a subset along a map
as in~\Cref{def:image}.
\begin{proposition}
\label{prop:clopen-projections}
A type $X$ is $\exists$-compact if and only if every projection
$Y \times X \to Y$ is a clopen map.
\AgdaRefs{\Agda{TypeTopology.WeaklyCompactTypes}{is-clopen-map};
\texttt{∃-compact-clopen-projections}; \texttt{clopen-projections-∃-compact}.}
\end{proposition}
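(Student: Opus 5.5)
We first fix what the two sides mean. A map $f : Z \to Y$ is clopen when, for every complemented subset $S$ of $Z$, the image $\mathrm{image}\,f\,S$ of \Cref{def:image} is again complemented. For the projection $\pi : Y \times X \to Y$, a complemented subset of $Y \times X$ amounts to a map $p : Y \times X \to \Two$ (\Cref{def:complemented}). Its image along $\pi$ has as members those $y$ with $\exists_{(y',x)}\,\big(p\,(y',x) = 0 \times y' = y\big)$. This is logically equivalent to $\exists_{x:X}\, p\,(y,x) = 0$. So clopenness of $\pi$ says exactly that $\exists_{x:X}\,p\,(y,x) = 0$ is decidable for every $y$ and every $p$. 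That is, for every $Y$ and every family $Y \to \Two^X$ obtained by currying.

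For the forward direction, we assume $X$ is $\exists$-compact and fix $p$. For each $y : Y$ we apply $\exists$-compactness to $\lambda x.\,p\,(y,x)$. This decides $\exists_x p\,(y,x) = 0$, which is the membership of $y$ in the image, so the image subset is complemented. One bookkeeping point: we must check that the image subset, defined with the truncated sum over pairs, matches this proposition pointwise. Both are propositions, and maps in both directions are immediate: use $(y,x)$ with $\mathrm{refl}$ one way, and transport along $y' = y$ under the truncation the other way. Propositional extensionality, or simply invariance of decidability under logical equivalence, then finishes this direction.

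For the converse, we take $Y :\equiv \One$, or any pointed type in a universe containing $X$. Given $q : X \to \Two$, we set $p\,(\pt,x) :\equiv q\,x$. Clopenness makes the image of the complemented subset determined by $p$ complemented. Evaluating at $\pt$ decides $\exists_{x} q\,x = 0$ by the same pointwise equivalence, which is $\exists$-compactness. The only place needing care is universe levels: the statement should quantify over $Y$ in a universe that admits $\One$, and $\One$ is available in every universe. The main obstacle is therefore just this identification of the image subset with the fiberwise existential, and it is routine.
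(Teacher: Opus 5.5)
Your proof is correct and is essentially the paper's argument. The forward direction identifies membership of $y$ in the image of the subset with $\exists_{x:X}\,p\,(y,x)=0$ and decides it by $\exists$-compactness, and the converse specializes to $Y = \One$ with characteristic function $q \circ \mathrm{pr}_2$; your check of the pointwise logical equivalence between truncated image membership and the fiberwise existential is the step the paper leaves implicit in the phrase ``amounts to''.
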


\begin{proof}
Let $X$ be $\exists$-compact, let $p : Y \times X \to \Two$ be the
characteristic function of a complemented subset, and let $y : Y$.
Membership of $y$ in the image of that subset along the projection
amounts to $\exists_{x:X}\,p\,(y,x) = 0$, which $\exists$-compactness
decides.

Conversely, take $Y$ to be the one-element type and let
$p : X \to \Two$. Consider the subset of $\One \times X$ with
characteristic function $p \circ \mathrm{pr}_2$. Membership of the
point of $\One$ in the image of that subset along the projection
$\One \times X \to \One$ amounts to $\exists_{x:X}\, p\,x = 0$, so that
the clopenness of this single projection gives the
$\exists$-compactness of $X$.
\end{proof}

\subsection{Interplay between the notions}
\label{sec:compact-interplay}

We collect results that combine compactness, discreteness and
total separatedness.
\begin{lemma}
\label{lem:sigma-isolated}
If $Y\,x$ is compact for every $x : X$ and the point $(x,y)$ of
$\Sigma\,Y$ is isolated, then $x$ is isolated.
\AgdaRefs{\Agda{TypeTopology.CompactTypes}{Σ-isolated-left}.}
\end{lemma}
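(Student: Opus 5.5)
Fix $x' : X$; we must decide $x = x'$. The plan is to use compactness of the fiber $Y\,x'$ to search for a point $y'$ with $(x',y')$ equal to $(x,y)$. The predicate $y' \mapsto ((x,y) = (x',y'))$ on $Y\,x'$ is complemented, because $(x,y)$ is isolated. So by~\Cref{lem:compact-formulations}, compactness of $Y\,x'$ decides the sum
\[
  \textstyle\Sigma_{y' : Y\,x'}\,\big((x,y) = (x',y')\big).
\]

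In the positive case we obtain $y'$ and an identification $(x,y) = (x',y')$. Applying the first projection gives $x = x'$.

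In the negative case we claim $x \neq x'$. Suppose $p : x = x'$. Transporting $y$ along $p$ gives a point $y' : Y\,x'$, and the pair $(p,\mathrm{refl})$ yields an identification $(x,y) = (x',y')$, because equality in $\Sigma\,Y$ is a pair consisting of an identification of first components and an identification of the transported second component. This contradicts the emptiness of the sum, so $x \neq x'$.

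The only delicate point is this standard characterization of identity types of $\Sigma$-types, which is used in the negative case. By path induction on $p$ it reduces to $\mathrm{refl}$, where it is immediate. No set hypothesis on $X$ is needed, unlike the converse direction for the second component in \Cref{prop:discrete-closure}(\labelcref{item:discrete-sigma-isolated}). Also note that compactness only has to be used at the fiber over the arbitrary point $x'$, not at the fiber over $x$.
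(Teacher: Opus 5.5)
Your proposal is correct and follows the paper's proof essentially step for step. The paper uses the same complemented family $y' \mapsto ((x,y) = (x',y'))$ on $Y\,x'$, decides its sum by compactness of $Y\,x'$, takes first components in the positive case, and in the negative case derives $x \neq x'$ by transporting $y$ along an assumed identification.
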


\begin{proof}
Let $x' : X$, so that what has to be decided is $x = x'$. The family
\[
  A\,y' :\equiv \big((x,y) = (x',y')\big)
\]
on $Y\,x'$ is complemented, because $(x,y)$ is isolated, and so the
compactness of $Y\,x'$ decides the sum of $A$. If some $y'$ has
$(x,y) = (x',y')$, then $x = x'$ by taking first components. If none
does, then $x \neq x'$, because an identification $x = x'$ would give
such a $y'$ by transporting $y$ along it.
\end{proof}

\begin{proposition}
\label{prop:compact-to-discrete}
\leavevmode
\begin{enumerate}
\item\label{item:pi-compact-discrete-power} If $X$ is $\Pi$-compact and
  $Y\,x$ is discrete for every $x:X$, then $\prod_x Y\,x$ is discrete.
\item\label{item:compact-apart-or-equal} If $X$ is compact, the
  discreteness of $\prod_x Y\,x$ can be
  strengthened to
  $\big(\Sigma_{x:X}\, f\,x \neq g\,x\big) + \big(f = g\big)$ for all
  $f,g:\prod_x Y\,x$.
\end{enumerate}
\AgdaRefs{\Agda{TypeTopology.CompactTypes}{apart-or-equal};
\texttt{discrete-to-power-compact-is-discrete};
\texttt{discrete-to-power-compact-is-discrete'};
\texttt{discrete-to-power-Compact-is-discrete};
\texttt{discrete-to-power-Compact-is-discrete'}.}
\end{proposition}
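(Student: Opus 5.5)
For both items we use the pointwise comparison map. Given $f,g : \prod_x Y\,x$, discreteness of each fiber gives, for every $x$, a decision of $f\,x = g\,x$. This defines $p : X \to \Two$ with $p\,x = 1$ exactly when $f\,x = g\,x$ and $p\,x = 0$ exactly when $f\,x \neq g\,x$. Equivalently, in the language of \Cref{prop:compact-complemented}(\labelcref{item:decide-sum-or-product}), we have $\prod_{x:X}\,\big((f\,x \neq g\,x) + (f\,x = g\,x)\big)$.

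For (\labelcref{item:compact-apart-or-equal}) we apply \Cref{prop:compact-complemented}(\labelcref{item:decide-sum-or-product}) with $A\,x :\equiv (f\,x \neq g\,x)$ and $B\,x :\equiv (f\,x = g\,x)$. Compactness of $X$ then decides between $\Sigma_{x:X}\, f\,x \neq g\,x$ and $\prod_{x:X}\, f\,x = g\,x$. In the second case, function extensionality turns the pointwise equality $f \sim g$ into $f = g$. This yields the sum $\big(\Sigma_{x:X}\, f\,x \neq g\,x\big) + \big(f = g\big)$, which in particular decides $f = g$, since the left summand refutes $f = g$ by applying it at the witness $x$.

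For (\labelcref{item:pi-compact-discrete-power}) we use $\Pi$-compactness directly on the map $p$ above: it decides $\prod_x p\,x = 1$. If this holds, then each $p\,x = 1$ unfolds to $f\,x = g\,x$ (by cases on the decision used to define $p$), and function extensionality gives $f = g$. If it fails, then $f \neq g$, because $f = g$ gives $f\,x = g\,x$ for every $x$, hence $p\,x = 1$ for every $x$, a contradiction.

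The only mildly delicate step is the bookkeeping relating $p\,x = 1$ to $f\,x = g\,x$. Defining $p$ by case analysis on the decision $\mathrm{dec}_x : (f\,x = g\,x) + \neg(f\,x = g\,x)$ and proving both directions by the same case analysis handles this routinely. Alternatively, one can note that $\Pi$-compactness is exactly what is needed to decide the statement $\prod_x B\,x$ for a complemented $B$, which avoids the $\Two$-valued encoding altogether. Nothing beyond function extensionality and the earlier results is needed.
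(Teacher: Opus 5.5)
Your proof is correct and follows the paper's argument. Both build the indicator $p : X \to \Two$ from the pointwise decisions of $f\,x = g\,x$, use $\Pi$-compactness to decide $\prod_x p\,x = 1$ for the first item and compactness to decide whether $p$ has a root for the second, and finish with function extensionality; the only cosmetic difference is that for the second item you invoke \Cref{prop:compact-complemented}(\labelcref{item:decide-sum-or-product}), whose proof is exactly that root search on the indicator.
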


\begin{proof}
Given $f, g : \prod_x Y\,x$, the discreteness of each $Y\,x$ decides
$f\,x = g\,x$, so there is an indicator $p : X \to \Two$ with
$p\,x = 0$ if $f\,x \neq g\,x$ and $p\,x = 1$ otherwise.

(\labelcref{item:pi-compact-discrete-power}) $\Pi$-compactness decides
whether $p\,x = 1$ for every $x$, which amounts to deciding $f = g$,
by function extensionality.

(\labelcref{item:compact-apart-or-equal}) Compactness decides whether
$p$ has a root, and a root of $p$ is a point~$x$ with
$f\,x \neq g\,x$, and the absence of one gives $f\,x = g\,x$
everywhere, and hence $f = g$.
\end{proof}

\begin{examples}
\label{ex:discrete-exponentials}
\leavevmode
\begin{enumerate}
\item\label{item:NInf-exponentials-discrete} The types $\Two^{\NInf}$
  and $\N^{\NInf}$ are discrete,
  by~\Cref{prop:compact-to-discrete}(\labelcref{item:compact-apart-or-equal}),
  because $\NInf$ is compact
  (\Cref{ex:compact-pt-basic}(\labelcref{item:NInf-compact})) and
  $\Two$ and $\N$ are discrete (\Cref{ex:isolated}).
  \AgdaRefs{\Agda{TypeTopology.GenericConvergentSequenceCompactness}{ℕ∞→𝟚-is-discrete},
    \texttt{ℕ∞→ℕ-is-discrete}.}
\item\label{item:cantor-discrete-converse} The principle $\WLPO$, the
  $\Pi$-compactness of $\N$
  (\Cref{ex:compact-basic}(\labelcref{item:N-pi-compact})), gives the
  discreteness of the Cantor type $\Cantor$
  by~\Cref{prop:compact-to-discrete}%
  (\labelcref{item:pi-compact-discrete-power}). Together
  with~\Cref{ex:tot-sep}(\labelcref{item:cantor-not-discrete}), this
  makes the discreteness of $\Cantor$ equivalent to $\WLPO$.
\end{enumerate}
\end{examples}

\begin{definition}[Disconnectedness]
\label{def:disconnected}
A type $X$ is \emph{disconnected} if $\Two$ is a retract of it, or,
equivalently, if some $p : X \to \Two$ takes both values.
\AgdaRefs{\Agda{TypeTopology.DisconnectedTypes}{is-disconnected}, which
is \texttt{is-disconnected₀}, with \texttt{is-disconnected₁},
\texttt{is-disconnected₂}, \texttt{is-disconnected₃} and
\texttt{is-disconnected-eq} for equivalent formulations;
\texttt{discrete-types-with-two-different-points-are-disconnected}.}
\end{definition}

\Cref{prop:compact-to-discrete} has a partial converse.

\begin{proposition}
\label{prop:discrete-exponential-gives-compact}
If $Y$ is disconnected and $Y^X$ is discrete, then $X$ is
$\Pi$-compact.
\AgdaRefs{\Agda{TypeTopology.WeaklyCompactTypes}{power-of-two-discrete-gives-compact-exponent};
\texttt{discrete-power-of-disconnected-gives-compact-exponent}.}
\end{proposition}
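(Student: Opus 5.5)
We reduce to the characterization of $\Pi$-compactness as isolatedness of the constantly-$1$ map, \Cref{prop:weakly-compact-basic}(\labelcref{item:pi-compact-isolated}). Since $Y$ is disconnected (\Cref{def:disconnected}), $\Two$ is a retract of $Y$. We fix a section $s : \Two \to Y$ with retraction $r : Y \to \Two$, so that $r \circ s = \id$.

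Given $p : X \to \Two$, we want to decide $\prod_{x:X} p\,x = 1$. We form the two maps $s \circ p$ and $s \circ (\lambda x.\,1)$ in $Y^X$, and the discreteness of $Y^X$ decides whether they are equal.

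\begin{itemize}
\item If $s \circ p = \lambda x.\, s\,1$, we post-compose with $r$ and use $r \circ s = \id$. Pointwise this gives $p\,x = r(s(p\,x)) = r(s\,1) = 1$ for every $x$.
\item If they are not equal, then $p$ cannot be constantly~$1$. Indeed, $\prod_x p\,x = 1$ would give $p = \lambda x.\,1$ by function extensionality, and hence $s \circ p = \lambda x.\, s\,1$, a contradiction.
\end{itemize}

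This decides $\prod_x p\,x = 1$, which is exactly $\Pi$-compactness. An equivalent, more structural route is to observe that $\Two^X$ is a retract of $Y^X$ via post-composition with $s$ and $r$. Discreteness is closed under retracts (\Cref{prop:discrete-closure}(\labelcref{item:discrete-retract})), so $\Two^X$ is discrete. In particular $\lambda x.\,1$ is isolated, and we conclude by \Cref{prop:weakly-compact-basic}(\labelcref{item:pi-compact-isolated}). I would present this second version, since it is a direct composition of earlier results.

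No step is really hard. The only point needing care is the passage from ``some $p : X \to \Two$ takes both values'' to an actual retraction $\Two \to Y \to \Two$. The definition asserts these formulations are equivalent, so I would take the retract formulation directly. If only the two-valued-map form were available, then $\Two$ is a retract of $Y$ by sending $b$ to a chosen point where the map takes value $b$, and using the map itself as the retraction.
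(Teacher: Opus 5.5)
Your proof is correct, and the version you say you would present is exactly the paper's argument: $\Two^X$ is a retract of $Y^X$ via post-composition with the section and retraction, discreteness passes to retracts, and the $\Pi$-compactness of $X$ is the isolatedness of $\lambda x.\,1$ in $\Two^X$. Your first, direct version is the same argument unfolded at the single point $\lambda x.\,1$.
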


\begin{proof}
Take first $Y = \Two$. The $\Pi$-compactness of $X$ is the isolatedness
of the constant map $\lambda x.\,1$ in $\Two^X$
(\Cref{prop:weakly-compact-basic}(\labelcref{item:pi-compact-isolated})),
and in a discrete type every point is isolated. A disconnected $Y$
comes with a retraction onto $\Two$, and composing with it and with
its section makes $\Two^X$ a retract of $Y^X$, and discreteness passes
to retracts
(\Cref{prop:discrete-closure}(\labelcref{item:discrete-retract})).
\end{proof}

\begin{proposition}
\label{prop:tot-sep-compact-exponential}
Let $Y$ be disconnected.
\begin{enumerate}
\item\label{item:tscd-disconnected} If $X$ is totally separated and
  $Y^X$ is $\Pi$-compact, then $X$ is discrete.
\item\label{item:tscd-reflection} For arbitrary $X$, if $Y^X$ is
  $\Pi$-compact, then the totally separated reflection
  $\tsreflection X$ of~\Cref{thm:ts-reflection} is discrete.
\end{enumerate}
\AgdaRefs{\Agda{TypeTopology.WeaklyCompactTypes}{tscd}; \texttt{tscd₀};
\texttt{tscd₁}; and, recorded independently in essentially the same
words, \Agda{TypeTopology.CompactTypes}{compact-power-of-𝟚-has-discrete-exponent}.}
\end{proposition}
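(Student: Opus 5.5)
First reduce to $Y = \Two$. Since $Y$ is disconnected, $\Two$ is a retract of $Y$ with retraction $r$ and section $s$. Post-composition with $r$ and $s$ then makes $\Two^X$ a retract of $Y^X$. $\Pi$-compactness passes to retracts by \Cref{prop:weakly-compact-closure}(\labelcref{item:weak-compact-retracts}), so in both items we may assume that $\Two^X$ is $\Pi$-compact.

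For (\labelcref{item:tscd-disconnected}), let $x,y : X$ be given; we must decide $x = y$. Consider the map $e : \Two^X \to \Two$ sending $p$ to the boolean that is $1$ exactly when $p\,x = p\,y$. This map is definable because $\Two$ has decidable equality. Applying $\Pi$-compactness of $\Two^X$ to $e$ decides the type $\prod_{p:\Two^X} e\,p = 1$, which is logically equivalent to $x =_2 y$.
\begin{itemize}
\item In the positive case, total separatedness of $X$ gives $x = y$.
\item In the negative case, $\neg(x =_2 y)$ holds, and this refutes $x = y$, since equal points are indistinguishable by every $p$ (transport along the identification).
\end{itemize}
Hence $X$ is discrete. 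This step is essentially a direct unfolding of definitions.

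For (\labelcref{item:tscd-reflection}), apply (\labelcref{item:tscd-disconnected}) to $\tsreflection X$, which is totally separated by \Cref{thm:ts-reflection}(\labelcref{item:ts-reflection-surjection}). It remains to show that $\Two^{\tsreflection X}$ is $\Pi$-compact. By \Cref{thm:ts-reflection}(\labelcref{item:ts-reflection-universal}) with $A = \Two$, precomposition with $\etaT$ is an equivalence $\Two^{\tsreflection X} \simeq \Two^X$. An equivalence is in particular a retraction, so $\Pi$-compactness transfers along it by \Cref{prop:weakly-compact-closure}(\labelcref{item:weak-compact-retracts}). We may equally do the reduction from $Y$ to $\Two$ first and then transport across this equivalence.

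The step I expect to need the most care is the reduction from $Y$ to $\Two$, namely checking that $f \mapsto r \circ f$ and $g \mapsto s \circ g$ really exhibit $\Two^X$ as a retract of $Y^X$. This needs function extensionality, to conclude $r \circ s \circ g = g$ from the pointwise equation. The rest is routine bookkeeping between decidability of $\prod_p e\,p = 1$ and decidability of $x =_2 y$, which are logically equivalent because $e\,p = 1$ iff $p\,x = p\,y$.
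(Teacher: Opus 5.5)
Your proposal is correct and follows essentially the same route as the paper: reduce to $Y = \Two$ via the retraction of $\Two^X$ onto $Y^X$, apply $\Pi$-compactness of $\Two^X$ to the map testing $p\,x = p\,y$ to decide $x =_2 y$, and use total separatedness. For the second item you likewise apply the first item to $\tsreflection X$, using the equivalence $\Two^{\tsreflection X} \simeq \Two^X$ given by the universal property of the reflection.
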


\begin{proof}
(\labelcref{item:tscd-disconnected}) Take first $Y = \Two$, and let
$x, y : X$. Since $\Two$ is discrete, there is a map
$q : \Two^X \to \Two$ with $q\,p = 1$ when $p\,x = p\,y$ and
$q\,p = 0$ when the two values differ. Applying the $\Pi$-compactness
of $\Two^X$ to $q$ decides $\prod_p q\,p = 1$, which says that $x$ and
$y$ agree at every $\Two$-valued map, that is, that $x =_2 y$. If
$x =_2 y$, then $x = y$ by total separatedness, and if not, then
$x \neq y$, because $x = y$ would give $p\,x = p\,y$ for every $p$. So
$X$ is discrete.

For a disconnected $Y$, a retraction of $Y$ onto $\Two$ makes $\Two^X$
a retract of $Y^X$, and $\Pi$-compactness passes to retracts
(\Cref{prop:weakly-compact-closure}%
(\labelcref{item:weak-compact-retracts})), so that the case just proved
applies.

(\labelcref{item:tscd-reflection}) A retraction of $Y$ onto $\Two$
makes $\Two^X$ a retract of $Y^X$, as
in~(\labelcref{item:tscd-disconnected}), and $\Two^{\tsreflection X}$
is equivalent to $\Two^X$ because $X$ and $\tsreflection X$ have the
same $\Two$-valued maps
(\Cref{thm:ts-reflection}(\labelcref{item:ts-reflection-universal})
with $A = \Two$). So $\Two^{\tsreflection X}$ is a retract of $Y^X$ and
hence $\Pi$-compact
(\Cref{prop:weakly-compact-closure}(\labelcref{item:weak-compact-retracts})).
Now apply~(\labelcref{item:tscd-disconnected}) to $\tsreflection X$,
which is totally separated
(\Cref{thm:ts-reflection}(\labelcref{item:ts-reflection-surjection})).
\end{proof}

\begin{examples}
\label{ex:not-compact}
\leavevmode
\begin{enumerate}
\item\label{item:simple-not-compact} A $\Pi$-compact simple type
  (\Cref{ex:tot-sep}(\labelcref{item:simple-types})) gives $\WLPO$.
  \AgdaRefs{\Agda{TypeTopology.SimpleTypes}{WLPO'}, which is the
  formalization's name for the $\Pi$-compactness of $\N$;
  \texttt{stcwlpo}; \texttt{stcwlpo'}.}
\item\label{item:NInf-power-not-compact} The $\Pi$-compactness of
  $\Two^{\NInf}$ gives $\WLPO$.
  \AgdaRefs{\Agda{TypeTopology.WeaklyCompactTypes}{[ℕ∞→𝟚]-compact-implies-WLPO}.}
\end{enumerate}
\end{examples}

\begin{proof}
(\labelcref{item:simple-not-compact}) The type $\N$ is a retract of
every simple type (\Cref{ex:tot-sep}(\labelcref{item:simple-types})),
and $\Pi$-compactness is inherited by retracts
(\Cref{prop:weakly-compact-closure}(\labelcref{item:weak-compact-retracts})),
so $\N$ is
$\Pi$-compact, which is $\WLPO$
(\Cref{ex:compact-basic}(\labelcref{item:N-pi-compact})).

(\labelcref{item:NInf-power-not-compact}) The type $\NInf$ is totally
separated, being a retract of $\Cantor$ (\Cref{ex:NInf}), which is
totally separated
(\Cref{ex:tot-sep}(\labelcref{item:cantor-not-discrete})), and total
separatedness passing to retracts
(\Cref{prop:tot-sep-closure}(\labelcref{item:tot-sep-retract})), so
\Cref{prop:tot-sep-compact-exponential}%
(\labelcref{item:tscd-disconnected}) turns
the $\Pi$-compactness of $\Two^{\NInf}$ into the discreteness of
$\NInf$, which is $\WLPO$
(\Cref{ex:isolated}(\labelcref{item:NInf-discrete-wlpo})).
\end{proof}

\subsection{The micro-Tychonoff theorem}
\label{sec:micro-tychonoff}

We now formulate and prove the smallest possible instance of Tychonoff's
theorem for our notion of compactness
(cf.~\Cref{rem:tychonoff-independence}) by restricting the index type
to be a subsingleton and the indexed compact types to be pointed. A
crucial application is the compactness of extended sums
(\Cref{thm:extended-sum-compact}), on which the main results
of~\Cref{sec:brouwer-standard,sec:inductive-recursive} are based.

\begin{theorem}[Micro-Tychonoff]
\label{thm:micro-tychonoff}
If $X$ is a proposition and $Y\,x$ is compact pointed for every $x:X$,
then $\prod_x Y\,x$ is compact pointed.
\AgdaRefs{\Agda{TypeTopology.MicroTychonoff}{micro-tychonoff};
\texttt{micro-tychonoff-corollary}; \texttt{micro-tychonoff-corollary'}.}
\end{theorem}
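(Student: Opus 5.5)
By \Cref{lem:compact-pt} it suffices to produce, for every $p : \prod_x Y\,x \to \Two$, a universal witness $f_0 : \prod_x Y\,x$. The idea is that when $X$ has a point $x$ the product is equivalent to the single fiber $Y\,x$, and when $X$ is empty the product is a singleton. We cannot decide which case holds, so the witness must be defined uniformly. The verification is then done under a double negation, which is harmless because $\Two$ is $\neg\neg$-separated.

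\emph{Step 1: the equivalence at a point.} For $x : X$ define $\varphi_x : Y\,x \to \prod_{x'} Y\,x'$ by sending $y$ to $\lambda x'.\,\mathrm{transport}\,(e_{x,x'})\,y$, where $e_{x,x'} : x = x'$ is obtained from $X$ being a proposition. Propositions are sets, so $e_{x,x}$ is $\mathrm{refl}$ up to identification, and hence $\varphi_x\,y\,x = y$. Conversely, for any $f$, function extensionality together with transporting along the unique paths gives $\varphi_x\,(f\,x) = f$. Thus evaluation at $x$ is an equivalence with inverse $\varphi_x$.

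\emph{Step 2: the candidate witness.} Since each $Y\,x$ is compact pointed, \Cref{lem:compact-pt} provides a selection function $\selection_x : \Two^{Y\,x} \to Y\,x$. Define
\[
  f_0\,x \;:\equiv\; \selection_x\,(p \circ \varphi_x).
\]
This needs no case analysis on $X$. Pointedness of the product is immediate: $\lambda x.\,$(the given point of $Y\,x$) is a point, or we can simply take $f_0$ for the constant map.

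\emph{Step 3: universality.} Assume $p\,f_0 = 1$ and let $f : \prod_x Y\,x$. Since $\Two$ is discrete and hence $\neg\neg$-separated, it is enough to prove $\neg\neg(p\,f = 1)$. For this it suffices to derive $p\,f = 1$ both from a point of $X$ and from $\neg X$, because $\neg\neg(X + \neg X)$ always holds.
\begin{itemize}
  \item If $x : X$, Step 1 gives $f_0 = \varphi_x(f_0\,x)$. Hence $(p\circ\varphi_x)(\selection_x(p\circ\varphi_x)) = p\,f_0 = 1$, and universality of $\selection_x$ gives $(p \circ \varphi_x)(f\,x) = 1$. Using $\varphi_x(f\,x) = f$, this is $p\,f = 1$.
  \item If $\neg X$, then $\prod_x Y\,x$ is a singleton by function extensionality, so $f = f_0$ and $p\,f = p\,f_0 = 1$.
\end{itemize}

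I expect the main obstacle to be Step 1, namely the transport bookkeeping showing $\varphi_x(f\,x) = f$ and $\varphi_x\,y\,x = y$. This rests on identity types of a proposition being propositions, so that every path $x = x'$ equals the chosen one and $e_{x,x}$ is $\mathrm{refl}$. The rest is routine.
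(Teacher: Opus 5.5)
Your proposal is correct and follows the paper's proof essentially step for step. It uses the same candidate witness $x \mapsto \selection_x(p \circ \varphi_x)$, obtained through the inverse of the evaluation equivalence, and the same two cases $x : X$ and $\neg X$; the paper combines these cases by a case split on the boolean value of $p$ at the arbitrary point (if it is $0$, contrapose the first case to get $\neg X$ and then apply the second), whereas you appeal to $\neg\neg(X+\neg X)$ and the $\neg\neg$-separatedness of $\Two$, which is only a cosmetic difference, and your explicit transport description of the inverse of evaluation fills in a detail the paper leaves implicit.
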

\begin{proof}
Let $X$ be a proposition, let $\selection_x$ be a selection function
witnessing that $Y\,x$ is compact pointed for each $x : X$, and let
$p : \prod_x Y\,x \to \Two$.
For each $x : X$, evaluation at $x$,
\[
  f_x : \textstyle\prod_{x'} Y\,x' \to Y\,x, \qquad f_x\,\varphi = \varphi\,x,
\]
is an equivalence, because a product indexed by a proposition is the
value at any point of that proposition. Carry the map across by
setting $q_x\,y = p\,(f_x^{-1}\,y)$, and define the candidate
witness pointwise:
\[
  \varphi_0\,x \;=\; \selection_x\,q_x .
\]
We claim $\varphi_0$ is a universal witness for $p$. So assume
$p\,\varphi_0 = 1$, and we must show $p\,\varphi = 1$ for an arbitrary
$\varphi$.

\emph{(1)~The conclusion follows from $X$.} Suppose $x : X$. The
universality of $\varphi_0\,x$ for $q_x$ says that
$p\,(f_x^{-1}(\varphi_0\,x)) = 1$ implies $p\,(f_x^{-1}\,y) = 1$ for
every $y : Y\,x$. Since $f_x\,\varphi_0 = \varphi_0\,x$ and $f_x^{-1}$
is a retraction of $f_x$, the hypothesis is our assumption
$p\,\varphi_0 = 1$. Instantiating $y = \varphi\,x$ and using
$f_x^{-1}(\varphi\,x) = \varphi$, we get $p\,\varphi = 1$.
This conclusion no longer mentions the point~$x$, so that
$X \to p\,\varphi = 1$.

\emph{(2)~The conclusion also follows from $\neg X$.} If $X$ is
empty then any two points of $\prod_x Y\,x$ are equal, in particular
$\varphi = \varphi_0$, and so
$p\,\varphi = p\,\varphi_0 = 1$.

\emph{(3)~Combining (1) and (2).} Excluded middle, if it were
available, would now finish the argument, since $X$ is a proposition,
but it is not needed. Suppose $p\,\varphi = 0$. Then
$p\,\varphi \neq 1$, so contraposing~(1) gives $\neg X$, and then
(2)~gives $p\,\varphi = 1$. Thus $p\,\varphi = 0$ implies
$p\,\varphi = 1$. The value of $p\,\varphi$ is $0$ or $1$, and in the
first case it is also $1$, so $p\,\varphi = 1$.
\end{proof}

Specializing to a constant family, this says that $Y^X$ is compact
pointed for any proposition $X$ and any type $Y$ such that the
pointedness of $X$ implies the compact pointedness of $Y$, whether or
not $X$ holds.  \claim{micro-tychonoff-constant-family}

\begin{example}
\label{ex:LPO-to-N}
The function type $\N^{\LPO}$ is compact pointed. Indeed $\LPO$ is a
proposition that implies the compact pointedness of $\N$,
so~\Cref{thm:micro-tychonoff} applies. The exponential is compact
whether or not $\LPO$ holds, even though the compactness of $\N$ is
$\LPO$ itself, which is independent of our type theory
(\Cref{ex:compact-basic}(\labelcref{item:N-compact-lpo})).
\AgdaRefs{\Agda{Taboos.LPO}{[LPO→ℕ]-is-compact∙};
  \texttt{[LPO→ℕ]-is-compact}; \texttt{LPO-is-prop};
  \texttt{LPO-gives-compact-ℕ}.}
\end{example}

\begin{remark}
\label{rem:micro-tychonoff}
\leavevmode
\begin{enumerate}
\item The proof of~\Cref{thm:micro-tychonoff} does not go through the
  equivalence $\prod_x Y\,x \simeq Y\,x_0$, which holds once a point
  $x_0 : X$ is given and would transfer compact pointedness directly.
  Such a point is at hand only under the hypothesis that $X$ holds,
  whereas the universal witness has to be produced without deciding
  $X$.
\item\label{item:pointedness-essential} The pointedness hypothesis is
  essential, since the corresponding statement for plain compactness
  fails constructively. Indeed, take $Y\,x :\equiv \Zero$, which is
  compact but not pointed. Then $\prod_x Y\,x$ is $\neg X$, so the
  unpointed statement would make $\neg X$ compact, and hence decidable
  (\Cref{lem:compact-decidable}), for every proposition $X$, which is
  the principle of weak excluded middle. Pointedness enters the proof
  of~\Cref{thm:micro-tychonoff} in the definition of $\varphi_0$, which
  has to be a point of $\prod_x Y\,x$ whether or not $X$ holds, and
  plain compactness provides no such point.
  \AgdaRefs{\Agda{TypeTopology.MicroTychonoff}{compact-micro-tychonoff-gives-WEM}.}
\end{enumerate}
\end{remark}

The hypothesis on the index type can be generalized from a proposition
to a subtype of a finite type, because such a subtype is a finite sum
of propositions.

\begin{corollary}
\label{cor:subfinite-tychonoff}
Let $F$ be a finite type and $j : X \hookrightarrow F$ an embedding. If
the type~$Y\,x$ is compact pointed for every $x : X$, then so is
$\prod_x Y\,x$.
\AgdaRefs{\Agda{TypeTopology.MicroTychonoff}{subfinite-tychonoff}, via
\Agda{Fin.Topology}{finitely-indexed-product-compact∙} and
\texttt{finite-product-compact∙}.}
\end{corollary}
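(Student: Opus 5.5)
The plan is to reduce the statement to a finite product of instances of the micro-Tychonoff theorem (\Cref{thm:micro-tychonoff}). Take $F$ to be $\One+\cdots+\One$ with $n$ summands, identified with $\{0,\dots,n-1\}$. For each $i : F$, form the fiber $X_i :\equiv \fib j i$. This is a proposition because $j$ is an embedding. By the fiber decomposition of a map (\Cref{def:base-fiber}), the domain splits as
\[
  X \simeq \textstyle\Sigma_{i:F}\,X_i .
\]
Currying then gives
\[
  \textstyle\prod_{x:X} Y\,x \;\simeq\; \prod_{i:F}\,\prod_{(x,e):X_i} Y\,x .
\]
Since pointed compactness is invariant under equivalence (\Cref{cor:compact-pt-closure}(\labelcref{item:pt-retract})), it suffices to prove that the right-hand side is compact pointed.

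For each fixed $i$, the inner product is indexed by the proposition $X_i$. Each of its factors $Y\,x$ is compact pointed by hypothesis. So \Cref{thm:micro-tychonoff}, applied to the family $(x,e) \mapsto Y\,x$ over $X_i$, shows that $Z_i :\equiv \prod_{(x,e):X_i} Y\,x$ is compact pointed. Importantly, this step does not require deciding whether $X_i$ is inhabited.

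It remains to show that a product $\prod_{i:F} Z_i$ of compact pointed types, indexed by a finite type, is compact pointed. I would prove this by induction on $n$. For $n=0$ the product over $\Zero$ is a singleton, which is compact by \Cref{thm:compact-closure}(\labelcref{item:compact-zero-one}) and is pointed. For the step, the product over $F+\One$ is equivalent to
\[
  \Big(\textstyle\prod_{i:F} Z_{\mathrm{inl}\,i}\Big)\times Z_{\mathrm{inr}\,\pt} ,
\]
using function extensionality. This binary product is compact pointed by \Cref{cor:compact-pt-closure}(\labelcref{item:pt-sigma}) together with the induction hypothesis, and the equivalence transfers the property back to the product over $F+\One$.

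The only delicate point is the bookkeeping of the equivalence $\prod_{x:X} Y\,x \simeq \prod_i \prod_{X_i} Y$. It needs function extensionality and the equivalence $X \simeq \Sigma_i \fib j i$, and both are routine. Everything else reduces directly to results already established above.
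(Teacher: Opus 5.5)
Your proposal is correct and follows the paper's own proof. It decomposes $X$ as the sum of the fibers of $j$, curries, applies the micro-Tychonoff theorem to each proposition-indexed inner product, and then uses closure of pointed compactness under finite products and equivalence. The only difference is that you spell out the induction on the number of summands for the finite product, which the paper leaves implicit in its appeal to closure under binary products.
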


\begin{proof}
The embedding decomposes $X$ as the sum of its fibers, and currying the
product over that sum gives
\[
  \textstyle\prod_{x : X} Y\,x \;\simeq\;
  \prod_{i : F}\ \prod_{w : \fib ji} Y\,(\mathrm{pr}_1 w) .
\]
Each fiber is a proposition, $j$ being an embedding, so each inner
product is compact pointed by~\Cref{thm:micro-tychonoff}. The outer
product is a finite product of compact pointed types, which is compact
pointed by~\Cref{cor:compact-pt-closure}(\labelcref{item:pt-sigma}),
and pointed compactness is invariant under equivalence
(\Cref{cor:compact-pt-closure}(\labelcref{item:pt-retract})).
\end{proof}

\subsection{Extension of families of compact types along embeddings}
\label{sec:extension}

The above closure properties of compact types allow us to construct
new compact types starting from examples such as finite types and
$\NInf$. However, they do not reach far. To construct more compact
types, we develop a generalization of a certain \emph{squashed sum}
constructed in~\cite{EscardoOmniscientJSL2013}, with the aid of
the theory of algebraically injective types developed
in~\cite{EscardoInjective2021}. It works by \emph{extending} a family
of types along an embedding and then summing the extended family. The
extension operation witnesses the algebraic injectivity of the
universe, and the sum is compact because the fibers of an embedding
are propositions, so that the
micro-Tychonoff~\Cref{thm:micro-tychonoff} is applicable.

\begin{definition}[$\Pi$-extension]
\label{def:extension}
Let $Y : X \to \UU$ be a family of types and $j : X \to K$ an arbitrary
map. The \emph{$\Pi$-extension} of $Y$
along $j$ is the family $Y \Pext j : K \to \UU$ given by
\[
  (Y \Pext j)\,k \;:\equiv\; \prod_{(x,p)\,:\,\fib{j}{k}} Y\,x,
\]
where $\fib{j}{k}$ is the fiber of $j$ over $k$.
\AgdaRefs{\Agda{InjectiveTypes.Blackboard}{Π-extension}; \texttt{\_/\_};
\texttt{2nd-Π-extension-formula}.}
\end{definition}
This is illustrated by the following diagram, with the general case on
the left, used in~\Cref{sec:inductive-recursive}, and the special case
of the embedding $\N \hookrightarrow \NInf$ on the right, used
in~\Cref{sec:brouwer-standard}:
\[
\begin{tikzcd}[row sep=large, column sep=large]
  X \arrow[r, hook, "j"]       \arrow[dr, "Y"'] & K     \arrow[d, dashed, "Y \Pext j"] \\
  & \UU
\end{tikzcd}
\qquad
\begin{tikzcd}[row sep=large, column sep=large]
  \N \arrow[r, hook, "\iota"] \arrow[dr, "Y"'] & \NInf \arrow[d, dashed, "Y \Pext \iota"] \\
  & \UU.
\end{tikzcd}
\]
The point of the above construction is that we can extend $Y$ from
$\N$ to $\NInf$ \emph{without} checking whether the argument of the
desired extension $Y \Pext \iota$ is $\infty$ or not, which is not
available in our constructive setting, as it amounts to $\WLPO$
(\Cref{ex:isolated}(\labelcref{item:infty-isolated-wlpo})).
\begin{lemma}
\label{lem:extension-property}
Let $Y : X \to \UU$ and $j : X \to K$.
\begin{enumerate}
\item\label{item:ext-restricts} If $j$ is an embedding, the
  $\Pi$-extension restricts back to $Y$ along $j$ up to type
  equivalence: for every $x : X$ we have
  $(Y \Pext j)(j\,x) \simeq Y\,x$.
\item\label{item:ext-off-image} If $j\,x \neq k$ for every $x:X$, then
  $(Y \Pext j)\,k \simeq \One$.
\end{enumerate}
\AgdaRefs{\Agda{InjectiveTypes.Blackboard}{Π-extension-property};
\texttt{Π-extension-out-of-range}.}
\end{lemma}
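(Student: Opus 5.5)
Both items come down to a single observation: a $\Pi$-type indexed by a singleton is equivalent to the fiber at its centre, and a $\Pi$-type indexed by an empty type is a singleton. The plan is to identify the index type $\fib{j}{k}$ in each case and then apply one of these two facts. Function extensionality is needed throughout, since it is what makes these $\Pi$-types behave as expected.

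For~(\labelcref{item:ext-restricts}), I fix $x : X$ and consider the fiber $\fib{j}{(j\,x)}$. It is pointed by $(x,\mathrm{refl})$. Since $j$ is an embedding, this fiber is a proposition, and a pointed proposition is a singleton (\Cref{def:singleton}), with centre $(x,\mathrm{refl})$. It then suffices to prove the general fact that for a singleton type $S$ with centre $c$ and a family $B : S \to \UU$, evaluation at $c$,
\[
  \textstyle\prod_{s:S} B\,s \to B\,c, \qquad \varphi \mapsto \varphi\,c,
\]
is an equivalence. Its inverse sends $b : B\,c$ to $s \mapsto \mathrm{transport}^B\,(h\,s)\,b$, where $h\,s : c = s$ is the contraction. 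One composite is the identity because the contraction $h\,c$ may be chosen to be $\mathrm{refl}$. Concretely, I replace $h$ by $h'\,s :\equiv (h\,c)^{-1} \cdot h\,s$, or equivalently use that $S$ is a set-like proposition. For the other composite I need $\mathrm{transport}\,(h\,s)\,(\varphi\,c) = \varphi\,s$, which follows from the dependent action of $\varphi$ on the path $h\,s$, followed by function extensionality. Taking $B\,(x',p) :\equiv Y\,x'$ gives $(Y \Pext j)\,(j\,x) \simeq Y\,x$.

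For~(\labelcref{item:ext-off-image}), the hypothesis says exactly that $\fib{j}{k}$ is empty, since any element $(x,p)$ of it would contradict $j\,x \neq k$. A $\Pi$-type over an empty type has the point $\lambda w.\,!(\ldots)$, obtained by eliminating the empty type. By function extensionality any two of its points are equal, since they agree vacuously at every argument. Hence it is a singleton and so equivalent to $\One$.

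The only delicate step is the transport bookkeeping in~(\labelcref{item:ext-restricts}), namely making sure the contraction can be normalized to $\mathrm{refl}$ at the centre. I would handle this by invoking the standard lemma that the based path space is contractible, together with based path induction on the fiber. Since the fiber is a singleton, it is equivalent to $\One$, and I then apply the $\Pi$-over-$\One$ computation, so that no explicit transports are needed.
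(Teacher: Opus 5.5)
Your proposal is correct and follows the paper's proof: in (1) the fiber $\fib{j}{j\,x}$ is a pointed proposition, hence a singleton, and a product over a singleton is equivalent to its value at the centre, while in (2) the fiber is empty and a product over the empty type is equivalent to $\One$. Your explicit verification of the singleton case, via the normalized contraction $h'$ and $\mathrm{apd}$, fills in the step the paper takes as standard. The closing remark about based path spaces is unnecessary, since the argument you already gave is complete.
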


\begin{proof}
(\labelcref{item:ext-restricts}) If $j$ is an embedding then
$\fib{j}{j\,x}$ is a proposition, and it contains the point
$(x,\mathrm{refl})$, so it is a singleton. A product indexed by a
singleton is equivalent to its value at the center, whence $(Y \Pext
j)(j\,x) \simeq Y\,x$.

(\labelcref{item:ext-off-image}) If $j\,x \neq k$ for every $x : X$
then $\fib jk$ is empty, and a product indexed by the empty type is
equivalent to the type~$\One$.
\end{proof}
We refer to $\Sigma (Y \Pext j)$ as the \emph{extended sum} of $Y$
along~$j$.
\begin{theorem}[Compactness of extended sums]
\label{thm:extended-sum-compact}
For any embedding $j : X \to K$ into a compact pointed type and
any family $Y : X \to \UU$ of compact pointed types, the extended sum
$\Sigma (Y \Pext j)$ is compact pointed.
\AgdaRefs{\Agda{TypeTopology.ExtendedSumCompact}{extended-sum-compact∙}.}
\end{theorem}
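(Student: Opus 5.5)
The plan is to combine closure of pointed compactness under sums (\Cref{cor:compact-pt-closure}(\labelcref{item:pt-sigma})) with the micro-Tychonoff \Cref{thm:micro-tychonoff}. The base $K$ is compact pointed by hypothesis. By \Cref{cor:compact-pt-closure}(\labelcref{item:pt-sigma}), the sum $\Sigma(Y \Pext j)$ is therefore compact pointed once every fiber $(Y \Pext j)\,k$ is compact pointed. So the whole task reduces to the following claim: for each $k : K$, the type $\prod_{(x,p):\fib{j}{k}} Y\,x$ is compact pointed.

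For this claim, fix $k : K$. Since $j$ is an embedding, its fiber $\fib{j}{k}$ is a proposition by \Cref{def:embedding}. Consider the family $w \mapsto Y\,(\mathrm{pr}_1 w)$ over the index type $\fib{j}{k}$. Each of its members $Y\,(\mathrm{pr}_1 w)$ is compact pointed by the hypothesis on $Y$. The micro-Tychonoff \Cref{thm:micro-tychonoff} then applies directly, with the proposition $\fib{j}{k}$ as index type. It gives that $\prod_{w : \fib{j}{k}} Y\,(\mathrm{pr}_1 w)$ is compact pointed, and this product is by definition $(Y \Pext j)\,k$ (\Cref{def:extension}). Note that we never decide whether $k$ lies in the image of $j$. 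In particular, for $j = \iota$ and $k = \infty$, doing so would require $\WLPO$.

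The key point is that pointedness of the fibers is required. This is why the hypotheses ask for compact pointed $Y\,x$ rather than merely compact. Off the image the product is $\One$ (\Cref{lem:extension-property}(\labelcref{item:ext-off-image})), which is pointed. But a point of $(Y\Pext j)\,k$ must be produced uniformly, without knowing whether the fiber is inhabited. Micro-Tychonoff supplies this point from the selection functions, as the map sending $w$ to the chosen point of $Y\,(\mathrm{pr}_1 w)$, and pointedness of $K$ supplies the base point. With both in hand, \Cref{cor:compact-pt-closure}(\labelcref{item:pt-sigma}) finishes the argument. I expect no real obstacle beyond checking that the definitional unfolding of $Y \Pext j$ matches the form required by \Cref{thm:micro-tychonoff}. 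This holds on the nose, since the product in \Cref{def:extension} is indexed by pairs $(x,p)$ in the fiber.
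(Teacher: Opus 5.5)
Your proposal is correct and follows essentially the same route as the paper. You reduce to the fibers $(Y \Pext j)\,k$ via closure of pointed compactness under $\Sigma$ over the compact pointed base $K$, and then apply the micro-Tychonoff \Cref{thm:micro-tychonoff} to the product indexed by the proposition $\fib{j}{k}$.
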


\begin{proof}
By the closure of pointed compactness under $\Sigma$
(\Cref{cor:compact-pt-closure}(\labelcref{item:pt-sigma})) applied to
the family $Y \Pext j$ over the compact pointed base $K$, it suffices to
show that $(Y \Pext j)\,k$ is compact pointed for every $k : K$. By
definition this is the product of the types $Y\,x$ over the index type
$\fib jk$, all of which are compact pointed by hypothesis, and $\fib jk$
is a proposition precisely because $j$ is an embedding.
So the micro-Tychonoff~\Cref{thm:micro-tychonoff} applies.
\end{proof}

\begin{remark}
\Cref{thm:extended-sum-compact} is a type theoretic generalization of
the fact that the squashed sum constructed
in~\cite[Section~10]{EscardoOmniscientJSL2013} produces compact
sets. In \emph{loc.~cit.}, given countably many compact pointed
subsets $X_n \subseteq \Cantor$, we rescaled $X_n$ by prefixing the
finite sequence $1^n0$, making the pieces disjoint, smaller in
diameter, close together, and converging to the point
$\infty = 1^\omega$ as $n$ increases, and defined their squashed sum
to be the set of those $\alpha$ such that if $1^n0$ is a prefix of
$\alpha$ then $\alpha \in 1^n0X_n$. This set is the closure of
$\bigcup_n 1^n0X_n$, which classically is this union together with
$\{\infty\}$. Constructively, it can be described as the
union of the $\NInf$-indexed family
\[
  Y_x = \{\alpha \in \Cantor \mid \forall n \in \N\,(x = n \Rightarrow
  \alpha \in 1^n0X_n) \text{ and } (x = \infty \Rightarrow \alpha = \infty)\}.
\]
The defining formula for $Y_x$ corresponds to the $\Pi$-extension
formula of~\Cref{def:extension}, for the family $n \mapsto 1^n0X_n$
along the embedding $\N \hookrightarrow \NInf$. Thus, the extended sum
generalizes the squashed sum to any embedding $X \hookrightarrow K$
into a compact pointed type and any $X$-indexed family of compact
pointed types.
\end{remark}

\begin{lemma}
\label{lem:extension-tot-sep}
For any map $j : X \to K$, not necessarily an embedding,
and any family $Y : X \to \UU$ of totally separated types,
the type $(Y \Pext j)\,k$ is totally separated for every $k : K$.
\AgdaRefs{\Agda{TypeTopology.ExtensionTotallySeparated}{/-is-totally-separated}.}
\end{lemma}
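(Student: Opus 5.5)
The statement is a direct consequence of the closure of total separatedness under arbitrary products (\Cref{prop:tot-sep-closure}(\labelcref{item:tot-sep-pi})). By \Cref{def:extension}, for each $k : K$ the type $(Y \Pext j)\,k$ is, by definition, the dependent product
\[
  \textstyle\prod_{w\,:\,\fib{j}{k}} Y\,(\mathrm{pr}_1\,w),
\]
a product over the index type $\fib jk$ of the family $w \mapsto Y\,(\mathrm{pr}_1 w)$. Each factor $Y\,(\mathrm{pr}_1 w)$ is totally separated by hypothesis. So I will simply apply \Cref{prop:tot-sep-closure}(\labelcref{item:tot-sep-pi}) with index type $\fib jk$ and this family. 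No assumption on $j$ is used, which is why the statement does not require $j$ to be an embedding. In particular, no analysis of whether $\fib jk$ is a proposition, empty, or inhabited is needed.

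To be safe, I would recall why that closure holds with an arbitrary index type. Let $\varphi =_2 \psi$ in the product. For a fixed index $w$ and a map $q : Y\,(\mathrm{pr}_1 w) \to \Two$, testing against $\gamma \mapsto q\,(\gamma\,w)$ gives $\varphi\,w =_2 \psi\,w$. Total separatedness of the factor then gives $\varphi\,w = \psi\,w$, and function extensionality gives $\varphi = \psi$.

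One minor point is that the definition uses pattern matching on pairs $(x,p)$ rather than $\mathrm{pr}_1$. This is only an equivalence of presentations; if needed, it is handled by closure of total separatedness under equivalence (\Cref{prop:tot-sep-closure}(\labelcref{item:tot-sep-retract})). I expect no real obstacle.
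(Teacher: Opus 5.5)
Your proposal is correct and matches the paper's proof: you unfold $(Y \Pext j)\,k$ as a product over the fiber $\fib jk$ of totally separated types and apply closure of total separatedness under arbitrary products (\Cref{prop:tot-sep-closure}(\labelcref{item:tot-sep-pi})), with no property of $j$ used. Your remark about pattern matching versus $\mathrm{pr}_1$ is harmless but not needed.
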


\begin{proof}
The type $(Y \Pext j)\,k$ is the product of the types $Y\,x$ over the
index type $\fib jk$, each of which is totally separated by hypothesis,
and total separatedness is closed under arbitrary products
(\Cref{prop:tot-sep-closure}(\labelcref{item:tot-sep-pi})).
\end{proof}

\begin{lemma}
\label{lem:extension-retract}
For any map $j : X \to K$ and any families $Y, Z : X \to \UU$ such
that $Y\,x$ is a retract of $Z\,x$ for every $x : X$,
\begin{enumerate}
\item\label{item:ext-retract-fiber} the type $(Y \Pext j)\,k$ is a
  retract of $(Z \Pext j)\,k$ for every $k : K$,
\item\label{item:ext-retract-sum} the extended sum $\Sigma\,(Y \Pext j)$
  is a retract of the extended sum $\Sigma\,(Z \Pext j)$.
\end{enumerate}
\AgdaRefs{\Agda{InjectiveTypes.Blackboard}{retract-extension};
  \Agda{UF.Retracts}{Σ-retract}.}
\end{lemma}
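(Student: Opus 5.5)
For item~(\labelcref{item:ext-retract-fiber}), fix retractions $r_x : Z\,x \to Y\,x$ with sections $s_x : Y\,x \to Z\,x$, so that $r_x \circ s_x \sim \id$ for every $x : X$. The plan is to act on each component of a product, with the product indexed by the fiber $\fib jk$. Define
\[
  R_k : (Z \Pext j)\,k \to (Y \Pext j)\,k, \qquad
  R_k\,\varphi \;=\; \lambda (x,p).\, r_x\,(\varphi\,(x,p)),
\]
and similarly $S_k\,\psi = \lambda (x,p).\, s_x\,(\psi\,(x,p))$. Then $R_k\,(S_k\,\psi)$ evaluated at $(x,p)$ is $r_x\,(s_x\,(\psi\,(x,p)))$, which equals $\psi\,(x,p)$. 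Function extensionality then gives $R_k \circ S_k \sim \id$. This is the general fact that a $\Pi$ of retracts over a fixed index type is a retract, here with index type $\fib jk$. No hypothesis on $j$ is needed, because the fibers are only used as index types. The one point to check is that the retraction data depend only on $x$, which is the first component of the index $(x,p)$. There is no transport along $p$ involved, since the family $Y \Pext j$ is defined directly as a product of the types $Y\,x$.

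For item~(\labelcref{item:ext-retract-sum}), apply \Cref{lem:sum-map}(\labelcref{item:sum-map-retraction}) to the map of sums $\summap{\id}{R}$ from $\Sigma\,(Z \Pext j)$ to $\Sigma\,(Y \Pext j)$. Each $R_k$ is a retraction by~(\labelcref{item:ext-retract-fiber}), and the lemma supplies the section $\summap{\id}{S}$.

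I do not expect a real obstacle. The only nontrivial ingredient is function extensionality in the first item, which is available because we work under univalence.
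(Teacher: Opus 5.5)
Your proposal is correct and follows the paper's proof: item~(\labelcref{item:ext-retract-fiber}) acts pointwise on the products indexed by the fiber of $j$ over $k$, and item~(\labelcref{item:ext-retract-sum}) applies \Cref{lem:sum-map}(\labelcref{item:sum-map-retraction}) with the identity on $K$ and the fiberwise retractions from~(\labelcref{item:ext-retract-fiber}).
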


\begin{proof}
(\labelcref{item:ext-retract-fiber}) By~\Cref{def:extension}, the two
types are products over the index type $\fib jk$, and a product of
retractions is a retraction of the corresponding products, the
retraction and the section acting pointwise.
(\labelcref{item:ext-retract-sum}) follows
by~\Cref{lem:sum-map}(\labelcref{item:sum-map-retraction}), applied to
the identity on $K$ and to the retractions
of (\labelcref{item:ext-retract-fiber}).
\end{proof}

Classically, every point of $\Cantor$ is of one of the forms
$1^n0\alpha$, with $\alpha : \Cantor$, or~$1^\omega$.  The following
gives a constructive version of this.

\begin{lemma}[Delayed binary sequences]
\label{lem:delayed-sequences} \leavevmode
\begin{enumerate}
\item \label{item:delayed-sequences-1}
The extended sum of the constant family at $\Cantor$ along the
embedding $\N\hookrightarrow\NInf$ is the type of \emph{delayed
binary sequences}, consisting of a time $u : \NInf$ together with a
binary sequence made available when $u$ is finite,
\[
  \textstyle\Sigma_{u\,:\,\NInf}\,
  \big(\text{$u$ is finite} \to \Cantor\big).
\]
\item \label{item:delayed-sequences-2}
This type is equivalent to $\Cantor$.
\end{enumerate}
\AgdaRefs{\Agda{TypeTopology.DelayMonad}{𝔻};
  \Agda{TypeTopology.SquashedCantor}{Head},
  \texttt{Tail}, \texttt{Cons}, \texttt{Cons⁻¹},
  \texttt{Cons⁻¹-Cons}, \texttt{Cons-Cons⁻¹},
  \texttt{𝔻-Cantor-≃-Cantor}.}
\end{lemma}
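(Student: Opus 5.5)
For item~(\labelcref{item:delayed-sequences-1}) I unfold \Cref{def:extension}. For the constant family at $\Cantor$ and the embedding $\iota : \N \hookrightarrow \NInf$, the fiber of the extension over $u$ is $\prod_{(n,p) : \fib{\iota}{u}} \Cantor$. Since the family is constant, currying makes this equivalent to $(\fib{\iota}{u}) \to \Cantor$. The fiber $\fib{\iota}{u}$ is $\Sigma_{n:\N}\,\underline n = u$, which is exactly ``$u$ is finite''. It is a proposition because $\iota$ is an embedding. Summing over $u$ then gives the stated type, up to equivalence, via \Cref{lem:sum-map}(\labelcref{item:sum-map-equiv}) with the identity on $\NInf$.

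For item~(\labelcref{item:delayed-sequences-2}) I construct maps both ways. Define $\Cons : \Sigma_{u:\NInf}(\text{$u$ finite} \to \Cantor) \to \Cantor$ by letting the output be $1^n0\alpha$ where it makes sense. The digit at position $i$ of $\Cons(u,\varphi)$ is $1$ if $u_i = 1$. If $u_i = 0$, then $u$ is finite with an explicitly computable $n \le i$ (the least $k$ with $u_k = 0$), so $\varphi$ applies to this witness. The digit is then $0$ if $i = n$ and $(\varphi\,w)(i-n-1)$ if $i > n$. This uses only finitely many digits of $u$ and never decides $u = \infty$.

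In the other direction, $\Consinv\,\beta$ takes $u$ to be the forcing of $\beta$ to be decreasing (\Cref{ex:NInf}), that is, $u_i = \min\{\beta\,0,\dots,\beta\,i\}$. For the second component, given a witness that $u = \underline n$, we have $\beta$ equal to $1^n0$ on its first $n{+}1$ digits, and we return the tail $i \mapsto \beta(n+1+i)$. This is well defined because finiteness is a proposition, so any two witnesses share the same $n$, $\iota$ being left cancellable. Equivalently, I compute $n$ from $u$ itself.

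The main obstacle is proving $\Consinv \circ \Cons \sim \id$ and $\Cons \circ \Consinv \sim \id$ without case analysis on finiteness. For $\Cons(\Consinv\,\beta) = \beta$, I compare digitwise by function extensionality. At position $i$, either $u_i = 1$, which forces $\beta\,i = 1$ by the min, or $u_i = 0$, which yields a finite $n \le i$ and a direct check. For $\Consinv(\Cons(u,\varphi)) = (u,\varphi)$, the first components agree digitwise by the same local argument. The second components are maps out of a proposition, so their equality reduces, by function extensionality, to assuming $u$ finite. Then everything is computed by transport along the identification of first components, which is where the bookkeeping is heaviest. I would handle it by proving a lemma that $\Sigma$-equality with a propositional-domain function component follows from pointwise agreement under that hypothesis.
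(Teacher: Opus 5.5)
Your proof is correct, but it takes a different route from the paper's in item~(2).

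\textbf{The paper's route.} It defines $\Cons$ by corecursion on $\Cantor$, splitting $u$ on its first digit into $\underline 0$ or a successor. It defines $\Head$ through the final-coalgebra presentation of $\NInf$; only $\Tail$ coincides with your second component. With these definitions, $\Head(\Cons(u,\pi)) = u$ is proved only at the points $u = \underline n$, by induction on $n$, and at $u = \infty$, as the unique fixed point of prefixing a~$1$. It is then extended to all $u$ by the extreme density of $\N+\One \to \NInf$ together with $\neg\neg$-separatedness of the relevant function type (\Cref{lem:dense-into-nn-separated-rc}). The other round trip is by induction on the digit index with the sequence generalized.

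\textbf{Your route.} You give explicit digitwise definitions: $\Cons$ searches for the least zero of $u$ at or below $i$, and $\Head$ is the force-decreasing retraction of \Cref{ex:NInf}. Both round trips then become local digit checks. The key point is that you only ever split on the decidable digit $u_i$, never on the finiteness of $u$. This removes the density argument and the fixed-point characterization of $\infty$ entirely. Your case checks go through as described; in particular, when $u_i = 0$ the witness you compute agrees with any other, because the fiber of $\iota$ is a proposition.

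\textbf{What each buys.} Your approach is more elementary and self-contained. The paper's coalgebraic definitions buy structure rather than simplicity: they build $\Cantor$ directly by corecursion from delayed sequences, in line with \Cref{rem:delay-monad} and the Agda formalization.
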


\begin{proof}
(\labelcref{item:delayed-sequences-1}) By~\Cref{def:extension}, the
extension of the constant family at $\Cantor$ along the embedding
$\iota : \N \hookrightarrow \NInf$ evaluates at a point $u : \NInf$ to
the type of functions from the fiber $\fib{\iota}{u}$ to $\Cantor$, so
that the extended sum is the sum of these function types over
$u : \NInf$. A point of the fiber is a natural number $n$ together
with an identification $\underline n = u$, that is, a witness of the
finiteness of $u$ (\Cref{ex:NInf}).

(\labelcref{item:delayed-sequences-2}) In one direction, we define a map
\[
  \Cons \;:\; \big(\textstyle\Sigma_{u\,:\,\NInf}\,
  \big(\text{$u$ is finite} \to \Cantor\big)\big) \to \Cantor.
\]
A delayed binary sequence is encoded as the run of $1$s prescribed by
its time, followed by a $0$ and the delayed value. A conatural number
is either $\underline 0$ or the result of prefixing a~$1$ to a
conatural number, decided by its first digit, and a binary sequence is
a digit followed by a binary sequence, so that corecursion on
$\Cantor$ defines $\Cons$ by the two clauses: the sequence
$\Cons(\underline 0, \pi)$ begins with~$0$ and continues as $\pi$
applied to the witness that $\underline 0$ is finite, and, for $u$
prefixing a~$1$ to $v$, the sequence $\Cons(u,\pi)$ begins with~$1$
and continues as $\Cons(v,\pi')$, where $\pi'$ is $\pi$ composed with
the passage from finiteness of $v$ to finiteness of~$u$.

In the other direction, we first define two maps
\[
\begin{array}{lll}
  \Head & : & \Cantor \to \NInf \\
  \Tail & : & \textstyle\prod_{\alpha:\Cantor}\,
                \big(\text{$\Head\,\alpha$ is finite} \to \Cantor\big)
\end{array}
\]
as follows.  A binary sequence is turned into a delayed one by taking
its initial run of $1$s as the time and what follows as the delayed
value. For a sequence~$\alpha$, let $\Head\,\alpha$ be that initial
run, read as a conatural number and defined by the final-coalgebra
presentation of~\Cref{ex:NInf}, as $\underline 0$ when $\alpha$
begins with~$0$, and as the result of prefixing a $1$ to the head of
the tail of $\alpha$ when $\alpha$ begins with~$1$. When
$\Head\,\alpha$ is
finite, witnessed by $n : \N$ together with an identification $p$ of
$\underline n$ with $\Head\,\alpha$, the delayed value is the suffix
of $\alpha$ after that run,
\[
  \Tail\,\alpha\,(n,p)\,k \;=\; \alpha\,(k{+}n{+}1).
\]
The two together give the map
\[
  \Consinv\,\alpha = (\Head\,\alpha, \Tail\,\alpha)
\]
from $\Cantor$ to the type of delayed binary sequences.

\emph{The composite $\Consinv \circ \Cons$ is the identity.} Its first
component asks for $\Head(\Cons(u,\pi)) = u$, which holds for
$u = \underline n$ by induction on $n$ from the two clauses, and for
$u = \infty$ because $\Cons(\infty,\pi)$ begins with~$1$ and continues
as $\Cons(\infty,\pi')$, where $\pi' = \pi$, both being maps out of the
empty type of witnesses of the finiteness of $\infty$, so that
$\Head(\Cons(\infty,\pi))$ is a fixed point of prefixing a~$1$, and
$\infty$ is the unique such point. This
proves the two cases $u = \underline n$ and $u = \infty$. These
suffice by~\Cref{lem:dense-into-nn-separated-rc}, applied to the two
functions of $u : \NInf$ that send $\pi$ to $\Head(\Cons(u,\pi))$
and to $u$, because the map $\N + \One \to \NInf$ is extremely dense
(\Cref{ex:isolated}(\labelcref{item:iota1-embedding-dense})) and the
values of the two functions at $u$ lie in a $\neg\neg$-separated
type. That type is the type of functions from
$(\text{$u$ is finite} \to \Cantor)$ to
$\NInf$, which is totally separated, since $\NInf$ is, being a
retract of $\Cantor$ (\Cref{ex:NInf},
\Cref{ex:tot-sep}(\labelcref{item:cantor-not-discrete}),
\Cref{prop:tot-sep-closure}(\labelcref{item:tot-sep-retract},
\labelcref{item:tot-sep-pi})), and totally separated types are
$\neg\neg$-separated
(\Cref{lem:tot-sep-basic}(\labelcref{item:ts-gives-nnsep})). The
second component asks that $\Tail(\Cons(u,\pi))$, transported along
the identification just established, be $\pi$, and this follows from
the two clauses by cases on the finiteness witness, the case of a
positive witness using that
$\Cons(\underline{n}, \pi)$ is $\pi$ applied to the witness of
finiteness of $\underline{n}$ once its first $n{+}1$ digits are
removed.

\emph{The composite $\Cons \circ \Consinv$ is the identity as well.} When
$\alpha$ begins with~$0$, the head of $\alpha$ is $\underline 0$, and
the first clause gives $\Cons(\Consinv\,\alpha) = \alpha$, because the
finiteness witness obtained by transporting along
$\Head\,\alpha = \underline 0$ has size $0$, which makes $\Tail\,\alpha$
at that witness the tail of $\alpha$. When $\alpha$ begins with~$1$,
the head of $\alpha$ prefixes a $1$ to the head of the tail of
$\alpha$, and the second clause gives that $\Cons(\Consinv\,\alpha)$
begins with~$1$ and continues as $\Cons(\Consinv(\text{the tail of
$\alpha$}))$, the transported witness again having the size that makes
$\Tail\,\alpha$ agree with $\Tail$ of the tail of $\alpha$.
Induction on the digit index, with~$\alpha$ generalized, settles all
digits, the first case directly, and the second by reducing the digit
of index $k{+}1$ to the digit of index $k$ of the tail. The
two composites being the identity, the map $\Cons$ is an equivalence.
\end{proof}

\begin{remark}
\label{rem:delay-monad}
Notice that, more generally, for any type $X$, the extended sum of the
constant family at $X$ along the embedding $\N\hookrightarrow\NInf$
is the type $ \textstyle\Sigma_{u\,:\,\NInf}\, \big(\text{$u$ is
  finite} \to X\big).$ This type satisfies the universal property of
the delay monad~\cite{Capretta_2005}, and, moreover, $\Cantor$ is the
final coalgebra of the functor part of this monad, but we will
not prove these two facts here. Hence the extended sum
along $\N\hookrightarrow\NInf$ of an arbitrary countable family
$X : \N \to \UU$ generalizes the delay monad, the type $X\,n$ of the
result being allowed to depend on the time $u = \underline n$ at which
it arrives.
\AgdaRefs{\Agda{TypeTopology.DelayMonad}{𝔻-is-final-coalgebra};
  \Agda{TypeTopology.SquashedCantor}{Cantor-is-final-𝔻-coalgebra}.}
\end{remark}

\begin{example}
\label{ex:bounded-decreasing-sequences}
For every natural number~$k$, the type of decreasing sequences of
natural numbers bounded by~$k$,
\[
  B_k \;:\equiv\; \textstyle\Sigma_{\beta\,:\,\N^\N}\,
  \big(\prod_{i:\N}\, \beta\,i \le k\big) \times
  \big(\prod_{i:\N}\, \beta\,(i{+}1) \le \beta\,i\big),
\]
is compact and totally separated.
\AgdaRefs{\Agda{TypeTopology.BoundedDecreasingSequences}{𝔹},
  \texttt{is-bounded-by}, \texttt{is-decreasing'};
  \texttt{𝔹-is-compact∙}; \texttt{𝔹-is-totally-separated}.}
\end{example}

\begin{proof}
We show that $B_k$ is equivalent to the type $D^k\,\One$, where $D$
sends a type~$X$ to the extended sum of the constant family at~$X$
along the embedding $\N \hookrightarrow \NInf$, and that $D^k\,\One$ is
compact pointed and totally separated. This suffices, because pointed
compactness and total separatedness are closed under equivalence
(\Cref{cor:compact-pt-closure}(\labelcref{item:pt-retract}),
\Cref{prop:tot-sep-closure}(\labelcref{item:tot-sep-retract})). We
first establish the properties of $D$ that we need, and then construct
equivalences $B_0 \simeq \One$ and $B_{k+1} \simeq D\,B_k$, which give
$B_k \simeq D^k\,\One$ by induction on~$k$, since $D$ preserves
equivalences.
Explicitly, for a type~$X$, we define
\[
  D\,X \;:\equiv\; \textstyle\Sigma_{u\,:\,\NInf}\,
  \big(\text{$u$ is finite} \to X\big).
\]
If $X$ is compact pointed, then so is~$D\,X$,
by~\Cref{thm:extended-sum-compact}, because $\NInf$ is compact pointed
(\Cref{ex:compact-pt-basic}(\labelcref{item:NInf-compact})). If $X$ is
totally separated, then so is~$D\,X$, by~\Cref{lem:sigma-NInf-tot-sep},
because the type of functions from the finiteness of~$u$ to~$X$ is
totally separated for every $u : \NInf$ (\Cref{lem:extension-tot-sep})
and is a proposition for $u = \infty$, since $\infty$ is not finite.
Hence $D^k\,\One$ is compact pointed and totally separated for
every~$k$, by induction on~$k$. An equivalence $e : X \simeq Y$ gives
an equivalence $D\,X \simeq D\,Y$, sending $(u,\pi)$ to
$(u, e \circ \pi)$.
\AgdaRefs{\Agda{TypeTopology.DelayMonad}{𝔻};
  \Agda{TypeTopology.SquashedSum}{Σ¹-is-totally-separated};
  \Agda{TypeTopology.BoundedDecreasingSequences}{𝔻-preserves-compactness∙},
  \texttt{𝔻-preserves-total-separatedness}, \texttt{𝓑},
  \texttt{𝓑-is-compact∙}, \texttt{𝓑-is-totally-separated},
  \texttt{𝔻-preserves-≃}.}

The two conditions on~$\beta$ in the definition of $B_k$ are
propositions, so two points of $B_k$ with the same sequence are equal,
and we denote a point of $B_k$ by its sequence. The type $B_0$ is a
singleton, its only point being the constantly~$0$ sequence, and so
$B_0 \simeq \One$.
\AgdaRefs{\Agda{TypeTopology.BoundedDecreasingSequences}{𝔹-base}.}

It remains to construct an equivalence $B_{k+1} \simeq D\,B_k$. In
one direction, we define a map $f : B_{k+1} \to D\,B_k$. Let
$\beta : B_{k+1}$, and let $u$ be the binary sequence with $u_i = 1$
if $\beta\,i = k{+}1$ and $u_i = 0$ otherwise. It is decreasing, and
hence a point of~$\NInf$, because if $u_{i+1} = 1$ then
$k{+}1 = \beta\,(i{+}1) \le \beta\,i \le k{+}1$, so that $u_i = 1$.
Now let $(n,p)$ witness the finiteness of~$u$, so that $p$ identifies
$\underline n$ with~$u$. Then $u_n = 0$, because $\underline n$ has
digit~$0$ at~$n$ (\Cref{ex:NInf}), so $\beta\,n \neq k{+}1$, and hence
$\beta\,n \le k$. The sequence $i \mapsto \beta\,(n{+}i)$ is
decreasing and bounded by~$k$, because $\beta$ is decreasing, so that
$\beta\,(n{+}i) \le \beta\,n \le k$. This defines a function $\pi$
from the finiteness of~$u$ to~$B_k$, and we set
$f\,\beta = (u,\pi)$.

In the other direction, we define a map $g : D\,B_k \to B_{k+1}$. Let
$(u,\pi) : D\,B_k$. When $u_i = 0$, the point $u$ is finite
(\Cref{ex:NInf}), with a witness $\varphi$ identifying $\underline m$
with~$u$ for some~$m$, and $m \le i$, because $\underline m$ has
digit~$1$ at every index below~$m$. Define a sequence $\beta$ by
\[
  \beta\,i \;=\;
  \begin{cases}
    k{+}1 & \text{if } u_i = 1, \\
    \pi\,\varphi\,(i{-}m) & \text{if } u_i = 0.
  \end{cases}
\]
It is bounded by $k{+}1$, because $\pi\,\varphi$ is bounded by~$k$. It
is decreasing, by cases on the digits $u_{i+1}$ and~$u_i$. If
$u_{i+1} = 1$ then $u_i = 1$, because $u$ is decreasing, and
$\beta\,(i{+}1) = k{+}1 = \beta\,i$. If $u_{i+1} = 0$ and $u_i = 1$
then $\beta\,(i{+}1) \le k < \beta\,i$. If $u_{i+1} = u_i = 0$ then
the witnesses of finiteness used at $i$ and at $i{+}1$ are equal, the
finiteness of~$u$ being a proposition, and so
$\beta\,(i{+}1) = \pi\,\varphi\,(i{+}1{-}m) \le \pi\,\varphi\,(i{-}m)
= \beta\,i$, because $\pi\,\varphi$ is decreasing. We set
$g\,(u,\pi) = \beta$.

\emph{The composite $g \circ f$ is the identity.} Let
$f\,\beta = (u,\pi)$ and $\beta' = g\,(u,\pi)$. If $u_i = 1$ then
$\beta'\,i = k{+}1 = \beta\,i$, by the definition of~$u$. If $u_i = 0$
then, with $\varphi$ and $m$ as in the definition of~$g$,
$\beta'\,i = \pi\,\varphi\,(i{-}m) = \beta\,(m + (i{-}m)) = \beta\,i$,
by the definition of~$\pi$.

\emph{The composite $f \circ g$ is the identity as well.} Let
$\beta = g\,(u,\pi)$ and $f\,\beta = (u',\pi')$. If $u_i = 1$ then
$\beta\,i = k{+}1$, and if $u_i = 0$ then $\beta\,i \le k$, so that
$u'_i = u_i$ in both cases. A point of $\NInf$ is determined by its
digits (\Cref{ex:NInf}), and so $u' = u$. It remains to show that the
transport of $\pi'$ along this identification is~$\pi$. Let $(n,p)$
witness the finiteness of~$u$. At $(n,p)$, the transported function
takes the value of $\pi'$ at a witness of the finiteness of $u'$ with
the same number~$n$, which is the point of $B_k$ with sequence
$i \mapsto \beta\,(n{+}i)$, by the definition of~$f$. Since
$u = \underline n$, we have $u_{n+i} = 0$, and the witness used in the
definition of $\beta\,(n{+}i)$ is $(n,p)$, the finiteness of~$u$ being
a proposition, so that $\beta\,(n{+}i) = \pi\,(n,p)\,i$ for every~$i$,
and the two functions agree at~$(n,p)$.
\AgdaRefs{\Agda{TypeTopology.BoundedDecreasingSequences}{𝔹-step};
  \texttt{𝔹-and-𝓑-are-equivalent}.}
\end{proof}

\section{Equipping compact types with well-orders}
\label{sec:ordinals-prelim}

The compact types we construct in this paper all come equipped with
well-orders, in the sense of~\cite[Section 10.3]{HoTTBook}, because the
operations used to build them, discussed in the previous section,
preserve well-orders.

\subsection{Ordinals in type theory}
\label{sec:ordinals-in-type-theory}

Let ${\lt{}}$ be a binary relation on a type $X$. We say that an
element $x : X$ is \emph{accessible} whenever every $y \lt{} x$ is
accessible. In particular, if an element has no predecessors, then it
is vacuously accessible. This is an inductive definition with a
constructor of type
\[
   \textstyle\prod_{x : X} \big(\textstyle\prod_{y : X} (y \lt{} x \to \text{$y$ is accessible})\big) \to \text{$x$ is accessible},
 \]
which we leave unnamed.
We say that the relation ${\lt{}}$ is \emph{well-founded} if every
element is accessible.
\AgdaRefs{\Agda{Ordinals.Notions}{is-accessible}, \texttt{acc},
  \texttt{accessible-induction}; \texttt{is-well-founded}.}

\begin{lemma}
\label{lem:accessibility-transfinite-induction}
A binary relation ${\lt{}}$ on a type $X$ is well-founded if and only if
the principle of transfinite induction holds: for every family
$P : X \to \UU$,
\[
  \big(\textstyle\prod_{x:X}
    \big(\textstyle\prod_{y:X} y \lt{} x \to P\,y\big) \to P\,x\big)
  \to \textstyle\prod_{x:X} P\,x .
\]
\AgdaRefs{\Agda{Ordinals.Notions}{transfinite-induction} and
  \texttt{transfinite-induction-converse}.}
\end{lemma}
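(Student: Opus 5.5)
The plan is to prove the two directions separately, each by a direct construction.

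\emph{Well-foundedness gives transfinite induction.} Assume every element is accessible, and fix $P$ together with a step function
\[
  \varphi : \textstyle\prod_{x:X}\big(\prod_{y:X} y \lt{} x \to P\,y\big) \to P\,x .
\]
I would prove the stronger statement $\prod_{x:X}(\text{$x$ is accessible} \to P\,x)$ by induction on the inductive accessibility predicate. Its eliminator handles an accessibility witness built by the constructor from a function $h$ that assigns to each $y \lt{} x$ a witness that $y$ is accessible. The induction hypothesis then provides $P\,y$ for every such $y$, since each $h\,y\,r$ is a structurally smaller witness. Feeding this family to $\varphi$ gives $P\,x$. Applying the result to the accessibility witness of each $x$ supplied by well-foundedness yields $\prod_x P\,x$. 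No truncation or proposition-valuedness is needed here, because $P$ is arbitrary and we simply produce data.

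\emph{Transfinite induction gives well-foundedness.} Take $P\,x :\equiv (\text{$x$ is accessible})$, which lies in a suitable universe. The required step, from $\prod_{y} y \lt{} x \to \text{$y$ is accessible}$ to $x$ being accessible, is exactly the constructor of the accessibility type. Transfinite induction then gives that every $x$ is accessible, which is the definition of well-foundedness. One caveat concerns universes: the family $P$ ranges over a universe $\UU$ that must contain the accessibility type. I would state the converse with $\UU$ chosen large enough, namely the universe of $X$ joined with that of ${\lt{}}$. This matches how the Agda formalization parametrizes the principle.

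The only step that needs real care is the first direction. There one must induct on the accessibility witness rather than on $x$, since there is no induction principle on $X$ itself. This means generalizing the goal to the family $x \mapsto (\text{$x$ is accessible} \to P\,x)$ before eliminating. Everything else is a direct unfolding of definitions.
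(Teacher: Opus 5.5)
Your proposal is correct and takes essentially the same approach as the paper. The accessibility eliminator turns a step into $P\,x$ for every accessible $x$, and conversely the accessibility constructor is itself a step for the family $x \mapsto (\text{$x$ is accessible})$. Your remark that $\UU$ must be large enough to contain the accessibility type makes explicit something the paper leaves implicit.
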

\begin{proof}
The accessibility elimination rule turns
a step \[ \big(\textstyle\prod_{y:X} (y \lt{} x \to P\,y)\big) \to P\,x \]
into a witness of $P\,x$ for every accessible $x$, so accessibility of
every element gives transfinite induction for every family
$P : X \to \UU$. Conversely, the constructor of the accessibility
predicate is a step for the family $P$ that sends $x$ to the type
expressing that $x$ is accessible, so transfinite induction applied to
this family gives accessibility of every element.
\end{proof}
We say that the relation ${\lt{}}$ is \emph{extensional} if any two
elements with the same predecessors are equal:
$\big(u \lt{} x \leftrightarrow u \lt{} y\text{ for all }u\big) \to x
= y$. An \emph{ordinal} is a type $X$ equipped with a
proposition valued binary relation ${\lt{}} : X \to X \to \UU$ which
is transitive, well-founded and extensional.  Such a relation is
called a \emph{well-order} on $X$. We write $\Ord$ for the type of
ordinals with underlying type in $\UU$.
\claim{ordinal-well-order}
\AgdaRefs{\Agda{Ordinals.Notions}{is-well-order};
  \Agda{Ordinals.Type}{Ordinal}.}  The HoTT Book additionally requires
the type~$X$ to be a set, but this is automatic:

\begin{lemma}
\label{lem:ext-gives-set}
Any type equipped with a proposition valued extensional
relation is a set.
\AgdaRefs{\Agda{Ordinals.Notions}{extensionally-ordered-types-are-sets};
used via \Agda{Ordinals.Type}{underlying-type-is-set}.}
\end{lemma}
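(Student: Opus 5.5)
The plan is to apply Hedberg's~\Cref{thm:hedberg}: for every pair $x,y : X$ we construct a weakly constant endofunction of $x = y$. Extensionality provides a map
\[
  e_{x,y} : \big(\textstyle\prod_{u:X}\, (u \lt{} x \leftrightarrow u \lt{} y)\big) \to x = y .
\]
In the other direction there is a map
\[
  d_{x,y} : x = y \to \textstyle\prod_{u:X}\, (u \lt{} x \leftrightarrow u \lt{} y),
\]
defined by path induction, sending $\mathrm{refl}$ to the pair of identity maps, or equivalently by transport in the family $z \mapsto (u \lt{} z)$. The candidate endofunction is the composite $f_{x,y} :\equiv e_{x,y} \circ d_{x,y}$.

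The key step is to show that the domain of $e_{x,y}$ is a proposition. Each $u \lt{} x$ and $u \lt{} y$ is a proposition by hypothesis. Hence the function types $u \lt{} x \to u \lt{} y$ and $u \lt{} y \to u \lt{} x$ are propositions by function extensionality. Their product, the logical equivalence, is then a proposition, and so is the $\Pi$-type over $u : X$, again by function extensionality.

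It follows that any two values $d_{x,y}\,p$ and $d_{x,y}\,q$ are equal. Applying $e_{x,y}$ then gives $f_{x,y}\,p = f_{x,y}\,q$, so $f_{x,y}$ is weakly constant. Hedberg's theorem, applied at every $x$, now shows that $X$ is a set.

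I expect no real obstacle here. The only subtlety is that the relation must be proposition valued, which is part of the hypothesis, and that function extensionality is available, which we assume throughout (\Cref{sec:univalent-foundations}). Transitivity and well-foundedness are not needed.
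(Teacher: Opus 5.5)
Your proof is correct and is essentially the paper's. The paper introduces the derived relation $x \preccurlyeq y :\equiv \prod_z (z < x \to z < y)$, observes that it is reflexive, proposition valued and, by extensionality, antisymmetric, and then applies Hedberg's theorem to the composite of transporting reflexivity along $x = y$ with antisymmetry. That composite is exactly your $e_{x,y}\circ d_{x,y}$, so the only difference is that the paper routes the argument through the general fact about reflexive, antisymmetric, proposition valued relations, while you inline it.
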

\begin{proof}
  Let ${\lt{}}$ be a proposition valued extensional relation on a type
  $X$. Consider the derived relation
  $x \preccurlyeq y :\equiv \prod_z (z \lt{} x \to z \lt{} y)$, that
  every predecessor of $x$ is a predecessor of $y$. It is reflexive,
  and it is proposition valued because $\lt{}$ is. Extensionality of
  $\lt{}$ says exactly that it is antisymmetric, since
  $x \preccurlyeq y$ and $y \preccurlyeq x$ together say that $x$ and
  $y$ have the same predecessors. Now a type carrying a
  reflexive, antisymmetric, proposition valued relation is a set, since
  the relation lets us define, from an identification $x = y$, a weakly
  constant endofunction of $x = y$: transport the reflexivity of
  $\preccurlyeq$ along the identification to get $x \preccurlyeq y$ and
  $y \preccurlyeq x$, and apply antisymmetry, the constancy coming from
  $\preccurlyeq$ being proposition valued. So Hedberg's~\Cref{thm:hedberg}
  applies.
  \AgdaRefs{\Agda{Ordinals.Notions}{extensional-po-is-prop-valued};
    \Agda{UF.HedbergApplications}{type-with-prop-valued-refl-antisym-rel-is-set}.}
\end{proof}
\begin{examples}
  \label{ex:standard-ordinals}
  \leavevmode
  \begin{enumerate}
  \item The type of natural numbers under its natural order is
    well-ordered, giving rise to an ordinal written $\omega$.
  \item Any proposition ordered by the relation with empty graph is an
    ordinal, and in particular we get ordinals $\ZeroO$ and $\OneO$.
  \item The two-element type $\TwoO$ ordered by $0 \lt{} 1$ is an
    ordinal.
  \item The type $\OmegaO$ of propositions is an ordinal under the
    order $p \lt{} q$ when $p = \bot$ and $q = \top$.
\end{enumerate}
\AgdaRefs{\Agda{Ordinals.Arithmetic}{ω}, \texttt{prop-ordinal},
  \texttt{𝟘ₒ}, \texttt{𝟙ₒ}, \texttt{𝟚ₒ};
  \Agda{Ordinals.OrdinalOfTruthValues}{Ωₒ}.}
\end{examples}
\begin{notation}
\label{not:ordinals}
An ordinal is written with the symbol of its underlying type, so that
$x : \alpha$ is a point of the underlying type of the
ordinal~$\alpha$, a map $\alpha \to \beta$ is a map between the
underlying types, and a notion such as compactness is a notion about
the underlying type. This is the same convention by which a group is
identified by its carrier. The order of an ordinal is written
${\lt{}}$, the same symbol for every ordinal, with a subscript naming
the ordinal only when several orders occur together, as in the
definitions of the arithmetic operations below. The subscript on
$\Ord[]$, and on the variants of it introduced later, names the universe
of the underlying types, as it does on $\Omega_{\UU}$, and is left out
where the context determines it.  \AgdaRefs{the
  underlying type of $\alpha$ is \texttt{⟨ α ⟩}.}
\end{notation}

By univalence, the type $\Ord[]$ is itself an ordinal in the next
universe, under the relation
\[
  \alpha \olt \beta \;:\equiv\; \Sigma_{b : \us\beta}\, \alpha =
  \down\beta b ,
\]
where $\down\beta b$ is the initial segment (lower set, or downward
closure) of $\beta$ at~$b$, which is again an ordinal, with the points
of $\beta$ below $b$ as its underlying type and the order inherited
from $\beta$:
\[
  \down\beta b \;:\equiv\; \textstyle\Sigma_{y : \us\beta}\, y \lt\beta b,
  \qquad
  (y,l) \lt{} (z,m) \;:\equiv\; y \lt\beta z .
\]
In particular, being well-ordered, $\Ord[]$ is a set
by~\Cref{lem:ext-gives-set}. Passing to initial segments preserves the
order, in the sense that $y \lt\beta y'$ implies
$\down\beta y \olt \down\beta{y'}$.
\claim{ordinal-of-ordinals}
\AgdaRefs{\Agda{Ordinals.OrdinalOfOrdinals}{OO}; the relation $\olt$
  is \texttt{\_⊲\_} and the initial segment is \texttt{\_↓\_}, both
  defined loc.\ cit.; \texttt{↓-preserves-order}.} We also use the
\emph{initial-segment embedding} order
\[
  \alpha \oemb \beta \;:\equiv\; \Sigma_{f:\us\alpha\to\us\beta}\,
  \text{$f$ is a simulation of $\alpha$ into $\beta$},
\]
where a \emph{simulation} of $\alpha$ into $\beta$ is a map
$f : \us\alpha \to \us\beta$ that is \emph{order-preserving} and
\emph{initial-segment-preserving}, the latter meaning that every point
of $\beta$ below $f\,x$ is the image of a point of $\alpha$ below $x$:
\[
  x \lt\alpha y \to f\,x \lt\beta f\,y
  \qquad\text{and}\qquad
  z \lt\beta f\,x \to \textstyle\Sigma_{x' : \us\alpha}\,
  (x' \lt\alpha x) \times (f\,x' = z) .
\]
The two conditions together say that $f$ maps the initial segment of
$x$ onto the initial segment of $f\,x$. Moreover, $\alpha \olt \beta$
implies
$\alpha \oemb \beta$, while $\alpha \olt \beta \oemb \gamma$ implies
$\alpha \olt \gamma$. There is at most one simulation of $\alpha$ into
$\beta$, so that $\alpha \oemb \beta$ is a proposition. Two ordinals
$\alpha,\beta$ are \emph{order equivalent}, written
$\alpha \iseq \beta$, when there is an
order-preserving equivalence $\us\alpha \simeq \us\beta$ with
order-preserving inverse. This relation is a proposition, as there is
at most one such equivalence.
\AgdaRefs{\Agda{Ordinals.OrdinalOfOrdinals}{\_⊴\_},
  \texttt{⊲-⊴-gives-⊲};
  \Agda{Ordinals.Maps}{is-simulation}, \texttt{is-order-preserving},
  \texttt{is-initial-segment}, \texttt{at-most-one-simulation};
  \Agda{Ordinals.OrdinalOfOrdinals}{⊴-is-prop-valued}.}
\claim{ordinal-comparisons}
\AgdaRefs{\Agda{Ordinals.Equivalence}{\_≃ₒ\_},
  \texttt{≃ₒ-is-prop-valued}.}

\begin{lemma}
\label{lem:surjective-simulations-are-order-equivs}
If $f : \us\alpha \to \us\beta$ is a surjective simulation, then $f$
is an equivalence with order-preserving inverse.
\AgdaRefs{\Agda{Ordinals.Equivalence}{surjective-simulations-are-order-equivs}.}
\end{lemma}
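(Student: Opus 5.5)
The plan is to show first that a simulation is an embedding, then combine this with surjectivity to get an equivalence, and finally check that the inverse preserves the order.

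\emph{Step 1: left cancellability.} Let $f\,x = f\,y$. By extensionality of $\alpha$ it is enough to show that $x$ and $y$ have the same predecessors, and by symmetry it suffices to show that $u \lt\alpha x$ implies $u \lt\alpha y$. Given $u \lt\alpha x$, order preservation gives $f\,u \lt\beta f\,x = f\,y$. Initial-segment preservation at $y$ then supplies $x' \lt\alpha y$ with $f\,x' = f\,u$. It remains to conclude $x' = u$, which we prove by transfinite induction (\Cref{lem:accessibility-transfinite-induction}).

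The main obstacle is organising this induction correctly. I would prove the statement
\[
  \textstyle\prod_{x:\alpha}\prod_{y:\alpha}\, f\,x = f\,y \to x = y
\]
by well-founded induction on $x$, with $y$ universally quantified. Then in the step above we apply the induction hypothesis to $u \lt\alpha x$ and $x'$, obtaining $u = x'$, and hence $u \lt\alpha y$.

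For the converse inclusion, let $v \lt\alpha y$. We get $f\,v \lt\beta f\,y = f\,x$, so there is $x'' \lt\alpha x$ with $f\,x'' = f\,v$. The induction hypothesis at $x''$ gives $x'' = v$, hence $v \lt\alpha x$. Extensionality then yields $x = y$.

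\emph{Step 2: equivalence.} The type underlying $\beta$ is a set (\Cref{lem:ext-gives-set}). A left-cancellable map into a set is an embedding, so the fibers of $f$ are propositions. They are also inhabited, since $f$ is a surjection, and an inhabited proposition is a singleton: its truncation can be eliminated into the proposition itself. Thus every fiber is a singleton, and $f$ is an equivalence (\Cref{def:equivalence}).

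\emph{Step 3: the inverse preserves the order.} Write $g$ for the inverse, and let $z \lt\beta z'$. Then $z \lt\beta f\,(g\,z')$, so initial-segment preservation gives $x' \lt\alpha g\,z'$ with $f\,x' = z$. Applying $g$ gives $x' = g\,z$, and hence $g\,z \lt\alpha g\,z'$, as required.
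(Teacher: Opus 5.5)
Your proof is correct and follows essentially the paper's route: left cancellability by transfinite induction plus extensionality, then embedding plus surjectivity gives an equivalence, then order preservation of the inverse via initial-segment preservation. The differences are minor. You induct on $x$ alone with $y$ generalized, which suffices because both recursive calls are at points below $x$, whereas the paper inducts on both points. Your Step~3 inlines the paper's detour through order reflection of $f$ by composing directly with the inverse.
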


\begin{proof}
A simulation is left cancellable, which is proved by transfinite
induction on the two points
(\Cref{lem:accessibility-transfinite-induction}). Let $f\,x = f\,x'$
and $u \lt{} x$. Order preservation gives $f\,u \lt{} f\,x'$, and
initial-segment preservation gives $v \lt{} x'$ with $f\,v = f\,u$, so
$u = v \lt{} x'$ by the induction hypothesis. Symmetrically, every
point below $x'$ is below $x$, and extensionality identifies $x$
with~$x'$. The underlying type of an ordinal is a set
by~\Cref{lem:ext-gives-set}, and a left-cancellable map into a set is
an embedding (\Cref{def:embedding}), so the fibers of $f$ are
propositions. Surjectivity makes them inhabited
(\Cref{def:surjection}), and an inhabited proposition is a singleton,
so $f$ is an equivalence (\Cref{def:equivalence}).

The map $f$ is also order-reflecting. If $f\,x \lt{} f\,y$, then
$f\,x$ lies in the initial segment of $f\,y$, which initial-segment
preservation identifies with the image of the initial segment of $y$,
so $f\,x = f\,x'$ for some $x' \lt{} y$, and left cancellability gives
$x = x' \lt{} y$. The inverse of an order-preserving order-reflecting
equivalence is order-preserving.
\end{proof}

\subsection{Trichotomy}

\begin{definition}[Trichotomy]
\label{def:trichotomy}
An ordinal $\alpha$ is \emph{trichotomous} if for all $x,y:\us\alpha$ we
have $x \lt{} y$ or $x = y$ or $x \gt{} y$. We write $\OrdThree[]$ for
the type of trichotomous ordinals in a universe left implicit.
\AgdaRefs{\Agda{Ordinals.Type}{is-trichotomous};
\Agda{Ordinals.TrichotomousType}{Ordinal₃}.}
\end{definition}

\begin{remark}
\label{rem:trichotomy-EM}
Asking that every discrete ordinal be trichotomous is equivalent to
excluded middle.
\end{remark}
\begin{proof}
Excluded middle decides
  $x \lt{} y$ for all $x,y : \us\alpha$, and a well-order whose
  order is decidable is trichotomous, by transfinite induction on both
  points. If neither $x \lt{} y$ nor $y \lt{} x$, the induction
  hypotheses show that every predecessor of either point is a
  predecessor of the other, and extensionality gives $x = y$.
  \AgdaRefs{\Agda{Ordinals.Notions}{trichotomy₃}, via
  \texttt{trichotomy-from-decidable-order}.}

 Conversely, let $P$ be a
  proposition with $\neg\neg P$, and order the two-element type by
  \[
    0 \lt{} 1 \;:\equiv\; P, \qquad
    0 \lt{} 0 \;:\equiv\; 1 \lt{} 0 \;:\equiv\; 1 \lt{} 1 \;:\equiv\; \Zero.
  \]
  This order is proposition valued, transitive and well-founded with no
  assumption on $P$, and $\neg\neg P$ makes it extensional, so it is an
  ordinal, whose underlying type $\Two$ is discrete. Trichotomy at $0$
  and $1$ then gives $P$, since $0 = 1$ is false and $1 \lt{} 0$ is
  empty. This is double negation elimination, and hence excluded
  middle.
  \AgdaRefs{\Agda{Ordinals.Taboos}{EM-if-Every-Discrete-Ordinal-Is-Trichotomous},
  via \texttt{DNE-if-Every-Discrete-Ordinal-Is-Trichotomous}.}
\end{proof}
Trichotomous ordinals are nevertheless abundant without excluded
middle, and~\Cref{sec:brouwer-four} interprets every Brouwer code as a
trichotomous ordinal under an interpretation that dominates the
standard one.
\begin{lemma}
\label{lem:trichotomy-gives-discrete}
  Trichotomy implies discreteness.
\end{lemma}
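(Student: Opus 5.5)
The plan is to take a trichotomous ordinal $\alpha$ and points $x,y : \us\alpha$, and to decide $x = y$ by case analysis on the trichotomy data for the pair $(x,y)$. Trichotomy gives one of $x \lt{} y$, $x = y$ or $x \gt{} y$. In the middle case we return the identification directly. The other two cases should yield $x \neq y$, so we have to show that $x \lt{} y$ and $y \lt{} x$ each refute $x = y$.

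For this we need irreflexivity of $\lt{}$, namely $\neg(z \lt{} z)$ for every $z$. This is the only real step. I would derive it from well-foundedness, using transfinite induction (\Cref{lem:accessibility-transfinite-induction}) on the family $P\,z :\equiv \neg(z \lt{} z)$. For the step, assume $\prod_{w} (w \lt{} z \to \neg(w \lt{} w))$ and suppose $z \lt{} z$. Instantiating the hypothesis at $w = z$ with this very witness gives $\neg(z \lt{} z)$, a contradiction. Equivalently, one can argue by induction on the accessibility of $z$.

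With irreflexivity in hand, suppose $x \lt{} y$ and $p : x = y$. Transporting $x \lt{} y$ along $p^{-1}$ gives $x \lt{} x$, which contradicts irreflexivity. The case $y \lt{} x$ is symmetric. So every outcome of trichotomy decides $x = y$, and $\alpha$ is discrete.

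There is no real obstacle here. The only care needed concerns how trichotomy is formulated. If it is stated with a truncated disjunction, we first observe that the three alternatives are mutually exclusive propositions: by irreflexivity, together with the fact that $\us\alpha$ is a set (\Cref{lem:ext-gives-set}) and that $\lt{}$ is proposition valued. Hence the disjunction is already untruncated and can be eliminated into the proposition ``$x = y$ is decidable'' (\Cref{lem:discrete-sets}). If trichotomy is stated as a plain sum, the case split is direct.
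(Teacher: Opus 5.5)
Your proposal is correct and follows the paper's argument. The middle case of trichotomy gives $x = y$, and the two order cases give $x \neq y$ because a well-founded relation is irreflexive, which you derive by transfinite induction. Your remark on the truncated reading is also sound and matches the paper's convention that the ``or'' in trichotomy is a sum of mutually exclusive propositions; excluding $x < y$ together with $y < x$ needs transitivity besides irreflexivity, but eliminating directly into the proposition that $x = y$ is decidable avoids even that.
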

\begin{proof}
Given $x$ and $y$, the two order cases of trichotomy decide $x \neq y$,
because a well-founded relation is irreflexive, and the middle case
decides $x = y$.
\AgdaRefs{\Agda{Ordinals.Notions}{trichotomous-gives-discrete}.}
\end{proof}

\subsection{Ordinal arithmetic}
\label{sec:ordinal-arithmetic}

\begin{definition}[Ordinal addition]
\label{def:ordinal-addition}
The sum $\alpha \oplusO \beta$ has as underlying type the binary sum of
those of $\alpha$ and $\beta$, ordered so that every element of
$\alpha$ precedes every element of $\beta$:
\begin{align*}
  \mathrm{inl}\,x \lt{} \mathrm{inl}\,x' &:\equiv x \lt\alpha x', &
  \mathrm{inl}\,x \lt{} \mathrm{inr}\,y' &:\equiv \One, \\
  \mathrm{inr}\,y \lt{} \mathrm{inr}\,y' &:\equiv y \lt\beta y', &
  \mathrm{inr}\,y \lt{} \mathrm{inl}\,x' &:\equiv \Zero.
\end{align*}
\AgdaRefs{\Agda{Ordinals.Arithmetic}{\_+ₒ\_}, whose well-order is the
  module \Agda{Ordinals.WellOrderArithmetic}{plus}.}
\end{definition}

\begin{lemma}
\label{lem:addition-well-order}
The order of~\Cref{def:ordinal-addition} is a well-order, so that
$\alpha \oplusO \beta$ is an ordinal.
\AgdaRefs{\Agda{Ordinals.WellOrderArithmetic}{plus.well-order}.}
\end{lemma}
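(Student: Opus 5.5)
We check the four defining properties of a well-order for the relation of \Cref{def:ordinal-addition} in turn, by case analysis on the injections $\mathrm{inl}$ and $\mathrm{inr}$. Each property reduces to the corresponding property of $\lt\alpha$ or $\lt\beta$, or is trivial because the relation is $\One$ or $\Zero$.

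\emph{Proposition valuedness.} In each of the four clauses the relation is $\lt\alpha$, $\lt\beta$, $\One$ or $\Zero$, all of which are propositions.

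\emph{Transitivity.} There are eight cases for the summands of $u \lt{} v \lt{} w$. Any case in which the relation passes from $\mathrm{inr}$ to $\mathrm{inl}$ is vacuous, since that relation is $\Zero$. When all three points lie in the same summand, we use transitivity of $\lt\alpha$ or $\lt\beta$. In the remaining cases the conclusion is $\mathrm{inl}\,x \lt{} \mathrm{inr}\,y$, which is $\One$.

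\emph{Extensionality.} Suppose $u$ and $v$ have the same predecessors. If both are $\mathrm{inl}\,x$ and $\mathrm{inl}\,x'$, then the predecessors of the form $\mathrm{inl}\,z$ give $z \lt\alpha x \leftrightarrow z \lt\alpha x'$, so $x = x'$ by extensionality of $\alpha$. If both are $\mathrm{inr}\,y$ and $\mathrm{inr}\,y'$, the same argument with the $\mathrm{inr}$ predecessors and extensionality of $\beta$ gives $y = y'$. The mixed case $\mathrm{inl}\,x$ versus $\mathrm{inr}\,y$ requires care, and it is the only nontrivial point. Every $\mathrm{inl}\,z$ lies below $\mathrm{inr}\,y$, so in particular $\mathrm{inl}\,x \lt{} \mathrm{inr}\,y$. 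Since the two points have the same predecessors, this gives $\mathrm{inl}\,x \lt{} \mathrm{inl}\,x$, that is $x \lt\alpha x$. This contradicts irreflexivity, which holds because $\lt\alpha$ is well-founded (an accessible $x$ with $x \lt{} x$ yields an infinite regress, handled by accessibility induction). The mixed case is therefore vacuous.

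\emph{Well-foundedness.} This is the step I expect to be the main obstacle, since it needs nested accessibility inductions. First show, by accessibility induction on $x$ in $\alpha$, that $\mathrm{inl}\,x$ is accessible for every $x$: the predecessors of $\mathrm{inl}\,x$ are exactly the $\mathrm{inl}\,x'$ with $x' \lt\alpha x$, and these are accessible by the induction hypothesis. Then show, by accessibility induction on $y$ in $\beta$, that $\mathrm{inr}\,y$ is accessible: a predecessor of $\mathrm{inr}\,y$ is either some $\mathrm{inl}\,x$, which is accessible by the first step, or some $\mathrm{inr}\,y'$ with $y' \lt\beta y$, which is accessible by the induction hypothesis. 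Both steps can be phrased through the transfinite induction principle of \Cref{lem:accessibility-transfinite-induction}, using the families $x \mapsto (\mathrm{inl}\,x$ is accessible$)$ and $y \mapsto (\mathrm{inr}\,y$ is accessible$)$. Since every point of the sum is of one of these two forms, every point is accessible.
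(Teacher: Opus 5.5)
Your proposal is correct and follows the paper's proof essentially step by step. It uses the same case analyses for proposition valuedness, transitivity and extensionality, with the mixed case ruled out because $\mathrm{inl}\,x$ would precede itself, and the same two nested accessibility inductions, first on $\alpha$ for the left summand and then on $\beta$ for the right. The only differences are cosmetic: you enumerate all eight transitivity cases rather than grouping them by the summand of the last point, and you appeal to irreflexivity of $\lt\alpha$ directly rather than to well-foundedness of the sum.
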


\begin{proof}
It is proposition valued because its four cases are, two of them by
hypothesis and the other two being $\One$ and $\Zero$.

For transitivity, suppose $x \lt{} y \lt{} z$. If $z$ lies in the left
summand, then so do $x$ and $y$, no point of the left summand being
preceded by a point of the right, and transitivity is inherited from
$\alpha$. If $z$ lies in the right summand and $x$ in the left, then
$x \lt{} z$ holds by definition. If $x$ and $z$ both lie in the right
summand, then so does $y$, no point of the right summand preceding a
point of the left, and transitivity is inherited from $\beta$.

For well-foundedness, each $\mathrm{inl}\,x$ is accessible by induction
on the accessibility of $x$, its predecessors being the
$\mathrm{inl}\,x'$ with $x' \lt{} x$. Each $\mathrm{inr}\,y$ is
then accessible by induction on the accessibility of $y$, its
predecessors being all the $\mathrm{inl}\,x$, accessible as shown
above, together with the $\mathrm{inr}\,y'$ with $y' \lt{} y$.

For extensionality, let two points have the same predecessors. They
cannot be one from each summand, for $\mathrm{inl}\,x$ precedes every
$\mathrm{inr}\,y$, so that $\mathrm{inl}\,x$ would precede itself,
which well-foundedness forbids. If both are of the form
$\mathrm{inl}$, all of their predecessors are, and the extensionality
of $\alpha$ applies, and if both are of the form $\mathrm{inr}$, their
predecessors in the right summand agree and the extensionality of
$\beta$ applies.
\end{proof}

\begin{definition}[Lexicographic order on arbitrary sums]
\label{def:lexicographic-order}
For $\tau : \Ord[]$ and a family $\upsilon : \tau \to \Ord[]$, the
\emph{lexicographic order} on $\Sigma_{x:\tau}\,\upsilon\,x$ has the
index as the more significant coordinate:
\[
  (x,a) \lt{} (x',a') \;:\equiv\; (x \lt\tau x') \;+\;
  \textstyle\Sigma_{r\,:\,x = x'}\,
  \big(\mathrm{transport}^{\upsilon}\,r\,a \lt{\upsilon\,x'} a'\big).
\]
We write $\osum\tau\upsilon$ for the type $\Sigma_{x:\tau}\,\upsilon\,x$
equipped with this order.
\AgdaRefs{\Agda{Ordinals.LexicographicOrder}{slex-order}.}
\end{definition}

\begin{lemma}
\label{lem:lex-order}
The lexicographic order is
proposition valued, transitive and well-founded. It is moreover
extensional when the family is constant.
\AgdaRefs{\Agda{Ordinals.WellOrderArithmetic}{sum.prop-valued},
  \texttt{sum.transitive}, \texttt{sum.well-founded};
  \texttt{times.well-order} for the constant family.}
\end{lemma}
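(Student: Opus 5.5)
We verify the four properties in turn, using that each $\upsilon\,x$ and $\tau$ carry well-orders. Proposition-valuedness is the first point to settle: each summand is a proposition, being $x \lt\tau x'$ in one case and, in the other, a sum over $r : x = x'$ of propositions. The index $x = x'$ is a proposition because the underlying type of $\tau$ is a set (\Cref{lem:ext-gives-set}). The two summands exclude each other, since $x \lt\tau x'$ together with $x = x'$ would give $x' \lt\tau x'$, contradicting irreflexivity, which follows from well-foundedness. Hence the binary sum is a proposition.

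For transitivity, we take $(x,a) \lt{} (y,b) \lt{} (z,c)$ and split into the four cases. If both steps are strict in the index, transitivity of $\tau$ applies. If exactly one step is strict, we transport the strict index inequality along the identification supplied by the other step. If both steps are equal-index steps with $r : x = y$ and $s : y = z$, we use $r \cdot s : x = z$, and transport is functorial, $\mathrm{transport}\,(r\cdot s)\,a = \mathrm{transport}\,s\,(\mathrm{transport}\,r\,a)$. We also use that transport along $s$ preserves the order; this holds by path induction on $s$. Transitivity of $\upsilon\,z$ then finishes this case.

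For well-foundedness, we prove that every $(x,a)$ is accessible by an outer accessibility induction on $x$ in $\tau$ and an inner accessibility induction on $a$ in $\upsilon\,x$. Concretely, we show, by induction on $x$, the statement that every $(x,a)$ is accessible for all $a$; within it we perform the induction on $a$. A predecessor $(x',a')$ of $(x,a)$ is either of the form with $x' \lt\tau x$, and is then accessible by the outer hypothesis, or comes with $r : x' = x$ and $\mathrm{transport}\,r\,a' \lt{} a$. In the latter case we path-induct on $r$ to reduce to $x' \equiv x$ and $a' \lt{} a$, and then the inner hypothesis applies. The main obstacle is organizing this path induction cleanly, since $r$ has a free endpoint $x'$, which is what makes it legitimate. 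It may be easier to do it via \Cref{lem:accessibility-transfinite-induction} on the predicate ``$(x,a)$ is accessible for all $a$''.

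For extensionality with a constant family $\upsilon\,x = \beta$, transport along $r$ is the identity, so the order becomes $x \lt{} x' + (x = x') \times (a \lt{} a')$. Suppose $(x,a)$ and $(y,b)$ have the same predecessors. We first show $x = y$ by extensionality of $\tau$: if $u \lt{} x$, then $(u,b) \lt{} (x,a)$, so $(u,b) \lt{} (y,b)$, whence $u \lt{} y$, the second disjunct $u = y$ being excluded by irreflexivity. Having $x = y$, we reduce to $x \equiv y$; then $c \lt{} a$ gives $(x,c) \lt{} (x,a)$, hence $(x,c) \lt{} (x,b)$, hence $c \lt{} b$, again discarding the strict-index case by irreflexivity. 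Extensionality of $\beta$ then yields $a = b$.
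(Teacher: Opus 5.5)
Your proof is correct and follows the paper's argument in all four parts. It uses the same exclusivity argument for proposition-valuedness, the same case split for transitivity, the same nested accessibility induction (first on the index, then on the fiber) for well-foundedness, and the same two-stage extensionality argument (first identify the indices by extensionality of $\tau$, then the fiber components by extensionality of the constant fiber).

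The one small difference is in extensionality. You test with $(u,b)$ rather than $(u,a)$, so the spurious equal-index case gives $b \lt{} b$ at once. The paper tests with $(u,a)$ and has to rule that case out by deriving $(x',a') \lt{} (x',a')$ from the same-predecessors hypothesis, so your variant is slightly more direct.
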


\begin{proof}
It is proposition valued because each of its two cases is, the second
because $\tau$ is a set, so that $r$ is unique, and the fiber orders
are proposition valued. The two cases exclude each other, because
$x \lt{} x'$ together with $x = x'$ would make $x$ precede itself. A
binary sum of two propositions that exclude each other is a
proposition.

For transitivity, if either of two consecutive steps moves the index
then so does their composite, by the transitivity of $\lt{}$ after
transporting, and if neither does, the three indices agree and the
fiber order is transitive.

For well-foundedness, we show that $(x,a)$ is accessible by induction
on the accessibility of $x$ and then on that of $a$. Its predecessors
are the $(x',a')$ with $x' \lt{} x$, accessible by the induction
hypothesis on $x$, and the $(x,a')$ with $a' \lt{} a$,
accessible by the induction hypothesis on $a$.

Now let the family be constant, say $\upsilon\,x = \alpha$ for every
$x$, and let $(x,a)$ and $(x',a')$ have the same predecessors. Given
$u \lt{} x$ we have $(u,a) \lt{} (x,a)$, so $(u,a) \lt{} (x',a')$,
which gives either $u \lt{} x'$, as wanted, or $u = x'$ together
with $a \lt{} a'$. In the second case $x' \lt{} x$, so
$(x',a') \lt{} (x,a)$ and hence $(x',a') \lt{} (x',a')$, which
well-foundedness forbids. By symmetry $x$ and $x'$ have the same
predecessors, so $x = x'$ by the extensionality of $\tau$, and we may
assume $x = x'$. Then for $b \lt{} a$ we have $(x,b) \lt{} (x,a)$,
hence $(x,b) \lt{} (x,a')$, which gives $b \lt{} a'$, since
$x \lt{} x$ is impossible, and by symmetry and the extensionality of the
fiber, we get $a = a'$.
\end{proof}

\begin{definition}[Ordinal multiplication]
\label{def:ordinal-multiplication}
Ordinal multiplication $\alpha \otimesO \beta$ is the lexicographic
order of~\Cref{def:lexicographic-order} in the case of a constant
family: the sum over~$\beta$ of the family constantly~$\alpha$, so that
$\beta$ is the more significant coordinate. The operands are thus
written in the opposite order from their sum, $\alpha$ before $\beta$,
as in the classical definition. It is an ordinal by~\Cref{lem:lex-order}.
\AgdaRefs{\Agda{Ordinals.Arithmetic}{\_×ₒ\_}, whose well-order is the
  module \Agda{Ordinals.WellOrderArithmetic}{times}.}
\end{definition}

\begin{lemma}[Maps of lexicographic sums]
\label{lem:sum-map-order}
For $\tau, \tau' : \Ord[]$ with families $\upsilon : \tau \to \Ord[]$
and $\upsilon' : \tau' \to \Ord[]$, a map $f : \us\tau \to
\us{\tau'}$ and maps $g\,x : \us{\upsilon\,x} \to
\us{\upsilon'\,(f\,x)}$, the map
$\summap fg : \osum\tau\upsilon \to \osum{\tau'}{\upsilon'}$
of~\Cref{def:sum-map} is
\begin{enumerate}
\item\label{item:sum-map-order-preserving} order-preserving if $f$ is
  and every $g\,x$ is,
\item\label{item:sum-map-order-reflecting} order-reflecting if $f$ is
  and every $g\,x$ is and $f$ is in addition an embedding.
\end{enumerate}
\AgdaRefs{\Agda{Ordinals.Closure}{pair-fun-is-order-preserving},
  \texttt{pair-fun-is-order-reflecting}.}
\end{lemma}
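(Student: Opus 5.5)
Both items will be proved by unfolding the lexicographic order of~\Cref{def:lexicographic-order} and splitting into its two cases. For points $(x,a)$ and $(x',a')$ of $\osum\tau\upsilon$, the map $\summap fg$ sends them to $(f\,x, g\,x\,a)$ and $(f\,x', g\,x'\,a')$. A relation between the images is either $f\,x \lt{} f\,x'$, or an identification $s : f\,x = f\,x'$ together with $\mathrm{transport}^{\upsilon'}\,s\,(g\,x\,a) \lt{} g\,x'\,a'$.

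For~(\labelcref{item:sum-map-order-preserving}), consider first the case $x \lt{} x'$. Order preservation of $f$ gives $f\,x \lt{} f\,x'$, which is the first disjunct. In the other case we have $r : x = x'$ with $\mathrm{transport}^{\upsilon}\,r\,a \lt{} a'$. We take $s :\equiv \mathrm{ap}_f\,r$ and need the naturality fact
\[
  \mathrm{transport}^{\upsilon'}\,(\mathrm{ap}_f\,r)\,(g\,x\,a) \;=\; g\,x'\,(\mathrm{transport}^{\upsilon}\,r\,a),
\]
which holds by induction on $r$, since both sides reduce to $g\,x\,a$ when $r$ is $\mathrm{refl}$. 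Order preservation of $g\,x'$ then turns the hypothesis into the required inequality. Alternatively, we could first perform path induction on $r$, after which everything is definitional.

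For~(\labelcref{item:sum-map-order-reflecting}) we run the same split backwards. If $f\,x \lt{} f\,x'$, then order reflection of $f$ gives $x \lt{} x'$. Otherwise we have $s : f\,x = f\,x'$. Since $f$ is an embedding it is left cancellable, so we get $r : x = x'$. The main obstacle is that $s$ must be identified with $\mathrm{ap}_f\,r$ so that the naturality equation above applies. Here the embedding hypothesis is used in full: $\mathrm{ap}_f$ is an equivalence $(x = x') \simeq (f\,x = f\,x')$, so we may take $r$ to be the preimage of $s$, and then $\mathrm{ap}_f\,r = s$ holds on the nose. Alternatively, $\us{\tau'}$ is a set by~\Cref{lem:ext-gives-set}, so any two identifications $f\,x = f\,x'$ agree.

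Once $s$ is replaced by $\mathrm{ap}_f\,r$, path induction on $r$ reduces the goal to this: from $g\,x\,a \lt{} g\,x\,a'$, deduce $a \lt{} a'$. This is the order reflection of $g\,x$, and it yields the second disjunct with $r$ being $\mathrm{refl}$.
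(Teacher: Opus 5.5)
Your proposal is correct and follows essentially the paper's proof: a case split on the two disjuncts of the lexicographic order, transporting along the image under $f$ of the identification for preservation, and, for reflection, left cancellability of the embedding $f$ followed by identifying the given path with the image of the cancelled one so that the transports agree. The paper does that last step using that $\us{\tau'}$ is a set (\Cref{lem:ext-gives-set}), which is your second option; your first option, that $\mathrm{ap}_f$ is an equivalence because $f$ is an embedding, is an equally valid variant.
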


\begin{proof}
A comparison in the lexicographic order
of~\Cref{def:lexicographic-order} is of one of two kinds, a comparison
of the indices, or an identification of the indices together with a
comparison in the fiber.

(\labelcref{item:sum-map-order-preserving}) A comparison of the first
kind is preserved by $f$. One of the second kind is preserved by
$g\,x$, after transporting the fiber component along the image under
$f$ of the identification of the indices.

(\labelcref{item:sum-map-order-reflecting}) A comparison of the images
of the first kind is reflected by $f$. One of the second kind comes
with an identification $f\,x = f\,y$ of the images of the indices, and
$f$, being an embedding, is left cancellable, which identifies $x$
with $y$. The underlying type of an ordinal is a set
(\Cref{lem:ext-gives-set}), so the given identification is the image
of that one, and the two transports agree. The comparison in the fiber
is then reflected by $g\,x$.
\end{proof}

\begin{remark}
\label{rem:lex-not-extensional}
For arbitrary $\tau : \Ord[]$ and $\upsilon : \us\tau \to \Ord[]$, the
lexicographic order on $\Sigma_{x:\us\tau}\,\us{\upsilon\,x}$ fails
constructively to be extensional, in that excluded middle follows if
all such orders are extensional.
\end{remark}
\begin{proof}
Take $\tau$ to be the ordinal $\OmegaO$ and $\upsilon$ to be the family
that sends a truth value $p$ to the proposition $p \ne \bot$. Every
proposition is an ordinal in exactly one way, under the empty order,
so this is a family of ordinals. No comparison in a fiber is possible,
the fibers carrying the empty order, so the lexicographic sum is the
subtype
\[
  X \;:\equiv\; \textstyle\Sigma_{p : \OmegaO}\,p \ne \bot
\]
of $\OmegaO$ with the restricted order. No element of $X$ has a
predecessor in $X$, since a predecessor would have to be $\bot$, which
$X$ excludes. So the hypothesis of extensionality is vacuous, and
extensionality of $X$ says that $X$ is a proposition. Let $P$ be a
proposition with $\neg\neg P$. Both $P$ and $\neg\neg P$ are elements
of $X$, so they are equal, and $P$ follows. This is double negation
elimination, and hence excluded middle. This
counterexample is due to Mike Shulman (personal communication).
\AgdaRefs{\Agda{Ordinals.ShulmanTaboo}{shulmans-taboo};
\Agda{Ordinals.ToppedArithmetic}{extensionality-of-ordinal-indexed-sums-gives-EM}
for the reduction of lexicographic sums to it.}
\end{proof}

One sufficient condition for the lexicographic order to be extensional
is that the family be constant, as we have seen above. Two further
sufficient conditions are that the index ordinal be trichotomous, and
that the summands have top elements.

\begin{lemma}
\label{lem:trichotomous-sum}
If $\tau$ is trichotomous and $\upsilon$ is a $\tau$-indexed family of
ordinals, then the lexicographic order is extensional, so that
$\osum\tau\upsilon$ is an ordinal.
\AgdaRefs{\Agda{Ordinals.TrichotomousArithmetic}{∑³}, whose well-order
  is the module \Agda{Ordinals.WellOrderArithmetic}{sum-cotransitive};
  \Agda{Ordinals.Notions}{tricho-gives-cotrans}.}
\end{lemma}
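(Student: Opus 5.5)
By \Cref{lem:lex-order} the lexicographic order on $\Sigma_{x:\tau}\,\upsilon\,x$ is already proposition valued, transitive and well-founded. So the only thing to prove is extensionality, and the plan is to mimic the constant-family argument of that lemma. The one step that fails for nonconstant families is where we used $(u,a)$ with the \emph{same} fiber point $a$ over a different index $u$. Trichotomy of $\tau$ will replace it.

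Let $(x,a)$ and $(x',a')$ have the same predecessors. The first goal is $x = x'$, and by trichotomy at $x,x'$ there are three cases.

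\emph{Case $x \lt{} x'$.} Then $(x,a) \lt{} (x',a')$ by the first clause of the order. Since the two points have the same predecessors, $(x,a) \lt{} (x,a)$, which contradicts irreflexivity; irreflexivity follows from well-foundedness.

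\emph{Case $x' \lt{} x$.} This is symmetric and equally contradictory.

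\emph{Case $x = x'$.} Since $\tau$ is a set (\Cref{lem:ext-gives-set}), we may assume this identification is $\mathrm{refl}$. The case split is a genuine decision, not a truncated disjunction: the three disjuncts are mutually exclusive propositions, as noted after the definition of trichotomy. So the two absurd cases are eliminated and we obtain $x = x'$ outright.

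It remains, with $x = x'$, to show $a = a'$ by extensionality of $\upsilon\,x$. Given $b \lt{} a$, we have $(x,b) \lt{} (x,a)$, hence $(x,b) \lt{} (x,a')$. Unfolding, this is either $x \lt{} x$, which is impossible, or an identification $r : x = x$ with $\mathrm{transport}\,r\,b \lt{} a'$. Here $r$ is $\mathrm{refl}$ because $\tau$ is a set, so $b \lt{} a'$. The symmetric argument shows that $a$ and $a'$ have the same predecessors in $\upsilon\,x$, so $a = a'$, and therefore $(x,a) = (x',a')$.

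The main obstacle is this transport bookkeeping: handling the identification $x = x'$ and the transport in the second clause correctly. Set-hood of $\tau$ resolves it, and the rest is routine.
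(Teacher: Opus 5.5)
Your proof is correct, but its first step takes a shorter route than the paper's. You apply trichotomy directly to the two indices $x, x'$. You discard the strict cases because $(x,a) < (x',a')$, or its mirror image, would make one of the two points its own predecessor. This gives $x = x'$ without ever using extensionality of $\tau$.

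The paper instead extracts from trichotomy only the cotransitivity of the order of $\tau$. It uses this to show that $x$ and $x'$ have the same predecessors: given $u < x$, cotransitivity at $x'$ gives $u < x'$ or $x' < x$, and the latter would make $(x',a')$ precede itself. Extensionality of $\tau$ then yields $x = x'$. The treatment of the fiber components is the same in both proofs.

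Your argument is more elementary, but it uses the full strength of trichotomy, in particular the disjunct $x = x'$. The paper's argument proves the more general fact that the lexicographic sum over any well-order with cotransitive order is extensional, which is how the formalization's module is parametrized. This generalization is genuine constructively. For instance, the order of $\NInfO$ is cotransitive: given $\underline n < v$ and $z$, decide the digit $z_n$, and if it is $0$ then $z$ is some $\underline m$ with $m \le n$, hence below $v$. Yet trichotomy of $\NInfO$ would give $\WLPO$.
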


\begin{proof}
By~\Cref{lem:lex-order}, only extensionality is left to prove.
Trichotomy makes the order of $\tau$ cotransitive, in the sense that
$x \lt{} y$ implies, for any~$z$, that $x \lt{} z$ or $z \lt{} y$.
Indeed, compare $z$ with $y$. In the case $z \lt{} y$ there is nothing
to do, in the case $z = y$ we have $x \lt{} z$, and in the case
$y \lt{} z$ transitivity gives $x \lt{} z$.

Now let $(a,b)$ and $(x,y)$ have the same predecessors. Given
$u \lt{} a$, cotransitivity applied to the point $x$ gives $u \lt{} x$
or $x \lt{} a$. In the second case $(x,y) \lt{} (a,b)$, so that
$(x,y) \lt{} (x,y)$, which the well-foundedness of the lexicographic
order forbids. So every predecessor of $a$ is one of $x$, and by
symmetry the two have the same predecessors, which gives $a = x$ by
the extensionality of~$\tau$.

So we may take $a$ to be $x$. For $v \lt{} b$ we have
$(x,v) \lt{} (x,b)$ and hence $(x,v) \lt{} (x,y)$, which gives
$v \lt{} y$, the alternative $x \lt{} x$ being impossible. By symmetry
and the extensionality of $\upsilon\,x$, we get $b = y$.
\end{proof}

\begin{definition}[Topped ordinals]
\label{def:topped}
A \emph{top} of an ordinal $\alpha$ is a point that no element exceeds,
and $\OrdT[]$ is the type of ordinals equipped with a top:
\[
  \OrdT[] :\equiv \textstyle\Sigma_{\alpha:\Ord[]}\,\Sigma_{t:\alpha}\,
  \prod_{y:\alpha}\,\neg\,(t \lt{} y).
\]
\AgdaRefs{\Agda{Ordinals.ToppedType}{Ordinalᵀ}.}
\end{definition}
A largest element is a top, because the order is well-founded and
hence irreflexive, but not conversely: in the ordinal $\OmegaO$ no
element exceeds a truth value $P$ with $\neg\neg P$, since such a $P$
differs from $\bot$, whereas $\top$ is the largest element. This shows
that having a top element is in general data rather than property.
Classically, the topped ordinals are precisely the successor ordinals
(ordinals of the form $\alpha \oplusO \OneO$).

\begin{example}
\label{ex:NInf-ordinal}
The type $\NInf$ of~\Cref{ex:NInf} is an ordinal under its natural
order, in which $u$ precedes $v$ when $u$ is a finite point
$\underline n$ whose index is below~$v$ in the sense of~\Cref{ex:NInf}:
\[
  u \lt{} v \;:\equiv\;
  \textstyle\Sigma_{n:\N}\,\big((u = \underline n) \times (n \sqlt v)\big).
\]
It is topped by its largest element $\infty$, and we write $\NInfO$ for
it, as an ordinal or as a topped one according to the context.
\AgdaRefs{\Agda{TypeTopology.GenericConvergentSequence}{\_≺ℕ∞\_},
  \texttt{ℕ∞-ordinal}, \texttt{∞-top}, \texttt{∞-largest};
  \Agda{Ordinals.Arithmetic}{ℕ∞ₒ};
  \Agda{Ordinals.ToppedArithmetic}{ℕ∞ᵒ}.}
\end{example}

\begin{proof}
The predecessors of a point $v$ are exactly the finite points
$\underline n$ with $n \sqlt v$. One direction is the definition, which
asks a predecessor to be finite, and for the other, the comparison
$\underline n \lt{} v$ holds whenever $n \sqlt v$. Moreover, the index
$n$ in the definition is determined by $u$, because $\underline{(-)}$
is left cancellable.
\AgdaRefs{\Agda{TypeTopology.GenericConvergentSequence}{⊏-gives-≺},
  \texttt{≺-gives-⊏}, \texttt{ℕ-to-ℕ∞-lc}.}

The order is proposition valued, being a sum over $n$ of a product of
two equations in the sets $\NInf$ and $\Two$, with the index $n$
determined as observed above.

For transitivity, let $u \lt{} v$ and $v \lt{} w$, say $u = \underline
m$ with $m \sqlt v$ and $v = \underline n$ with $n \sqlt w$. From
$m \sqlt \underline n$ we get $m < n$, and $w$ is decreasing, so that
$n \sqlt w$ gives $m \sqlt w$. This says that $u \lt{} w$.

For well-foundedness, each finite point $\underline n$ is accessible by
course-of-values induction on $n$. Its predecessors are the
$\underline m$ with $m \sqlt \underline n$, that is, with $m < n$, and
these are accessible by the induction hypothesis. Any point $v$ is then
accessible, its predecessors being finite points.

For extensionality, let $u$ and $v$ have the same predecessors. By the
first paragraph this says that $n \sqlt u$ if and only if $n \sqlt v$
for every $n$, and two binary digits that are $1$ in the same cases are
equal, so $u$ and $v$ have the same digits. Being decreasing is a
proposition, so a point of $\NInf$ is determined by its digits, and
$u = v$.

Finally, the point $\infty$ is a top, because $\infty \lt{} v$ would
make $\infty$ a finite point~$\underline n$, and $\infty$ is not
finite, having $n \sqlt \infty$ for every~$n$ whereas
$\underline n \sqleq n$. It is moreover the largest element, since
every predecessor $\underline n$ of a point has $n \sqlt \infty$, and
so is a predecessor of $\infty$.
\end{proof}

Constructively, $\NInfO$ is topped by~$\infty$ but fails to be a
successor ordinal, because the top of a successor ordinal
$\alpha \oplusO \OneO$ is isolated, whereas $\infty$ fails to be
isolated constructively
(\Cref{ex:isolated}(\labelcref{item:infty-isolated-wlpo})).

The operations on topped ordinals and on trichotomous ordinals
(\Cref{def:trichotomy}) are written like those on plain ordinals.
Addition and multiplication of ordinals restrict to topped ordinals,
and the ordinal-indexed sum $\osum\tau\upsilon$ of a
topped-ordinal-indexed family of topped ordinals is again topped.
\claim{topped-arithmetic}
For topped ordinals we take $\tau \oplusT \upsilon$ to be the
ordinal-indexed sum over the topped ordinal $\TwoT$ of the family with
members $\tau$ and $\upsilon$, whose underlying type is
$\Sigma_{i : \One+\One}$ of these, and which is order equivalent to
$\tau \oplusO \upsilon$.
\AgdaRefs{\Agda{Ordinals.ToppedArithmetic}{\_+ᵒ\_};
  \Agda{Ordinals.ToppedAdditionProperties}{alternative-plus}.}
They restrict to trichotomous ordinals as well, where the sum of a
trichotomous-ordinal-indexed family of trichotomous ordinals is an
ordinal by~\Cref{lem:trichotomous-sum} and is again trichotomous.
\claim{trichotomous-arithmetic}

\begin{lemma}
\label{lem:topped-sum}
If $\upsilon$ is a $\tau$-indexed family of topped ordinals, then the
lexicographic order is extensional, so that $\osum\tau\upsilon$ is an
ordinal. If moreover $\tau$ is topped, then so is $\osum\tau\upsilon$.
\AgdaRefs{\Agda{Ordinals.ToppedArithmetic}{∑ₒ} for the ordinal and
  \texttt{∑} for the topped one, whose well-order is the module
  \Agda{Ordinals.WellOrderArithmetic}{sum-top}.}
\end{lemma}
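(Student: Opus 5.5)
By \Cref{lem:lex-order}, the lexicographic order on $\osum\tau\upsilon$ is already proposition valued, transitive and well-founded. So the plan is to prove extensionality using the tops of the fibers, and then exhibit a top of the sum when $\tau$ is topped. Write $t_x$ for the top of $\upsilon\,x$.

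\emph{Extensionality.} Let $(a,b)$ and $(x,y)$ have the same predecessors. The first task is to show that $a$ and $x$ have the same predecessors in $\tau$.
\begin{enumerate}
\item Given $u \lt{} a$, we have $(u,t_u) \lt{} (a,b)$, hence $(u,t_u) \lt{} (x,y)$.
\item This comparison is either $u \lt{} x$, as wanted, or an identification $r : u = x$ together with $\mathrm{transport}\,r\,t_u \lt{} y$.
\item In the second case, transporting the top along $r$ gives the top $t_x$ of $\upsilon\,x$. The top is part of the family's data, so $\mathrm{transport}\,r\,t_u = t_x$ by dependent application of $x \mapsto t_x$ along $r$. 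Hence $t_x \lt{} y$, which contradicts $t_x$ being a top.
\end{enumerate}
Here is the key point: the top sits above every fiber point through $\neg(t \lt{} y)$, so we never have to decide the position of $u$ relative to $x$, which is where trichotomy was used in \Cref{lem:trichotomous-sum}. By symmetry and the extensionality of $\tau$ we get $a = x$.

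Transporting along this identification, we may assume $a \equiv x$. The second task is then to show that $b$ and $y$ have the same predecessors in $\upsilon\,x$. Given $v \lt{} b$, we have $(x,v) \lt{} (x,b)$, hence $(x,v) \lt{} (x,y)$. The first disjunct $x \lt{} x$ is impossible by irreflexivity. In the second disjunct, the identification is $\mathrm{refl}$ because $\tau$ is a set (\Cref{lem:ext-gives-set}), which gives $v \lt{} y$. By symmetry and the extensionality of $\upsilon\,x$ we get $b = y$, and hence $(a,b) = (x,y)$.

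\emph{Top.} If $\tau$ has top $t$, take $(t, t_t)$. Suppose $(t,t_t) \lt{} (x,y)$. The first disjunct gives $t \lt{} x$, contradicting $t$ being a top. The second gives $r : t = x$ with $\mathrm{transport}\,r\,t_t \lt{} y$, that is, $t_x \lt{} y$ as in step~3 above, contradicting $t_x$ being a top.

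The main obstacle is the transport bookkeeping in step~3, namely justifying that transport carries the chosen top of $\upsilon\,u$ to the chosen top of $\upsilon\,x$. I would handle it uniformly by path induction on $r$. If that is awkward, the alternative is to note only that the transport of a top is a top of $\upsilon\,x$, which suffices since the argument needs only the property $\neg(t_x \lt{} y)$.
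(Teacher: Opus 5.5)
Your proposal is correct and follows the paper's proof essentially step for step. You probe with $(u,t_u)$, so that the fiber alternative of the comparison would exceed a top; you then identify the fiber components as in \Cref{lem:trichotomous-sum}; and you take $(t,t_t)$ as the top of the sum, exactly as the paper does. Two details the paper leaves implicit are spelled out correctly in your version: handling the transport by dependent application, and noting that the identification $x = x$ is $\mathrm{refl}$ because $\tau$ is a set.
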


\begin{proof}
By~\Cref{lem:lex-order}, only extensionality is left to prove. Write
$t_u$ for the top of $\upsilon\,u$, and let $(a,b)$ and $(x,y)$ have
the same predecessors. Given $u \lt{} a$, we have
$(u,t_u) \lt{} (a,b)$ and hence $(u,t_u) \lt{} (x,y)$, which gives
$u \lt{} x$ or $u = x$ together with a comparison exceeding $t_u$ in
the fiber, and the latter is impossible. So every predecessor of $a$
is one of $x$, and by symmetry the two have the same predecessors,
which gives $a = x$ by the extensionality of~$\tau$. The fiber
components are then identified as in the proof
of~\Cref{lem:trichotomous-sum}, which for that step uses only the
irreflexivity of the two orders.

We take the top of the sum to be the pair of the top $t$ of $\tau$
with the top of $\upsilon\,t$. A point exceeding it would either
exceed $t$ in $\tau$ or exceed the top of $\upsilon\,t$ in that fiber,
and neither is possible.
\end{proof}

\subsection{Extended sums}
\label{sec:extended-sums}

The extended sum of~\Cref{sec:extension} becomes a construction on
ordinals once the extension of a family carries an order, which we use
to interpret the ordinal notations
of~\Cref{sec:brouwer-standard,sec:inductive-recursive}.
For the following, recall the $\Pi$-extension operation on families
given in~\Cref{def:extension}.
\begin{lemma}
\label{lem:ordinal-extension}
Let $j : X \hookrightarrow K$ be an embedding and
$\alpha : X \to \Ord[]$ a family of ordinals. For any $k : K$, give the
type $(\us{\alpha} \Pext j)\,k$ the relation
\[
  u \lt{} v \;:\equiv\; \Sigma_{p\,:\,\fib{j}{k}}
  u\,p \lt{\alpha\,(\mathrm{pr}_1 p)} v\,p .
\]
This is a well-order, so we obtain the extension
$\alpha \extend j : K \to \Ord[]$.
\AgdaRefs{\Agda{Ordinals.WellOrderExtension}{extension}, whose
\texttt{well-order} is an instance of the prop-indexed product of
ordinals.}
\end{lemma}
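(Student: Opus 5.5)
The plan is to prove the general fact that a product of ordinals indexed by a proposition $P$, ordered by $f \lt{} g :\equiv \Sigma_{p:P}\, f\,p \lt{} g\,p$, is a well-order. We then instantiate it with $P = \fib jk$, which is a proposition because $j$ is an embedding, and with the family $p \mapsto \alpha\,(\mathrm{pr}_1 p)$. So fix a proposition $P$ and ordinals $\beta\,p$ for $p : P$, and check the four conditions on $\prod_{p:P} \us{\beta\,p}$ in turn.

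\emph{Proposition valuedness.} The relation is a sum over the proposition $P$ of propositions, hence a proposition.

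\emph{Transitivity.} From $(p, f\,p \lt{} g\,p)$ and $(q, g\,q \lt{} h\,q)$ we get $p = q$ since $P$ is a proposition. Transporting the second comparison to $p$ and using transitivity in $\beta\,p$ gives $f\,p \lt{} h\,p$.

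\emph{Extensionality.} Let $f,g$ have the same predecessors. We show $f\,p = g\,p$ for each $p : P$ and conclude by function extensionality. Fix $p$. Evaluation $\mathrm{ev}_p : \prod_{q:P}\beta\,q \to \beta\,p$ is an equivalence, as in the proof of~\Cref{thm:micro-tychonoff}, and it is order-preserving and order-reflecting: $\varphi \lt{} \psi$ holds iff $\varphi\,p \lt{} \psi\,p$, again because $P$ is a proposition. Hence for $b : \beta\,p$ we have $b \lt{} f\,p$ iff $\mathrm{ev}_p^{-1} b \lt{} f$ iff $\mathrm{ev}_p^{-1} b \lt{} g$ iff $b \lt{} g\,p$. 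Extensionality of $\beta\,p$ then gives $f\,p = g\,p$.

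\emph{Well-foundedness.} This is the main obstacle, because we cannot decide $P$. I would prove accessibility of an arbitrary $f$ by splitting on the case rather than the proposition. Any predecessor $g \lt{} f$ comes with some $p : P$. So it suffices to show that for every $p : P$ and every $\varphi$, the point $\varphi$ is accessible. For fixed $p$, transport accessibility along the order equivalence $\mathrm{ev}_p$: by transfinite induction on $\varphi\,p$ in $\beta\,p$ (\Cref{lem:accessibility-transfinite-induction}), every $\psi \lt{} \varphi$ has $\psi\,p \lt{} \varphi\,p$, so by the induction hypothesis $\psi$ is accessible, and hence $\varphi$ is accessible. Then for arbitrary $f$, the accessibility constructor applies: each predecessor $g$ carries a witness $p$, and by the preceding argument $g$ is accessible. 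No case distinction on $P$ is required.

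Assembling these four facts gives a well-order on $\prod_{p:\fib jk}\us{\alpha(\mathrm{pr}_1 p)}$, which is exactly the relation in the statement. This defines $\alpha \extend j$.
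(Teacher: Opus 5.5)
Your proposal is correct and follows essentially the same route as the paper. Both treat $(\us{\alpha} \Pext j)\,k$ as a product indexed by the proposition $\fib jk$. Both obtain proposition-valuedness and transitivity from the uniqueness of fiber points, and extensionality from the order equivalence given by evaluation at a point $p$ of the fiber. Both prove well-foundedness by taking the fiber point that each predecessor witness carries and transferring accessibility from the component $\alpha\,(\mathrm{pr}_1 p)$ back to the product along evaluation at $p$. You first show that every point is accessible once some $p$ is given, and then apply the accessibility constructor, but that is only a reordering of the same argument.
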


\begin{proof}
Since $j$ is an embedding, the fiber $\fib jk$ is a proposition, so
$(\us{\alpha} \Pext j)\,k$ is a product indexed by a proposition, and
$\lt{}$ is the order that such a product carries, asking the component
order to hold at some point of the fiber. For each $p : \fib jk$,
evaluation at $p$ is an equivalence from the product to the component
$\us{\alpha\,(\mathrm{pr}_1 p)}$, whose inverse sends a point of the
component to the section transporting it along the identifications of
the fiber.

The order is proposition valued, being a sum over a proposition of
proposition valued components.

For extensionality, suppose that $u$ and $v$ have the same
predecessors, and let $p : \fib jk$. A predecessor $x$ of $u\,p$
determines a section, which is a predecessor of $u$ in the product,
witnessed at $p$. By the hypothesis it is a predecessor of $v$,
witnessed at some point of the fiber, which is $p$ because the fiber
is a proposition, so $x$ is a predecessor of $v\,p$. Symmetrically,
$u\,p$ and $v\,p$ have the same predecessors, and the extensionality
of $\alpha\,(\mathrm{pr}_1 p)$ gives $u\,p = v\,p$. Since $p$ was
arbitrary, $u = v$.

For transitivity, two consecutive comparisons come with points of the
fiber, which are equal because the fiber is a proposition, so that the
transitivity of the component order at that point applies.

For well-foundedness, a point $u$ of the product is accessible as soon
as every $v \lt{} u$ is. A witness of $v \lt{} u$ supplies a point $p$
of $\fib jk$ together with $v\,p \lt{} u\,p$, and $u\,p$ is accessible
because the order of $\alpha\,(\mathrm{pr}_1 p)$ is well-founded, so
that $v\,p$ is accessible as one of its predecessors. Accessibility
transfers from the component back to the product along evaluation at
$p$, because a predecessor in the product of the section determined by
a point $x$ of the component has its value at $p$ below $x$.
\end{proof}

\begin{lemma}
\label{lem:extension-top}
If every $\alpha\,x$ is topped, then so is every
$(\alpha \extend j)\,k$, so the construction extends families of
topped ordinals.
\AgdaRefs{\Agda{Ordinals.WellOrderExtension}{top-preservation}.}
\end{lemma}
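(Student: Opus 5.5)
The top of $(\alpha \extend j)\,k$ will be the section that picks the top of each component. Write $t_x$ for the given top of $\alpha\,x$, and define
\[
  t_k \;:\equiv\; \lambda p.\, t_{\mathrm{pr}_1 p} \;:\; (\us\alpha \Pext j)\,k ,
\]
a point of $\prod_{p : \fib jk} \us{\alpha\,(\mathrm{pr}_1 p)}$. This definition works uniformly in $k$. It does not require deciding whether the fiber $\fib jk$ is inhabited, which matters because that fiber is only a proposition.

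Next we show that no element exceeds $t_k$. Suppose $t_k \lt{} v$. By the definition of the order in \Cref{lem:ordinal-extension}, this supplies some $p : \fib jk$ with
\[
  t_k\,p \lt{\alpha\,(\mathrm{pr}_1 p)} v\,p .
\]
By definitional unfolding, $t_k\,p$ is $t_{\mathrm{pr}_1 p}$. So the top of $\alpha\,(\mathrm{pr}_1 p)$ would be exceeded by $v\,p$, which contradicts the topness of $t_{\mathrm{pr}_1 p}$. Hence $\neg(t_k \lt{} v)$ for every $v$.

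The two steps together give the data required by \Cref{def:topped}: the ordinal from \Cref{lem:ordinal-extension}, the point $t_k$, and the proof of non-exceedance. Thus $\alpha \extend j$ is a family $K \to \OrdT[]$.

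There is no real obstacle here. The only point needing care is that the top is constructed uniformly, without any case analysis on the fiber. When the fiber is empty, $t_k$ is the unique point of the singleton product, and the non-exceedance condition holds vacuously because no witness $p$ exists. This case is already covered by the argument above.
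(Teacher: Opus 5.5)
Your proof is correct and is essentially the paper's own argument. The paper also takes the section $p \mapsto t_{\mathrm{pr}_1 p}$ as the top, and refutes its being exceeded by extracting from a witness a point $p$ of the fiber at which the top of $\alpha\,(\mathrm{pr}_1 p)$ would be exceeded.
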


\begin{proof}
Suppose each $\alpha\,x$ has a top element $t_x$. Then
$p \mapsto t_{\mathrm{pr}_1 p}$ is a top element of $\lt{}$, since, if
it were below some $\psi$, we would have a point $p : \fib jk$ with
$t_{\mathrm{pr}_1 p} \lt{} \psi\,p$, contradicting the topness of
$t_{\mathrm{pr}_1 p}$.
\end{proof}

\begin{lemma}
\label{lem:extension-restricts}
The construction restricts back to $\alpha$ along $j$, giving
\[
  (\alpha \extend j)(j\,x) \iseq \alpha\,x
\]
for every $x : X$, as illustrated in the diagram below.
\[
  \begin{tikzcd}[row sep=large, column sep=large]
  X \arrow[r, hook, "j"] \arrow[dr, "\alpha"'] & K \arrow[d, dashed, "\alpha \extend j"] \\
  & \Ord[]
\end{tikzcd}
\]
\AgdaRefs{\Agda{Ordinals.Injectivity}{\_↗\_}, \texttt{↗-propertyₒ}.}
\end{lemma}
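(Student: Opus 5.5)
The plan is to reuse the type equivalence of \Cref{lem:extension-property}(\labelcref{item:ext-restricts}) and check that it, together with its inverse, preserves the orders of \Cref{lem:ordinal-extension}. Fix $x : X$ and put $k :\equiv j\,x$. Because $j$ is an embedding, the fiber $\fib{j}{k}$ is a proposition. It contains the point $p_0 :\equiv (x,\mathrm{refl})$, so it is a singleton with center $p_0$. The equivalence will be evaluation at the center,
\[
  e : (\us{\alpha}\Pext j)(j\,x) \to \us{\alpha\,x}, \qquad e\,u :\equiv u\,p_0 .
\]
Its inverse $s$ sends $a : \us{\alpha\,x}$ to the section $p \mapsto \mathrm{transport}\,(\mathrm{ap}\,\mathrm{pr}_1\,r_p)\,a$. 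Here $r_p : p_0 = p$ is the identification supplied by contractibility of the fiber. The two composites are identities: at $p = p_0$ the chosen $r_{p_0}$ can be taken to be $\mathrm{refl}$, or more robustly we use that $\fib{j}{k}$ is a set, being a proposition. For the other composite we use function extensionality over the proposition $\fib{j}{k}$, since every $p$ equals $p_0$ and the transport of $u\,p_0$ along that identification is $u\,p$ by dependent $\mathrm{ap}$.

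For order preservation of $e$, suppose $u \lt{} v$ in the extension. This gives some $p : \fib{j}{k}$ with $u\,p \lt{\alpha(\mathrm{pr}_1 p)} v\,p$. Since $p = p_0$, transporting the comparison along this identification yields $u\,p_0 \lt{\alpha\,x} v\,p_0$. This is a transport in the family $q \mapsto (u\,q \lt{} v\,q)$ over the fiber, so no order-preservation property of transport between different ordinals is needed. Conversely, if $a \lt{\alpha\,x} b$, then the witness $p_0$ together with $s\,a\,p_0 = a$ and $s\,b\,p_0 = b$ gives $s\,a \lt{} s\,b$. So $s$ is order-preserving as well, and $e$ is an order equivalence, which is what $\iseq$ asks for.

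The main obstacle I expect is the bookkeeping of transports in the definition of $s$ and in identifying $s\,a\,p_0$ with $a$. If it becomes awkward, I would fall back on the more abstract route: first prove order preservation and reflection of $e$ as above, which only uses that $\fib{j}{k}$ is a proposition containing $p_0$. Then I would take bijectivity of $e$ from \Cref{lem:extension-property}(\labelcref{item:ext-restricts}). An order-preserving, order-reflecting equivalence has an order-preserving inverse, which finishes the argument without an explicit formula for $s$.
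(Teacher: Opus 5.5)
Your proposal is correct and follows essentially the paper's proof. You use evaluation at the center $(x,\mathrm{refl})$ of the singleton fiber. Order preservation holds because any witnessing point of the fiber equals the center, and in the other direction the center itself serves as the witness. The only difference is that you build the inverse explicitly and check that it is order-preserving, whereas the paper takes the equivalence from the type-level extension property and concludes via order reflection together with the general fact that an order-preserving, order-reflecting equivalence has an order-preserving inverse; this is exactly your fallback route.
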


\begin{proof}
For $k = j\,x$ the fiber $\fib{j}{j\,x}$ is a singleton, being an
inhabited proposition, so evaluation at its center $(x,\mathrm{refl})$
is an equivalence $(\alpha \extend j)(j\,x) \to \alpha\,x$
by~\Cref{lem:extension-property}(\labelcref{item:ext-restricts}). It is
order-preserving, because a comparison $u \lt{} v$ is witnessed at some
point of the fiber, and that point is the center, the fiber being a
proposition, so that the comparison holds at the center. It is
order-reflecting, because a comparison at the center is by definition a
witness of $u \lt{} v$. The inverse of an order-preserving
order-reflecting equivalence is order-preserving, so this is an order
equivalence.
\end{proof}

Under univalence, \Cref{lem:extension-restricts} and, for total
separatedness, \Cref{lem:extension-tot-sep} show that the type of
ordinals and the type of totally separated ordinals are both
algebraically
injective in the sense of~\cite{EscardoInjective2021}, as univalence
turns order equivalences into identifications.
\claim{ordinals-are-ainjective}
\AgdaRefs{\Agda{Ordinals.Injectivity}{Ordinal-is-ainjective}, in
module \texttt{ordinals-injectivity}, and
\texttt{topped-ordinals-injectivity} for the topped variant;
\Agda{Ordinals.TotallySeparated}{TSOrdinal-is-ainjective};
\Agda{Ordinals.Equivalence}{eqtoidₒ} for the passage from an order
equivalence to an identification, and \texttt{UAₒ} for the equivalence
of the two.}

\begin{definition}[Countable extended sum and successor sum]
\label{def:extended-and-successor-sum}
Let $\upsilon : \N \to \OrdT[]$ be a family of topped ordinals.
\begin{enumerate}
\item\label{item:extended-sum} The \emph{extended sum} of $\upsilon$
  is the extension of~$\upsilon$ along the embedding
  $\iota : \N \hookrightarrow \NInf$, summed over the
  ordinal~$\NInfO$:
  \[
    \ssumone\upsilon \;:\equiv\; \osum{\NInfO}{}\,\big(\upsilon \extend
    \iota\big).
  \]
\item\label{item:successor-sum} The \emph{successor sum} of $\upsilon$
  is the extension of~$\upsilon$ along the left inclusion
  $\mathrm{inl} : \N \hookrightarrow \N + \One$, summed over the
  successor $\succT{\omegaO}$, taken as a topped ordinal with the added
  point as top:
  \[
    \ssumsub\upsilon \;:\equiv\; \osum{\succT{\omegaO}}{}\,\big(\upsilon
    \extend \mathrm{inl}\big).
  \]
\end{enumerate}
\AgdaRefs{\Agda{Ordinals.ToppedArithmetic}{∑¹} and \texttt{∑₁}; at the
level of types, \Agda{TypeTopology.SquashedSum}{Σ¹} and
\texttt{Σ₁}, the left inclusion being \texttt{over}.}
\end{definition}
Both are topped ordinals, by~\Cref{lem:extension-top}
and~\Cref{lem:topped-sum}, the index ordinals $\NInfO$ and
$\succT{\omegaO}$ being topped.

\begin{lemma}
\label{lem:successor-sum}
Let $\upsilon : \N \to \OrdT[]$.
\begin{enumerate}
\item\label{item:successor-sum-is-successor} The successor sum is order
  equivalent to the successor of the countable sum:
  \[
    \ssumsub\upsilon \;\iseq\; \Big(\osum{\omegaO}{}\,\upsilon\Big) \oplusO \One.
  \]
\item\label{item:successor-sum-retract} If $\upsilon\,n$ is a retract of
  $\N$ for every $n$, then so is $\ssumsub\upsilon$, and if
  $\upsilon\,n$ is discrete for every $n$, then so is
  $\ssumsub\upsilon$.
\end{enumerate}
\AgdaRefs{\Agda{TypeTopology.SquashedSum}{Σ₁-explicitly},
  \Agda{Ordinals.Closure}{∑₁-top-is-over-inr};
  \Agda{Ordinals.ToppedAdditionProperties}{∑₁-is-successorₒ}, and
  \Agda{Ordinals.Injectivity}{↗-propertyₒ},
  \texttt{↗-out-of-range}, \Agda{Ordinals.Closure}{∑-≃ₒ};
  \Agda{TypeTopology.SquashedSum}{Σ₁-is-discrete}, via
  \texttt{over-is-discrete};
  \Agda{Ordinals.Closure}{Σ₁-ℕ-retract}.}
\end{lemma}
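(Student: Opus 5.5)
The plan is to work summand by summand over the index $\N+\One$. Since $\N+\One$ is discrete, we can decide for each point whether it is of the form $\mathrm{inl}\,n$ or $\mathrm{inr}\,\pt$. Over these two kinds of point the extension $\upsilon\extend\mathrm{inl}$ is described by \Cref{lem:extension-restricts} and \Cref{lem:extension-property}.

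For~(\labelcref{item:successor-sum-is-successor}) I would first write the underlying type as a binary sum,
\[
  \textstyle\Sigma_{z:\N+\One}\,(\upsilon\extend\mathrm{inl})\,z \;\simeq\; \big(\Sigma_{n:\N}\,(\upsilon\extend\mathrm{inl})(\mathrm{inl}\,n)\big) + (\upsilon\extend\mathrm{inl})(\mathrm{inr}\,\pt).
\]
On the left summand, \Cref{lem:extension-restricts} gives an order equivalence $(\upsilon\extend\mathrm{inl})(\mathrm{inl}\,n)\iseq\upsilon\,n$. On the right summand, the fiber of $\mathrm{inl}$ over $\mathrm{inr}\,\pt$ is empty, so \Cref{lem:extension-property}(\labelcref{item:ext-off-image}) makes the type a singleton. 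The order on it is empty, because a comparison needs a point of the fiber, so it is the ordinal $\One$. Assembling these fiberwise equivalences with \Cref{lem:sum-map} and \Cref{lem:sum-map-order} (using the identity on $\omega$, which is an embedding) gives an order equivalence of the left part with $\osum{\omegaO}{}\upsilon$.

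It then remains to check that the lexicographic order on the whole sum matches the order of $\oplusO$ in \Cref{def:ordinal-addition}. Points over $\mathrm{inl}\,n$ precede points over $\mathrm{inr}\,\pt$ because $\mathrm{inl}\,n<\mathrm{inr}\,\pt$ in $\succT{\omegaO}$. Nothing lies above the $\mathrm{inr}$ part, since the only index comparison possible there is within the fiber, and that fiber carries the empty order. Among points with indices in $\N$, the lexicographic comparison reduces to the one in $\osum{\omegaO}{}\upsilon$. I expect this step to be the main bookkeeping obstacle: the transports along identifications $\mathrm{inl}\,n=\mathrm{inl}\,m$ have to be shown to agree with those along $n=m$. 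This holds because $\mathrm{inl}$ is an embedding and $\N$ is a set, exactly as in the proof of \Cref{lem:sum-map-order}(\labelcref{item:sum-map-order-reflecting}). Once we have an order-preserving equivalence whose inverse is order-preserving, the result is an $\iseq$.

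For~(\labelcref{item:successor-sum-retract}), the equivalence just built reduces the claims to the type $\big(\Sigma_{n:\N}\upsilon\,n\big)+\One$, since discreteness and being a retract of $\N$ are invariant under equivalence. Discreteness follows from \Cref{prop:discrete-closure}: the index $\N$ is discrete and each $\upsilon\,n$ is discrete, so the sum is discrete, and adding $\One$ preserves discreteness. For the retract claim, let $r_n:\N\to\upsilon\,n$ and $s_n$ be the given retraction and section. Then $\Sigma_n\upsilon\,n$ is a retract of $\N\times\N$ by \Cref{lem:sum-map}(\labelcref{item:sum-map-retraction}). The type $\N\times\N$ is a retract of $\N$ via a pairing bijection, and $\N+\One$ is a retract of $\N$ via $0\mapsto\mathrm{inr}\,\pt$ and $k{+}1\mapsto\mathrm{inl}\,k$. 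Composing these retractions gives the claim for $\ssumsub\upsilon$.
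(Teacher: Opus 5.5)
Your proposal is correct and follows essentially the paper's proof. You identify the fibers of $\upsilon\extend\mathrm{inl}$ via \Cref{lem:extension-restricts} and \Cref{lem:extension-property}, compare the lexicographic order with the order of $\oplusO$ summand by summand, and reduce (2) to $\big(\Sigma_{n:\N}\upsilon\,n\big)+\One$ using a pairing function and the closure of discreteness under $\Sigma$ and $+$. The only differences are cosmetic: the paper first replaces the whole family over $\N+\One$ by an order-equivalent family $\upsilon'$ and then rearranges, whereas you split off the $\mathrm{inr}$ summand first, and you spell out the transport bookkeeping that the paper leaves implicit.
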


\begin{proof}
The sum is taken over $\N + \One$, so its fibers are the extension of
$\upsilon$ along $\mathrm{inl}$. Since $\mathrm{inl}$ is an embedding,
that extension gives $\upsilon\,n$ back at $\mathrm{inl}\,n$, up to
order equivalence (\Cref{lem:extension-restricts}). At
$\mathrm{inr}\,\star$ the fiber of $\mathrm{inl}$ is empty, no natural
number having $\mathrm{inl}\,n = \mathrm{inr}\,\star$, so the extension
is the one-point ordinal there
(\Cref{lem:extension-property}(\labelcref{item:ext-off-image})). Write
$\upsilon'$ for the family on $\N + \One$ so described. Taking
underlying types, that of $\ssumsub\upsilon$ is the binary sum
$\big(\Sigma_{n:\N}\,\upsilon\,n\big) + \One$, whose top is the one
supplied by~\Cref{lem:topped-sum}, namely the pair of the top
$\mathrm{inr}\,\star$ of $\succT{\omegaO}$ with the point of its fiber.

(\labelcref{item:successor-sum-is-successor}) Order equivalent families
have order equivalent sums, by~\Cref{lem:sum-map}%
(\labelcref{item:sum-map-equiv}) and~\Cref{lem:sum-map-order} with the
identity on the index, so $\ssumsub\upsilon$ is order equivalent to
$\osum{\succT{\omegaO}}{}\,\upsilon'$. The map sending
$(\mathrm{inl}\,n, x)$ to $\mathrm{inl}\,(n,x)$ and
$(\mathrm{inr}\,\star, \star)$ to $\mathrm{inr}\,\star$ is an
equivalence from the underlying type of that sum to that of
$\big(\osum{\omegaO}{}\,\upsilon\big) \oplusO \One$, with the evident
inverse. The map and its inverse are order-preserving, as we see by
comparing the lexicographic order of the sum with the order of the
addition on each of the two summands. Composing the two order
equivalences gives the desired equivalence.

(\labelcref{item:successor-sum-retract}) By the description of the
underlying type above, it is enough to treat
$\big(\Sigma_{n:\N}\,\upsilon\,n\big) + \One$. If each $\upsilon\,n$ is
a retract of $\N$, then $\Sigma_{n:\N}\,\upsilon\,n$ is a retract of
$\N \times \N$ and hence of $\N$, by a pairing function, so that its
binary sum with $\One$ is a retract of $\N + \One$, which is again a
retract of $\N$. If each $\upsilon\,n$ is discrete, then so is
$\Sigma_{n:\N}\,\upsilon\,n$, discreteness being closed under $\Sigma$
over the discrete index $\N$
(\Cref{prop:discrete-closure}(\labelcref{item:discrete-sigma})), and so
is its binary sum with $\One$, discreteness being closed under $+$
(\Cref{prop:discrete-closure}(\labelcref{item:discrete-plus})).
\end{proof}

\begin{definition}[Comparison map]
\label{def:sum-comparison}
We now define a \emph{comparison map}
\[
  \cmap : \ssumsub\upsilon \to \ssumone\upsilon
\]
where $\upsilon : \N \to \OrdT[]$. We first need some preparation. The
underlying type of $\ssumsub\upsilon$ is
$\Sigma_{z\,:\,\N+\One}\,\us{(\upsilon \extend \mathrm{inl})\,z}$ and
that of $\ssumone\upsilon$ is
$\Sigma_{u\,:\,\NInf}\,\us{(\upsilon \extend \iota)\,u}$. By
\Cref{lem:ordinal-extension} the two fibers are the products
\[
  \us{(\upsilon \extend \mathrm{inl})\,z}
    = \textstyle\prod_{(n,r)\,:\,\fib{\mathrm{inl}}{z}}
      \us{\upsilon\,n},
  \qquad
  \us{(\upsilon \extend \iota)\,u}
    = \textstyle\prod_{(n,q)\,:\,\fib{\iota}{u}} \us{\upsilon\,n} .
\]
Write $\iota_1 : \N + \One \to \NInf$ for the map that sends
$\mathrm{inl}\,n$ to $\underline n$ and the added point to $\infty$,
so that $\iota_1 \circ \mathrm{inl} = \iota$. The map $\iota_1$ is
left cancellable: if
$\iota_1\,(\mathrm{inl}\,m) = \iota_1\,(\mathrm{inl}\,n)$ then
$\underline m = \underline n$ and so $m = n$. If $\iota_1$ takes the
same value at a finite point and at the added point, in either order,
then $\underline n = \infty$, which is impossible. And if both are the
added point, they are already equal, $\One$ having a single point.
Hence each $z : \N + \One$ gives the map
$\varphi_z : \fib{\iota}{\iota_1\,z} \to \fib{\mathrm{inl}}{z}$
defined by
\[
  \varphi_z\,(n,q) = (n,r_q) ,
\]
where $r_q : \mathrm{inl}\,n = z$ is obtained by cancelling $\iota_1$
in $q$, which has type $\iota_1\,(\mathrm{inl}\,n) = \iota_1\,z$
because $\iota_1 \circ \mathrm{inl} = \iota$. Since $\varphi_z$ keeps the
first component, precomposition with it gives the map
$e_z : \us{(\upsilon \extend \mathrm{inl})\,z} \to
\us{(\upsilon \extend \iota)\,(\iota_1\,z)}$ defined by
\[
  e_z\,\psi\,(n,q) = \psi\,(\varphi_z\,(n,q)) .
\]
With this notation, we define the \emph{comparison map}
$\cmap : \ssumsub\upsilon \to \ssumone\upsilon$ to be the map of sums
of~\Cref{def:sum-map} induced by $\iota_1$ and the family $e$,
\[
  \cmap \;=\; \summap{\iota_1}{e},
\]
which amounts to $\cmap\,(z,\psi) = (\iota_1\,z,\, e_z\,\psi)$.
\AgdaRefs{\Agda{TypeTopology.SquashedSum}{Σ-up}, which is
  \Agda{UF.PairFun}{pair-fun} applied to
  \Agda{TypeTopology.GenericConvergentSequence}{ι𝟙}, our $\iota_1$, and
  to \Agda{TypeTopology.SquashedSum}{over-ι-map}, our $e_z$, itself
  precomposition with
  \texttt{over-ι-fiber}, our $\varphi_z$, which uses
  \texttt{ι𝟙-lc} for the left cancellability of $\iota_1$;
  \Agda{Ordinals.Closure}{∑-up} for the ordinal version.}
\end{definition}

This comparison map is the countable-sum case of the comparison
between the discrete and the compact interpretation of Brouwer ordinal
codes, which is the subject of~\Cref{sec:discrete-compact}.

\begin{lemma}
\label{lem:sum-comparison}
The comparison map $\cmap$ is an extremely dense embedding and both
order-preserving and order-reflecting.
\AgdaRefs{\Agda{TypeTopology.SquashedSum}{Σ-up-embedding},
  \texttt{Σ-up-dense};
  \Agda{Ordinals.Closure}{∑-up-is-order-preserving},
  \texttt{∑-up-is-order-reflecting}, via
  \texttt{ι𝟙ᵒ-is-order-preserving} and
  \texttt{ι𝟙ᵒ-is-order-reflecting} for $\iota_1$.}
\end{lemma}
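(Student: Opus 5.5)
Since $\cmap = \summap{\iota_1}{e}$ by definition, the plan is to read off each of the four properties from \Cref{lem:sum-map} and \Cref{lem:sum-map-order}, by verifying the corresponding property of the index map $\iota_1$ and of each fiber map $e_z$.

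\emph{The index map.} The map $\iota_1 : \N + \One \to \NInf$ is an extremely dense embedding by \Cref{ex:isolated}(\labelcref{item:iota1-embedding-dense}). It is order-preserving and order-reflecting between $\succT{\omegaO}$ and $\NInfO$, which I would check by cases.
\begin{itemize}
\item For $\mathrm{inl}\,m < \mathrm{inl}\,n$ we have $m<n$, which is equivalent to $m \sqlt \underline n$, that is, to $\underline m \lt{} \underline n$.
\item For $\mathrm{inl}\,m < \mathrm{inr}\,\star$, the comparison $\underline m \lt{} \infty$ holds because $m \sqlt \infty$.
\item Conversely, no comparison has $\infty$ as its smaller side, since $\infty$ is not finite. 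This matches the fact that $\mathrm{inr}\,\star$ has no successors in $\succT{\omegaO}$.
\end{itemize}

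\emph{The fiber maps.} For each $z$, the domain $\fib{\iota}{\iota_1 z}$ of $\varphi_z$ is a proposition because $\iota$ is an embedding, and the codomain $\fib{\mathrm{inl}}{z}$ is a proposition because $\mathrm{inl}$ is one. Moreover $\varphi_z$ has an inverse map: send $(n,r)$ to $(n, \mathrm{ap}_{\iota_1} r)$, using $\iota_1\circ\mathrm{inl}=\iota$. Two logically equivalent propositions are equivalent, so $\varphi_z$ is an equivalence. Precomposition with an equivalence of index types is an equivalence of $\Pi$-types, so each $e_z$ is an equivalence, hence both an embedding and extremely dense.

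For the order on the fibers, I use \Cref{lem:ordinal-extension}: a comparison $u \lt{} v$ is a point $p$ of the fiber together with a comparison at $p$. Since $e_z\,\psi\,(n,q)=\psi(\varphi_z(n,q))$ and $\varphi_z$ preserves the first component $n$, a comparison at $p$ transfers along $\varphi_z$ in one direction and along $\varphi_z^{-1}$ in the other. I would first replace $\psi$ by its transport so that the component types match definitionally; this is possible because the fibers are propositions, so any two points are equal. This gives that $e_z$ is order-preserving and order-reflecting.

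\emph{Conclusion and main obstacle.} Now \Cref{lem:sum-map}(\labelcref{item:sum-map-embedding},\labelcref{item:sum-map-dense}) give that $\cmap$ is an embedding and extremely dense, and \Cref{lem:sum-map-order} gives that it is order-preserving and order-reflecting. Order reflection additionally needs $\iota_1$ to be an embedding, which we have. The main obstacle is the transport bookkeeping in the fiber-order argument. A comparison in $\ssumone\upsilon$ comes with an identification $\iota_1 z = \iota_1 z'$ and a transported fiber component, and this has to be matched with the transport in $\ssumsub\upsilon$. \Cref{lem:sum-map-order} already handles this reduction, using that the underlying types are sets, so what remains is only that each $e_z$ preserves and reflects the order. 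I would prove that by the proposition-valuedness of the fibers, choosing the witness point freely.
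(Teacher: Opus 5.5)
Your proposal is correct and follows the paper's proof step by step. Both establish that $\iota_1$ is an extremely dense embedding that preserves and reflects the order, using the same case analysis. Both show that each $e_z$ is precomposition with the equivalence $\varphi_z$ between two propositional fibers, and that it transfers comparison witnesses along $\varphi_z$ and its converse, after which \Cref{lem:sum-map} and \Cref{lem:sum-map-order} finish the argument. The only superfluous step is transporting $\psi$ to make component types match: $\varphi_z$ already fixes the first component $n$, so you only need an identification inside $\upsilon\,n$, which comes from the fiber being a proposition.
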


\begin{proof}
The map $\iota_1$ is an embedding and is extremely dense
(\Cref{ex:isolated}(\labelcref{item:iota1-embedding-dense})). Each
$e_z$ is an equivalence, being precomposition with the equivalence
$\varphi_z$. That $\varphi_z$ is an equivalence holds because the two
fibers are propositions, $\mathrm{inl}$ and $\iota$ being embeddings,
and because $\varphi_z$ has a converse, sending $(n,r)$ to $(n,q)$ with
$q$ obtained by applying $\iota_1$ to $r$. An equivalence is in
particular an embedding and extremely dense, so
that~\Cref{lem:sum-map}(\labelcref{item:sum-map-embedding},\labelcref{item:sum-map-dense}),
applied to $\iota_1$ on the indices and to the maps $e_z$ on the
fibers, makes $\cmap$ an extremely dense embedding.

Each $e_z$ is moreover order-preserving and order-reflecting. A
comparison in either of the two products is a point of the fiber
indexing it together with a comparison of the values there, and
$\varphi_z$ and its converse map such data in each product to such
data in the other. The map
$\iota_1$ is order-preserving and order-reflecting from
$\succT{\omegaO}$ to $\NInfO$. Among the finite points,
$\underline m \lt{} \underline n$ says that $m \sqlt \underline n$,
which holds exactly when $m < n$. A finite point precedes the added
point of $\succT{\omegaO}$, and $\underline n \lt{} \infty$ because
$n \sqlt \infty$. And $\infty$ precedes nothing, since
$\infty \lt{} v$ would make $\infty$ finite.
So~\Cref{lem:sum-map-order} applies, its embedding hypothesis
for~(\labelcref{item:sum-map-order-reflecting}) being again
that~$\iota_1$ is an embedding.
\end{proof}

\subsection{Extended suprema}
\label{sec:extended-suprema}

Every family of ordinals indexed by a small type has a least upper
bound, assuming set quotients, or, equivalently, set replacement
(\Cref{def:set-replacement}).

\begin{theorem}
\label{thm:sup}
Let $I : \UU$ and $\alpha : I \to \Ord[]$, and
consider the map
\[
  \sigma : \textstyle\Sigma_{i:I}\,\us{\alpha\,i} \to \Ord[],
  \qquad \sigma\,(i,x) = \down{\alpha\,i}{x} ,
\]
which sends a point of a member of the family to its initial segment in~$\Ord[]$.
\begin{enumerate}
\item\label{item:sup-is-ordinal} The image of the map $\sigma$ is an
  ordinal, written $\sup\alpha$, under the inherited order
  \[
    (\beta,w) \lt{} (\beta',w') \;:\equiv\; \beta \olt \beta' .
  \]
\item\label{item:sup-surjection} The map
  $\Sigma_{i:I}\,\us{\alpha\,i} \to \us{\sup\alpha}$ sending
  $(i,x)$ to the image of $x$ is a surjection.
\item\label{item:sup-lub} $\sup\alpha$ is the least upper bound of
  the family~$\alpha$.
\end{enumerate}
\AgdaRefs{\Agda{Ordinals.OrdinalOfOrdinalsSuprema}{sup}, constructed in
module \texttt{suprema} from module \texttt{construction-using-image},
with \texttt{construction-using-quotient} for the other construction;
\texttt{sup-is-upper-bound}; \texttt{sup-is-lower-bound-of-upper-bounds};
\texttt{initial-segment-of-sup-at-component}; \texttt{sum-to-sup},
\texttt{sum-to-sup-is-surjection}.}
\end{theorem}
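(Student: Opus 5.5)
The plan is to work entirely inside the ordinal $\Ord[]$ of ordinals (itself an ordinal in the next universe, under $\olt$), so that the image of $\sigma$ is a subtype of an ordinal and most properties are inherited. Set replacement (\Cref{def:set-replacement}) is used only to make $\sup\alpha$ small: the domain $\Sigma_{i:I}\,\us{\alpha\,i}$ is in $\UU$, and $\Ord[]$ is a set which is locally $\UU$-small, because by univalence identifications of ordinals amount to order equivalences, which live in $\UU$.

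For~(\labelcref{item:sup-is-ordinal}), the inherited order is proposition valued, transitive and well-founded, since these properties restrict to any subtype of $\Ord[]$; well-foundedness restricts because accessibility of $\beta$ in $\Ord[]$ gives accessibility of $(\beta,w)$ by induction. The main obstacle is extensionality, since a subtype of an ordinal is not extensional in general. The key observation is that the image is downward closed: if $\gamma \olt \down{\alpha\,i}{x}$, then $\gamma = \down{(\down{\alpha\,i}x)}{(y,l)}$ for some $y \lt{} x$, and this is $\down{\alpha\,i}{y}$, a point of the image. Hence, for points of the image, having the same predecessors in the image is the same as having the same predecessors in $\Ord[]$. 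This last step is done under the truncation $w$, which is allowed because the goal is a proposition. Extensionality of $\Ord[]$ then identifies the first components, and the second components agree because they are propositions.

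Item~(\labelcref{item:sup-surjection}) holds by construction, the map being the corestriction of $\sigma$ (\Cref{def:image}). For~(\labelcref{item:sup-lub}), I would first show $\alpha\,i \oemb \sup\alpha$ via $x \mapsto (\down{\alpha\,i}x, \mid(i,x)\mid)$. This map is order-preserving because initial segments preserve order, and it preserves initial segments by the downward closure argument above. For leastness, let $\beta$ be an upper bound with simulations $f_i : \alpha\,i \oemb \beta$. Define $\us{\sup\alpha} \to \us\beta$ by sending $(\gamma,w)$ to the unique $b$ with $\gamma = \down\beta b$. This $b$ is unique because $b \mapsto \down\beta b$ is left cancellable, by extensionality of $\beta$, and it exists under the truncation because $\down{\alpha\,i}x = \down\beta{(f_i\,x)}$, simulations preserving initial segments. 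Since the fiber is a proposition we may untruncate $w$. This map is order-preserving, and it is initial-segment preserving by the same downward closure reasoning in $\beta$. So it is a simulation, giving $\sup\alpha \oemb \beta$.
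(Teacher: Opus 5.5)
Your proof is correct; the paper gives no argument of its own for these clauses, deferring to de Jong's thesis, and yours is the natural direct proof for the image construction: downward closure of the image of $\sigma$ in $\Ord[]$ (via $\down{(\down{\alpha\,i}{x})}{(y,l)} = \down{\alpha\,i}{y}$) yields extensionality and makes each map $\us{\alpha\,i} \to \us{\sup\alpha}$ a simulation, and leastness comes from sending $(\gamma,w)$ to the unique $b$ with $\gamma = \down\beta b$. The one detail left implicit is that, for $\sup\alpha$ to land in $\Ord[\UU]$, the inherited order $\beta \olt \beta'$, which is $\UU^{+}$-valued as written, must be resized together with the carrier; your observation that identifications of ordinals amount to order equivalences also does this.
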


The proofs of these clauses are in~\cite{deJongThesis2023}. Set
replacement makes the image of the map $\sigma$ equivalent to a type
in our base universe, although $\Ord[]$ is a type in
the next universe.  Whether suprema of families of ordinals exist
constructively was left open by Kraus, Nordvall Forsberg and
Xu~\cite{ForsbergKrausXu2021}, who established the least upper bound
property for weakly increasing families indexed by $\N$. Tom de Jong
answered it with a construction by set quotients, extending and
formalizing the one of~\cite[Lemma~10.3.22]{HoTTBook}, which is
claimed there only to be an upper bound, and the author of this paper
simultaneously answered it with the above construction. Both are
formalized by de Jong and are presented in his
thesis~\cite{deJongThesis2023}.

\begin{corollary}
\label{cor:sup-compact}
If $I$ is compact pointed and
$\us{\alpha\,i}$ is compact pointed for every $i : I$, then
$\us{\sup\alpha}$ is compact pointed.
\AgdaRefs{\Agda{Ordinals.CompactnessOfSuprema}{sup-is-compact∙}.}
\end{corollary}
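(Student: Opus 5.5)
By \Cref{thm:sup}(\labelcref{item:sup-surjection}), the map $\Sigma_{i:I}\,\us{\alpha\,i} \to \us{\sup\alpha}$ sending $(i,x)$ to the image of $x$ is a surjection. Pointed compactness passes to surjective images by \Cref{cor:compact-pt-closure}(\labelcref{item:pt-image}). So the plan reduces the claim to showing that the domain $\Sigma_{i:I}\,\us{\alpha\,i}$ is compact pointed.

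That reduction is immediate from closure of pointed compactness under sums, \Cref{cor:compact-pt-closure}(\labelcref{item:pt-sigma}). We apply it with the compact pointed index $I$ and the family $i \mapsto \us{\alpha\,i}$, whose members are compact pointed by hypothesis. A point of the sum is then a point $i_0$ of $I$ paired with the given point of $\us{\alpha\,i_0}$, and its image under the surjection gives the point of $\us{\sup\alpha}$.

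The only step needing care is universe bookkeeping. The sum lives in $\UU$, while $\us{\sup\alpha}$ is the image of a map into $\Ord[]$, which lives in the next universe, although by set replacement it is equivalent to a type in $\UU$. Compactness does not depend on the universe of the complemented families used (\Cref{lem:compact-formulations}), and \Cref{thm:compact-closure}(\labelcref{item:compact-surjection}) is stated for arbitrary types $X$ and $Y$. So the surjection argument applies directly to the large image. Alternatively, we can transfer along the small copy, using invariance of pointed compactness under equivalence (\Cref{cor:compact-pt-closure}(\labelcref{item:pt-retract})).

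No ordinal-theoretic property of the order on $\sup\alpha$ is needed; the underlying type alone carries the claim.
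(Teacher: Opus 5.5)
Your proof is correct and follows the paper's argument: $\Sigma_{i:I}\,\us{\alpha\,i}$ is compact pointed by closure of pointed compactness under sums, and $\us{\sup\alpha}$ inherits this as a surjective image of that sum via \Cref{thm:sup}(\labelcref{item:sup-surjection}). Your remark on universe levels is accurate but adds nothing the paper needs, since \Cref{thm:compact-closure}(\labelcref{item:compact-surjection}) already holds for types in arbitrary universes.
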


\begin{proof}
The type $\Sigma_{i:I}\,\us{\alpha\,i}$ is compact pointed
by~\Cref{cor:compact-pt-closure}(\labelcref{item:pt-sigma}), and
$\us{\sup\alpha}$ is a surjective image of it
by~\Cref{thm:sup}(\labelcref{item:sup-surjection}),
so~\Cref{cor:compact-pt-closure}(\labelcref{item:pt-image}) applies.
\end{proof}

\begin{theorem}[Compactness of extended suprema]
\label{thm:extended-sup-compact}
For any embedding $j : X \hookrightarrow K$ into a compact pointed type
and any family $\alpha : X \to \Ord[]$ of compact pointed ordinals, the
supremum of the extended family $\alpha \extend j$ is itself compact
pointed.
\AgdaRefs{\Agda{Ordinals.CompactnessOfSuprema}{sup-is-compact∙};
  \Agda{TypeTopology.MicroTychonoff}{micro-tychonoff}.}
\end{theorem}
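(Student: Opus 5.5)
The plan is to reduce the statement to \Cref{cor:sup-compact}, applied to the index type $K$ and the family $\alpha \extend j : K \to \Ord[]$ of \Cref{lem:ordinal-extension}. That corollary asks for two things: that $K$ be compact pointed, which is a hypothesis, and that each member of the family have compact pointed underlying type. So the only thing left to prove is that $\us{(\alpha \extend j)\,k}$ is compact pointed for every $k : K$.

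By \Cref{lem:ordinal-extension}, the underlying type of $(\alpha \extend j)\,k$ is the $\Pi$-extension $(\us{\alpha} \Pext j)\,k$ of \Cref{def:extension}, that is, the product
\[
  \textstyle\prod_{(x,p)\,:\,\fib{j}{k}} \us{\alpha\,x}.
\]
Since $j$ is an embedding, its fiber $\fib jk$ is a proposition. Each factor $\us{\alpha\,x}$ is compact pointed by hypothesis. The micro-Tychonoff \Cref{thm:micro-tychonoff} therefore applies, with the proposition $\fib jk$ as index type and the family $(x,p) \mapsto \us{\alpha\,x}$, and it gives that the product is compact pointed. This is exactly the fiberwise argument of \Cref{thm:extended-sum-compact}, with the sum replaced by the supremum.

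Putting these together, \Cref{cor:sup-compact} yields that $\us{\sup(\alpha \extend j)}$ is compact pointed. Recall how that corollary works: the sum $\Sigma_{k:K}\,\us{(\alpha\extend j)\,k}$ is compact pointed by \Cref{cor:compact-pt-closure}(\labelcref{item:pt-sigma}), and the supremum is a surjective image of it by \Cref{thm:sup}(\labelcref{item:sup-surjection}).

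There is essentially no obstacle. The only point needing care is universe bookkeeping. \Cref{thm:sup} requires the index type $K$ to lie in the universe $\UU$ of the ordinals. It also relies on set replacement so that $\sup$ lands back in $\Ord[]$, with its underlying type small. I would state these as standing assumptions, matching those of \Cref{thm:sup}, rather than treat them as part of the argument.
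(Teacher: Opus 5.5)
Your proposal is correct and follows the paper's own proof: reduce to \Cref{cor:sup-compact} over the compact pointed index $K$, then show each $\us{(\alpha \extend j)\,k}$ is compact pointed. For that, you read it as a product over the proposition $\fib jk$ via \Cref{lem:ordinal-extension} and apply the micro-Tychonoff \Cref{thm:micro-tychonoff}. Your remark about the universe and set-replacement assumptions inherited from \Cref{thm:sup} is a reasonable addition, though the paper leaves it implicit.
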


\begin{proof}
By~\Cref{cor:sup-compact} applied to the family $\alpha \extend j$ over
the compact pointed index $K$, it suffices to show that
$\us{(\alpha \extend j)\,k}$ is compact pointed for every $k : K$. By
\Cref{lem:ordinal-extension} this is the product of the types
$\us{\alpha\,x}$ over the index type $\fib jk$, all of which are
compact pointed by hypothesis, and $\fib jk$ is a proposition precisely
because $j$ is an embedding. So the
micro-Tychonoff~\Cref{thm:micro-tychonoff} applies.
\end{proof}

For use in~\Cref{sec:brouwer-standard}, we introduce notation for the
special case of the embedding $\iota : \N \hookrightarrow \NInf$.
\begin{definition}[Countable extended supremum]
\label{def:extended-supremum}
For a family $\alpha : \N \to \Ord[]$ of ordinals, the
\emph{extended supremum} is
\[
  \ssupone\alpha \;:\equiv\; \sup\big(\alpha \extend \iota\big),
\]
the supremum of the extension of $\alpha$ along the embedding
$\iota : \N \hookrightarrow \NInf$. The superscript has the same
meaning as in the extended sum, that the index type is $\NInf$ rather than
$\N + \One$.
\AgdaRefs{\Agda{Ordinals.OrdinalOfOrdinalsSuprema}{sup} applied to the
  extension \Agda{Ordinals.Injectivity}{\_↗\_}, as
  in \Agda{Ordinals.BrouwerCodesInterpretations}{⟦\_⟧₂}.}
\end{definition}

\section{Compact totally separated ordinals from Brouwer codes}
\label{sec:brouwer-standard}

In the remainder of this paper we give two notation systems, each with
several interpretations of the notations as ordinals. The first
occupies this section and takes a traditional system, the Brouwer
ordinal codes, whose interpretation $\compactsepint{-}$ is compact and
totally separated and classically dominates the standard
interpretation $\stdint{-}$, whose compactness fails constructively.
The subscript on an interpretation
names the operation interpreting the limit constructor, a sum or a
supremum, and a superscript~$1$ indicates that the family is first
extended along the embedding $\N \hookrightarrow \NInf$, as in the
extended sum of~\Cref{def:extended-and-successor-sum} and the extended
supremum of~\Cref{def:extended-supremum}.

We give five interpretations to Brouwer codes, obtaining different
combinations of compactness, total separatedness, trichotomy and
discreteness.  The last of the five is the discrete interpretation
of~\Cref{sec:discrete-compact}. It assigns to each code $b$ a
discrete ordinal~$\alttrichint{b}$, and the same code $b$ denotes a compact
totally separated ordinal $\compactsepint b$. There is an
extremely dense order-embedding of the discrete ordinal into
the compact totally separated one, which classically is an
equivalence but fails to be one constructively.

\begin{definition}[Brouwer codes]
\label{def:brouwer-codes}
The type $\Brouwer$ of Brouwer ordinal codes (countably branching
trees) is inductively generated by the constructors
\[
  \bZ : \Brouwer, \qquad
  \bS : \Brouwer \to \Brouwer, \qquad
  \bL : (\N \to \Brouwer) \to \Brouwer .
\]
\AgdaRefs{\Agda{Ordinals.BrouwerCodes}{B}.}
\end{definition}

\subsection{The standard interpretation}
\label{sec:std-interpretation}

\begin{definition}
  \label{def:std-interp} We define an interpretation function
\[
  \stdint{-} : \Brouwer \to \Ord[] \qquad \text{(standard interpretation)}
\]
by interpreting the constructors as indicated in the following table:

\begin{center}
\renewcommand{\arraystretch}{1.6}
\begin{tabular}{|c|c|c|c|}
  \hline
  & $\bZ$ & $\bS$ & $\bL$
  \\ \hline
  $\stdint{-}$ & $\ZeroO$ & $(-) \oplusO \OneO$ & $\sup$ \\
\hline
\end{tabular}
\end{center}

\noindent
Here $\sup$ is the supremum of an $\N$-indexed family of ordinals
(\Cref{thm:sup}).
\AgdaRefs{\Agda{Ordinals.BrouwerCodesInterpretations}{⟦\_⟧₀}, using
\Agda{Ordinals.OrdinalOfOrdinalsSuprema}{sup}, which requires
univalence and a form of set replacement to be constructed.}
\end{definition}
The standard interpretation does not produce compact ordinals or
trichotomous ordinals constructively, which motivates the alternative
interpretations given in~\Cref{sec:brouwer-four}.
\begin{proposition}
\label{prop:std-compact-lpo}
If the standard interpretation $\us{\stdint b}$ is compact for every
Brouwer code~$b$, then $\LPO$ holds.
\AgdaRefs{\Agda{Ordinals.BrouwerCodesInterpretations}{⟦\_⟧₀-compact-gives-LPO}.}
\end{proposition}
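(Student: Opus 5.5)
The plan is to find a single Brouwer code whose standard interpretation has $\N$ as a retract, or otherwise lets us read off $\LPO$ from compactness. The natural candidate is $\omega$ itself: take $b :\equiv \bL\,s$ where $s\,n$ is the $n$-fold application of $\bS$ to $\bZ$, so that $\stdint{s\,n}$ is (order equivalent to) the finite ordinal with $n$ points. Then $\stdint b = \sup_n \stdint{s\,n}$, and I will show that this supremum is order equivalent to $\omegaO$, whose underlying type is $\N$. Once that is established, compactness of $\us{\stdint b}$ transfers along the equivalence (\Cref{thm:compact-closure}(\labelcref{item:compact-retract})) to compactness of $\N$, which is $\LPO$ by \Cref{ex:compact-basic}(\labelcref{item:N-compact-lpo}).

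The key steps, in order, are as follows. First, by induction on $n$, $\stdint{s\,n}$ is order equivalent to the ordinal on $\mathrm{Fin}\,n$, or more conveniently to the initial segment $\down{\omegaO}{n}$. Under univalence this order equivalence becomes an identification in $\Ord[]$ (as noted after \Cref{lem:extension-restricts}, via the passage from order equivalences to identifications). This identification uses $\down{\omegaO}{(n{+}1)} \iseq \down{\omegaO}{n} \oplusO \OneO$, which is routine.

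Second, I identify the supremum. By \Cref{thm:sup}, $\sup_n \down{\omegaO}{n}$ is the image of the map $(n,x) \mapsto \down{(\down{\omegaO}{n})}{x}$. This image is the set of initial segments $\down{\omegaO}{m}$ for $m : \N$, since an initial segment of an initial segment is an initial segment of $\omegaO$. So the map $\N \to \us{\sup}$ given by $m \mapsto \down{\omegaO}{m}$, witnessed in the image by $(m{+}1, m)$, is surjective. It is order-preserving and initial-segment-preserving because initial segments preserve the order and every $\beta \olt \down{\omegaO}{m}$ is of the form $\down{\omegaO}{k}$ with $k<m$. Hence it is a surjective simulation, and \Cref{lem:surjective-simulations-are-order-equivs} makes it an equivalence. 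Alternatively, and more abstractly, $\omegaO$ is the least upper bound of the $\down{\omegaO}{n}$, because any upper bound simulates each $\down{\omegaO}{n}$ compatibly, so the least-upper-bound property of \Cref{thm:sup}(\labelcref{item:sup-lub}) together with antisymmetry of $\oemb$ gives $\sup = \omegaO$. I would use the concrete simulation argument.

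The main obstacle is the bookkeeping in the second step. The elements of $\sup$ are ordinals living in the next universe, so one must carefully verify that the initial segments $\down{(\down{\omegaO}{n})}{x}$ are equal, not merely equivalent, to $\down{\omegaO}{x}$, which needs univalence. One must also check initial-segment preservation of the map $\N \to \us{\sup}$. If this becomes awkward, the fallback is to avoid equivalence altogether and exhibit only $\N$ as a retract of $\us{\stdint b}$. For that it suffices to have a surjection from $\us{\stdint b}$ onto $\N$: send a point $\beta$ to the unique $m$ with $\beta = \down{\omegaO}{m}$, which is unique by left cancellability of $m \mapsto \down{\omegaO}{m}$ and exists by surjection induction since the target is a proposition. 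Then \Cref{thm:compact-closure}(\labelcref{item:compact-surjection}) already yields compactness of $\N$ and hence $\LPO$.
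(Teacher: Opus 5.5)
Your proposal is correct and follows the paper's own route. The paper applies the hypothesis to the same code $\bL(n \mapsto \bS^n\,\bZ)$, identifies its standard interpretation with $\omegaO$ as the supremum of the finite ordinals, and uses that compactness of $\N$ is $\LPO$. The paper treats that identification as a one-line citation of a formalized lemma, and your surjective-simulation argument (or the surjection fallback) simply spells it out.
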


\begin{proof}
The code $\bL(n\mapsto \bS^n\,\bZ)$ is interpreted as the supremum of the
finite ordinals, which is $\omegaO$, and the compactness of its
underlying type $\N$ is $\LPO$
(\Cref{ex:compact-basic}(\labelcref{item:N-compact-lpo})).
\AgdaRefs{\Agda{Ordinals.BrouwerCodesInterpretations}{ω-is-a-standard-interpretation},
using \Agda{Ordinals.Omega}{ω-is-sup-of-Fin}.}
\end{proof}

\begin{proposition}
\label{prop:failure-trichotomy}
If the standard interpretation $\stdint{b}$ is trichotomous for every
Brouwer code~$b$, then $\LPO$ holds.
\AgdaRefs{\Agda{Ordinals.FailureOfTrichotomy}{trichotomy-of-the-standard-interpretation-gives-LPO}.}
\end{proposition}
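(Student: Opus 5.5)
The plan is to reduce $\LPO$ to one instance of trichotomy by building, from a given $p : \N \to \Two$, a Brouwer code whose standard interpretation contains two specific points. The first point is always the ordinal $\TwoO$. The second is an ordinal $s$ that equals $\TwoO$ when $p$ has no root and is strictly larger when it does. Trichotomy applied to these two points will then decide $\Sigma_{n:\N}\,p\,n = 0$. The construction uses only that codes are data, so we may define them by cases on finitely many values of $p$.

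\emph{Step 1: the code.} Define $f : \N \to \Brouwer$ by letting $f\,n$ be $\bS^2\,\bZ$ if $p\,k = 1$ for every $k \le n$, and $\bS^3\,\bZ$ otherwise; this is decidable since it inspects finitely many booleans. Put $s :\equiv \stdint{\bL f} = \sup_n \stdint{f\,n}$. Then let $g\,0 :\equiv \bS(\bL f)$ and $g\,(n{+}1) :\equiv \bS^3\,\bZ$ for every $n$, and take the code $b :\equiv \bL\,g$. Its interpretation $\stdint b$ is the supremum of the family consisting of $s \oplusO \OneO$ and copies of the three-point ordinal.

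\emph{Step 2: the two points and their identification.} By \Cref{thm:sup}, the points of $\stdint b$ are initial segments $\down{\stdint{g\,i}}{x}$, that is, ordinals, ordered by $\olt$. We take:
\begin{itemize}
\item[] the point $y$ given by the top $\mathrm{inr}\,\star$ of $s \oplusO \OneO$, whose initial segment is order equivalent to $s$;
\item[] the point $x$ given by the last point of the three-point ordinal $\stdint{g\,1}$, whose initial segment is $\TwoO$.
\end{itemize}
By univalence these segments are identified with the ordinals $s$ and $\TwoO$ themselves, so $x \lt{} y$ amounts to $\TwoO \olt s$, the equation $x = y$ amounts to $\TwoO = s$, and $y \lt{} x$ amounts to $s \olt \TwoO$.

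\emph{Step 3: computing $s$.} If $p$ has no root, every member of the family is $\TwoO$, so $s = \TwoO$ because the supremum of a constant family is that ordinal (least upper bound, \Cref{thm:sup}(\labelcref{item:sup-lub})). If $p\,m = 0$ for some $m$, the members are $\TwoO$ or the three-point ordinal and from $m$ on they are the three-point ordinal, so $s$ is the three-point ordinal. In particular $\TwoO \olt s$, and $s \olt \TwoO$ never happens.

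\emph{Step 4: reading off $\LPO$.} Apply trichotomy to $x$ and $y$. In the case $x = y$ we have $s = \TwoO$, so $p$ has no root, since a root would give $\TwoO \olt s = \TwoO$, contradicting irreflexivity. In the case $x \lt{} y$ we have $\TwoO \olt s$, so $p$ must have a root, since otherwise $s = \TwoO$. Producing that root is harmless: once we know $\neg\neg\Sigma_n\,p\,n = 0$ we can search for the least root, or equivalently note that $\TwoO \olt s$ exhibits a point of $s$ whose segment is $\TwoO$, and that point lives in some member $\stdint{f\,n}$ equal to the three-point ordinal, so $n$ bounds a root that can be found by search below $n$. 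The case $y \lt{} x$ is impossible.

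The main obstacle I expect is Step 3 together with the identifications in Step 2. Computing $s$ requires facts about suprema: the supremum of a constant family is that ordinal, and the supremum of a family of $\TwoO$'s and three-point ordinals eventually constant at three points is the three-point ordinal. It also requires showing that the point supplied by a member's element really has the claimed initial segment inside $\stdint b$. I would handle these with the characterization of $\olt$ through initial segments and with \Cref{lem:surjective-simulations-are-order-equivs}, applied to the canonical simulations from the members into their supremum.
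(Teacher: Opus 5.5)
Your construction is essentially the paper's, shifted up by one. The paper compares a point with initial segment $\OneO$ against one with initial segment $\sup_i\alpha_i$, where each $\alpha_i$ is $\OneO$ or $\TwoO$. It also decides the finiteness of a conatural number $u$ rather than $\Sigma_{n:\N}\,p\,n = 0$. Your Steps 1 and 2 are fine, and so are the cases $x = y$ and $y \lt{} x$ of Step 4. The latter needs only $\TwoO \oemb \stdint{f\,0} \oemb s$ and the irreflexivity of $\olt$.

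The real problem is the case $x \lt{} y$. Your first justification is that from $\neg\neg\Sigma_{n:\N}\,p\,n = 0$ one can search for the least root. That is Markov's principle, which is independent of the type theory (\Cref{sec:constructive-mathematics}): nothing guarantees that an unbounded search terminates. So your two routes are not equivalent, and only the second one works. It is exactly the paper's third lemma, and it needs two points made explicit.
\begin{enumerate}
\item \Cref{thm:sup}(\labelcref{item:sup-surjection}) gives the preimage $a : \stdint{f\,n}$ of the point $z$ of $s$ with $\down s z = \TwoO$ only in an unspecified way. You must therefore aim at a proposition. One choice is the existence of a least root, $\Sigma_{n:\N}\,(p\,n = 0 \times \prod_{k<n} p\,k = 1)$. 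Another, as in the paper, is the finiteness of the conatural number $i \mapsto \min\{p\,0,\dots,p\,i\}$.
\item The canonical maps into the supremum are simulations, so $\down{\stdint{f\,n}}{a} = \TwoO$, that is, $\TwoO \olt \stdint{f\,n}$. This rules out $p\,k = 1$ for all $k \le n$, and a bounded search among $0,\dots,n$ then finds the least root.
\end{enumerate}
With this, the $\neg\neg$ detour becomes unnecessary, and so does the exact computation of $s$ in Step 3, which uses the least-upper-bound property. As in the paper, only the upper-bound property, joint surjectivity and the simulation property of the maps into the supremum are used.
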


\begin{proof}
It suffices to decide, for an arbitrary $u : \NInf$, whether $u$ is
finite, because deciding finiteness for every conatural number is
equivalent to $\LPO$.
\AgdaRefs{\Agda{Taboos.LPO}{LPO'}, \texttt{LPO'-gives-LPO}.}
So fix $u : \NInf$. By~\Cref{ex:NInf},
the point $u$ is finite precisely when $u \sqleq i$ for some $i$.

\emph{The codes.} Put
\begin{align*}
  g\,i &\;=\; \bS\big(\text{if $u_i = 0$ then $\bS\,\bZ$ else $\bZ$}\big),
  \\
  h\,0 &= \bS(\bS\,\bZ),
  \\
  h\,1 &= \bS(\bL\,g),
  \\
  h\,(n{+}2) &= \bZ,
\end{align*}
the last clause being arbitrary. Write $\alpha\,i =
\stdint{g\,i}$ and $\beta = \stdint{\bL\,h} = \sup_i
\stdint{h\,i}$.
\AgdaRefs{in \Agda{Ordinals.FailureOfTrichotomy}{} the name $\beta$ is
given to the
family $i \mapsto \stdint{h\,i}$ instead, and the supremum is written
$\sup\beta$.}
Unfolding~\Cref{def:std-interp} and using that
$\ZeroO$ is left neutral for $\oplusO$,
\begin{align*}
  \alpha\,i &=
  \begin{cases}
    \TwoO & \text{if } u_i = 0, \\
    \OneO & \text{if } u_i = 1,
  \end{cases}
  \\
  \stdint{h\,0} &= \TwoO,
  \\
  \stdint{h\,1} &= (\sup_i \alpha\,i) \oplusO \OneO .
\end{align*}
The code we associate to $u$ is $\bL\,h$, and the hypothesis makes
$\beta$ trichotomous.

\emph{Two elements of $\beta$.} We have $\OneO \olt \TwoO$, and
$\stdint{h\,0} \oemb \beta$ by~\Cref{thm:sup}(\labelcref{item:sup-lub}),
so $\OneO \olt \beta$. Also
$\sup_i \alpha\,i \olt (\sup_i \alpha\,i)
\oplusO \OneO = \stdint{h\,1} \oemb \beta$, an ordinal being an
element of its own successor, so $\sup_i \alpha\,i \olt \beta$. Let
$y_0, y_1 : \us\beta$ be the witnesses of these two facts, so that
\[
  \down\beta{y_0} = \OneO, \qquad \down\beta{y_1} = \sup_i \alpha\,i .
\]

\emph{Three lemmas.} First, the supremum $\sup_i\alpha\,i$ has a
point: each $\alpha\,i$, being of the form $\gamma \oplusO \OneO$, has
the point contributed by the~$\OneO$ summand, and the map
$\alpha\,i \to \sup_i\alpha\,i$ carries it to a point of the supremum.
Hence the inequality $\sup_i \alpha\,i \olt \OneO$ is impossible,
because an initial segment of $\OneO$ is $\ZeroO$, which would make
$\sup_i\alpha\,i$ empty.

Second, if $u$ is finite then $\OneO \olt \sup_i \alpha\,i$. To see
this, say $u = \underline n$, so that $u_n = 0$ and hence $\alpha\,n =
\TwoO$. Now $\OneO \olt \TwoO = \alpha\,n \oemb \sup_i\alpha\,i$.

Third, conversely, the inequality $\OneO \olt \sup_i\alpha\,i$ implies
that $u$ is finite. Being finite is a proposition, so we may use the
joint surjectivity of the maps $\alpha\,i \to \sup_i\alpha\,i$, which
is the surjectivity of the map out of the sum
(\Cref{thm:sup}(\labelcref{item:sup-surjection})). The element of
$\sup_i\alpha\,i$ whose initial segment is $\OneO$ therefore comes
from some $x : \us{\alpha\,i}$, and these maps are simulations
(\Cref{thm:sup}(\labelcref{item:sup-lub})), hence preserve initial
segments, so that $\OneO = \down{\alpha\,i}{x}$, that is,
$\OneO \olt \alpha\,i$.
Decide the value of $u_i$. If $u_i = 0$ then
$u$ is bounded by $i$, hence finite. If $u_i = 1$ then
$\alpha\,i = \OneO$, giving $\OneO \olt \OneO$, which is impossible
since $\olt$ is irreflexive.

\emph{Deciding finiteness.} Apply trichotomy of $\beta$ to $y_0$ and
$y_1$. Since passing to initial segments preserves the order, the three
cases for trichotomy are:
\begin{enumerate}
\item $y_0 \lt{} y_1$: then $\OneO \olt \sup_i \alpha\,i$, so $u$
  is finite by the third lemma.
\item $y_0 = y_1$: then $\OneO = \sup_i\alpha\,i$. So $u$ is not
  finite, for were it finite, the second lemma would give
  $\OneO \olt \OneO$.
\item $y_1 \lt{} y_0$: then $\sup_i\alpha\,i \olt \OneO$, which the
  first lemma rules out, so this case does not arise.
\end{enumerate}
\AgdaRefs{\Agda{Ordinals.FailureOfTrichotomy}{main-lemma},
\texttt{brouwer-code},
\texttt{trichotomy-of-the-standard-interpretation-gives-LPO}.}
\end{proof}

\begin{remark}
\label{rem:trichotomy-hypothesis-use}
The proof uses the hypothesis of trichotomy only at the single pair
$y_0, y_1$ of the single ordinal $\beta$, and $\beta$ is built from $u$
by a fixed construction. So the statement can be sharpened to give a
function assigning to each conatural number a Brouwer code such that
trichotomy of the standard interpretation of the code
decides the finiteness of the conatural number.
\end{remark}

\subsection{Three non-standard interpretations}
\label{sec:brouwer-four}

We now give three further interpretations of Brouwer codes, each
obtaining a different combination of compactness, total separatedness
and trichotomy, and all of them dominating the standard interpretation
classically (\Cref{thm:comparisons}).

\begin{definition}[Three non-standard interpretations]
  \label{def:non-standard-interpretations} We define three
  interpretation functions
\[
  \begin{array}{rcll}
    \trichint{-} & :  & \Brouwer \to \OrdThree[] & \text{(trichotomous interpretation)}\\
    \compactsepint{-}  & : & \Brouwer \to \OrdT[] & \text{(compact totally separated interpretation)} \\
    \compactint{-} & : & \Brouwer \to \Ord[] & \text{(compact interpretation)}
  \end{array}
\]
by interpreting the constructors as indicated in the following table:

\begin{center}
\renewcommand{\arraystretch}{1.6}
\begin{tabular}{|c|c|c|c|}
  \hline
  & $\bZ$ & $\bS$ & $\bL$
  \\ \hline
  $\trichint{-}$ & $\ZeroO$ & $(-) \oplusO \OneO$ & $\osumthree{}{}$
  \\\hline
  $\compactsepint{-}$ & $\OneT$ & $(-) \oplusT \OneT$ & $\ssumone{}$
  \\\hline
  $\compactint{-}$ & $\OneO$ & $(-) \oplusO \OneO$ & $\ssupone{}$ \\
\hline
\end{tabular}
\end{center}

\noindent
Here $\osumthree{}{}$ is the lexicographic sum
over~$\omegaO$ (\Cref{def:lexicographic-order}), which is an ordinal
by~\Cref{lem:trichotomous-sum}, the index $\omegaO$ being
trichotomous, $\ssumone{}$ is the
extended sum
(\Cref{def:extended-and-successor-sum}(\labelcref{item:extended-sum})),
and $\ssupone{}$ is the
extended supremum (\Cref{def:extended-supremum}).
\AgdaRefs{\Agda{Ordinals.BrouwerCodesInterpretations}{⟦\_⟧₃}
  for $\trichint{-}$, \texttt{⟦\_⟧₁} for $\compactsepint{-}$ and
  \texttt{⟦\_⟧₂} for $\compactint{-}$;
  \Agda{Ordinals.TrichotomousArithmetic}{\_+₃\_} and \texttt{∑³} for
  the successor and countable-sum operations of $\trichint{-}$.}
\end{definition}

In the compact totally separated and the compact interpretation we
deliberately map $\bZ$ to~$\One$ rather than $\Zero$, so that every
denoted ordinal is pointed and hence the
micro-Tychonoff~\Cref{thm:micro-tychonoff} is applicable to get the
compactness of the ordinals in their images.

\begin{theorem}
\label{thm:four-interp-props}
For every Brouwer code~$b$,
\begin{enumerate}
\item\label{item:trich-interp} the ordinal $\trichint b$ is
  trichotomous,
\item\label{item:compactsep-interp} the underlying type
  of $\compactsepint{b}$ is compact
  and totally separated,
\item\label{item:kappa-retract-cantor} the underlying type of
  $\compactsepint b$ is a retract of $\Cantor$,
\item\label{item:compact-interp} the underlying type
  of $\compactint b$ is compact.
\end{enumerate}
\AgdaRefs{\Agda{Ordinals.BrouwerCodesInterpretations}{⟦\_⟧₁-is-compact∙};
\texttt{⟦\_⟧₁-is-totally-separated}; \texttt{⟦\_⟧₂-is-compact∙};
\texttt{⟦\_⟧₃}, which lands in $\OrdThree[]$ by
construction;
\Agda{Ordinals.BrouwerCodesDiscreteAndCompactInterpretations}{Κ-Cantor-retract},
stated for \texttt{Κ} (\Cref{def:delta-kappa}).}
\end{theorem}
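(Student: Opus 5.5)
All four items go by induction on the Brouwer code $b$, with one case for each of $\bZ$, $\bS$ and $\bL$, using the closure results established earlier.

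\emph{Item~(\labelcref{item:trich-interp}).} $\ZeroO$ is vacuously trichotomous. For successors, $\alpha \oplusO \OneO$ is trichotomous when $\alpha$ is, by comparing summands first: $\mathrm{inl}$ always precedes $\mathrm{inr}$, and within $\mathrm{inl}$ we use trichotomy of $\alpha$. For limits, $\osumthree{}{}$ over $\omegaO$ of trichotomous ordinals is an ordinal by~\Cref{lem:trichotomous-sum}. It is trichotomous by comparing indices trichotomously in $\omegaO$, and, when the indices are equal, comparing the fiber components without transport, since $\N$ is a set. The interpretation is defined to land in $\OrdThree[]$, so this item amounts to this closure observation.

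\emph{Items~(\labelcref{item:compactsep-interp}) and~(\labelcref{item:kappa-retract-cantor}).} We strengthen the induction hypothesis to ``compact pointed, totally separated, and a retract of $\Cantor$''. Pointedness is needed to apply~\Cref{thm:extended-sum-compact}.
\begin{enumerate}
\item For $\OneT$ all three properties hold trivially; for the retract, use a constant map and the point of $\Cantor$.
\item For $\tau \oplusT \OneT$, whose underlying type is a sum over $\Two$, \Cref{cor:compact-pt-closure} gives compact pointedness, and \Cref{prop:tot-sep-closure}(\labelcref{item:tot-sep-sigma}) gives total separatedness. It is a retract of $\Cantor$ because $\Cantor \simeq \Two \times \Cantor$ via head and tail, and sums over $\Two$ of retracts of $\Cantor$ are retracts of $\Two\times\Cantor$.
\item For $\ssumone\upsilon$, compact pointedness is~\Cref{thm:extended-sum-compact} with $K = \NInf$, which is compact pointed by~\Cref{ex:compact-pt-basic}. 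Total separatedness follows from~\Cref{lem:sigma-NInf-tot-sep}. Each fiber $(\upsilon\extend\iota)\,u$ is totally separated by~\Cref{lem:extension-tot-sep}, and the fiber at $\infty$ is a product over the empty type $\fib\iota\infty$, hence a proposition (in fact $\One$).
\item For the retract claim in the limit case, \Cref{lem:extension-retract}(\labelcref{item:ext-retract-sum}) applied to the retractions $\Cantor \to \us{\upsilon\,n}$ makes $\ssumone\upsilon$ a retract of the extended sum of the constant family $\Cantor$ along $\iota$. By~\Cref{lem:delayed-sequences}, that extended sum is equivalent to $\Cantor$.
\end{enumerate}

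\emph{Item~(\labelcref{item:compact-interp}).} We induct with the hypothesis ``compact pointed''. $\OneO$ is compact pointed, and $\alpha\oplusO\OneO$ is compact pointed by~\Cref{cor:compact-pt-closure}. The limit case is exactly~\Cref{thm:extended-sup-compact} with $j=\iota$ and $K=\NInf$.

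The main obstacle is the bookkeeping in the limit case of the retract claim. The retraction from~\Cref{lem:extension-retract} has to be composed with $\Cons$ from~\Cref{lem:delayed-sequences}, and the fiberwise retractions must be chosen uniformly in $n$. This is fine because the induction hypothesis provides them as data, not merely as a truncated property. Everything else is direct appeal to earlier closure results.
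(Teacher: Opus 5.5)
Your proposal is correct and follows the paper's proof essentially step by step. It uses the same inductions and the same key appeals: \Cref{thm:extended-sum-compact}, \Cref{lem:sigma-NInf-tot-sep} with the fiber at $\infty$ being $\One$, \Cref{lem:extension-retract} combined with \Cref{lem:delayed-sequences}, and \Cref{thm:extended-sup-compact}. The only differences are cosmetic: you fold items~(2) and~(3) into a single strengthened induction, and in the successor case you phrase the retract via $\Cantor \simeq \Two\times\Cantor$ rather than via $\Cantor+\Cantor$ being a retract of $\Cantor$.
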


\begin{proof}
All by induction on $b$, with compactness throughout in the pointed
sense.

(\labelcref{item:trich-interp}) The interpretation lands in
$\OrdThree[]$ by construction, each of its clauses preserving
trichotomy: $\ZeroO$ is trichotomous, so is the successor of a
trichotomous ordinal, and so is a sum over $\omegaO$ of a family of
trichotomous ordinals, such a sum being an ordinal
by~\Cref{lem:trichotomous-sum}.

(\labelcref{item:compactsep-interp}) For compactness, the type $\One$ is
compact pointed. The successor clause is a sum over $\One+\One$,
so~\Cref{cor:compact-pt-closure}(\labelcref{item:pt-sigma}) applies. The
limit clause is~\Cref{thm:extended-sum-compact}. For total
separatedness, the type $\One$ is totally separated. The successor
clause is a $\Sigma$ whose index type $\One+\One$ is discrete, and total
separatedness is closed under $\Sigma$ over a discrete index
(\Cref{prop:tot-sep-closure}(\labelcref{item:tot-sep-sigma})). For the
limit clause,
apply~\Cref{lem:sigma-NInf-tot-sep}: each fiber is totally separated
by~\Cref{lem:extension-tot-sep} and the induction hypothesis, and the
fiber at $\infty$ is $\One$
by~\Cref{lem:extension-property}(\labelcref{item:ext-off-image}).
\AgdaRefs{\Agda{TypeTopology.SquashedSum}{Σ¹-is-totally-separated}.}

(\labelcref{item:kappa-retract-cantor}) We show by induction on $b$
that $\us{\compactsepint b}$ is a retract of $\Cantor$. For $\bZ$, the
type $\One$ is a retract of any pointed type. For $\bS$, we use that
$\Cantor + \Cantor$ is a retract of $\Cantor$
(split on the first digit and pass to the tail), and being a retract of
$\Cantor$ is closed under $+$
\AgdaRefs{\Agda{Ordinals.Closure}{+-retract-of-Cantor};
\Agda{TypeTopology.SquashedCantor}{+-Cantor-retract}}. For $\bL$, the
induction hypothesis gives, for each $i : \N$, a section
$s_i : \us{\compactsepint{b\,i}} \to \Cantor$ with retraction
$\rho_i$. By~\Cref{lem:extension-retract}, the extension along
$\N\hookrightarrow\NInf$ of the family
$i \mapsto \us{\compactsepint{b\,i}}$ is pointwise a retract of the
extension of the constant family at $\Cantor$, and hence the extended
sum $\us{\compactsepint{\bL\,b}}$ is a retract of the constant
extended sum. By~\Cref{lem:delayed-sequences}, the constant extended
sum, the type of delayed binary sequences, is equivalent to $\Cantor$,
and in particular a retract of it. Retracts compose, so
$\us{\compactsepint{\bL\,b}}$ is a retract of $\Cantor$.

(\labelcref{item:compact-interp}) The type $\One$ is compact pointed
and the successor clause is a coproduct. The limit clause is the one place where
the argument differs from~(\labelcref{item:compactsep-interp}), because
$\compactint{\bL\,b}$ is a supremum rather than a sum, and it
is~\Cref{thm:extended-sup-compact}.
\end{proof}

Being trichotomous, $\trichint b$ is discrete
(\Cref{lem:trichotomy-gives-discrete}).
\claim{trichotomous-interpretation-is-discrete}
The compactness
in~\Cref{thm:four-interp-props}(\labelcref{item:compactsep-interp})
does not follow
from~\Cref{thm:four-interp-props}(\labelcref{item:kappa-retract-cantor}),
the compactness of $\Cantor$ being independent of our type theory
(\Cref{rem:tychonoff-independence}).

\begin{remark}
\label{rem:tot-sep-via-cantor}
The total separatedness of $\compactsepint b$, proved directly
in~\Cref{thm:four-interp-props}(\labelcref{item:compactsep-interp}),
also follows
from~\Cref{thm:four-interp-props}(\labelcref{item:kappa-retract-cantor})
without a further induction, because $\Cantor$ is totally separated
(\Cref{ex:tot-sep}(\labelcref{item:cantor-not-discrete})) and total
separatedness is closed under retracts
(\Cref{prop:tot-sep-closure}(\labelcref{item:tot-sep-retract})).
In the direct proof of total
separatedness, addition is a sum over the discrete index
$\One+\One$, and total separatedness is closed under $\Sigma$ over a
discrete index (\Cref{prop:tot-sep-closure}(\labelcref{item:tot-sep-sigma})).
The index $\NInf$ of the
extended sum is not discrete, its discreteness being $\WLPO$
(\Cref{ex:isolated}(\labelcref{item:NInf-discrete-wlpo})), but~\Cref{lem:sigma-NInf-tot-sep} applies, the
fiber at $\infty$ being $\One$. Closure under $\Sigma$ fails in general
(\Cref{ex:tot-sep}(\labelcref{item:ts-not-sigma})), and being a retract
of $\Cantor$ is a stronger property that implies total separatedness
and is closed under the constructions used here.
\AgdaRefs{\Agda{TypeTopology.SquashedSum}{Σ¹-is-totally-separated};
\Agda{Ordinals.BrouwerCodesDiscreteAndCompactInterpretations}{Κ-is-totally-separated}.}
\end{remark}

\begin{proposition}
\label{prop:trich-compact-lpo}
If $\us{\trichint b}$ is compact for every Brouwer code~$b$, then
$\LPO$ holds.
\AgdaRefs{\Agda{Ordinals.BrouwerCodesInterpretations}{⟦\_⟧₃-compact-gives-LPO}.}
\end{proposition}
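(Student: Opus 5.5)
Take the code $\bL(n \mapsto \bS^n\,\bZ)$, as in the proof of~\Cref{prop:std-compact-lpo}, and show that its trichotomous interpretation has $\N$ as a retract. Compactness passes to retracts (\Cref{thm:compact-closure}(\labelcref{item:compact-retract})), so the compactness of $\N$ follows, and that is $\LPO$ by~\Cref{ex:compact-basic}(\labelcref{item:N-compact-lpo}).

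Unfold the definitions. Since $\trichint{\bZ} = \ZeroO$ and the successor clause is $(-)\oplusO\OneO$, induction on $n$ shows that the underlying type of $\trichint{\bS^n\,\bZ}$ is the $n$-fold iterate
\[
  F_n \;:\equiv\; (\cdots((\Zero + \One) + \One) \cdots) + \One .
\]
For $n \ge 1$ this type is pointed by its last summand. The limit clause is the lexicographic sum over $\omegaO$, so the underlying type of $\trichint{\bL(n\mapsto \bS^n\bZ)}$ is $\Sigma_{n:\N}\, F_n$.

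Define the section $s : \N \to \Sigma_{n:\N}F_n$ by $s\,k = (k{+}1,\, \mathrm{inr}\,\star)$, which chooses the top point of the $(k{+}1)$-th summand. Define the retraction $r(n,x) = n - 1$, using truncated subtraction. Then $r(s\,k) = k$ holds definitionally, so $\N$ is a retract of the interpretation.

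No step here is a real obstacle. The only care needed is the indexing: the point must be chosen in a nonempty fiber, because $F_0 = \Zero$, and that is why the section uses $k{+}1$. We also need no fact about ordinals beyond unfolding the underlying types.
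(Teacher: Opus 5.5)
Your proof is correct and follows the paper's route: exhibit $\N$ as a retract of the underlying type of the trichotomous interpretation of a limit code, then use closure of compactness under retracts and \Cref{ex:compact-basic}(\labelcref{item:N-compact-lpo}). The only difference is the choice of code. The paper uses the simpler code $\bL(\lambda i.\,\bS\,\bZ)$, whose interpretation has underlying type $\Sigma_{i:\N}(\Zero+\One)$, so the first projection is already the retraction and your index shift past the empty fiber $F_0$ is not needed.
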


\begin{proof}
Apply the hypothesis to the code $\bL(\lambda i.\,\bS\,\bZ)$, whose
interpretation is the sum over $\omegaO$ of the constant family at
$\ZeroO \oplusO \OneO$, so that its underlying type is
$\Sigma_{i:\N}\,(\Zero+\One)$. The first projection makes $\N$ a
retract of it, compactness is closed under retracts
(\Cref{thm:compact-closure}(\labelcref{item:compact-retract})), and the
compactness of $\N$ is $\LPO$
(\Cref{ex:compact-basic}(\labelcref{item:N-compact-lpo})).
\end{proof}

Unlike $\compactsepint{-}$
(\Cref{thm:four-interp-props}(\labelcref{item:compactsep-interp})), the
compact interpretation $\compactint{-}$ fails to give
totally separated ordinals constructively:

\begin{proposition}
\label{prop:failure-tot-sep}
If $\us{\compactint b}$ is totally separated for every Brouwer
code~$b$, then $\neg\neg\WLPO$ holds.
\AgdaRefs{\Agda{Ordinals.FailureOfTotalSeparatedness}{total-separatedness-of-the-sup-of-extension-interpretation-gives-¬¬WLPO}.}
\end{proposition}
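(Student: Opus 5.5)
The plan is to mimic the two-infinities example of~\Cref{ex:tot-sep-failures}(\labelcref{item:two-infinities-not-tot-sep}) inside a single compact interpretation. I take the code $b = \bL(\lambda i.\,\bS\,\bZ)$. Since $\bZ$ is interpreted as $\OneO$, each $\compactint{\bS\,\bZ}$ is the two-point ordinal $\OneO \oplusO \OneO$, with a bottom and a top. Write $\alpha$ for this constant family, so that $\compactint b = \sup(\alpha \extend \iota)$.

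\emph{The extended family.} By~\Cref{lem:extension-restricts}, at a finite point $\underline n$ the extended family $(\alpha\extend\iota)\,\underline n$ is order equivalent to $\OneO\oplusO\OneO$. At $\infty$ it is the one-point ordinal, the fiber of $\iota$ being empty (\Cref{lem:extension-property}(\labelcref{item:ext-off-image})). By~\Cref{lem:extension-top} every member has a top $t_u$, given by $p \mapsto$ the top of $\alpha\,(\mathrm{pr}_1 p)$.

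\emph{The two points of the supremum.} I consider the map $e : \NInf \to \us{\compactint b}$ sending $u$ to the image of $(u,t_u)$ under the surjection of~\Cref{thm:sup}(\labelcref{item:sup-surjection}). Its underlying ordinal (the first component in the image construction) is the initial segment $\down{(\alpha\extend\iota)\,u}{t_u}$. At $u = \underline n$ this is $\OneO$, and at $u = \infty$ it is $\ZeroO$, since the one-point ordinal's only point has empty initial segment. Write $c_1 = e\,\underline 0$ and $c_0 = e\,\infty$. Then $e\,\underline n = c_1$ for every $n$, because points of the image are determined by their first components. Moreover $c_0 \neq c_1$, since $\ZeroO \neq \OneO$, the latter being inhabited.

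\emph{Separating them decides $\WLPO$.} Suppose $p : \us{\compactint b} \to \Two$ has $p\,c_0 \neq p\,c_1$, and let $q = p\circ e : \NInf\to\Two$. For $u : \NInf$, compare $q\,u$ with $p\,c_1$. If they differ, then $u$ is not finite, and hence $u = \infty$, a conatural number that is not finite being $\infty$. If they agree, then $u \neq \infty$, since $q\,\infty = p\,c_0$. This decides $u = \infty$, which is $\WLPO$, exactly as in the proof of~\Cref{ex:tot-sep-failures}(\labelcref{item:two-infinities-not-tot-sep}).

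\emph{Conclusion.} Assume $\neg\WLPO$, aiming for a contradiction. For every $p$, the value $p\,c_0 \neq p\,c_1$ is impossible by the previous step, so $p\,c_0 = p\,c_1$ because $\Two$ is discrete. Hence $c_0 =_2 c_1$, total separatedness gives $c_0 = c_1$, and this contradicts $c_0 \neq c_1$. So $\neg\neg\WLPO$ holds.

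The main obstacle is the bookkeeping for the supremum. I must justify that the element of $\us{\sup(\alpha\extend\iota)}$ arising from $(u,t_u)$ has first component $\down{(\alpha\extend\iota)\,u}{t_u}$, which holds by the definition of $\sigma$ in~\Cref{thm:sup}. I must also compute that initial segment at finite points up to identification. Here univalence turns the order equivalence of~\Cref{lem:extension-restricts}, which preserves initial segments, into an equality $\down{(\alpha\extend\iota)\,\underline n}{t} = \down{(\OneO\oplusO\OneO)}{\mathrm{top}} = \OneO$.
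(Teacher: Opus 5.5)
Your proof is correct. It uses the same code $\bL(\lambda\_.\,\bS\,\bZ)$ as the paper, and its last two steps are the separation argument of \Cref{lem:sierpinski-separation}. Where it differs is in how it handles the supremum.

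\emph{The paper's route.} It first proves the full description $\sier \iseq \Sier$ of this supremum as the Sierpi\'nski ordinal of semidecidable propositions (\Cref{lem:sup-is-sierpinski}). This requires building a map $\tau$ out of the supremum by joint surjectivity and proving that it is a surjective simulation. Only then does it separate $\botS$ from $\topS$.

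\emph{Your route.} You map \emph{into} the supremum, via $e : \NInf \to \us{\compactint b}$, and compute only the two initial segments you need. In fact the initial segment of the constantly-top partial element at $u$ is the proposition that $u$ is finite, with the empty order. So your $e$ is the paper's $u \mapsto \hat u$ followed by $\tau^{-1}$.

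\emph{What each buys.} Your argument is more economical. It needs only the definition of $\sigma$, the fact that points of an image are determined by their first components, and univalence for ordinals to identify the initial segments at finite points with $\OneO$. The paper's argument gives more: an explicit and conceptually meaningful description of $\compactint b$ for this code. It also gives a counterexample (\Cref{ex:tot-sep-counterexample}) to closure of compact totally separated types under suprema that does not mention Brouwer codes.
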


\noindent
This follows from the following counterexample, which does not
mention Brouwer codes and says that compact totally separated types
fail to be closed under suprema constructively.

\begin{example}
\label{ex:tot-sep-counterexample}
There are a type $I$ and a family $\beta : I \to \Ord[]$ such that $I$
is compact pointed and totally separated, $\us{\beta\,i}$ is compact
pointed and totally separated for every $i : I$, and the total
separatedness of $\us{\sup\beta}$ implies $\neg\neg\WLPO$.
\AgdaRefs{\Agda{Ordinals.FailureOfTotalSeparatedness}{counterexample-to-total-separatedness}.}
\end{example}

We may take $I = \NInf$ and for $\beta$ the extension $\extfam$ along
$\N\hookrightarrow\NInf$ of the constant family $\alpha : \N \to \Ord[]$
with value $\TwoO$.  The proof, which requires a number of
lemmas, uses the following explicit description of this supremum:
\[
  \sier \;:\equiv\; \sup\extfam.
\]
By~\Cref{def:extension}, since $\N\hookrightarrow\NInf$
is an embedding whose fiber over $u$ is the proposition that $u$ is
finite,
\[
  \us{\extfam\,u} \;=\; \Two^{\text{$u$ is finite}} .
\]
Its elements are \emph{partial elements of $\Two$}, with the
finiteness of $u$ as domain of definition, and $\sier$ is obtained
from all such partial elements, for all $u$, by identifying those with
the same initial segment. We will show that $\sier$ is order equivalent
to the following \emph{Sierpi\'nski ordinal}.
\claim{extension-is-partial-boolean}

\begin{definition}[A Sierpi\'nski ordinal]
\label{def:sierpinski}
Call a proposition \emph{semidecidable} if it is equivalent to the
finiteness of some conatural number,
\[
  \textstyle\exists_{u:\NInf}\,
  \big(P \simeq \text{$u$ is finite}\big),
\]
which is a proposition. Let
\[
  \Sier \;:\equiv\;
  \Sigma_{P : \Omega_{\UU_0}}\,(\text{$P$ is semidecidable}),
  \qquad
  t \ltS t' \;:\equiv\; \neg\,\dom t \times \dom t'
\]
where $\dom$ is the first projection, the \emph{domain of definition}.
Write $\botS$ and $\topS$ for the elements given by $\Zero$ and
$\One$, which are semidecidable via $\infty$ and $0$ respectively.
\AgdaRefs{\Agda{Ordinals.FailureOfTotalSeparatedness}{is-semidecidable};
\texttt{𝕊}; \texttt{\_≺ₛ\_}; \texttt{⊥ₛ}; \texttt{⊤ₛ}; \texttt{𝓢}.}
\end{definition}

\begin{lemma}
\label{lem:sierpinski-ordinal}
The relation $\ltS$ is a well-order, so $\Sier$ is an ordinal, and two
of its elements are equal precisely when their domains of definition
are logically equivalent.
\AgdaRefs{\Agda{Ordinals.FailureOfTotalSeparatedness}{≺ₛ-prop-valued};
\texttt{≺ₛ-transitive}; \texttt{≺ₛ-extensional};
\texttt{≺ₛ-well-founded}; \texttt{to-𝕊-＝}.}
\end{lemma}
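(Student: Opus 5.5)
First I would establish the characterization of equality, since the other properties depend on it. An element of $\Sier$ is a pair $(P,w)$ where $w$ witnesses semidecidability. Being semidecidable is a proposition, being a truncation. Hence two elements are equal exactly when their first components are equal in $\Omega_{\UU_0}$. By propositional extensionality (\Cref{def:Omega}) this holds exactly when the domains are logically equivalent. Proposition-valuedness of $\ltS$ is then immediate: $\neg\,\dom t$ is a negation, $\dom t'$ is a proposition, and a product of propositions is a proposition.

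Transitivity comes from the observation that a chain $t \ltS t' \ltS t''$ is impossible. It would require both $\dom t'$ and $\neg\,\dom t'$, so transitivity holds vacuously. Well-foundedness uses the same fact. An element $t'$ with $\neg\,\dom t'$ has no predecessors, since a predecessor $t$ would need $\dom t'$, so such a $t'$ is vacuously accessible. Any element $t''$ is accessible, because each of its predecessors $t'$ satisfies $\neg\,\dom t'$ and is therefore accessible by the previous sentence. Neither step uses semidecidability.

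Extensionality is the main obstacle, and it is the only place where semidecidability is needed. Suppose $t$ and $t'$ have the same predecessors. By the equality characterization, it suffices to show that $\dom t \leftrightarrow \dom t'$. Assume $\dom t$ and try to derive $\dom t'$. Since $\botS$ has empty domain, $\dom t$ gives $\botS \ltS t$, hence $\botS \ltS t'$, hence $\dom t'$. The other direction is symmetric. So the predecessor $\botS$ alone suffices, and the semidecidability of $\Zero$ via $\infty$ (noted in \Cref{def:sierpinski}) is exactly what guarantees that $\botS$ lies in $\Sier$. I would double-check that this step is as easy as it looks: the restriction to semidecidable propositions appears to matter only for later results, such as the order equivalence with $\sup\extfam$, and not for this lemma.
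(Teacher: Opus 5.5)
Your proof is correct and follows the paper's argument essentially step for step: equality via $\Sier$ being a subtype of $\Omega_{\UU_0}$ together with propositional extensionality, extensionality via the predecessor $\botS$, and well-foundedness from the impossibility of a chain $t'' \ltS t' \ltS t$. The only cosmetic difference is that you get transitivity vacuously from that same impossibility, whereas the paper reads off $\neg\,\dom t$ and $\dom t''$ directly from the two hypotheses.
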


\begin{proof}
Being semidecidable is a proposition, so $\Sier$ is a subtype of
$\Omega_{\UU_0}$, and hence two of its elements are equal exactly when
their domains of definition are logically equivalent. In particular
$\Sier$ is a set. The relation $\ltS$ is proposition valued, being a
product of two propositions, and transitive, since $t \ltS t'$ and $t'
\ltS t''$ give $\neg\,\dom t$ from the first and $\dom t''$ from the
second. For extensionality, suppose $t$ and $t'$ have the same
predecessors. If $\dom t$ holds then $\botS \ltS t$, hence $\botS \ltS
t'$, whose second component is $\dom t'$, and symmetrically. So the
domains of definition are logically equivalent, and $t = t'$. For
well-foundedness, if $t'' \ltS t' \ltS t$ then $t'' \ltS t'$ gives
$\dom t'$ while $t' \ltS t$ gives $\neg\,\dom t'$, a contradiction. So
no predecessor of $t$ has a predecessor, hence every predecessor of $t$
is accessible, and therefore so is $t$.
\end{proof}

\begin{lemma}
\label{lem:sup-is-sierpinski}
There is an order equivalence $\sier \iseq \Sier$.
\AgdaRefs{\Agda{Ordinals.FailureOfTotalSeparatedness}{𝓼-is-𝓢};
\texttt{⟨𝓼⟩-is-𝕊}; \texttt{𝕊-is-small}.}
\end{lemma}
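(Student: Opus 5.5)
The plan is to compute the image of the map $\sigma$ of \Cref{thm:sup} explicitly and match it with $\Sier$ inside $\Ord[]$. Write $\mathrm{prop}\,P$ for the ordinal whose underlying type is a proposition $P$ with the empty order (\Cref{ex:standard-ordinals}). First, the map $P \mapsto \mathrm{prop}\,P$ from $\Omega_{\UU_0}$ to $\Ord[]$ is an embedding. The reason is that, by univalence, an identification $\mathrm{prop}\,P = \mathrm{prop}\,Q$ is an order equivalence, which for empty orders is just a logical equivalence of $P$ and $Q$. Restricting to semidecidable $P$ gives an embedding $\Sier \hookrightarrow \Ord[]$. Since $\sier$ is by definition a subtype of $\Ord[]$, namely the image of $\sigma$, it suffices to show that the two subtypes $\mathrm{image}\,\sigma$ and $\mathrm{image}(\mathrm{prop}|_{\Sier})$ coincide, and that the two orders correspond.

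\emph{Step 1: initial segments of $\extfam\,u$.} Take $\varphi : \Two^{\text{$u$ is finite}}$. By \Cref{lem:ordinal-extension}, $\psi < \varphi$ means that there is a witness $w$ of the finiteness of $u$ with $\psi\,w < \varphi\,w$ in $\TwoO$, that is, $\psi\,w = 0$ and $\varphi\,w = 1$. Because the finiteness of $u$ is a proposition, $\psi$ is determined on its whole domain by this data. Hence $\down{\extfam\,u}{\varphi}$ has as underlying type a proposition equivalent to
\[
  Q_{u,\varphi} \;:\equiv\; \textstyle\Sigma_{w : \text{$u$ is finite}}\,\varphi\,w = 1 ,
\]
and it carries the empty order, since two predecessors of $\varphi$ are equal. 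So $\sigma(u,\varphi) = \mathrm{prop}\,Q_{u,\varphi}$ by univalence.

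\emph{Step 2: $Q_{u,\varphi}$ is semidecidable, and every semidecidable proposition arises this way.} Given $u$ and $\varphi$, define $v : \NInf$ by setting $v_i = 0$ exactly when there is $n \le i$ with $u_n = 0$ and $u_k = 1$ for all $k < n$ (which identifies $u$ with $\underline n$) and with $\varphi$ taking the value $1$ at the resulting witness. This is decidable because only finitely many digits are inspected, and the resulting sequence is decreasing. Then the finiteness of $v$ is logically equivalent to $Q_{u,\varphi}$, and hence equivalent to it as both are propositions. Conversely, if $P \simeq (\text{$u$ is finite})$, take $\varphi$ to be the constant function with value $1$, so that $Q_{u,\varphi} \simeq P$. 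Since semidecidability is a truncated existence and membership in an image is a proposition, both inclusions of subtypes go through under the truncation. This gives an equivalence $\us{\sier} \simeq \Sier$, and $\Sier$ lives in $\UU_0$, which also gives the smallness.

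\emph{Step 3: the orders agree.} In $\sier$ we have $\mathrm{prop}\,P \olt \mathrm{prop}\,P'$ if and only if $\mathrm{prop}\,P = \down{\mathrm{prop}\,P'}{b}$ for some $b : P'$. That initial segment is $\ZeroO$, because the order of $\mathrm{prop}\,P'$ is empty. So the condition says exactly $P'$ together with $\neg P$, which is $\ltS$. The equivalence is therefore order-preserving and order-reflecting, and hence an order equivalence. I expect the main obstacle to be Step 1 together with the univalent bookkeeping: identifying the initial segment with $\mathrm{prop}\,Q_{u,\varphi}$ as an actual equality in $\Ord[]$, with the transports along fiber witnesses handled correctly, and keeping track of universe levels so that $\Sier$ sits in $\UU_0$.
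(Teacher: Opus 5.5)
Your proof is correct, but it takes a different route from the paper. The paper never computes inside $\Ord[]$. It uses only the properties recorded in \Cref{thm:sup}: the canonical maps $\theta_u : \us{\extfam\,u} \to \us\sier$ are jointly surjective simulations. It sends a partial element $\xi$ to the proposition $\mathcal F\,\xi$ that $\xi$ is defined with value $1$, which is your $Q_{u,\varphi}$, and factors this through the $\theta_u$ into the set $\Sier$ to get $\tau : \us\sier \to \Sier$. It then checks by hand that $\tau$ is order-preserving, initial-segment-preserving and surjective, and concludes with \Cref{lem:surjective-simulations-are-order-equivs}.

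You instead use the concrete construction of $\sup$ as the image of $\sigma$. You identify each initial segment $\down{\extfam\,u}{\varphi}$ with the propositional ordinal on $Q_{u,\varphi}$, and then compare two subtypes of $\Ord[]$ through the embedding $P \mapsto \mathrm{prop}\,P$.

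What your route buys:
\begin{itemize}
\item The equivalence and the agreement of the orders are almost immediate, since the only initial segment of a propositional ordinal is $\ZeroO$.
\item It shows concretely that the points of $\sier$ are exactly the propositional ordinals on semidecidable propositions.
\item It avoids the paper's delicate step of producing, under a truncation, a predecessor of $y$ that maps to $\botS$, which needs left cancellability of $\tau$ at $\botS$.
\end{itemize}
What the paper's route buys: it depends only on the least-upper-bound and joint-surjectivity properties of suprema, not on their construction. So it applies verbatim to other constructions of $\sup$, such as de Jong's via set quotients.

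One slip: $\Sier$ does not live in $\UU_0$. It is a subtype of $\Omega_{\UU_0}$, which is a type in $\UU_1$. Its $\UU_0$-smallness is a consequence of your equivalence with $\us\sier$, which is small by set replacement, not an input. This does not affect the order equivalence itself.
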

\begin{proof}
For each $u : \NInf$ let $\theta_u : \us{\extfam\,u} \to \us\sier$ be the
canonical map into the supremum. These maps are simulations
(\Cref{thm:sup}(\labelcref{item:sup-lub})) and jointly surjective
(\Cref{thm:sup}(\labelcref{item:sup-surjection})), and they preserve
initial segments, a simulation mapping the initial segment of $\xi$
onto that of its value, so that
$\down{\sier}{\theta_u\,\xi} = \down{\extfam\,u}{\xi}$.

Send a partial element $\xi : \Two^{\text{$u$ finite}}$ to the
proposition
\[
  \mathcal F\,\xi \;:\equiv\;
  \textstyle\Sigma_{\varphi\,:\,\text{$u$ finite}}\,(\xi\,\varphi = 1),
\]
that is, the proposition that $\xi$ is defined and takes the value $1$.
This is a proposition, because the finiteness of $u$ is a proposition
and $\Two$ is a set,
and it is semidecidable, because from $\xi$ we build a decreasing
binary sequence, whose value at $i$ is $1$ before $u$ is known to
satisfy $u \sqleq i$, and is $1$ minus the value of $\xi$ at the
finiteness witness afterwards, hence a conatural number whose
finiteness is equivalent to $\mathcal F\,\xi$. Writing
$\mathbb F\,\xi$ for the resulting element of $\Sier$, the map
$\mathbb F$ is order-preserving, and it depends on $\xi$ only through
$\theta_u\,\xi$, since $\mathbb F\,\xi$ is determined by the initial
segment of $\xi$, which $\theta_u$ preserves. Since the maps $\theta_u$
are jointly surjective and $\Sier$ is a set, there is a
unique $\tau : \us\sier \to \Sier$ with $\tau\,(\theta_u\,\xi) = \mathbb
F\,\xi$. It is order-preserving, by joint surjectivity and the
corresponding property of $\mathbb F$. It preserves initial
segments, because an element of $\Sier$ below $\tau\,y$ has empty
domain of definition, so it is $\botS$, and we exhibit a predecessor of
$y$ mapping to $\botS$, namely the image under $\theta_u$ of the
constantly~$0$ partial element, which is unique because $\tau$ is left
cancellable at $\botS$, so that the required existence is a proposition
and the joint surjectivity of the $\theta_u$ may be used to produce it.
So $\tau$ is a simulation. It is
surjective, for given $(P,w)$ with $w$ a witness of
semidecidability, we may unpack $w$, because being in the image is
a truncated statement, to get $u$ with $P \simeq (\text{$u$
finite})$, and then the constantly~$1$ partial element of
$\us{\extfam\,u}$ is sent to $(P,w)$. So $\tau$ is an order
equivalence by~\Cref{lem:surjective-simulations-are-order-equivs}.
\AgdaRefs{\Agda{Ordinals.FailureOfTotalSeparatedness}{θ-is-simulation},
\texttt{θ-is-jointly-surjective}, \texttt{𝓕-is-semidecidable},
\texttt{𝔽-is-order-preserving}, \texttt{θ-to-𝔽-＝}, \texttt{T-point},
\texttt{τ-is-simulation}, \texttt{τ-is-surjection},
\texttt{τ-is-equiv}.}
\end{proof}

\begin{lemma}
\label{lem:sierpinski-separation}
If some $p : \Sier \to \Two$ has $p\,\botS \neq p\,\topS$, then
$\WLPO$ holds. Consequently, if $\Sier$ is totally separated then
$\neg\neg\WLPO$ holds.
\AgdaRefs{\Agda{Ordinals.FailureOfTotalSeparatedness}{𝕊-separation-gives-WLPO};
\texttt{𝕊-totally-separated-gives-¬¬WLPO};
\texttt{𝓼-totally-separated-gives-¬¬WLPO}.}
\end{lemma}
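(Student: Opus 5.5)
The plan mirrors the proof of \Cref{ex:tot-sep-failures}(\labelcref{item:two-infinities-not-tot-sep}). Suppose $p : \Sier \to \Two$ satisfies $p\,\botS \neq p\,\topS$. We decide, for an arbitrary $u : \NInf$, whether $u = \infty$, which is the formulation of $\WLPO$ in terms of conatural numbers.

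Given $u$, the proposition that $u$ is finite is semidecidable, witnessed by $u$ itself. Let $t_u : \Sier$ be the element with domain of definition ``$u$ is finite''. By \Cref{lem:sierpinski-ordinal}, elements of $\Sier$ are equal exactly when their domains are logically equivalent. Two consequences follow.
\begin{enumerate}
\item If $u = \underline n$, then the domain of $t_u$ holds, so $t_u = \topS$.
\item If $u = \infty$, then the domain is empty, since $\infty$ is not finite, so $t_u = \botS$.
\end{enumerate}

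Now compare $p\,t_u$ with $p\,\botS$ in the discrete type $\Two$.
\begin{itemize}
\item If $p\,t_u \neq p\,\botS$, then $t_u \neq \botS$, and hence $u \neq \infty$ by the second consequence.
\item If $p\,t_u = p\,\botS$, then $p\,t_u \neq p\,\topS$, so $t_u \neq \topS$. By the first consequence, $u$ is not finite, that is, $u \neq \underline n$ for every $n$. A conatural number that is not finite is $\infty$, as used repeatedly earlier, so $u = \infty$.
\end{itemize}
This decides $u = \infty$ for every $u$, which is $\WLPO$.

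For the consequence, assume $\Sier$ is totally separated and suppose $\neg\WLPO$, aiming at a contradiction. By the first part, every $p : \Sier \to \Two$ satisfies $\neg(p\,\botS \neq p\,\topS)$, and since $\Two$ is $\neg\neg$-separated, $p\,\botS = p\,\topS$. Thus $\botS =_2 \topS$, and total separatedness gives $\botS = \topS$. Then $\Zero \leftrightarrow \One$ by \Cref{lem:sierpinski-ordinal}, which is absurd. Hence $\neg\neg\WLPO$.

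The only real subtlety is the step from ``$u$ is not finite'' to $u = \infty$. It is available constructively as a standard fact about $\NInf$, already invoked in earlier proofs such as that of \Cref{lem:sigma-NInf-tot-sep}. Everything else is routine.
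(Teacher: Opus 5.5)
Your proof is correct and follows the paper's argument. Both send $u$ to the element of $\Sier$ given by its finiteness, note that it is $\topS$ at finite points and $\botS$ at $\infty$, decide $u = \infty$ by inspecting $p$ at that element, and obtain $\neg\neg\WLPO$ from total separatedness exactly as you do. The only cosmetic difference is that the paper first normalizes $u \mapsto p\,\hat u$, complementing if necessary, to be $0$ on finite points and $1$ at $\infty$, whereas you compare $p\,t_u$ directly with $p\,\botS$.
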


\begin{proof}
Every $u : \NInf$ gives an element of $\Sier$, namely its own
finiteness, which is semidecidable by definition. Write $\hat u$ for
this element.
Then $\widehat{\underline n} = \topS$ for every natural number $n$, since a
natural number is finite, and $\widehat\infty = \botS$, since $\infty$ is
not. So, given $p$ as in the statement, the composite $q = (u
\mapsto p\,\hat u) : \NInf \to \Two$ takes the constant value $p\,\topS$
on the natural numbers and the different value $p\,\botS$ at $\infty$.
Composing with the complement if necessary, a matter decided by
inspecting the two boolean values, we may assume that $q$ is
constantly~$0$ on $\N$ and takes the value $1$ at $\infty$. Such a
map decides, for an arbitrary $u : \NInf$, whether $u = \infty$,
by inspecting the value $q\,u$, which if $0$ gives $u \neq \infty$
and if $1$ makes $u$ differ from every finite point, since a conatural
number that is not finite is $\infty$. That is $\WLPO$ in the formulation
of~\Cref{def:lpo-wlpo}.
\AgdaRefs{\Agda{Taboos.BasicDiscontinuity}{basic-discontinuity-taboo}.}

For the second claim, assume $\Sier$ is totally separated and, for the
sake of contradiction, that $\neg\WLPO$ holds. For any
$p : \Sier \to \Two$ we cannot have $p\,\botS \neq p\,\topS$, by the
first part. Deciding the two values, we conclude
$p\,\botS = p\,\topS$. As this holds for every $p$, total
separatedness gives $\botS = \topS$, which is a contradiction. So
$\neg\neg\WLPO$.
\end{proof}

\begin{proof}[Proof of~\Cref{ex:tot-sep-counterexample}]
Take $I = \NInf$ and $\beta = \extfam$. Then $\NInf$ is
compact pointed (\Cref{ex:compact-pt-basic}(\labelcref{item:NInf-compact}))
and totally separated, being
a retract of $\Cantor$ (\Cref{ex:NInf}), which is totally separated
(\Cref{ex:tot-sep}(\labelcref{item:cantor-not-discrete})), and total
separatedness is inherited by retracts
(\Cref{prop:tot-sep-closure}(\labelcref{item:tot-sep-retract})). Each
$\us{\extfam\,u}$ is $\Two^{\text{$u$ finite}}$, which is compact
pointed by the micro-Tychonoff~\Cref{thm:micro-tychonoff}, and totally
separated by closure of total separatedness under products
(\Cref{prop:tot-sep-closure}(\labelcref{item:tot-sep-pi})). Finally, if $\us{\sup\extfam} = \us\sier$
is totally separated then so is $\Sier$, by~\Cref{lem:sup-is-sierpinski}
and closure of total separatedness under equivalence
(\Cref{prop:tot-sep-closure}(\labelcref{item:tot-sep-retract})), and
then~\Cref{lem:sierpinski-separation} gives $\neg\neg\WLPO$.
\end{proof}

\begin{proof}[Proof of~\Cref{prop:failure-tot-sep}]
Apply the hypothesis to the code
$b = \bL(\lambda\_.\,\bS\,\bZ)$.
By~\Cref{def:non-standard-interpretations},
$\compactint{\bS\,\bZ} = \compactint{\bZ} \oplusO \OneO = \OneO
\oplusO \OneO$, which is $\TwoO$, and therefore
$\compactint b = \sup\extfam = \sier$, since $\compactint{-}$ sends
$\bZ$ to $\OneO$ rather than to $\ZeroO$. So $\us\sier$ is totally
separated,
and~\Cref{lem:sup-is-sierpinski,lem:sierpinski-separation} give
$\neg\neg\WLPO$.
\end{proof}

\subsection{Comparing the four interpretations}
\label{sec:comparing-four}

Classically, the three alternative interpretations dominate the
standard interpretation and are related as in~\Cref{thm:comparisons}
below. Excluded middle is used twice, to make the successor operation
of $\Ord[]$ monotone and to turn an order-preserving map between
ordinals into $\oemb$. A fifth interpretation follows
in~\Cref{sec:discrete-compact}, related to $\compactsepint{-}$ in a
stronger and constructive way (\Cref{thm:iota}).

\begin{theorem}
\label{thm:comparisons}
Assuming the principle of excluded middle, the following order
relations hold for the four interpretations of any Brouwer code~$b$:
\begin{center}
\begin{tikzpicture}[every node/.style={font=\normalsize}]
  \node (stdl) at (-2,2) {standard};
  \node (stdv) at (1,2) {$\stdint b$};
  \node (le1) at (2,2) {$\oemb$};
  \node (triv) at (3,2) {$\trichint b$};
  \node (tril) at (7,2) {trichotomous};
  \node (cmpl) at (-2,0) {compact};
  \node (cmpv) at (1,0) {$\compactint b$};
  \node (le2) at (2,0) {$\oemb$};
  \node (csepv) at (3,0) {$\compactsepint b$};
  \node (csepl) at (7,0) {compact totally separated.};
  \node at ($(stdv)!0.5!(cmpv)$) {$\oembdown$};
  \node at ($(triv)!0.5!(csepv)$) {$\oembdown$};
\end{tikzpicture}
\end{center}
\AgdaRefs{\Agda{Ordinals.BrouwerCodesInterpretations}{comparison₀₃};
\texttt{comparison₀₂}; \texttt{comparison₂₁}; \texttt{comparison₃₁}.}
\end{theorem}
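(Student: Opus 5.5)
All four inequalities will be proved by simultaneous induction on the Brouwer code $b$, working under excluded middle. Under excluded middle every ordinal is trichotomous, every type is discrete, and $\LPO$ holds, so the map $\iota_1 : \N + \One \to \NInf$ is an equivalence. The key tool is a lemma, valid under excluded middle, that an order-preserving map $f : \alpha \to \beta$ between ordinals gives $\alpha \oemb \beta$. To prove it, show by transfinite induction that $\alpha \olt$-ranks are bounded, i.e.\ $\down\alpha x \oemb \down\beta{f\,x}$. Excluded middle lets us compare ordinals trichotomously in $\Ord[]$, and then $\alpha \oemb \beta$ follows because $\beta \olt \alpha$ would give some $x$ with $\beta = \down\alpha x \oemb \down\beta{f x} \olt \beta$, which is impossible. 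With this lemma, each comparison reduces to building an order-preserving map. A second ingredient is monotonicity of the successor, $\alpha \oemb \beta \to \alpha \oplusO \One \oemb \beta\oplusO\One$, which again follows classically by mapping the top to the top and being order preserving.

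\emph{Base and successor.} For $\bZ$, we need $\ZeroO \oemb \ZeroO$, $\ZeroO \oemb \OneO$, $\OneO\oemb\OneT$, and $\ZeroO \oemb \OneT$, all of which are trivial. For $\bS$, all four interpretations use $(-)\oplusO\One$ (for $\compactsepint{-}$ this is the topped sum, which is order equivalent to $\oplusO$), so each case follows from the induction hypothesis by monotonicity of the successor.

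\emph{Limits.} Here we need order-preserving maps in four cases.
\begin{enumerate}
\item $\sup_i \stdint{b_i} \oemb \Sigma_{\omegaO}\trichint{b_i}$. Use the universal property of $\sup$ (\Cref{thm:sup}(\labelcref{item:sup-lub})). It suffices that each $\trichint{b_i} \oemb \Sigma_\omega \trichint{b_\cdot}$, via the inclusion into the fiber $i$, which is a simulation. Composing with the induction hypothesis $\stdint{b_i}\oemb\trichint{b_i}$ finishes the case.
\item $\Sigma_\omega\trichint{b_i} \oemb \ssumone \compactsepint{b_\cdot}$. Map $(i,x) \mapsto \cmap(\mathrm{inl}\,i, \ldots)$ using the simulations $\trichint{b_i}\to\compactsepint{b_i}$ from the induction hypothesis. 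This map is order preserving by \Cref{lem:sum-map-order} together with \Cref{lem:sum-comparison}.
\item $\stdint{\bL b} \oemb \compactint{\bL b}$. Again use the sup universal property: $\stdint{b_i}\oemb\compactint{b_i}\simeq(\compactint{b_\cdot}\extend\iota)(\underline i)\oemb\ssupone$, using \Cref{lem:extension-restricts}.
\item $\ssupone\compactint{b_\cdot} \oemb \ssumone\compactsepint{b_\cdot}$. This is the main obstacle, because the extended family is indexed by $\NInf$, including $\infty$, where the fiber is $\One$. By the sup universal property it suffices to bound each $(\compactint{b_\cdot}\extend\iota)\,u$ by the sum. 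Classically $u$ is either $\underline n$, where we use the induction hypothesis and the fiber inclusion, or $\infty$, where the extension is $\OneO$ and we map it to any point of the $\infty$-fiber. A fiber inclusion at the index $u$ is order preserving into the lexicographic sum, so the lemma applies.
\end{enumerate}

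I expect the real difficulty to be the lemma turning order-preserving maps into $\oemb$ (together with successor monotonicity), since everything else is bookkeeping of fiber inclusions. Transport issues along $u = \underline n$ are harmless because all underlying types are sets.
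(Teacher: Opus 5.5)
Your plan follows the paper's route: induction on $b$, with excluded middle used only for monotonicity of the successor and for passing from an order-preserving map between ordinals to $\oemb$. There is one false step.

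In the limit case of the top row you say that the inclusion $y \mapsto (i,y)$ of $\trichint{b\,i}$ into $\trichint{\bL\,b}$ is a simulation. It is only order-preserving. Every $(j,z)$ with $j < i$ lies below $(i,y)$ in the lexicographic order but is not in the image. So initial-segment preservation fails as soon as the $i$-th summand and some earlier one are inhabited, e.g.\ for $b$ constantly $\bS\,\bZ$ and $i = 1$. This is exactly the paper's second use of excluded middle: a supremum is bounded by the corresponding sum. Your own lemma repairs it, just as you already use it for the fiber inclusions in your case~(4). As written, though, the top row at $\bL$ is wrongly presented as following from the universal property of the supremum alone.

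With that repair the argument is correct, and the remaining differences from the paper are local.
\begin{itemize}
\item Bottom row. You use $\LPO$ to split $u : \NInf$ into $\underline n$ and $\infty$ and bound each member of the extended family separately. The paper argues uniformly instead. Extension along an embedding preserves $\oemb$ fiberwise, since a simulation between the members induces one between the proposition-indexed products. Hence the supremum of the extension of $n \mapsto \compactint{b\,n}$ is below that of $n \mapsto \compactsepint{b\,n}$, which in turn is below the extended sum.
\item Right-hand column. You invoke the lemma at every $\bS$ and $\bL$ step, feeding it the simulations from the induction hypothesis. The paper defines one order-preserving map $\us{\trichint b} \to \us{\compactsepint b}$ by induction and uses excluded middle only once, at the end. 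Both work.
\item The key lemma. Your sketch, via trichotomy of $\Ord[]$ and the bound $\down{\alpha}{x} \oemb \down{\beta}{f\,x}$ by transfinite induction, is sound under excluded middle. The paper only cites this lemma from the formalization.
\end{itemize}
A small inaccuracy: excluded middle makes every set discrete, not every type, but only sets occur here.
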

\noindent So the compact totally separated interpretation gives the largest
ordinal among the four.
\begin{proof}
The top row, the left-hand column and the bottom row are proved by
induction on $b$. All three use excluded middle at $\bS$, for
monotonicity of the successor operation on ordinals
\AgdaRefs{\Agda{Ordinals.AdditionProperties}{succ-monotone}}, and the
top row and the bottom row use it again at $\bL$, for the fact that a
supremum is bounded by the corresponding sum. That fact is in turn the
passage from an order-preserving map to $\oemb$, applied to the map
$y \mapsto (x,y)$ of a summand into the sum, which makes the sum an
upper bound of the family, so that the supremum, being least among upper
bounds (\Cref{thm:sup}(\labelcref{item:sup-lub})), is below it. The
right-hand column uses the passage directly.

\emph{The top row.} At $\bZ$ both sides are $\ZeroO$. At $\bS$, apply
monotonicity of the successor to the induction hypothesis.
At $\bL$, monotonicity of the supremum in its family gives
$\sup_i\stdint{b\,i} \oemb \sup_i\trichint{b\,i}$, and this supremum
is below $\trichint{\bL\,b}$, a supremum being bounded by the
corresponding sum.
\AgdaRefs{\Agda{Ordinals.SupSum}{sup-bounded-by-sum₃}.}

\emph{The left-hand column.} The same shape, with one extra step at
$\bL$, where the family whose supremum is taken on the right is the
extension of the family
$n \mapsto \compactint{b\,n}$ along $\N\hookrightarrow\NInf$, so we first
identify its value at $\underline n$ with $\compactint{b\,n}$,
by~\Cref{lem:extension-restricts} and univalence, compare suprema over
$\N$, and then
observe that the supremum over $\N$ of a family restricted along
$\iota$ is bounded by the supremum over all of $\NInf$. At $\bZ$ the
inequality is $\ZeroO \oemb \OneO$, the zero
ordinal being least.

\emph{The bottom row.} Again the same, using at $\bL$ that extension
along an embedding preserves $\oemb$ fiberwise, the extension being at
each point a product
indexed by a proposition, so that a simulation between the members gives
one between the products, and then that suprema compare and that a
supremum is bounded by the corresponding sum.
\AgdaRefs{\Agda{Ordinals.Injectivity}{↗-preserves-⊴};
\Agda{Ordinals.SupSum}{sup-bounded-by-sumᵀ}.} At $\bS$ we also need
that addition of topped ordinals agrees with addition of plain ordinals
on underlying ordinals.
\AgdaRefs{\Agda{Ordinals.ToppedAdditionProperties}{alternative-plus}.}

\emph{The right-hand column.} This one is proved differently. We define
a map
$\us{\trichint b} \to \us{\compactsepint
b}$ by induction on $b$ and show that it is order-preserving, and then
excluded middle turns an order-preserving map between ordinals into
$\oemb$. At $\bZ$ the source is empty. At
$\bS$ the two summands go to the two summands. At $\bL$ the pair
$(i,x)$ goes to $\iota\,i$ paired with the partial element that sends a
point $(j,p)$ of the fiber of $\iota$ over $\iota\,i$ to the recursively
defined value at $x$, transported along the identification $j = i$
supplied by left cancellability of $\iota$. Order preservation is a
routine induction.
\AgdaRefs{\Agda{Ordinals.BrouwerCodesInterpretations}{map₃₁},
\texttt{map₃₁-is-order-preserving};
\Agda{Ordinals.OrdinalOfOrdinals}{EM-implies-order-preserving-gives-≼}.}
\end{proof}

\begin{proposition}
  \label{prop:comparisons-taboos}
  \leavevmode
\begin{enumerate}
\item\label{item:comparison03-gives-lpo} If the relation
  $\stdint b \oemb \trichint b$ holds for every Brouwer code~$b$, then
  $\LPO$ holds.
\item\label{item:comparison21-gives-notnot-wlpo} If the relation
  $\compactint b \oemb \compactsepint b$ holds for every Brouwer
  code~$b$, then $\neg\neg\WLPO$ holds.
\end{enumerate}
\AgdaRefs{\Agda{Ordinals.FailureOfTrichotomy}{comparison₀₃-gives-LPO};
  \Agda{Ordinals.FailureOfTotalSeparatedness}{comparison₂₁-gives-¬¬WLPO}.}
\end{proposition}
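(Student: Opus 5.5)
For~(\labelcref{item:comparison03-gives-lpo}) I would reuse the code $\bL\,h$ built from an arbitrary $u : \NInf$ in the proof of~\Cref{prop:failure-trichotomy}, and show that the hypothesis $\stdint{\bL\,h} \oemb \trichint{\bL\,h}$ decides the finiteness of $u$. The simulation $f : \us\beta \to \us{\trichint{\bL\,h}}$ preserves initial segments, so it sends $y_0$ and $y_1$ to points whose initial segments are $\OneO$ and $\sup_i \alpha\,i$ respectively. Then $\down\beta{y_0} = \down\beta{y_1}$ if and only if $f\,y_0 = f\,y_1$. The target $\trichint{\bL\,h}$ is trichotomous (\Cref{thm:four-interp-props}(\labelcref{item:trich-interp})), hence discrete (\Cref{lem:trichotomy-gives-discrete}), so $f\,y_0 = f\,y_1$ is decidable. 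Consequently $\OneO = \sup_i\alpha\,i$ is decidable.

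By the three lemmas in the proof of~\Cref{prop:failure-trichotomy}, the equality $\OneO = \sup_i\alpha\,i$ holds exactly when $u$ is not finite. Indeed, if $u$ is finite then $\OneO \olt \sup_i\alpha\,i$, which rules out the equality. Conversely, if $u$ is not finite then every $\alpha\,i$ is $\OneO$, since $u_i = 0$ would make $u$ finite, and the supremum of a family constantly $\OneO$ is $\OneO$. Deciding $\neg(\text{$u$ finite})$ is only $\WLPO$, so this does not yet give $\LPO$. To reach $\LPO$ I would compare $f\,y_0$ and $f\,y_1$ by trichotomy in the target rather than merely by equality. Since $f$ is order-reflecting, by the argument in~\Cref{lem:surjective-simulations-are-order-equivs}, trichotomy of $f\,y_0$ and $f\,y_1$ pulls back to trichotomy of $y_0$ and $y_1$. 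The case analysis of~\Cref{prop:failure-trichotomy} then decides finiteness outright. The main point to check is this order reflection, which needs only left cancellability of simulations together with initial-segment preservation, and both hold constructively.

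For~(\labelcref{item:comparison21-gives-notnot-wlpo}) I would take $b = \bL(\lambda\_.\,\bS\,\bZ)$. As in the proof of~\Cref{prop:failure-tot-sep}, $\compactint b = \sier \iseq \Sier$ by~\Cref{lem:sup-is-sierpinski}. A simulation $\sier \oemb \compactsepint b$ is left cancellable, hence an embedding into a type that is totally separated by~\Cref{thm:four-interp-props}(\labelcref{item:compactsep-interp}). Total separatedness passes along left-cancellable maps: if $x =_2 y$ then $f\,x =_2 f\,y$, by testing with $q \circ f$, so $f\,x = f\,y$ and hence $x = y$. Therefore $\us\sier$ is totally separated, and~\Cref{lem:sierpinski-separation} gives $\neg\neg\WLPO$.
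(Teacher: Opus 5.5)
Your proposal is correct and is essentially the paper's argument. The paper pulls trichotomy (resp.\ total separatedness) back along the simulation using left cancellability and order reflection, for every code, and then cites \Cref{prop:failure-trichotomy} and \Cref{prop:failure-tot-sep}; you do the same pull-back but only at the specific codes (and, in part~(1), only at the pair $y_0, y_1$), inlining those propositions. The detour through discreteness in part~(1) is unnecessary, as you yourself observe.
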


\begin{proof}
(\labelcref{item:comparison03-gives-lpo}) An ordinal with a simulation
into a trichotomous ordinal is trichotomous. Since the trichotomous
interpretation is trichotomous
(\Cref{thm:four-interp-props}(\labelcref{item:trich-interp})), the
standard interpretation is trichotomous for every code, and
\Cref{prop:failure-trichotomy} gives $\LPO$.
\AgdaRefs{\Agda{Ordinals.Maps}{simulation-into-trichotomous-gives-trichotomous}.}

(\labelcref{item:comparison21-gives-notnot-wlpo}) An embedding into a
totally separated type is totally separated. Since
$\us{\compactsepint b}$ is totally separated
(\Cref{thm:four-interp-props}(\labelcref{item:compactsep-interp})),
$\us{\compactint b}$ is totally separated for every code, and
\Cref{prop:failure-tot-sep} gives $\neg\neg\WLPO$.
\AgdaRefs{\Agda{TypeTopology.TotallySeparated}{embedding-into-totally-separated-gives-totally-separated}.}
\end{proof}

\begin{question}
\label{q:comparisons-taboos}
Does each of the remaining two comparisons, namely the relation
$\stdint b \oemb \compactint b$ for every code $b$, and the relation
$\trichint b \oemb \compactsepint b$ for every code $b$, imply a
constructive taboo?
\end{question}

\subsection{A fifth interpretation}
\label{sec:discrete-compact}

Our fifth interpretation is a modification of the trichotomous
interpretation $\trichint{-}$
of~\Cref{def:non-standard-interpretations}, mapping $\bZ$ to the
ordinal $\One$, rather than $\Zero$, and $\bL$ to $\ssumsub{}$ in
order to match the compact totally separated interpretation
$\compactsepint{-}$ and be able to get
\Cref{thm:iota}(\labelcref{item:iota-density}).

\begin{definition}
  \label{def:delta-kappa} We define an interpretation function
\[
  \alttrichint{-} : \Brouwer \to \OrdT[] \qquad
  \text{(discrete interpretation)}
\]
by interpreting the constructors as in the second row of the following
table, whose first row repeats the compact totally separated
interpretation of~\Cref{def:non-standard-interpretations} for
comparison:

\begin{center}
\renewcommand{\arraystretch}{1.6}
\begin{tabular}{|c|c|c|c|}
  \hline
  & $\bZ$ & $\bS$ & $\bL$
  \\ \hline
  $\compactsepint{-}$ & $\OneT$ & $(-) \oplusT \OneT$ & $\ssumone{}$
  \\\hline
  $\alttrichint{-}$ & $\OneT$ & $(-) \oplusT \OneT$ & $\ssumsub{}$ \\
\hline
\end{tabular}
\end{center}

\noindent
Here $\ssumone{}$ and $\ssumsub{}$ are the extended sum and the
successor sum of~\Cref{def:extended-and-successor-sum}.
\AgdaRefs{\Agda{Ordinals.BrouwerCodesDiscreteAndCompactInterpretations}{Δ}
for $\alttrichint{-}$; \texttt{Κ} in the same module for
$\compactsepint{-}$, repeating
\Agda{Ordinals.BrouwerCodesInterpretations}{⟦\_⟧₁} clause for clause
without the assumptions of univalence, propositional truncation and set
replacement, and the name the pointers below use for it.}
\end{definition}

\begin{theorem}
\label{thm:delta-kappa-props}
For every Brouwer code~$b$,
\begin{enumerate}
\item\label{item:delta-trichotomous} the ordinal $\alttrichint b$ is
  trichotomous,
\item\label{item:delta-retract-N} the underlying type of
  $\alttrichint b$ is a retract of $\N$, hence discrete.
\end{enumerate}
\AgdaRefs{\Agda{Ordinals.BrouwerCodesDiscreteAndCompactInterpretations}{Δ-retract-of-ℕ};
\texttt{Δ-is-discrete}; \texttt{Δ-is-trichotomous}.}
\end{theorem}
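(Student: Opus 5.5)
We prove both items simultaneously by induction on the code $b$, carrying along the facts that $\alttrichint b$ is trichotomous and that its underlying type is a retract of $\N$. Discreteness then follows in either of two ways. A retract of the discrete type $\N$ is discrete (\Cref{prop:discrete-closure}(\labelcref{item:discrete-retract})), and trichotomy implies discreteness (\Cref{lem:trichotomy-gives-discrete}).

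\emph{Retract of $\N$.} For $\bZ$, the type $\One$ is a retract of the pointed type $\N$. For $\bS$, the topped sum $\tau \oplusT \One$ has underlying type $\Sigma_{i:\One+\One}$ of $\us\tau$ and $\One$, which is equivalent to $\us\tau + \One$. If $\us\tau$ is a retract of $\N$, then $\us\tau+\One$ is a retract of $\N+\One$, and $\N+\One$ is a retract of $\N$ (indeed equivalent to it). For $\bL$, we invoke \Cref{lem:successor-sum}(\labelcref{item:successor-sum-retract}) with the induction hypothesis for each $b\,n$.

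\emph{Trichotomy.} For $\bZ$, the one-point ordinal is trichotomous. For $\bS$, we argue via the order equivalence of $\tau \oplusT \One$ with $\tau \oplusO \One$ (the alternative-plus remark after \Cref{ex:NInf-ordinal}). The ordinal $\tau\oplusO\One$ is trichotomous whenever $\tau$ is: compare two points by cases on the summands, using trichotomy of $\tau$ when both are in the left summand, the fact that $\mathrm{inl}$ precedes $\mathrm{inr}$ for mixed pairs, and equality when both are the added point. Trichotomy transfers along order equivalences, since these preserve and reflect both order and equality.

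For $\bL$, we use \Cref{lem:successor-sum}(\labelcref{item:successor-sum-is-successor}) to reduce trichotomy of $\ssumsub\upsilon$ to that of $(\osum{\omegaO}{}\upsilon)\oplusO\One$, where $\upsilon\,n = \alttrichint{b\,n}$. By the successor case just discussed, it is then enough to show that the lexicographic sum $\osum{\omegaO}{}\upsilon$ is trichotomous. We compare $(m,x)$ with $(n,y)$ using trichotomy of $\omegaO$. If $m<n$ or $n<m$, the index comparison settles it. If $m=n$, we transport, which is harmless because $\N$ is a set and the identification may be taken to be $\mathrm{refl}$. We then use the trichotomy of $\upsilon\,n$ given by the induction hypothesis to compare $x$ with $y$, and each of its three outcomes yields the corresponding outcome for the pairs.

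The main, though modest, obstacle is bookkeeping at $\bL$. The fibers of $\ssumsub\upsilon$ are extensions along $\mathrm{inl}$ rather than the $\upsilon\,n$ themselves, so one must make sure to work up to order equivalence, through \Cref{lem:extension-restricts} packaged in \Cref{lem:successor-sum}, rather than on the nose. Alternatively, this could be done directly: at $\mathrm{inl}\,n$ the fiber is order equivalent to $\upsilon\,n$, and at $\mathrm{inr}\,\star$ it is $\One$ by \Cref{lem:extension-property}(\labelcref{item:ext-off-image}). Trichotomy and being a retract of $\N$ both transfer along these equivalences, and the index $\succT{\omegaO}$ is trichotomous and a retract of $\N$.
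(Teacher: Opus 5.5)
Your proof is correct. The overall structure matches the paper's: one induction on $b$, with the retract-of-$\N$ step at $\bL$ delegated to \Cref{lem:successor-sum}(\labelcref{item:successor-sum-retract}). The trichotomy steps at $\bS$ and $\bL$, however, take a different route.

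\emph{The paper's route.} At $\bS$ it reads $\tau\oplusT\OneT$ directly as a sum over the trichotomous index $\TwoT$, and uses that ordinal-indexed sums of trichotomous families over a trichotomous index are trichotomous. At $\bL$ it unfolds $\ssumsub{}$ as the sum over $\succT{\omegaO}$ of the extension along $\mathrm{inl}$. It then shows, fiber by fiber, that a product of trichotomous ordinals indexed by a \emph{decidable} proposition is trichotomous. This is essentially your closing ``alternatively'' remark.

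\emph{Your route.} You pass through the global order equivalence $\ssumsub\upsilon\iseq(\osum{\omegaO}{}\upsilon)\oplusO\OneO$ of \Cref{lem:successor-sum}(\labelcref{item:successor-sum-is-successor}) and reuse your successor case. You then check trichotomy of the $\omegaO$-indexed lexicographic sum by hand. That check works verbatim over any trichotomous index, since any ordinal is a set, so it is in fact the general closure property the paper cites.

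\emph{What each buys.} Yours reuses an already-proved lemma and avoids reasoning about products indexed by propositions. The paper's is more modular, and it isolates exactly where the argument depends on the fibers of $\N\hookrightarrow\N+\One$ being decidable. In your version that dependence is hidden in the case split on $z:\N+\One$ inside the proof of \Cref{lem:successor-sum}. Making it visible explains why the argument does not carry over to $\ssumone{}$, whose fibers are those of $\N\hookrightarrow\NInf$ and whose decidability is $\LPO$.
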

\noindent
Notice that~(\labelcref{item:delta-trichotomous}) gives an alternative
route to the discreteness of $\alttrichint b$
(\Cref{lem:trichotomy-gives-discrete}), as does a direct induction on
$b$ using~\Cref{prop:discrete-closure}.
Item~(\labelcref{item:delta-retract-N}) is the counterpart
of~\Cref{thm:four-interp-props}(\labelcref{item:kappa-retract-cantor}),
with $\N$ in place of $\Cantor$.
\begin{proof}
(\labelcref{item:delta-trichotomous}) By induction on $b$. For $\bZ$
the one-point ordinal is trichotomous. For $\bS$, ordinal addition
$\tau \oplusT \upsilon$ is not a coproduct here but the ordinal-indexed
sum over the ordinal $\TwoT$ of the two-element family, whose index
type is $\One+\One$ (\Cref{sec:ordinal-arithmetic}), and trichotomy is
closed under ordinal-indexed sums, the index $\TwoT$ being
trichotomous. For $\bL$, the successor sum must preserve
trichotomy. Unfolding $\ssumsub{}$ as the sum over
$\succT\omegaO$ of the extension of the family along
$\N\hookrightarrow\N+\One$, the index is trichotomous and it remains
to see that the extension takes trichotomous values. At a point $z$ of
$\N+\One$ its value is a product indexed by the fiber of
$\N\hookrightarrow\N+\One$ over $z$, and such a product is trichotomous
as soon as that fiber is decidable, since if the fiber has a point, the
product is order equivalent to the member of the family at that point,
and if the fiber is empty, the product is $\One$. The fiber is
decidable, being a singleton over $\mathrm{inl}\,n$ and empty over the
added point.
\AgdaRefs{\Agda{Ordinals.ToppedArithmetic}{∑₁-is-trichotomous};
\Agda{Ordinals.Arithmetic}{𝟙ₒ-is-trichotomous};
\Agda{Ordinals.ToppedArithmetic}{+ᵒ-is-trichotomous};
\Agda{Ordinals.Injectivity}{↗-is-trichotomous};
\Agda{Ordinals.WellOrderArithmetic}{pip.decidable-index-gives-trichotomy}.}

(\labelcref{item:delta-retract-N}) We show by induction on $b$ that
$\us{\alttrichint b}$ is a retract of $\N$. For $\bZ$ the underlying
type is $\One$, a retract of $\N$. For $\bS$, use that being
a retract of $\N$ is closed under $\Sigma$, in the sense that if
$\us\tau$ and every $\us{\upsilon\,x}$ are retracts of $\N$, then so is
$\Sigma_x \us{\upsilon\,x}$, being a retract of $\N\times\N$, which is
a retract of $\N$ by a pairing function
\AgdaRefs{\Agda{Ordinals.Closure}{Σ-retract-of-ℕ}}; the index type
$\One+\One$ of the addition is a retract of $\N$ by sending $0$ to the
left point and every successor to the right one. For $\bL$, the
interpretation is a successor sum, whose underlying type is a retract
of $\N$ as soon as those of the summands are,
by~\Cref{lem:successor-sum}(\labelcref{item:successor-sum-retract}).
Discreteness follows since $\N$ is discrete and discreteness is closed
under retracts.
\end{proof}

\noindent
The trichotomy of the successor sum used the decidability of the fibers
of $\N\hookrightarrow\N+\One$. For the extended sum the corresponding
fibers are those of $\N\hookrightarrow\NInf$, whose decidability is
$\LPO$, so the same argument is unavailable for $\compactsepint{-}$.

Each of the two interpretations fails constructively to have the
property that the other has.

\begin{proposition}
\label{prop:delta-kappa-fail}
\leavevmode
\begin{enumerate}
\item\label{item:delta-compact-lpo} The statement that
  $\us{\alttrichint b}$ is compact for every Brouwer code~$b$ is
  equivalent to $\LPO$.
\item\label{item:kappa-discrete-wlpo} If $\us{\compactsepint b}$ is
  discrete for every Brouwer code~$b$, then $\WLPO$ holds.
\end{enumerate}
\AgdaRefs{\Agda{Ordinals.BrouwerCodesDiscreteAndCompactInterpretations}{Δ-compact-iff-LPO},
combining \texttt{Δ-compact-gives-LPO} and \texttt{LPO-gives-Δ-compact};
\texttt{Κ-discrete-gives-WLPO}.}
\end{proposition}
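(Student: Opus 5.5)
For~(\labelcref{item:delta-compact-lpo}), the direction from compactness to $\LPO$ follows the pattern of~\Cref{prop:trich-compact-lpo}. I would take the code $\bL(\lambda i.\,\bZ)$. By~\Cref{lem:successor-sum}(\labelcref{item:successor-sum-is-successor}), its interpretation $\alttrichint{\bL(\lambda i.\,\bZ)}$ is order equivalent to $(\osum{\omegaO}{}\,\lambda i.\,\OneO) \oplusO \One$, so its underlying type is equivalent to $(\N\times\One)+\One$. This type has $\N$ as a retract: send $\mathrm{inl}(n,\star)$ to $n$ and the added point to $0$. Compactness passes to retracts (\Cref{thm:compact-closure}(\labelcref{item:compact-retract})), and the compactness of $\N$ is $\LPO$ (\Cref{ex:compact-basic}(\labelcref{item:N-compact-lpo})).

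For the converse, assume $\LPO$, so that $\N$ is compact pointed. I would then show by induction on $b$ that $\us{\alttrichint b}$ is compact pointed.
\begin{itemize}
\item[$\bZ$:] the interpretation is $\One$, which is compact pointed.
\item[$\bS$:] the interpretation is a $\Sigma$ over $\One+\One$, so \Cref{cor:compact-pt-closure}(\labelcref{item:pt-sigma}) applies.
\item[$\bL$:] the underlying type is a $\Sigma$ over $\N+\One$ of the extension along $\mathrm{inl}$. The index $\N+\One$ is compact pointed as a binary sum. Each fiber is a product indexed by the proposition $\fib{\mathrm{inl}}{z}$ of compact pointed types, so \Cref{thm:micro-tychonoff} makes it compact pointed; this is exactly \Cref{thm:extended-sum-compact} for the embedding $\mathrm{inl}$. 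Alternatively, one can use the explicit description $(\Sigma_n \us{\upsilon\,n})+\One$ from the proof of \Cref{lem:successor-sum}.
\end{itemize}
This settles the equivalence. The only delicate point is bookkeeping: the $\bL$ step needs compact pointedness rather than plain compactness, so the induction hypothesis must be stated in that stronger form.

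For~(\labelcref{item:kappa-discrete-wlpo}), I would apply the hypothesis to $b = \bL(\lambda i.\,\bZ)$. The underlying type is $\Sigma_{u:\NInf}\,\us{(\OneT \extend \iota)\,u}$. Each fiber is a product of copies of $\One$ over a proposition, hence a singleton, so the first projection is an equivalence onto $\NInf$. It has an inverse sending $u$ to $(u,\lambda\_.\,\star)$, and $\NInf$ is a set, which makes this inverse an equivalence. Discreteness is preserved by equivalences (\Cref{prop:discrete-closure}(\labelcref{item:discrete-reflected})), so $\NInf$ is discrete, which gives $\WLPO$ by \Cref{ex:isolated}(\labelcref{item:NInf-discrete-wlpo}). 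If an equivalence is inconvenient, it is enough to note that $\NInf$ is a retract via the same pair of maps and to use \Cref{prop:discrete-closure}(\labelcref{item:discrete-retract}).

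I expect the main obstacle to be the $\bL$ step of the $\LPO$-to-compactness induction. There one has to check that the index $\succT{\omegaO}$, with underlying type $\N+\One$, is compact pointed under $\LPO$, and that the extension fibers meet the pointedness hypothesis of micro-Tychonoff. Both follow from earlier results, but they require care with the pointed formulation.
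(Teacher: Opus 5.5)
Your proof is correct. The forward implications of both items are the paper's: apply the hypothesis to the single code $\bL(\lambda i.\,\bZ)$; then $\N$ is a retract of its discrete interpretation, and $\NInf$ is a retract of (indeed equivalent to) its compact one. In (2), the appeal to $\NInf$ being a set is not what does the work. The fibers are products of copies of $\One$, hence singletons by function extensionality, so the first projection is an equivalence over any base. Your fallback via retracts is airtight anyway.

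Where you differ is the converse of (1). The paper does not induct again. It invokes \Cref{thm:delta-kappa-props}(\labelcref{item:delta-retract-N}), that every $\us{\alttrichint b}$ is a retract of $\N$. Then $\LPO$, which is the compactness of $\N$, transfers to all codes at once by \Cref{thm:compact-closure}(\labelcref{item:compact-retract}).

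Your induction instead mirrors the proof of \Cref{thm:four-interp-props}(\labelcref{item:compactsep-interp}), with $\N+\One$ in place of $\NInf$, using \Cref{thm:extended-sum-compact} for the embedding $\mathrm{inl}$. It is longer, and it needs the pointed form of the induction hypothesis for micro-Tychonoff. In exchange, it does not depend on the pairing-function bookkeeping behind the retract-of-$\N$ result. It also isolates exactly where $\LPO$ enters: only in the compactness of the index $\N+\One$ at the limit clause.

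The paper's route packages the fact more economically: a single use of $\LPO$ on $\N$ suffices uniformly for every code, because all the discrete interpretations sit inside $\N$ as retracts.
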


\begin{proof}
For the forward implications, apply the hypotheses to
$b = \bL(\lambda i.\,\bZ)$, the code of~\Cref{ex:omega-plus-one},
whose discrete interpretation is $\succT{\omegaO}$ with underlying type
$\N+\One$ and whose compact one is $\NInfO$ with underlying type~$\NInf$.

Compactness of $\N+\One$ gives compactness of $\N$, a retract of it
(\Cref{thm:compact-closure}(\labelcref{item:compact-retract})), which is
$\LPO$ (\Cref{ex:compact-basic}(\labelcref{item:N-compact-lpo})), and
discreteness of $\NInf$ is $\WLPO$
(\Cref{ex:isolated}(\labelcref{item:NInf-discrete-wlpo})). Conversely,
$\LPO$ makes $\N$ compact, and $\us{\alttrichint b}$ is a retract of
$\N$ for every~$b$
(\Cref{thm:delta-kappa-props}(\labelcref{item:delta-retract-N})).
\end{proof}

\noindent
So the trichotomy
of~\Cref{thm:delta-kappa-props}(\labelcref{item:delta-trichotomous})
has no counterpart for $\compactsepint{-}$, since trichotomy implies
discreteness (\Cref{lem:trichotomy-gives-discrete}).

\begin{definition}[The comparison map]
\label{def:iota}
The map $\iemb_b : \us{\alttrichint b} \to \us{\compactsepint b}$ is
defined by induction on $b$ as follows:
\[
  \begin{array}{l}
    \iemb_{\bZ} = \mathrm{id} \\[0.4ex]
    \iemb_{\bS\,b}\,(\mathrm{inl}\,\pt, y)
      = (\mathrm{inl}\,\pt, \iemb_b\,y) \\[0.4ex]
    \iemb_{\bS\,b}\,(\mathrm{inr}\,\pt, y)
      = (\mathrm{inr}\,\pt, y) \\[0.4ex]
    \iemb_{\bL\,b}
      = \cmap \circ \ssumsub{\big(i \mapsto \iemb_{b\,i}\big)} ,
  \end{array}
\]
where $\ssumsub{(i \mapsto \iemb_{b\,i})}$ is the map between the
successor sums that applies $\iemb_{b\,i}$ in the $i$-th summand and
fixes the added point, and $\cmap$ is the comparison map
of~\Cref{def:sum-comparison} for the family
$\upsilon\,i = \compactsepint{b\,i}$.
\AgdaRefs{\Agda{Ordinals.BrouwerCodesDiscreteAndCompactInterpretations}{ι};
\Agda{TypeTopology.SquashedSum}{Σ↑}, \texttt{Σ₁-functor},
\texttt{Σ-up}; \Agda{UF.PairFun}{pair-fun}.}
\end{definition}

\begin{theorem}
\label{thm:iota}
For every Brouwer code~$b$, the map
$\iemb_b : \us{\alttrichint b} \to \us{\compactsepint b}$ is
\begin{enumerate}
\item an \emph{embedding},
\item \label{item:iota-density} \emph{extremely dense},
\item order-preserving,
\item order-reflecting.
\end{enumerate}
\AgdaRefs{\Agda{Ordinals.BrouwerCodesDiscreteAndCompactInterpretations}{ι};
\texttt{ι-is-embedding}; \texttt{ι-is-dense};
\texttt{ι-is-order-preserving}; \texttt{ι-is-order-reflecting}.}
\end{theorem}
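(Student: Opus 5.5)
We argue by induction on $b$, proving all four properties together, since each clause of the definition of $\iemb_b$ is a composite of maps of sums (\Cref{def:sum-map}) to which \Cref{lem:sum-map} and \Cref{lem:sum-map-order} apply. For $\bZ$ the map is the identity of $\One$, which has all four properties trivially.

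For $\bS\,b$, both interpretations are ordinal-indexed sums over the topped ordinal $\TwoT$, with underlying type $\One+\One$, of the two-element families $(\alttrichint b, \One)$ and $(\compactsepint b,\One)$. The map $\iemb_{\bS\,b}$ is then exactly $\summap{\mathrm{id}}{g}$ with $g$ given by $\iemb_b$ on the left index and the identity on the right. The identity on $\One+\One$ is an equivalence and order equivalence, and the fiber maps are embeddings, extremely dense, order-preserving and order-reflecting by the induction hypothesis (and trivially for the identity on $\One$). So \Cref{lem:sum-map}(\labelcref{item:sum-map-embedding},\labelcref{item:sum-map-dense}) gives the embedding and density properties, and \Cref{lem:sum-map-order} gives order preservation and reflection, its embedding hypothesis being satisfied by the identity.

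For $\bL\,b$, write $\iemb_{\bL\,b} = \cmap \circ \ssumsub{(i\mapsto\iemb_{b\,i})}$. The second factor $\cmap$ is an extremely dense embedding that is order-preserving and order-reflecting by \Cref{lem:sum-comparison}. For the first factor, the map $\ssumsub{(i\mapsto\iemb_{b\,i})}$ is $\summap{\mathrm{id}_{\N+\One}}{h}$, where $h_z$ acts on the product $\prod_{(n,r):\fib{\mathrm{inl}}z}\us{\alttrichint{b\,n}}$ by postcomposing pointwise with $\iemb_{b\,n}$. We check that each $h_z$ inherits the four properties from the $\iemb_{b\,n}$. The fiber of $\mathrm{inl}$ over $z$ is decidable: it is a singleton over $\mathrm{inl}\,n$ and empty over the added point. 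In the first case, evaluation at the centre turns $h_z$ into $\iemb_{b\,n}$ up to the order equivalences of \Cref{lem:extension-restricts}. In the second case, $h_z$ is the identity of $\One$ (\Cref{lem:extension-property}(\labelcref{item:ext-off-image})). Either way $h_z$ has the four properties, and \Cref{lem:sum-map,lem:sum-map-order} apply as in the successor case. Finally, each of the four properties is closed under composition, which for extreme density is a short double-negation argument: if a point lies outside the image of $g\circ f$, density of $g$ rules out that it is outside the image of $g$, and density of $f$ then rules out the remaining case. This finishes the limit case.

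The main obstacle is the bookkeeping in the limit step. One has to verify that, under the decidable case split on the fiber of $\mathrm{inl}$, the pointwise map $h_z$ really corresponds to $\iemb_{b\,n}$ through the transports built into the extension, and that order reflection survives. I would handle this by proving once and for all a small lemma: postcomposition over a proposition-indexed product preserves each of the four properties whenever every component map has them. The proposition index here is a fiber of an embedding, so it is a proposition, and the case split is not even needed. For density in particular, one first finds a point of the index proposition by double negation, exactly as in the proof of \Cref{lem:sum-map}(\labelcref{item:sum-map-dense}), and then uses density of the component map at that point.
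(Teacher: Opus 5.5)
Your proposal is correct and follows essentially the same route as the paper: induction on $b$, with \Cref{lem:sum-map} and \Cref{lem:sum-map-order} applied to the identity index map at $\bS$, and at $\bL$ the factorization $\iemb_{\bL\,b} = \cmap \circ \ssumsub{(i\mapsto\iemb_{b\,i})}$ combined with \Cref{lem:sum-comparison} and closure of the four properties under composition. Your fiberwise analysis of $\ssumsub{(i\mapsto\iemb_{b\,i})}$, either by the case split on $z$ or by the general lemma on postcomposition over proposition-indexed products, simply spells out what the paper compresses into ``preserves all four properties fiberwise''.
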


\begin{proof}
The four properties are established by induction on $b$. At $\bZ$ the
map is the identity. At $\bS$
it is a map of sums, with the identity on the index and with the
induction hypothesis and the identity on the two fibers, so that the
embedding property and the density come
from~\Cref{lem:sum-map}(\labelcref{item:sum-map-embedding},
\labelcref{item:sum-map-dense}) and the two order properties
from~\Cref{lem:sum-map-order}, order reflection requiring that the
index map be an embedding
(\labelcref{item:sum-map-order-reflecting}), which the identity is.

At $\bL$, the family of maps $\iemb_{b\,i}$ induces a map between the
successor sums, namely $\mathrm{inl}\,(i,x) \mapsto \mathrm{inl}\,(i,
\iemb_{b\,i}\,x)$ and $\mathrm{inr}\,\pt \mapsto \mathrm{inr}\,\pt$,
which preserves all four properties fiberwise, and the comparison
map~$\cmap$ for the family $\upsilon\,i = \compactsepint{b\,i}$ has all
four (\Cref{lem:sum-comparison}), so the composite does too, extreme
density and the embedding property both being closed under
composition.
\AgdaRefs{\Agda{Ordinals.Closure}{∑↑-dense}, \texttt{∑↑-embedding},
\texttt{∑↑-is-order-preserving}, \texttt{∑↑-is-order-reflecting}.}
\end{proof}

The first code whose interpretations are both infinite shows the extent
of the failure of the comparison embedding to be an equivalence.

\begin{example}
\label{ex:omega-plus-one}
Take $b = \bL(\lambda i.\,\bZ)$, the countable sum of the
one-point code. By~\Cref{def:delta-kappa} the two interpretations are
the successor sum and the extended sum of the constant family at $\OneT$,
\[
  \alttrichint b = \ssumsub{\big(i \mapsto \OneT\big)}
    \iseq \big(\succT{\omegaO}\big), \qquad
  \compactsepint b = \ssumone{\big(i \mapsto \OneT\big)} \iseq \NInfO ,
\]
the summands being one-point ordinals and a sum of those being its own
index,
\AgdaRefs{\Agda{Ordinals.Closure}{∑₁-of-𝟙ᵒ}, \texttt{∑¹-of-𝟙ᵒ}, via
\texttt{∑-of-𝟙ᵒ} and \texttt{↗-of-𝟙ᵒ}.}
so the discrete side is $\N + \One$ with its added point isolated,
because the point of $\One$ is isolated and $\mathrm{inr}$ preserves
isolatedness
(\Cref{prop:discrete-closure}(\labelcref{item:discrete-plus})),
\AgdaRefs{\Agda{Ordinals.Closure}{∑₁-top-is-isolated}.}
the compact side is $\NInf$ with its added point $\infty$, whose
isolatedness is $\WLPO$
(\Cref{ex:isolated}(\labelcref{item:infty-isolated-wlpo})), and
$\iemb_b$ is the inclusion
$\N + \One \hookrightarrow \NInf$ that sends the added point to
$\infty$.

The ordinal $\alttrichint b$ is discrete and a retract of $\N$, but it
is compact exactly when $\LPO$ holds, since $\N$ is
(\Cref{ex:compact-basic}(\labelcref{item:N-compact-lpo})). The ordinal
$\compactsepint b$ is compact
and totally separated with no assumption
(\Cref{thm:four-interp-props}(\labelcref{item:compactsep-interp})),
but it
is discrete, equivalently its top point is isolated, exactly when
$\WLPO$ holds
(\Cref{ex:isolated}(\labelcref{item:NInf-discrete-wlpo,item:infty-isolated-wlpo})).
The map between them is an
extremely dense order-embedding, by~\Cref{thm:iota}.
Read as binary sequences, the image of $\iemb_b$ consists of the sequences
$1^n0^\omega$ and the sequence $1^\omega$. Extreme density of $\iemb_b$
says that no decreasing binary sequence lies outside this image, which
is a theorem, and surjectivity says that every decreasing binary
sequence lies in it, which for this code is exactly $\LPO$.
\end{example}

\begin{proposition}
\label{prop:brouwer-iota-equiv-LPO}
The statement that $\iemb_b$ is an equivalence for every Brouwer
code~$b$ is equivalent to $\LPO$.
\AgdaRefs{\Agda{Ordinals.BrouwerCodesDiscreteAndCompactInterpretations}{ι-is-equiv-iff-LPO},
combining \texttt{ι-is-equiv-gives-LPO}, which needs only the single
code $\bL(\lambda i.\,\bZ)$, and \texttt{LPO-gives-ι-is-equiv}, which
needs the induction on~$b$.}
\end{proposition}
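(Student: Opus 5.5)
The proof has two directions, and I treat them separately.

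\emph{Forward direction.} Assume that $\iemb_b$ is an equivalence for every code $b$, and take the single code $b = \bL(\lambda i.\,\bZ)$ of~\Cref{ex:omega-plus-one}. There $\iemb_b$ is, up to the order equivalences $\alttrichint b \iseq \succT\omegaO$ and $\compactsepint b \iseq \NInfO$, the inclusion $\N+\One \hookrightarrow \NInf$ sending $\mathrm{inl}\,n$ to $\underline n$ and the added point to $\infty$. An equivalence is in particular a surjection, so every $u : \NInf$ is, in an unspecified way, either some $\underline n$ or $\infty$. The embedding property makes this fiber a proposition, and an equivalence even gives it untruncated: we get $z : \N+\One$ with $\iota_1\,z = u$, and inspecting $z$ decides whether $u$ is finite. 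This is $\LPO$ in the formulation ``finiteness of every conatural is decidable'' (\Cref{def:lpo-wlpo}). Alternatively, $\NInf$ becomes equivalent to $\N+\One$ and hence discrete, but the direct route gives $\LPO$ itself rather than only $\WLPO$. The only care needed is to transport along the order equivalences identifying the two interpretations and $\iemb_b$ with the inclusion.

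\emph{Backward direction.} Assume $\LPO$ and proceed by induction on $b$, showing that $\iemb_b$ is an equivalence.
\begin{itemize}
\item At $\bZ$ the map $\iemb_\bZ$ is the identity.
\item At $\bS$ the map is $\summap{\mathrm{id}}{g}$ with each $g$ either $\iemb_b$ or the identity. These are equivalences by the induction hypothesis, so $\iemb_{\bS\,b}$ is an equivalence by~\Cref{lem:sum-map}(\labelcref{item:sum-map-equiv}).
\item At $\bL$ the map is $\cmap \circ \ssumsub{(i\mapsto\iemb_{b\,i})}$. The first factor is an equivalence by the same lemma applied to the extensions along $\mathrm{inl}$, since precomposition-style maps on products indexed by propositions are equivalences fiberwise. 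For $\cmap = \summap{\iota_1}{e}$, each $e_z$ is already an equivalence (proof of~\Cref{lem:sum-comparison}), so it suffices that $\iota_1 : \N+\One \to \NInf$ is an equivalence.
\end{itemize}

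The main step is showing that $\iota_1$ is an equivalence under $\LPO$. It is an embedding (\Cref{ex:isolated}), so it remains to show it is surjective with untruncated fibers. Given $u$, $\LPO$ decides whether $u$ is finite. If it is, we obtain $\underline n = u$. If it is not, a non-finite conatural is $\infty$, so $u = \iota_1(\mathrm{inr}\,\pt)$. Every fiber is therefore a pointed proposition, hence a singleton, and $\iota_1$ is an equivalence. Composing the two equivalences completes the induction.
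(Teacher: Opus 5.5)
Your proof is correct and follows the paper's argument: the single code $\bL(\lambda i.\,\bZ)$ for the forward direction, and induction on $b$ for the converse with $\LPO$ used only to make $\iota_1$ an equivalence, which you spell out in more detail than the paper does. One small correction: at $\bL$ the map between the successor sums is an equivalence because its fiber maps apply the $\iemb_{b\,n}$ pointwise (postcomposition, not precomposition), and these maps are equivalences by the induction hypothesis, which you should cite at that step.
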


\begin{proof}
For the forward direction, apply the hypothesis to the code
$\bL(\lambda i.\,\bZ)$, where $\iemb_b$ is the inclusion
$\N+\One\hookrightarrow\NInf$, which is an equivalence exactly when
$\LPO$ holds (\Cref{ex:omega-plus-one}). Conversely, assume $\LPO$ and
induct on $b$, following the clauses of~\Cref{def:iota}. At $\bZ$ the
map is the identity. At $\bS\,b$ it is a map of sums whose index part
is the identity and whose two fiber parts are the induction hypothesis
and the identity, so it is an equivalence
by~\Cref{lem:sum-map}(\labelcref{item:sum-map-equiv}). At
$\bL\,b$ the map between the successor sums induced by the family
$\iemb_{b\,i}$ is an equivalence by the induction hypothesis, and
$\cmap$ is one because the maps $e_z$ are equivalences
(\Cref{lem:sum-comparison}) and $\LPO$ makes $\iota_1$ one.
\end{proof}

\subsection{The least element property}
\label{sec:kappa-infs}

The compact ordinals $\compactsepint b$ are not just compact. Every
complemented subset has an infimum, which is a member when the subset
is non-empty.

\begin{definition}[Infima of complemented subsets]
\label{def:has-inf}
Let $X$ carry a relation~$\leq$. A complemented subset of $X$ is given
by its characteristic function $p : X \to \Two$, whose roots are its
elements. Say that $X$ \emph{has infima of complemented subsets} if
for every $p$ there is a point $x_0 : X$ that is
\begin{enumerate}
\item\label{item:conditional-root} a \emph{conditional root}: if
  $p$ has a root at all then $x_0$ is one,
\item\label{item:roots-infimum} the infimum of the roots:
  $x_0 \leq x$ for every root $x$, and $l \leq x_0$ for every $l$ that
  is below all roots.
\end{enumerate}
For a topped ordinal $\tau$ this is taken with respect to the weak
order $x \leq y :\equiv \neg\,(y \lt{} x)$.
\AgdaRefs{\Agda{Ordinals.InfProperty}{has-inf}, \texttt{is-conditional-root},
\texttt{is-roots-infimum};
\Agda{Ordinals.ToppedType}{has-infs-of-complemented-subsets}.}
\end{definition}

\begin{remark}
\label{rem:inf-gives-compact}
Having infima of complemented subsets is a strengthening of pointed
compactness, since clause~(\labelcref{item:conditional-root}) alone
already gives it. Given
$p$, take the conditional root $x_0$ as a universal witness. If $p\,x_0
= 1$ then $p$ has no root, since a root would force $p\,x_0 = 0$, and a
$\Two$-valued function with no root is constantly~$1$.
\AgdaRefs{\Agda{Ordinals.InfProperty}{has-inf-gives-compact∙}.}
\end{remark}

\begin{lemma}[Least element property]
\label{lem:least-element}
Let $X$ have infima of complemented subsets with respect to a
relation~$\leq$. Then every non-empty complemented subset of $X$ has a
least element, that is, a member below all members.
\AgdaRefs{\Agda{Ordinals.InfProperty}{has-inf-gives-least-root},
\texttt{is-least-root}, \texttt{is-roots-lower-bound}.}
\end{lemma}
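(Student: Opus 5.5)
The plan is to use the infimum $x_0$ supplied by the hypothesis directly as the least element, with non-emptiness used only to show that $x_0$ is a member. Let $p : X \to \Two$ be the characteristic function of the given complemented subset, whose members are the roots of $p$, and suppose the subset is non-empty, that is, $\neg\neg\,\Sigma_{x:X}\,p\,x = 0$. The hypothesis provides a point $x_0$ that is a conditional root of $p$ and is the infimum of its roots in the sense of~\Cref{def:has-inf}.

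\emph{Step 1: $x_0$ is a member.} We need $p\,x_0 = 0$. Equality in $\Two$ is decidable, so it is enough to rule out $p\,x_0 = 1$. Suppose $p\,x_0 = 1$. If $p$ had a root, clause~(\labelcref{item:conditional-root}) would make $x_0$ a root, giving $p\,x_0 = 0$, which contradicts the assumption. Hence $\neg\,\Sigma_{x:X}\,p\,x = 0$, and this contradicts non-emptiness. Therefore $p\,x_0 = 0$. A decidable equation is $\neg\neg$-stable, so no choice principle or compactness is needed at this point, only the conditional root property.

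\emph{Step 2: $x_0$ is below all members.} This is the first half of clause~(\labelcref{item:roots-infimum}), namely $x_0 \leq x$ for every root $x$. So $x_0$ is a member below all members, that is, a least element.

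I do not expect a real obstacle. The only delicate point is Step 1, where the double negation in non-emptiness has to be discharged against a decidable target, $p\,x_0 = 0$ in $\Two$, rather than against the existence of a root. The infimum half of clause~(\labelcref{item:roots-infimum}), $l \leq x_0$, is not used.
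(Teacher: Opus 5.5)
Your proof is correct and is essentially the paper's argument. The paper first obtains a root of $p$ by noting that infima give compactness (\Cref{rem:inf-gives-compact}) and applying complemented choice (\Cref{prop:compact-complemented}(\labelcref{item:complemented-choice})), after which the conditional-root clause makes $x_0$ a root and the lower-bound half of the infimum clause finishes; you instead discharge the double negation directly against the decidable equation $p\,x_0 = 0$, which is what that detour amounts to once unfolded, so you skip the intermediate root but use the same two clauses in the same way.
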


\begin{proof}
Let $p$ be the characteristic function of the subset and $x_0$ the point
supplied by~\Cref{def:has-inf}. Having infima gives compactness, and
the subset is non-empty, so $p$ has a root by complemented choice
(\Cref{prop:compact-complemented}(\labelcref{item:complemented-choice})).
Then $x_0$ is a root by
clause~(\labelcref{item:conditional-root}), and it is below every root
by clause~(\labelcref{item:roots-infimum}).
\end{proof}

The following strengthens the micro-Tychonoff
\Cref{thm:micro-tychonoff} from compactness to infima.

\begin{theorem}
\label{thm:micro-inf-tychonoff}
Let $X$ be a proposition and $Y : X \to \UU$ a family, and let each
$Y\,x$ carry an arbitrary relation~${\lt{}}$, with derived relation
\[
y \preceq y' \;:\equiv\; \neg\,(y' \lt{} y).
\]
Give $\prod_x Y\,x$ the pointwise relation
\[
\varphi \leq \gamma \;:\equiv\; \prod_{x:X} \varphi\,x \preceq \gamma\,x .
\]
If every $Y\,x$ has infima of complemented subsets with respect
to~$\preceq$ (\Cref{def:has-inf}), then $\prod_x Y\,x$ has infima of
complemented subsets with respect to~$\leq$.
\AgdaRefs{\Agda{TypeTopology.MicroInfTychonoff}{micro-inf-tychonoff}, which
writes the pointwise relation in the equivalent form
$\neg\,\Sigma_{x:X}\,\gamma\,x \lt{} \varphi\,x$.}
\end{theorem}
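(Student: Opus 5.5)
The plan is to mimic the proof of the micro-Tychonoff theorem (\Cref{thm:micro-tychonoff}), upgrading the universal witness to an infimum.

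\emph{Construction.} Let $p : \prod_x Y\,x \to \Two$. For each $x : X$, evaluation $f_x\,\varphi = \varphi\,x$ is an equivalence, because $X$ is a proposition. Put $q_x = p \circ f_x^{-1} : Y\,x \to \Two$, let $y_x$ be the infimum point supplied for $q_x$ by the hypothesis, and define $\varphi_0\,x = y_x$.

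\emph{Conditional root.} Suppose $p\,\gamma = 0$ for some $\gamma$, and show $p\,\varphi_0 = 0$.
\begin{enumerate}
\item Since $\Two$ is discrete, it suffices to refute $p\,\varphi_0 = 1$, so assume it.
\item From $\gamma$ we get a point of $X$, for otherwise $\gamma = \varphi_0$ and $p\,\gamma = 1$. This is only a refutation of $\neg X$, but that is harmless because we are proving a negation.
\item With $x : X$ in hand, $\gamma\,x$ is a root of $q_x$, since $f_x^{-1}(\gamma\,x) = \gamma$.
\item The conditional-root clause then makes $y_x$ a root of $q_x$, and $f_x^{-1}\,y_x = \varphi_0$, so $p\,\varphi_0 = 0$, a contradiction.
\end{enumerate}

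\emph{Lower bound.} Let $\gamma$ be a root of $p$ and $x : X$. Then $\gamma\,x$ is a root of $q_x$, so the infimum clause gives $y_x \preceq \gamma\,x$. Hence $\varphi_0 \leq \gamma$.

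\emph{Greatest lower bound.} Let $\ell$ be below all roots of $p$, and fix $x : X$; we must show $\ell\,x \preceq y_x$. By the infimum property of $y_x$ in $Y\,x$, it suffices to show that $\ell\,x$ is below every root $y$ of $q_x$. Given such a $y$, the point $f_x^{-1}\,y$ is a root of $p$, so $\ell \leq f_x^{-1}\,y$. Evaluating at $x$ gives $\ell\,x \preceq (f_x^{-1}\,y)\,x = y$.

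\emph{Main obstacle.} I expect the hardest part to be the careful handling of the transports hidden in $f_x^{-1}$ and of the identification $(f_x^{-1}\,y)\,x = y$. This equation holds by the section property of $f_x$, but the relation $\preceq$ at index $x$ must be compared after transport along identifications $x = x'$ in the proposition $X$. Since $X$ is a proposition, every such identification is refl up to equality, so this should be routine.

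Another point to check is that the conditional-root step uses only double-negation reasoning, which is legitimate because the target equation $p\,\varphi_0 = 0$ lives in the discrete type $\Two$. Pointedness is automatic, since each $Y\,x$ has infima and therefore points by \Cref{rem:inf-gives-compact}.
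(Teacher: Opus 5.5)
Your proof is correct and follows the paper's. It uses the same pointwise witness $\varphi_0\,x$ built from the transported maps $q_x$, the same combination of the cases $X$ and $\neg X$ for the conditional root, and the same lower-bound step. For the conditional root you refute $\neg X$ and then $X$ under the assumption $p\,\varphi_0 = 1$, while the paper refutes them in the other order.

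The only real divergence is in the greatest-lower-bound clause. The paper supposes $\varphi_0\,x \lt{} \psi\,x$ and uses the conditional-root property of $\varphi_0$ to make ``$\psi\,x$ is below every root'' hold vacuously. You instead evaluate $\ell \leq f_x^{-1}\,y$ at $x$ and use $(f_x^{-1}\,y)\,x = y$. Your route is more direct and does not need the conditional-root property of $\varphi_0$ at that point.
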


\begin{proof}
The construction of the witness is the same as
in~\Cref{thm:micro-tychonoff}. Given $p : \prod_x Y\,x \to \Two$, for
each $x : X$ transport $p$ along the evaluation equivalence $f_x$ to a
map on $Y\,x$, and let $\varphi_0\,x$ be the infimum of its roots in
$Y\,x$, which defines $\varphi_0 : \prod_x Y\,x$. Let $p$ have a root
$\gamma$. Given $x : X$, the transported $\gamma\,x$ is a root in
$Y\,x$, so the infimum $\varphi_0\,x$ is a root there, and since
$f_x^{-1}(\varphi_0\,x) = \varphi_0$, this gives $p\,\varphi_0 = 0$. If
$X$ is empty, then $\varphi_0 = \gamma$, the product having a single
point, and again $p\,\varphi_0 = 0$. These combine as in step~(3) of the
proof of~\Cref{thm:micro-tychonoff}. If $p\,\varphi_0 = 1$, the first
case refutes $X$, and then the second gives $p\,\varphi_0 = 0$. So
$p\,\varphi_0 = 0$, and $\varphi_0$ is a conditional root. It remains
to show that $\varphi_0$ is the infimum for the relation~$\leq$.

To see that $\varphi_0$ is a lower bound, note that if $\gamma$ is a
root, then for each $x$ the transported $\gamma$ is a root in $Y\,x$, so
$\varphi_0\,x \preceq \gamma\,x$, that is, $\varphi_0 \leq \gamma$.

To see that it is greatest among lower bounds, let $\psi$ be below every
root. We must show $\psi\,x \preceq \varphi_0\,x$ for every $x$, so let
$x : X$ and suppose $\varphi_0\,x \lt{} \psi\,x$. Then $\psi\,x$ is
below every root $y$ of the transported family at $x$. Indeed, such a
$y$ gives the root $f_x^{-1}\,y$ of $p$, so that $\varphi_0$ is a root,
being a conditional root, and hence $\psi \leq \varphi_0$, since $\psi$
is below every root. This is refuted by the supposition, so the claim
holds vacuously. Since $\varphi_0\,x$ is the infimum of the roots of the
transported family, we get $\psi\,x \preceq \varphi_0\,x$, contradicting
the supposition.
\end{proof}

\begin{theorem}
\label{thm:kappa-infs}
For every Brouwer code~$b$, the topped ordinal $\compactsepint b$ has
infima of complemented subsets.
\AgdaRefs{\Agda{Ordinals.BrouwerCodesDiscreteAndCompactInterpretations}{Κ-has-infs-of-complemented-subsets},
using \Agda{Ordinals.ToppedType}{has-infs-of-complemented-subsets}.}
\end{theorem}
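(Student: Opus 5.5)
We argue by induction on $b$, proving that $\compactsepint b$ has infima of complemented subsets with respect to $x \leq y :\equiv \neg(y \lt{} x)$. Each constructor clause is an ordinal-indexed sum, so the central step is a closure lemma: if $\tau$ is a topped ordinal with infima of complemented subsets and every $\upsilon\,x$ is a topped ordinal with infima of complemented subsets, then $\osum\tau\upsilon$ has them too. Combined with the micro-inf-Tychonoff~\Cref{thm:micro-inf-tychonoff}, this lemma handles all three clauses.

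\emph{Base case and successor.} At $\bZ$ the ordinal $\OneT$ has infima, since the unique point serves as conditional root and infimum. At $\bS$, $\tau \oplusT \OneT$ is a sum over $\TwoT$. So it suffices that $\TwoT$ has infima (by inspecting $p\,0$ and $p\,1$) and then to apply the sum lemma.

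\emph{The sum lemma.} Given $p : \Sigma_x \upsilon\,x \to \Two$, let $\varepsilon_x$ be the infimum operator of $\upsilon\,x$. Define $q\,x = p\,(x, \varepsilon_x(\lambda y.\,p(x,y)))$ and let $x_0$ be the infimum of the roots of $q$ in $\tau$. The candidate is $(x_0, \varepsilon_{x_0}(p(x_0,-)))$. It is a conditional root: a root $(x,y)$ of $p$ makes $\varepsilon_x(p(x,-))$ a root, so $q$ has a root, so $x_0$ is a root of $q$.

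For the lower-bound property, take a root $(x,y)$ and suppose $(x,y) \lt{} (x_0,y_0)$. Either $x \lt{} x_0$, which contradicts $x_0 \preceq x$ because $x$ is a root of $q$, or $x = x_0$ and $y \lt{} y_0$, which contradicts the infimum property in the fiber.

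For the greatest-lower-bound property, take $(l,m)$ below all roots and suppose $(x_0,y_0) \lt{} (l,m)$. In the case $x_0 \lt{} l$, we need to refute this using that $l \preceq x$ for all roots $x$ of $q$. The test points are $(x, t_x)$ for roots $x$ of $q$, with $t_x$ the top of $\upsilon\,x$: a pair $(x,y)$ lies below $(l,m)$ in the weak sense when $\neg(l \lt{} x)$. Here the topped hypothesis is used, in the same way as in~\Cref{lem:topped-sum}. In the case $x_0 = l$ and $y_0 \lt{} m$, $m$ is below every root in the fiber, contradicting maximality of $y_0$. This is the step I expect to be the main obstacle. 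The weak order on the lexicographic sum is only a negation, so relating "$(l,m)$ below all roots of $p$" to "$l$ below all roots of $q$" needs care. I would first try to show that every root $x$ of $q$ gives a genuine root $(x, \varepsilon_x(\ldots))$ of $p$, so that $(l,m) \leq (x,\varepsilon_x\ldots)$ yields $\neg(x \lt{} l)$, and then use the infimum property of $x_0$ directly.

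\emph{Limit.} $\compactsepint{\bL\,b}$ is $\osum{\NInfO}{}(\upsilon \extend \iota)$. The fibers $(\upsilon\extend\iota)\,u$ are products over the proposition $\fib\iota u$. By the induction hypothesis and~\Cref{thm:micro-inf-tychonoff} they have infima for the pointwise relation. This pointwise relation coincides with the weak order of the extension in~\Cref{lem:ordinal-extension}, since $\neg\Sigma_p(\gamma\,p \lt{} \varphi\,p)$ is exactly $\neg(\gamma \lt{} \varphi)$. The index $\NInfO$ has infima by~\Cref{ex:compact-pt-basic}(\labelcref{item:NInf-compact}): the selection function gives the infimum of the roots and is a conditional root. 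The fibers are topped by~\Cref{lem:extension-top}, so the sum lemma applies.
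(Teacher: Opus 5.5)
Your proof is correct and follows the paper's route: induction on $b$, closure of infima of complemented subsets under ordinal-indexed sums computed lexicographically (your $q$ picks out exactly the indices whose fiber contains a root), $\OneT$ and $\TwoT$ by inspection, $\NInfO$ via its selection function, and the fibers of the extension via the micro-inf-Tychonoff theorem. In the greatest-lower-bound step, the detour through the tops $t_x$ does not work as written, since $(x,t_x)$ need not be a root of $p$ and so the hypothesis on $(l,m)$ says nothing about it; your fallback argument via the genuine roots $(x,\varepsilon_x(p(x,-)))$ is exactly right and needs no tops, which enter only to make the sum an ordinal (\Cref{lem:topped-sum}).
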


\begin{proof}
By induction on $b$, with the same three cases and the same shape
as~\Cref{thm:delta-kappa-props}, but now with closure properties for
infima in place of those for compactness.

For $\bZ$, the one-point ordinal has infima, since the unique point is a
conditional root and the infimum. For $\bS$,
use that having infima of complemented subsets is closed under
ordinal-indexed sums, in the sense that if $\tau$ has them and every
$\upsilon\,x$ does, then so does $\osum\tau\upsilon$, the infimum
being computed lexicographically, first in the base, as the infimum of
those $x$ whose fiber contains a root, and then within that fiber. This
needs the lexicographic weak order on the sum to agree
with the weak order derived from the sum's own strict order. The
ordinal $\TwoT$ indexing the addition has infima by inspection of its
two points.
\AgdaRefs{\Agda{Ordinals.Closure}{∑-has-infs-of-complemented-subsets};
\texttt{𝟙ᵒ-has-infs-of-complemented-subsets};
\texttt{𝟚ᵒ-has-infs-of-complemented-subsets}.}

For $\bL$, unfold $\ssumone{}$ as the sum over the ordinal $\NInfO$ of
the extension along $\N\hookrightarrow\NInf$ and apply the same closure
property. Two things are needed. First, the ordinal $\NInfO$ has
infima of complemented subsets, which is the property of its selection
function stated in~\Cref{ex:compact-pt-basic}(\labelcref{item:NInf-compact}),
that the sequence it
returns is a conditional root and the infimum of the set of roots.
Second, each fiber of the extended family has infima.
This is exactly~\Cref{thm:micro-inf-tychonoff}, the micro-Tychonoff
theorem for infima, applied to the fiber of the embedding over
the point in question, which is a proposition.
\AgdaRefs{\Agda{Ordinals.Closure}{∑¹-has-infs-of-complemented-subsets}
and \texttt{ℕ∞ᵒ-has-infs-of-complemented-subsets};
\Agda{TypeTopology.MicroInfTychonoff}{micro-inf-tychonoff}.}
\end{proof}

\noindent
By~\Cref{lem:least-element}, we conclude the following.
\begin{corollary}
\label{cor:kappa-least}
For every Brouwer code~$b$, every non-empty complemented subset of
the ordinal $\compactsepint b$ has a least element.
\AgdaRefs{\Agda{Ordinals.BrouwerCodesDiscreteAndCompactInterpretations}{Κ-has-least-roots-of-complemented-subsets}.}
\end{corollary}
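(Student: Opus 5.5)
The corollary will follow by instantiating \Cref{lem:least-element} at the underlying type of $\compactsepint b$, equipped with the weak order $x \leq y :\equiv \neg\,(y \lt{} x)$ of \Cref{def:has-inf}. \Cref{thm:kappa-infs} supplies the hypothesis of that lemma for every Brouwer code $b$, namely that $\compactsepint b$ has infima of complemented subsets with respect to this relation.

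Concretely, I fix a code $b$ and a complemented subset $S$ of $\us{\compactsepint b}$, given by its characteristic function $p$, and assume $S$ is non-empty, meaning $\neg\neg\,\Sigma_x\, p\,x = 0$. Let $x_0$ be the point provided by \Cref{thm:kappa-infs}. By \Cref{rem:inf-gives-compact}, the existence of such conditional roots makes $\us{\compactsepint b}$ compact pointed. Alternatively, compactness is available directly from \Cref{thm:four-interp-props}(\labelcref{item:compactsep-interp}).

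Complemented choice (\Cref{prop:compact-complemented}(\labelcref{item:complemented-choice})) then turns non-emptiness into an actual root. The conditional-root clause makes $x_0$ a member of $S$, and the infimum clause gives $x_0 \leq x$, that is $\neg\,(x \lt{} x_0)$, for every member $x$. So $x_0$ is a member below all members, as required.

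There is no real obstacle, since all the work is in \Cref{thm:kappa-infs}. The one point to check is the reading of ``least element''. It is taken with respect to the weak order $\neg(y \lt{} x)$ fixed in \Cref{def:has-inf}, which is what the lemma delivers. Strengthening it to $x_0 \lt{} x \vee x_0 = x$ would require trichotomy, which fails for $\compactsepint b$ (\Cref{prop:delta-kappa-fail}).
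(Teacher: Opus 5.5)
Your proposal is correct and is the paper's argument: the corollary is obtained by applying \Cref{lem:least-element} to \Cref{thm:kappa-infs}, and your middle paragraphs just unfold the proof of that lemma (compactness from the conditional root, complemented choice, then the two clauses of \Cref{def:has-inf}). The closing aside is unnecessary, and its claim that the form $x_0 \lt{} x \vee x_0 = x$ would require trichotomy is not established anywhere, so it is best dropped.
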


\noindent By~\Cref{rem:inf-gives-compact}, \Cref{thm:kappa-infs}
subsumes the compactness
of~\Cref{thm:four-interp-props}(\labelcref{item:compactsep-interp}). For the
discrete interpretation the least element property fails constructively.

\begin{proposition}
\label{prop:delta-least-lpo}
The statement that every non-empty complemented subset of the discrete
interpretation $\alttrichint b$ has a least element, for every Brouwer
code~$b$, is equivalent to $\LPO$.
\AgdaRefs{\Agda{Ordinals.BrouwerCodesDiscreteAndCompactInterpretations}{Δ-least-roots-iff-LPO},
combining \texttt{Δ-least-roots-gives-LPO} and
\texttt{LPO-gives-Δ-least-roots}, for the property
\Agda{Ordinals.ToppedType}{has-least-roots-of-complemented-subsets},
which is \Agda{Ordinals.InfProperty}{has-least-roots} for the
underlying weak order.}
\end{proposition}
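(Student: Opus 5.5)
The plan is to prove the two directions separately, using the code $b = \bL(\lambda i.\,\bZ)$ of~\Cref{ex:omega-plus-one} for the forward implication and the retract-of-$\N$ property of~\Cref{thm:delta-kappa-props}(\labelcref{item:delta-retract-N}) for the converse.

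\emph{Forward direction.} Assume the least element property for every $\alttrichint b$. Take $b = \bL(\lambda i.\,\bZ)$, whose discrete interpretation is order equivalent to $\succT{\omegaO}$ on $\N+\One$. Given $p : \N \to \Two$, extend it to $q : \N + \One \to \Two$ with $q\,(\mathrm{inl}\,n) = p\,n$ and $q\,(\mathrm{inr}\,\pt) = 0$. The complemented subset of roots of $q$ is pointed by the added point, hence non-empty, so it has a least element $m$. Since $m$ lies in $\N+\One$ we can inspect it. If $m = \mathrm{inl}\,n$, then $n$ is a root of $p$. If $m = \mathrm{inr}\,\pt$, then $p$ has no root: a root $n$ would give $\mathrm{inl}\,n \lt{} \mathrm{inr}\,\pt$, contradicting the minimality $m \le \mathrm{inl}\,n$, that is, $\neg(\mathrm{inl}\,n \lt{} m)$. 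This decides $\Sigma_n\,p\,n = 0$ for every $p$, which is $\LPO$. The order equivalence needs to be transported, or the argument can be run directly on the underlying type $\Sigma_{z:\N+\One}$ of the successor sum (\Cref{lem:successor-sum}); this part is routine.

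\emph{Converse.} Assume $\LPO$ and fix a code $b$ and a non-empty complemented subset $p$ of $\us{\alttrichint b}$. By~\Cref{thm:delta-kappa-props}(\labelcref{item:delta-retract-N}) there is a retraction $r : \N \to \us{\alttrichint b}$ with section $s$. Consider $p \circ r$ on $\N$. Since $\LPO$ makes $\N$ compact and the subset is non-empty, $p$ has a root $x$ by complemented choice (\Cref{prop:compact-complemented}(\labelcref{item:complemented-choice})) applied through the retract. I would then obtain a least root by bounded search instead of by search over $\N$. For each $y$, the condition ``$p\,y = 0$ and $\prod_{z}(p\,z = 0 \to \neg(z \lt{} y))$'' is decidable. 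Its universal part is decidable because $\us{\alttrichint b}$ is compact (being a retract of the compact type $\N$), $\lt{}$ is decidable by trichotomy (\Cref{thm:delta-kappa-props}(\labelcref{item:delta-trichotomous})), and the conditions can be combined via~\Cref{prop:compact-complemented}(\labelcref{item:decide-sum-or-product}). So the subset of least roots is complemented.

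It remains to show that this subset is non-empty; complemented choice then yields a least root. Non-emptiness follows from well-foundedness. Suppose no least root exists. By transfinite induction (\Cref{lem:accessibility-transfinite-induction}) prove that every $y$ fails to be a root: if $y$ were a root it would not be least, so some root $z \lt{} y$ exists, which contradicts the induction hypothesis. Here the ``$\neg$least gives a smaller root'' step uses decidability of the universal statement via compactness, so that the required $z$ can be extracted. Hence there is no root, contradicting non-emptiness. Finally, the weak order of~\Cref{def:has-inf}, $\neg(y \lt{} x)$, is exactly what ``below all members'' requires, so the element found is least in the required sense.

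The main obstacle I expect is the converse's minimality extraction. It needs carefully combining compactness, decidable order from trichotomy, and the well-founded induction under negation, which is where constructive care is required. The forward direction is straightforward given~\Cref{ex:omega-plus-one}.
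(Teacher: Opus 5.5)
Your forward direction is the paper's argument, with the same code $\bL(\lambda i.\,\bZ)$ and the same characteristic function that takes the value $0$ at the added point.

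Your converse is correct but follows a different route.

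\begin{itemize}
\item \textbf{The paper's route.} It uses $\LPO$ to make $\iemb_b$ an equivalence (\Cref{prop:brouwer-iota-equiv-LPO}). Order preservation and reflection (\Cref{thm:iota}) then make $\alttrichint b$ order equivalent to $\compactsepint b$. The least element property is transported from $\compactsepint b$ (\Cref{cor:kappa-least}), where it comes from the micro-Tychonoff theorem for infima (\Cref{thm:micro-inf-tychonoff}) and the infimum property of the selection function of $\NInf$.
\item \textbf{Your route.} You stay entirely on the discrete side. Under $\LPO$, $\us{\alttrichint b}$ is compact as a retract of $\N$. Trichotomy makes its order decidable, so the family of least roots is complemented. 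Well-founded induction shows that family is non-empty, and complemented choice then produces a least root.
\end{itemize}

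What your route buys is a general and elementary lemma: any ordinal with compact underlying type and decidable order has least elements of non-empty complemented subsets. It applies verbatim to $\Disc\nu$ for the inductive-recursive codes via \Cref{thm:E-delta}, without any comparison map. The paper's route instead reuses results it has already proved. It also yields the stronger fact that under $\LPO$ the two interpretations coincide as ordinals, so every property of $\compactsepint b$, including infima of complemented subsets, transfers at once.

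Two small remarks on your write-up:
\begin{itemize}
\item The root $x$ you extract at the start of the converse is never used.
\item The extraction of a smaller root in the induction step is unnecessary. Take $P\,y :\equiv \neg(p\,y = 0)$. The induction hypothesis shows directly that if $y$ were a root it would be least, which contradicts the assumption that no least root exists. So the step is purely negative and needs no appeal to compactness.
\end{itemize}
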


\begin{proof}
For the forward implication, take $b = \bL(\lambda i.\,\bZ)$, whose
discrete interpretation is $\succT{\omegaO}$ with underlying type
$\N+\One$ (\Cref{ex:omega-plus-one}). Given $\alpha : \N \to \Two$,
consider the complemented subset with characteristic function
\[
  p\,(\mathrm{inl}\,n) = \alpha\,n , \qquad
  p\,(\mathrm{inr}\,\pt) = 0 .
\]
It contains the added point, so it is non-empty, and its least element
$x_0$ is either $\mathrm{inl}\,n$ for some~$n$, in which case
$\alpha\,n = 0$, or the added point, in which case no
$\mathrm{inl}\,n$ is a member, since membership would give
$x_0 \leq \mathrm{inl}\,n$, that is,
$\neg(\mathrm{inl}\,n \lt{} \mathrm{inr}\,\pt)$. Inspection of $x_0$
tells the two cases apart, and they decide whether $\alpha$ has a
root, which is compactness of~$\N$, equivalently $\LPO$
(\Cref{ex:compact-basic}(\labelcref{item:N-compact-lpo})).

Conversely, $\LPO$ makes $\iemb_b$ an equivalence
(\Cref{prop:brouwer-iota-equiv-LPO}), and it is order-preserving and
order-reflecting (\Cref{thm:iota}), so $\alttrichint b$ and
$\compactsepint b$ are order equivalent. Every non-empty complemented
subset of $\compactsepint b$ has a least element
(\Cref{cor:kappa-least}), and this property transfers along the order
equivalence.
\end{proof}

\begin{remark}
\label{rem:delta-least-single-code}
The forward implication needs only the single code
$\bL(\lambda i.\,\bZ)$, as in~\Cref{prop:brouwer-iota-equiv-LPO}.
\AgdaRefs{\Agda{Ordinals.BrouwerCodesDiscreteAndCompactInterpretations}{Δ𝟙-least-roots-gives-LPO}.}
\end{remark}

\section{An inductive-recursive universe of compact ordinals}
\label{sec:inductive-recursive}

In the Brouwer codes (\Cref{def:brouwer-codes}), the branching of the
limit constructor $\bL$ is always indexed by $\N$. We now let the
branching of a sum be indexed by an ordinal built at an earlier stage,
which requires defining the type $\Ecodes$ of codes and their standard
interpretation $\Disc$ simultaneously by \emph{induction-recursion} in
the sense
of~\cite{Dybjer2000,DybjerSetzer1999,dybjersetzer:IndexedInductionRecursion:2006}.
The codes carry a further interpretation~$\Comp$, with an extremely
dense embedding~$\Disc\nu \hookrightarrow \Comp\nu$ for every code
$\nu:\Ecodes$ (\Cref{thm:E-props}(\labelcref{item:E-iota-embedding})).
The Brouwer codes embed into $\Ecodes$, with the inclusion making
$\alttrichint{-}$ commute with $\Disc$, and $\compactsepint{-}$ with
$\Comp$ (\Cref{thm:brouwer-to-E}).  The interpretation $\Disc$ gives
trichotomous and hence discrete ordinals (\Cref{thm:E-delta}),
and~$\Comp$ gives compact ordinals with infima of complemented subsets
(\Cref{thm:E-props}). The compact interpretation $\Comp$, however, is
no longer totally separated in general
(\Cref{prop:kappa-not-tot-sep}), and hence the ordinals witnessing
this are not denoted by Brouwer codes.

\subsection{Ordinal expressions and their discrete interpretation}

\begin{definition}[$\Ecodes$ and $\Disc$]
\label{def:E}
We define, by induction-recursion, a type $\Ecodes$ of ordinal
expressions by the constructors
\[
  \eOne, \eOmegaPlusOne : \Ecodes, \qquad
  \eAdd, \eMul : \Ecodes \to \Ecodes \to \Ecodes,
\]
\[
  \eSig : \textstyle\prod_{\nu : \Ecodes}\,
  \big((\us{\Disc\nu} \to \Ecodes) \to \Ecodes\big) ,
\]
simultaneously with the standard, \emph{discrete interpretation}
$\Disc : \Ecodes \to \OrdT[]$, by
\begin{align*}
\Disc\,\eOne &= \OneT
&
\Disc\,\eOmegaPlusOne &= \succT{\omegaO}
\\
\Disc\,(\nu_0\eAdd\nu_1) &= \Disc\nu_0 \oplusT \Disc\nu_1
&
\Disc\,(\nu_0\eMul\nu_1) &= \osum{\Disc\nu_0}{}(\lambda\_.\,\Disc\nu_1)
\\
\Disc\,(\eSig\,\nu\,A) &= \osum{\Disc\nu}{}(\Disc\circ A).
\end{align*}
\AgdaRefs{\Agda{Ordinals.InductiveRecursiveCodesInterpretations}{E}; \texttt{Δ}.}
\end{definition}

\begin{theorem}
\label{thm:E-delta}
For every $\nu:\Ecodes$, the underlying type of $\Disc\nu$ is a
retract of~$\N$, hence discrete, and moreover the ordinal $\Disc\nu$ is
trichotomous.
\AgdaRefs{\Agda{Ordinals.InductiveRecursiveCodesInterpretations}{Δ-retract-of-ℕ};
  \texttt{Δ-is-discrete}; \texttt{Δ-is-trichotomous}.}
\end{theorem}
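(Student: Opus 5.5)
We argue by induction on the inductive-recursive type $\Ecodes$, proving simultaneously, for every code $\nu$, that $\us{\Disc\nu}$ is a retract of $\N$ and that $\Disc\nu$ is trichotomous. Discreteness then follows, since $\N$ is discrete and discreteness passes to retracts (\Cref{prop:discrete-closure}(\labelcref{item:discrete-retract})). Alternatively it follows from trichotomy by \Cref{lem:trichotomy-gives-discrete}.

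\emph{The retract claim.} For $\eOne$ the type $\One$ is a retract of $\N$. For $\eOmegaPlusOne$ the type $\N+\One$ is a retract of $\N$, sending $0$ to the added point and $n{+}1$ to $\mathrm{inl}\,n$. The remaining three constructors all reduce to one closure fact, already used in the proof of \Cref{thm:delta-kappa-props}(\labelcref{item:delta-retract-N}): if $\us\tau$ and every $\us{\upsilon\,x}$ are retracts of $\N$, then $\Sigma_x\us{\upsilon\,x}$ is a retract of $\N\times\N$, hence of $\N$ via a pairing function. The reductions are as follows.
\begin{enumerate}
\item For $\eAdd$, the topped sum is a $\Sigma$ over $\One+\One$, and $\One+\One$ is a retract of $\N$.
\item For $\eMul$, the sum is over $\Disc\nu_0$ of a constant family.
\item For $\eSig\,\nu\,A$, the sum is over $\Disc\nu$ of $\Disc\circ A$.
\end{enumerate}
In each case the induction hypotheses supply the retractions for the index and for every fiber. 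For $\eSig$ this needs the hypothesis at every $A\,x$, which the induction-recursion principle provides as a function of $x : \us{\Disc\nu}$.

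\emph{The trichotomy claim.} $\OneT$ is trichotomous, and so is $\succT{\omegaO}$, since comparisons of $\mathrm{inl}$'s reduce to the trichotomy of $\N$ and the added point is above everything else. The remaining constructors are all ordinal-indexed lexicographic sums over a trichotomous index: $\TwoT$ for $\eAdd$, $\Disc\nu_0$ for $\eMul$, and $\Disc\nu$ for $\eSig$. Their fibers are trichotomous by the induction hypotheses. By \Cref{lem:trichotomous-sum} such a sum is an ordinal, and it remains to check that it is trichotomous.

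\emph{The one step with content.} Given $(x,a)$ and $(x',a')$, apply trichotomy in the index. If $x \lt{} x'$ or $x' \lt{} x$, the lexicographic order decides the comparison. If $x = x'$, transport $a$ along this identification, which is unique because the index is a set (\Cref{lem:ext-gives-set}), and apply trichotomy in the fiber $\upsilon\,x'$. The three outcomes give $(x,a)\lt{}(x',a')$, equality of pairs, or $(x',a')\lt{}(x,a)$ respectively.

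I expect no real obstacle here; this is the closure statement \claim{trichotomous-arithmetic} already recorded in \Cref{sec:ordinal-arithmetic}. The only care needed is that the induction hypothesis for $\eSig$ is used uniformly over the family $A$, which is legitimate in the induction-recursion eliminator.
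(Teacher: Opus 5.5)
Your proposal is correct and follows the paper's proof essentially step for step. You argue by induction on $\Ecodes$ and reduce the retract claim to closure of ``retract of $\N$'' under $\Sigma$: over $\One+\One$ for addition, over a constant family for multiplication, and over the plain sum $\osum{\Disc\nu}{}(\Disc\circ A)$ for $\eSig$. You then get trichotomy from its preservation under ordinal-indexed sums over a trichotomous index. The differences are cosmetic: you run the two inductions simultaneously rather than separately, and you spell out the case analysis for the trichotomy of such sums, which the paper takes from the proof of \Cref{thm:delta-kappa-props}.
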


\begin{proof}
By induction on $\nu$, which for an inductive-recursive definition
means induction by the elimination principle of $\Ecodes$, in which the
hypothesis for $\eSig\,\nu\,A$ may use the already-available value
$\Disc\nu$ in order to speak about the index type
$\us{\Disc\nu}$ of $A$.

For $\eOne$ the underlying type is $\One$. For $\eOmegaPlusOne$ it is
$\N+\One$, which is equivalent to $\N$. The underlying type of an
addition is a $\Sigma$ over the index type $\One+\One$
(\Cref{sec:ordinal-arithmetic}), and that of a multiplication
is a $\Sigma$ over a constant family
(\Cref{def:ordinal-multiplication}), so the cases $\eAdd$ and $\eMul$
are, as in the proof
of~\Cref{thm:delta-kappa-props}(\labelcref{item:delta-retract-N}),
instances of the closure of ``being a retract of $\N$'' under $\Sigma$,
the index type $\One+\One$ being a retract of $\N$. So is the case
$\eSig\,\nu\,A$, where $\Disc(\eSig\,\nu\,A)$ is the plain
ordinal-indexed sum $\osum{\Disc\nu}{}(\Disc\circ A)$, with no extension,
whose base $\us{\Disc\nu}$ and fibers
$\us{\Disc(A\,x)}$ are retracts of $\N$ by the induction hypothesis.
Discreteness then
follows because $\N$ is discrete and discreteness is closed under
retracts
(\Cref{prop:discrete-closure}(\labelcref{item:discrete-retract})).

Trichotomy is proved by a separate induction on the same cases,
using that $\One$ is trichotomous, that $\omega$ is and hence so is
its successor, and that trichotomy is preserved by ordinal-indexed
sums of trichotomous families over a trichotomous index, as in the
proof
of~\Cref{thm:delta-kappa-props}(\labelcref{item:delta-trichotomous}),
which covers addition and multiplication under the above readings of
them as sums.
\AgdaRefs{\Agda{Ordinals.ToppedArithmetic}{∑-is-trichotomous},
\texttt{+ᵒ-is-trichotomous}, \texttt{×ᵒ-is-trichotomous}.}
\end{proof}

\noindent
Trichotomy gives a second route to the discreteness of $\Disc\nu$,
by~\Cref{lem:trichotomy-gives-discrete}.
However, compactness fails constructively.
\begin{proposition}
\label{prop:E-delta-lpo}
The statement that the type $\us{\Disc\nu}$ is compact for every code
$\nu : \Ecodes$ is equivalent to $\LPO$.
\AgdaRefs{\Agda{Ordinals.InductiveRecursiveCodesInterpretations}{Δ-compact-iff-LPO},
  combining \texttt{Δ-compact-gives-LPO} and
  \texttt{LPO-gives-Δ-compact}.}
\end{proposition}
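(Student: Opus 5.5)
The plan is to prove the two implications separately, following the pattern of \Cref{prop:delta-kappa-fail}(\labelcref{item:delta-compact-lpo}).

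\emph{Forward direction.} Apply the hypothesis to the single code $\eOmegaPlusOne$. Its discrete interpretation is $\succT{\omegaO}$, with underlying type $\N+\One$. The left inclusion $\N \to \N+\One$ has a retraction, sending $\mathrm{inr}\,\pt$ to, say, $0$. So compactness of $\N+\One$ gives compactness of $\N$ by \Cref{thm:compact-closure}(\labelcref{item:compact-retract}), and that is $\LPO$ by \Cref{ex:compact-basic}(\labelcref{item:N-compact-lpo}). This direction is immediate.

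\emph{Converse.} Assume $\LPO$. By \Cref{ex:compact-basic}(\labelcref{item:N-compact-lpo}), $\N$ is compact. By \Cref{thm:E-delta}, the underlying type $\us{\Disc\nu}$ is a retract of $\N$ for every $\nu : \Ecodes$. Compactness passes to retracts (\Cref{thm:compact-closure}(\labelcref{item:compact-retract})), so every $\us{\Disc\nu}$ is compact. This needs no further induction on codes, since the induction-recursive induction was already done in \Cref{thm:E-delta}.

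\emph{Possible alternative.} One could instead induct directly on $\nu$, using closure of compactness under $\Sigma$ (\Cref{thm:compact-closure}(\labelcref{item:compact-sigma})) for $\eAdd$, $\eMul$ and $\eSig$. The base cases would be $\One$ and $\N+\One$, the latter compact under $\LPO$ by \Cref{thm:compact-closure}(\labelcref{item:compact-plus}). The retract route avoids this induction entirely.

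The only point needing care is making sure the retraction of $\N+\One$ onto $\N$ is stated correctly; I do not expect any real obstacle.
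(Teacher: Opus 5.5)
Your proposal is correct and follows the paper's proof: the converse is exactly "$\LPO$ makes $\N$ compact, $\us{\Disc\nu}$ is a retract of $\N$ by \Cref{thm:E-delta}, and compactness passes to retracts," and the forward direction uses the single code $\eOmegaPlusOne$, whose underlying type $\N+\One$ has $\N$ as a retract. Your retraction (identity on $\mathrm{inl}\,n$, sending $\mathrm{inr}\,\pt$ to $0$, with section $\mathrm{inl}$) is a valid witness.
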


\begin{proof}
Under $\LPO$ the type $\N$ is compact
(\Cref{ex:compact-basic}(\labelcref{item:N-compact-lpo})), and
$\us{\Disc\nu}$ is a retract of it by~\Cref{thm:E-delta}, so it is
compact too, compactness being closed under retracts
(\Cref{thm:compact-closure}(\labelcref{item:compact-retract})).
Conversely, the single code $\eOmegaPlusOne$ suffices, since
$\us{\Disc\eOmegaPlusOne}$ is $\N+\One$, which has $\N$ as a retract,
so the same closure property gives the compactness of $\N$, which is
$\LPO$.
\end{proof}

\subsection{The compact interpretation}
\begin{definition}[$\Comp$ and $\iemb_\nu$]
  \label{def:E-kappa}
  We simultaneously define
\[
  \begin{array}{lll}
    \Comp & : & \Ecodes \to \OrdT[], \\
    \iemb_\nu & : & \Disc\nu \to \Comp\nu
  \end{array}
\]
and prove by induction that $\iemb_\nu$ is an embedding as follows.
For a code $\nu : \Ecodes$ and
$A : \us{\Disc\nu}\to\Ecodes$, write
$(\Comp\circ A) \extend \iemb_\nu$ for the extension along
$\iemb_\nu:\us{\Disc\nu}\hookrightarrow\us{\Comp\nu}$ of
$\Comp\circ A : \us{\Disc\nu}\to\OrdT[]$ to a family
$\us{\Comp\nu}\to\OrdT[]$, as illustrated in the following diagram
\AgdaRefs{\Agda{Ordinals.Injectivity}{}, applied to the algebraic
  injectivity of $\OrdT[]$ along embeddings}:
\begin{center}
\begin{tikzcd}[row sep=huge, column sep=huge]
  \us{\Disc\nu} \arrow[r, hook, "\iemb_\nu"] \arrow[d, "A"'] \arrow[dr, dashed] & \us{\Comp\nu} \arrow[d, dashed, "(\Comp\circ A)\extend\iemb_\nu"] \\
  \Ecodes \arrow[r, "\Comp"'] & \OrdT[].
\end{tikzcd}
\end{center}
We define $\Comp : \Ecodes \to \OrdT[]$ and
$\iemb_\nu : \us{\Disc\nu}\to\us{\Comp\nu}$ by
\begin{align*}
\Comp\,\eOne &= \OneT
&
\Comp\,\eOmegaPlusOne &= \NInfO
\\
\Comp\,(\nu_0\eAdd\nu_1) &= \Comp\nu_0 \oplusT \Comp\nu_1
&
\Comp\,(\nu_0\eMul\nu_1) &= \osum{\Comp\nu_0}{}(\lambda\_.\,\Comp\nu_1)
\\
\Comp\,(\eSig\,\nu\,A) &= \osum{\Comp\nu}{}\big((\Comp\circ A)\extend\iemb_\nu\big)
\end{align*}
\begin{align*}
  \iemb_{\eOne} &= \mathrm{id}
&
  \iemb_{\eOmegaPlusOne} &= \iota_1
\\
  \iemb_{\nu_0\eAdd\nu_1}\,(\mathrm{inl}\,\pt,\,y)
  &= (\mathrm{inl}\,\pt,\,\iemb_{\nu_0}\,y)
&
  \iemb_{\nu_0\eMul\nu_1}\,(x,\,y)
&= (\iemb_{\nu_0}\,x,\,\iemb_{\nu_1}\,y)
\\
  \iemb_{\nu_0\eAdd\nu_1}\,(\mathrm{inr}\,\pt,\,y)
&= (\mathrm{inr}\,\pt,\,\iemb_{\nu_1}\,y)
\\
  \iemb_{\eSig\,\nu\,A}\,(x,\,z)
    &= \big(\iemb_\nu\,x,\;\varphi_x^{-1}(\iemb_{A\,x}\,z)\big).
\end{align*}
Here $\iota_1$ is the canonical embedding
$\N+\One\hookrightarrow\NInf$ sending the added point to $\infty$, and
$\varphi_x : \big((\Comp\circ A)\extend\iemb_\nu\big)(\iemb_\nu\,x) \iseq
\Comp(A\,x)$ is the extension property
of~\Cref{lem:ordinal-extension} at the point $\iemb_\nu\,x$ of the image.

That each $\iemb_\nu$ is an embedding is proved by the same induction.
At $\eOne$ the map is the identity and at $\eOmegaPlusOne$ it is the
canonical embedding $\iota_1$, an embedding
by~\Cref{ex:isolated}(\labelcref{item:iota1-embedding-dense}). The
identity is an embedding, a map of sums is an embedding if its index
map and all its fiber maps are
(\Cref{lem:sum-map}(\labelcref{item:sum-map-embedding})), embeddings
are closed under composition, and an equivalence is an embedding. At
$\eAdd$, where the addition is a sum over $\One+\One$
(\Cref{sec:ordinal-arithmetic}), the index map is the identity on
$\One+\One$ and the fiber maps are $\iemb_{\nu_0}$ and
$\iemb_{\nu_1}$, and at $\eMul$, where the multiplication is a sum
over a constant family (\Cref{def:ordinal-multiplication}), the index
map is $\iemb_{\nu_0}$ and the fiber map is $\iemb_{\nu_1}$, the
maps $\iemb_{\nu_0}$ and $\iemb_{\nu_1}$ being embeddings by the
induction hypothesis. At $\eSig\,\nu\,A$ the map is
the map of sums with index map $\iemb_\nu$ and fiber maps
$\varphi_x^{-1}\circ\iemb_{A\,x}$, an embedding as the composition of
the embedding $\iemb_{A\,x}$ with the equivalence~$\varphi_x^{-1}$.
\AgdaRefs{\Agda{Ordinals.InductiveRecursiveCodesInterpretations}{Κ};
  \texttt{ι}; the extension diagram is spelled out loc.\ cit.\ via the
  auxiliary family \texttt{𝓚 ν A} $: \us{\Comp\nu}\to\OrdT[]$, defined
  as $(\Comp\circ A)\extend\iemb_\nu$.  The embedding property is
  \texttt{ι-is-embedding} loc.\ cit., built from
  \Agda{UF.PairFun}{pair-fun-is-embedding},
  \Agda{UF.Embeddings}{id-is-embedding}, \texttt{∘-is-embedding} and
  \texttt{equivs-are-embeddings'}, and
  \Agda{TypeTopology.GenericConvergentSequence}{ι𝟙-is-embedding}.}
\end{definition}

The following generalizes \Cref{thm:kappa-infs,thm:iota}.

\begin{theorem}
\label{thm:E-props}
For every $\nu:\Ecodes$,
\begin{enumerate}
\item\label{item:E-comp-infima} the ordinal $\Comp\nu$ has infima of
  complemented subsets, hence is compact,
\item\label{item:E-iota-embedding}
  the embedding $\iemb_\nu : \us{\Disc\nu}\to\us{\Comp\nu}$ is extremely dense,
  order-preserving and order-reflecting.
\end{enumerate}
\AgdaRefs{\Agda{Ordinals.InductiveRecursiveCodesInterpretations}{K-has-infs-of-complemented-subsets};
\texttt{𝓚-has-infs-of-complemented-subsets};
\texttt{Κ-Compact}, which is proved there by a direct induction on the
code, in the pointed form, rather than through the infima;
\texttt{ι-is-embedding}; \texttt{ι-is-dense};
\texttt{ι-is-order-preserving}; \texttt{ι-is-order-reflecting}.}
\end{theorem}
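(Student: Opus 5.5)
Both items are proved by induction on $\nu$, following the clauses of \Cref{def:E-kappa}, exactly as in \Cref{thm:kappa-infs,thm:iota}; the only genuinely new case in each is $\eSig\,\nu\,A$. Throughout, addition is read as the ordinal-indexed sum over $\TwoT$ and multiplication as the ordinal-indexed sum of a constant family, so that every clause except the base cases is an instance of one closure lemma about sums.

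For~(\labelcref{item:E-comp-infima}) we use three facts established in the proof of \Cref{thm:kappa-infs}: infima of complemented subsets are closed under ordinal-indexed sums (computed lexicographically), $\OneT$ and $\TwoT$ have them by inspection, and $\NInfO$ has them by \Cref{ex:compact-pt-basic}(\labelcref{item:NInf-compact}). The cases $\eOne$ and $\eOmegaPlusOne$ are then immediate. For $\eAdd$ and $\eMul$ we apply the sum closure with indices $\TwoT$ and $\Comp\nu_0$ respectively, using the induction hypothesis on the index and on the fibers. For $\eSig\,\nu\,A$ the index $\Comp\nu$ has infima by the induction hypothesis. Each fiber $((\Comp\circ A)\extend\iemb_\nu)\,k$ is a product of the ordinals $\Comp(A\,x)$ indexed by the fiber $\fib{\iemb_\nu}{k}$, which is a proposition because $\iemb_\nu$ is an embedding (proved simultaneously in \Cref{def:E-kappa}). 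So \Cref{thm:micro-inf-tychonoff} applies, using the induction hypothesis for each $A\,x$; this hypothesis is available because the elimination principle of $\Ecodes$ supplies it for all $A\,x$. One point must be checked: the weak order derived from the product order of \Cref{lem:ordinal-extension} has to coincide with the pointwise relation of \Cref{thm:micro-inf-tychonoff}. Since the product order is $\Sigma_p\, u\,p \lt{} v\,p$, its negation is $\prod_p \neg(v\,p \lt{} u\,p)$, which is exactly that relation. Compactness then follows by \Cref{rem:inf-gives-compact}.

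For~(\labelcref{item:E-iota-embedding}) the embedding property is already built into \Cref{def:E-kappa}, so only density and the two order properties remain. Each $\iemb$ is a map of sums $\summap fg$, so we invoke \Cref{lem:sum-map}(\labelcref{item:sum-map-dense}) for density and \Cref{lem:sum-map-order} for order-preservation and order-reflection. The latter needs the index map to be an embedding, which holds by the simultaneous induction. The base cases are the identity and $\iota_1$, and $\iota_1$ has all three properties by \Cref{ex:isolated}(\labelcref{item:iota1-embedding-dense}) and the proof of \Cref{lem:sum-comparison}. At $\eAdd$ the index map is the identity of $\One+\One$, and at $\eMul$ it is $\iemb_{\nu_0}$, with fiber maps $\iemb_{\nu_1}$. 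At $\eSig\,\nu\,A$ the index map is $\iemb_\nu$, and the fiber maps are $\varphi_x^{-1}\circ\iemb_{A\,x}$. Here $\varphi_x$ is an order equivalence by \Cref{lem:extension-restricts}, and an order equivalence is dense, order-preserving and order-reflecting; composites of maps with these properties retain them.

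The main obstacle is the bookkeeping at $\eSig$, and in particular the density step, since the target base $\us{\Comp\nu}$ contains points outside the image of $\iemb_\nu$. Over such points the fiber of the extension is not of the form $\Comp(A\,x)$. However, \Cref{lem:sum-map}(\labelcref{item:sum-map-dense}) only requires density of the fiber maps over points of the form $\iemb_\nu\,x$, because the transport in its proof moves the given point into the image first. So I expect the argument to go through once the target family is identified with $(\Comp\circ A)\extend\iemb_\nu$ reindexed along $\iemb_\nu$.
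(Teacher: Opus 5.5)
Your proposal is correct and follows the paper's proof essentially step for step. The paper argues by induction on the code, using closure of infima under ordinal-indexed sums and \Cref{thm:micro-inf-tychonoff} for the fibers of the extension at $\eSig\,\nu\,A$ (because $\iemb_\nu$ is an embedding), and it gets density and the two order properties from \Cref{lem:sum-map}, \Cref{lem:sum-map-order} and closure under composition, with $\eOmegaPlusOne$ the only new base case. The obstacle you flag at the end is not really one: the fiber maps in \Cref{lem:sum-map} have type $g\,x : A\,x \to B\,(f\,x)$, so they only concern fibers over points in the image, and the $\eSig$ case is a direct instance with $B = (\Comp\circ A)\extend\iemb_\nu$ and $g\,x = \varphi_x^{-1}\circ\iemb_{A\,x}$.
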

\begin{proof}
Write
$
  \Comp_{\nu,A} \;=\; (\Comp\circ A)\extend\iemb_\nu \;:\; \us{\Comp\nu}\to\OrdT[]
$
so that
$\Comp(\eSig\,\nu\,A) = \osum{\Comp\nu}{}\,\Comp_{\nu,A}$, and
by~\Cref{def:extension} its value at $y$ is the product of the
$\us{\Comp(A\,x)}$ over the fiber of $\iemb_\nu$ at $y$.
We prove (\labelcref{item:E-comp-infima}) by induction on $\nu$,
using in the case $\eSig\,\nu\,A$ that
\begin{enumerate}
\item [($\dagger$)] the ordinal $\Comp_{\nu,A}\,y$ has infima of
  complemented subsets for every $y : \us{\Comp\nu}$.
\end{enumerate}

For (\labelcref{item:E-comp-infima}), the ordinal $\OneT$ has infima
by inspection. The ordinal $\NInfO$ has them, by the property of its
selection function stated
in~\Cref{ex:compact-pt-basic}(\labelcref{item:NInf-compact}), that the
point it returns is a conditional root and the infimum of the set of
roots. The cases $\eAdd$, $\eMul$ and $\eSig$ are all instances of the
closure of this property under ordinal-indexed sums shown in the proof
of~\Cref{thm:kappa-infs}, the index $\TwoT$ of the addition having
infima by inspection of its two points, applied in the last case to
the family $\Comp_{\nu,A}$, using ($\dagger$).
\AgdaRefs{\Agda{Ordinals.Closure}{ℕ∞ᵒ-has-infs-of-complemented-subsets};
\texttt{∑-has-infs-of-complemented-subsets};
\texttt{𝟚ᵒ-has-infs-of-complemented-subsets}.}

For ($\dagger$), the ordinal $\Comp_{\nu,A}\,y$ is a product of the
ordinals
$\Comp(A\,x)$ indexed by $\fib{\iemb_\nu}{y}$, which is a proposition
because $\iemb_\nu$ is an embedding, and each factor has infima by the
induction hypothesis for the codes $A\,x$. So~\Cref{thm:micro-inf-tychonoff}
applies.

Compactness then follows from~\Cref{rem:inf-gives-compact}.

(\labelcref{item:E-iota-embedding}) That $\iemb_\nu$ is an embedding is
part of~\Cref{def:E-kappa}. Each of the three remaining properties is
then inherited exactly as in~\Cref{thm:iota}. All three are preserved
by maps of sums
(\Cref{lem:sum-map}(\labelcref{item:sum-map-dense})
and~\Cref{lem:sum-map-order}, order reflection requiring, as there,
that the index map be an embedding), all three are preserved by
composition, and equivalences have all three. The only new base case
is $\eOmegaPlusOne$, where
$\N+\One\hookrightarrow\NInf$ is extremely dense
(\Cref{ex:isolated}(\labelcref{item:iota1-embedding-dense}))
and is order-preserving and order-reflecting, the predecessors of a
point $v$ of $\NInfO$ being the finite points $\underline n$ with
$n \sqlt v$ (\Cref{ex:NInf-ordinal}).
\AgdaRefs{\Agda{TypeTopology.GenericConvergentSequence}{ι𝟙-dense};
\Agda{Ordinals.Closure}{ι𝟙ᵒ-is-order-preserving},
\texttt{ι𝟙ᵒ-is-order-reflecting}; \Agda{UF.PairFun}{pair-fun-dense}
and companions.}
\end{proof}

\begin{corollary}
\label{cor:E-least}
Every non-empty complemented subset of the ordinal $\Comp\nu$ has a
least element for any $\nu : \Ecodes$.
\AgdaRefs{\Agda{Ordinals.InductiveRecursiveCodesInterpretations}{K-has-least-roots-of-complemented-subsets}.}
\end{corollary}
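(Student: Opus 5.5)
This is a direct consequence of the infima property just established, mirroring how \Cref{cor:kappa-least} was obtained from \Cref{thm:kappa-infs}. By \Cref{thm:E-props}(\labelcref{item:E-comp-infima}), the topped ordinal $\Comp\nu$ has infima of complemented subsets with respect to its weak order $x \leq y :\equiv \neg\,(y \lt{} x)$, in the sense of \Cref{def:has-inf}. I will then apply \Cref{lem:least-element} to the underlying type $\us{\Comp\nu}$ with this relation, which gives the conclusion directly.

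To make the appeal to \Cref{lem:least-element} explicit, let $S$ be a non-empty complemented subset with characteristic function $p : \us{\Comp\nu} \to \Two$, and let $x_0$ be the point supplied by the infima property. By \Cref{rem:inf-gives-compact}, the conditional-root clause alone makes $\us{\Comp\nu}$ compact pointed. Complemented choice (\Cref{prop:compact-complemented}(\labelcref{item:complemented-choice})) then turns non-emptiness into an actual root of $p$. With a root in hand, $x_0$ is itself a root by the conditional-root clause, so it is a member of $S$. It lies below every member by the infimum clause. Hence $x_0$ is the least element of $S$.

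I expect no real obstacle here. The only point to check is bookkeeping: ``least'' must be read with respect to the same weak order $\neg(y \lt{} x)$ for which \Cref{def:has-inf} is stated for topped ordinals. That is the convention already used in \Cref{cor:kappa-least}, so the argument carries over verbatim. All the substantive work, namely the induction over $\Ecodes$ and the micro-Tychonoff theorem for infima applied to the fibers of the extension, has already been done in \Cref{thm:E-props}.
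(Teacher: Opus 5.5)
Your proposal is correct and is the paper's own argument: the corollary follows from \Cref{lem:least-element} applied to \Cref{thm:E-props}(\labelcref{item:E-comp-infima}) with respect to the weak order $\neg(y \lt{} x)$. Your unfolding of the lemma (compactness from \Cref{rem:inf-gives-compact}, complemented choice to obtain a root, then the conditional-root and infimum clauses) also matches the paper's proof of \Cref{lem:least-element}.
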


\begin{proof}
By~\Cref{lem:least-element}, applied
to~\Cref{thm:E-props}(\labelcref{item:E-comp-infima}) with respect to
the weak order of the ordinal.
\end{proof}

Classically, the embedding
$\iemb_\nu : \us{\Disc\nu} \hookrightarrow \us{\Comp\nu}$ is a surjection
and hence an equivalence, but this fails constructively, and so does
the discreteness, and hence the trichotomy, of~$\Comp\nu$.

\begin{proposition}
\label{prop:E-iota-equiv-discrete}
\leavevmode
\begin{enumerate}
\item\label{item:E-iota-equiv} The map $\iemb_\nu$ is an equivalence
  for every $\nu : \Ecodes$ if and only if $\LPO$ holds.
  \AgdaRefs{\Agda{Ordinals.InductiveRecursiveCodesInterpretations}{ι-is-equiv-iff-LPO},
    combining \texttt{ι-is-equiv-gives-LPO}, which only needs the
    single instance $\nu = \eOmegaPlusOne$, and
    \texttt{LPO-gives-ι-is-equiv}, which needs the full induction on
    $\nu$.}
\item\label{item:E-kappa-discrete-lpo} $\LPO$ makes $\Comp\nu$
  discrete for every $\nu : \Ecodes$.
  \AgdaRefs{\Agda{Ordinals.InductiveRecursiveCodesInterpretations}{LPO-gives-Κ-discrete}.}
\item\label{item:E-kappa-discrete-wlpo} If $\Comp\nu$ is
  discrete for every $\nu : \Ecodes$, then $\WLPO$ holds.
  \AgdaRefs{\Agda{Ordinals.InductiveRecursiveCodesInterpretations}{Κ-discrete-gives-WLPO}.}
\end{enumerate}
\end{proposition}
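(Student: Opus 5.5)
For~(\labelcref{item:E-iota-equiv}), the forward direction uses only the code $\eOmegaPlusOne$. There $\iemb_{\eOmegaPlusOne}$ is $\iota_1 : \N+\One\hookrightarrow\NInf$, and its being an equivalence says that every conatural number is either finite or $\infty$. That decides finiteness of every $u:\NInf$, which is $\LPO$ in the form used in the proof of~\Cref{prop:failure-trichotomy}. For the converse, I assume $\LPO$ and induct on $\nu$, following the clauses of~\Cref{def:E-kappa}, as in~\Cref{prop:brouwer-iota-equiv-LPO}.
\begin{itemize}
\item At $\eOne$ the map is the identity.
\item At $\eOmegaPlusOne$, $\LPO$ makes $\iota_1$ surjective. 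Being an embedding, it is then an equivalence, since its fibers are inhabited propositions.
\item At $\eAdd$, $\eMul$ and $\eSig\,\nu\,A$ the map is a map of sums. Its index map is the identity or $\iemb_{\nu_0}$ or $\iemb_\nu$, and its fiber maps are $\iemb_{\nu_i}$ or $\varphi_x^{-1}\circ\iemb_{A\,x}$. All of these are equivalences by the induction hypothesis, composed with the equivalence $\varphi_x^{-1}$. So \Cref{lem:sum-map}(\labelcref{item:sum-map-equiv}) applies.
\end{itemize}
The one point needing care is the $\eSig$ case. The fiber map at $x$ lands in $\Comp_{\nu,A}(\iemb_\nu\,x)$, which is exactly the shape required by \Cref{def:sum-map} with index map $\iemb_\nu$. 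So no extra transport arises.

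For~(\labelcref{item:E-kappa-discrete-lpo}), under $\LPO$ each $\iemb_\nu$ is an equivalence by~(\labelcref{item:E-iota-equiv}). The type $\us{\Disc\nu}$ is discrete by~\Cref{thm:E-delta}, and equivalences preserve discreteness (\Cref{prop:discrete-closure}(\labelcref{item:discrete-reflected})). So $\us{\Comp\nu}$ is discrete.

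For~(\labelcref{item:E-kappa-discrete-wlpo}), apply the hypothesis to $\nu=\eOmegaPlusOne$. Then $\us{\Comp\nu}=\NInf$, and its discreteness is $\WLPO$ (\Cref{ex:isolated}(\labelcref{item:NInf-discrete-wlpo})).

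I expect the only real obstacle to be the bookkeeping in the $\eSig$ step of the converse of~(\labelcref{item:E-iota-equiv}). There one must note that the induction hypothesis is available for all codes $A\,x$, uniformly in $x:\us{\Disc\nu}$. This is granted by the elimination principle of the inductive-recursive type, as in the proof of~\Cref{thm:E-delta}.
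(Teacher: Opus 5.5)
Your proposal is correct and follows essentially the same argument as the paper. You use the single code $\eOmegaPlusOne$ for the forward direction of~(\labelcref{item:E-iota-equiv}) and for~(\labelcref{item:E-kappa-discrete-wlpo}), induction on codes via \Cref{lem:sum-map}(\labelcref{item:sum-map-equiv}) (with the extra equivalence $\varphi_x^{-1}$ at $\eSig$) for the converse, and transfer of discreteness from $\us{\Disc\nu}$ along the equivalence $\iemb_\nu$ for~(\labelcref{item:E-kappa-discrete-lpo}), just as the paper does.
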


\begin{proof}
(\labelcref{item:E-iota-equiv}) ($\Rightarrow$) Instantiate at $\nu = \eOmegaPlusOne$, where $\iemb_\nu$ is
the canonical map $\N+\One\to\NInf$. If it is an equivalence, then
every conatural number is either the image of a unique $n : \N$ or is
$\infty$, and the inverse decides which. Deciding, for an arbitrary
$u$, whether $u$ is finite is $\LPO$, as in the proof
of~\Cref{prop:failure-trichotomy}.
\AgdaRefs{\Agda{Taboos.LPO}{ι𝟙-is-equiv-gives-LPO}.}

($\Leftarrow$) By induction on $\nu$. At $\eOne$ the map is the
identity. At $\eOmegaPlusOne$, the principle $\LPO$ says exactly that
finiteness is decidable, which makes $\N+\One\to\NInf$ split
surjective and hence, being an embedding already
(\Cref{ex:isolated}(\labelcref{item:iota1-embedding-dense})), an
equivalence. At
$\eAdd$, $\eMul$ and
$\eSig$ the map is the map of sums
of~\Cref{def:E-kappa}, which is an equivalence when its index map and
all its fiber maps are
(\Cref{lem:sum-map}(\labelcref{item:sum-map-equiv})). At $\eSig$ the
fiber map is additionally composed with the equivalence
$\varphi_x^{-1}$, which does not affect the conclusion.
\AgdaRefs{\Agda{UF.PairFun}{pair-fun-is-equiv}.}

(\labelcref{item:E-kappa-discrete-lpo}) Under $\LPO$ the map
$\iemb_\nu$ is an equivalence for every $\nu$
by~(\labelcref{item:E-iota-equiv}), so $\us{\Comp\nu}$ is discrete,
being equivalent to the discrete type $\us{\Disc\nu}$
(\Cref{thm:E-delta}).
\AgdaRefs{\Agda{Ordinals.InductiveRecursiveCodesInterpretations}{ι-is-equiv-gives-Κ-discrete}.}

(\labelcref{item:E-kappa-discrete-wlpo}) Instantiate at
$\nu = \eOmegaPlusOne$ again, the discreteness of $\NInf$ being
$\WLPO$, since deciding $u = \infty$ for every $u$ is deciding whether
a binary sequence is constantly~$1$.
\AgdaRefs{\Agda{Taboos.WLPO}{ℕ∞-discrete-gives-WLPO}.}
\end{proof}

\noindent
For the discrete interpretation the least element property fails
constructively.
\begin{proposition}
\label{prop:E-delta-least-lpo}
The statement that every non-empty complemented subset of the discrete
interpretation $\Disc\nu$ has a least element, for every code
$\nu : \Ecodes$, is equivalent to $\LPO$.
\AgdaRefs{\Agda{Ordinals.InductiveRecursiveCodesInterpretations}{Δ-least-roots-iff-LPO},
  combining \texttt{Δ-least-roots-gives-LPO} and
  \texttt{LPO-gives-Δ-least-roots}.}
\end{proposition}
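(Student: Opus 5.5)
The plan mirrors the proof of \Cref{prop:delta-least-lpo} for Brouwer codes. The forward implication needs only the single code $\eOmegaPlusOne$, and the converse uses \Cref{prop:E-iota-equiv-discrete}(\labelcref{item:E-iota-equiv}) to transfer the least element property from $\Comp\nu$ (\Cref{cor:E-least}).

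\emph{Forward direction.} Assume the least element property for every $\Disc\nu$ and instantiate at $\nu = \eOmegaPlusOne$. Then $\Disc\nu = \succT{\omegaO}$, with underlying type $\N+\One$ and the added point on top. Given $\alpha : \N \to \Two$, take the complemented subset with characteristic function
\[
  p\,(\mathrm{inl}\,n) = \alpha\,n, \qquad p\,(\mathrm{inr}\,\pt) = 0.
\]
It contains $\mathrm{inr}\,\pt$, so it is non-empty, and we obtain its least element $x_0$. We then inspect $x_0$, which is decidable because the index type is a binary sum.
\begin{itemize}
\item If $x_0 = \mathrm{inl}\,n$, then $\alpha\,n = 0$, so $\alpha$ has a root.
\item If $x_0 = \mathrm{inr}\,\pt$, then any member $\mathrm{inl}\,n$ would satisfy $x_0 \leq \mathrm{inl}\,n$, that is, $\neg(\mathrm{inl}\,n \lt{} \mathrm{inr}\,\pt)$. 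This contradicts the definition of the order on $\succT{\omegaO}$, so $\alpha$ has no root.
\end{itemize}
This decides whether $\alpha$ has a root, which is the compactness of $\N$, and hence $\LPO$ by \Cref{ex:compact-basic}(\labelcref{item:N-compact-lpo}).

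\emph{Converse.} Assume $\LPO$. By \Cref{prop:E-iota-equiv-discrete}(\labelcref{item:E-iota-equiv}), each $\iemb_\nu$ is an equivalence. By \Cref{thm:E-props}(\labelcref{item:E-iota-embedding}), it is also order-preserving and order-reflecting, so it is an order equivalence $\Disc\nu \iseq \Comp\nu$. \Cref{cor:E-least} gives the least element property for $\Comp\nu$.

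It remains to transfer this along the equivalence. Take a non-empty complemented subset of $\Disc\nu$ with characteristic function $p$. Transport it to $p \circ \iemb_\nu^{-1}$ on $\Comp\nu$, which is non-empty because non-emptiness is preserved by composing with an equivalence. Take its least element $y_0$. Then $\iemb_\nu^{-1}\,y_0$ is a member of the original subset. Below-all-members also transfers, because the weak order $\neg(y \lt{} x)$ is preserved and reflected by order-preserving and order-reflecting maps.

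\emph{Main obstacle.} I expect the main obstacle to be only bookkeeping. In particular, one must check that the weak order used in \Cref{def:has-inf} for topped ordinals is the one transported. This is immediate from order preservation and reflection in both directions.
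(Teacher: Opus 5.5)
Your proposal is correct and follows the paper's proof essentially verbatim. The forward direction reuses the argument of \Cref{prop:delta-least-lpo} with the single code $\eOmegaPlusOne$ in place of $\bL(\lambda i.\,\bZ)$, and the converse transfers \Cref{cor:E-least} along the order equivalence supplied by \Cref{prop:E-iota-equiv-discrete}(\labelcref{item:E-iota-equiv}) and \Cref{thm:E-props}(\labelcref{item:E-iota-embedding}). Your explicit check that the weak order transfers is a detail the paper leaves implicit, and it is sound; for the lower-bound part, order preservation alone already suffices.
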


\begin{proof}
The proof is that of~\Cref{prop:delta-least-lpo}. In the forward
implication the single code $\eOmegaPlusOne$ takes the place of
$\bL(\lambda i.\,\bZ)$, its discrete interpretation being
$\succT{\omegaO}$. In the converse, $\LPO$ makes $\iemb_\nu$ an
equivalence
(\Cref{prop:E-iota-equiv-discrete}(\labelcref{item:E-iota-equiv})),
and it is order-preserving and order-reflecting
(\Cref{thm:E-props}(\labelcref{item:E-iota-embedding})), so
$\Disc\nu$ and $\Comp\nu$ are order equivalent. Every non-empty
complemented subset of $\Comp\nu$ has a least element
(\Cref{cor:E-least}), and this property transfers along the order
equivalence.
\end{proof}

The embedding $\iemb_\nu$ is order-preserving and order-reflecting,
and it is natural to ask whether it additionally satisfies the
initial-segment condition, so that $\Disc\nu \oemb \Comp\nu$ for all
codes $\nu$. The code $\eOmegaPlusOne$ does satisfy this, but the
requirement that it hold for every code fails constructively.

\begin{proposition}
\label{prop:E-delta-below-kappa}
\leavevmode
\begin{enumerate}
\item\label{item:iota1-simulation} The canonical embedding
  $\iemb_{\eOmegaPlusOne} : \N+\One \hookrightarrow \NInf$ is a
  simulation, so that
  $\Disc\eOmegaPlusOne \oemb \Comp\eOmegaPlusOne$.
  \AgdaRefs{\Agda{Ordinals.ConvergentSequence}{ω+𝟙-is-⊴-ℕ∞}.}
\item\label{item:E-delta-below-kappa-lpo} The relation
  $\Disc\nu \oemb \Comp\nu$ holds for every $\nu : \Ecodes$ if and only
  if $\LPO$ holds.
  \AgdaRefs{\Agda{Ordinals.InductiveRecursiveCodesInterpretations}{Δ-⊴-Κ-iff-LPO},
    combining \texttt{Δ-⊴-Κ-gives-LPO}, which only needs the single
    code $\eOmegaPlusOne\eAdd\eOne$, written \texttt{⌜ω+𝟚⌝} there, and
    \texttt{LPO-gives-Δ-⊴-Κ}, which needs the full induction on $\nu$.}
\end{enumerate}
\end{proposition}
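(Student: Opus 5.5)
For~(\labelcref{item:iota1-simulation}) I would check the two simulation conditions for $\iota_1 : \N+\One \to \NInf$ directly. Order preservation is already in the proof of~\Cref{lem:sum-comparison}. For the initial-segment condition, let $z \lt{} \iota_1\,x$ in $\NInfO$. By~\Cref{ex:NInf-ordinal}, $z = \underline n$ for some $n$ with $n \sqlt \iota_1\,x$. If $x = \mathrm{inl}\,m$, then $n \sqlt \underline m$ gives $n < m$, so $\mathrm{inl}\,n \lt{} \mathrm{inl}\,m$ is a preimage below $x$. If $x$ is the added point, then $\mathrm{inl}\,n$ is below it and maps to $\underline n$. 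In both cases the required preimage is $\mathrm{inl}\,n$, and it is found without any omniscience, because the predecessor is handed to us as a finite point. This gives $\Disc\eOmegaPlusOne \oemb \Comp\eOmegaPlusOne$.

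For the backward direction of~(\labelcref{item:E-delta-below-kappa-lpo}), assume $\LPO$. Then $\iemb_\nu$ is an equivalence by~\Cref{prop:E-iota-equiv-discrete}(\labelcref{item:E-iota-equiv}). It is order-preserving and order-reflecting by~\Cref{thm:E-props}(\labelcref{item:E-iota-embedding}), so its inverse is order-preserving as well. An order equivalence is a simulation: the initial-segment condition follows by taking the inverse image of $z$ and using order reflection. Hence $\Disc\nu \oemb \Comp\nu$ for every $\nu$.

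For the forward direction, use the single code $\nu = \eOmegaPlusOne \eAdd \eOne$. Classically this is $\omega+2$ on the discrete side and $\NInf + 1$ on the compact side, with underlying types $\Sigma$ over $\One+\One$ of $(\N+\One, \One)$ and $(\NInf, \One)$ respectively. Let $f$ be the simulation given by the hypothesis, and write $\top_d$ and $\top_c$ for the points of the $\One$ summands on each side.
\begin{itemize}
\item The key step is to determine $f$ on the first summand. Simulations agree with initial segments, and $\Disc$ of the first summand is an initial segment, so $f$ restricted to it must coincide with $\iota_1$ composed with $\mathrm{inl}$, by uniqueness of simulations. Likewise $f$ sends the added point $\mathrm{inr}\,\pt$ of $\N+\One$ to $(\mathrm{inl}, \infty)$, since that point of $\N+\One$ has the same initial segment as $\infty$, namely $\omega$.
\item Now take $f\,\top_d$. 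Its predecessors in $\Comp\nu$ must be exactly the images of the predecessors of $\top_d$, that is, the finite points, together with $\infty$, all in the left summand.
\item Given $u : \NInf$, the point $(\mathrm{inl}, u)$ lies below $\top_c$. Every simulation is surjective onto an initial segment, and for any ordinal the points below $f\,\top_d$ are exactly those of the form $f\,y$ with $y < \top_d$. Since $(\mathrm{inl}, u) < f\,\top_d$ (I expect $f\,\top_d = \top_c$, by comparing predecessors via extensionality), $u$ must be $\iota_1\,y$ for some $y : \N+\One$, with $y$ unique by left cancellability.
\item Inspecting $y$ then decides whether $u$ is finite, which is $\LPO$.
\end{itemize}

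The main obstacle is identifying $f\,\top_d$ with $\top_c$, or at least showing that every $(\mathrm{inl}, u)$ is below $f\,\top_d$. I would argue this through extensionality of $\Comp\nu$. The predecessors of $f\,\top_d$ are the images of the predecessors of $\top_d$, and each of these is below $\top_c$, so $f\,\top_d$ must be in the right summand or equal to $(\mathrm{inl}, \infty)$. The latter is impossible, because $f$ is injective and $(\mathrm{inl}, \infty)$ is already $f$ of the added point. So $f\,\top_d = \top_c$, whose predecessors are all $(\mathrm{inl}, u)$. The initial-segment condition then yields the preimage, and the argument goes through.
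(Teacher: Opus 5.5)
Your argument is correct and is essentially the paper's. You use the same code $\eOmegaPlusOne\eAdd\eOne$ and pin down $f$ on the left summand as $\iota_1$ by uniqueness of simulations. The one small variation is that you compare $f\circ\mathrm{inl}$ and $\mathrm{inl}\circ\iota_1$ directly as simulations into $\Comp\nu$, whereas the paper first shows that the left summand lands in the left summand and extracts $g$. You then show $f\,\top_d=\top_c$ and use initial-segment preservation below $\top_c$ to get a section of $\iota_1$ that decides finiteness, exactly as the paper does.

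The step to repair is the one you call the main obstacle. Extensionality is not the right tool there: showing that $f\,\top_d$ and $\top_c$ have the same predecessors already requires every $(\mathrm{inl}\,\pt,u)$ to be of the form $f\,(\mathrm{inl}\,\pt,y)$, which is the conclusion you are after. Also, ``each of these is below $\top_c$'' does not by itself force $f\,\top_d$ to be in the right summand or equal to $(\mathrm{inl}\,\pt,\infty)$. The clean reason, as in the paper, is that $(\mathrm{inl}\,\pt,\infty)=f\,(\mathrm{inl}\,\pt,\mathrm{inr}\,\pt)$ is among the predecessors of $f\,\top_d$ that you list yourself. No point $(\mathrm{inl}\,\pt,w)$ lies above it, because $\infty$ is a top of $\NInfO$. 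So deciding the first component of $f\,\top_d$ gives $f\,\top_d=\top_c$ directly, with no appeal to injectivity.
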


\begin{proof}
(\labelcref{item:iota1-simulation}) The map is order-preserving
(\Cref{thm:E-props}(\labelcref{item:E-iota-embedding})). It preserves
initial segments because the points of $\NInf$ below
$\iemb_{\eOmegaPlusOne}\,z$ are the finite points $\underline n$ with
$n \sqlt \iemb_{\eOmegaPlusOne}\,z$ (\Cref{ex:NInf-ordinal}), and such
a point is the image of $\mathrm{inl}\,n$, which is below $z$ in
$\succT{\omegaO}$ whether $z$ is finite or is the added point.

(\labelcref{item:E-delta-below-kappa-lpo}) ($\Leftarrow$) Under $\LPO$
the map $\iemb_\nu$ is an equivalence
(\Cref{prop:E-iota-equiv-discrete}(\labelcref{item:E-iota-equiv})),
and it is order-preserving and order-reflecting
(\Cref{thm:E-props}(\labelcref{item:E-iota-embedding})), so it is an
order equivalence, and an order equivalence is a simulation.

($\Rightarrow$) The single code $\nu = \eOmegaPlusOne \eAdd \eOne$
suffices. Its discrete interpretation is the sum of $\succT{\omegaO}$
and $\OneT$, and its compact interpretation is the sum of $\NInfO$ and
$\OneT$. In both, the right summand contributes the
single point $(\mathrm{inr}\,\pt,\pt)$, and no point is above it
(\Cref{sec:ordinal-arithmetic}), the order of $\OneT$ being empty.

Let $f$ be a simulation of $\Disc\nu$ into $\Comp\nu$. Every point
$(\mathrm{inl}\,\pt,z)$ of $\Disc\nu$ is below $(\mathrm{inr}\,\pt,\pt)$,
so its image is below $f\,(\mathrm{inr}\,\pt,\pt)$ and hence is not
$(\mathrm{inr}\,\pt,\pt)$. The image therefore lies in the left
summand, giving a map $g : \N+\One \to \NInf$ with
$f\,(\mathrm{inl}\,\pt,z) = (\mathrm{inl}\,\pt,\,g\,z)$.

The map $g$ is a simulation of $\succT{\omegaO}$ into $\NInfO$. It is
order-preserving because $f$ is, and it preserves initial segments
because on both sides the points below a point of the left summand are
the points of the left summand below it. There is at most one
simulation of one ordinal into another
(\Cref{sec:ordinals-in-type-theory}), so
$g = \iemb_{\eOmegaPlusOne}$ by~(\labelcref{item:iota1-simulation}),
and in particular $g$ sends the added point of $\N+\One$ to $\infty$.

The map $f$ sends $(\mathrm{inr}\,\pt,\pt)$ to $(\mathrm{inr}\,\pt,\pt)$.
Otherwise its value there is $(\mathrm{inl}\,\pt,w)$ for some
$w : \NInf$, and, the point $(\mathrm{inl}\,\pt,\mathrm{inr}\,\pt)$ of
$\Disc\nu$ being below $(\mathrm{inr}\,\pt,\pt)$, the conatural number
$\infty = g\,(\mathrm{inr}\,\pt)$ would be below $w$, whereas $\infty$
is below no conatural number (\Cref{ex:NInf-ordinal}).

Now let $u : \NInf$. The point $(\mathrm{inl}\,\pt,u)$ of $\Comp\nu$ is
below $(\mathrm{inr}\,\pt,\pt) = f\,(\mathrm{inr}\,\pt,\pt)$, so by
initial-segment preservation it is in the image of~$f$, and the point
of $\Disc\nu$ sent to it is below $(\mathrm{inr}\,\pt,\pt)$. That point
lies in the left summand, say $(\mathrm{inl}\,\pt,z)$, and then
$\iemb_{\eOmegaPlusOne}\,z = g\,z = u$. The assignment $u \mapsto z$ is
a section of $\iemb_{\eOmegaPlusOne}$, and deciding whether $z$ is of
the form $\mathrm{inl}\,n$ decides whether $u$ is finite, which is
$\LPO$.
\AgdaRefs{\Agda{Taboos.LPO}{ι𝟙-has-section-gives-LPO}.}
\end{proof}

\subsection{Inclusion of Brouwer codes into inductive-recursive codes}
\label{sec:brouwer-into-E}

The two notation systems of this paper are related by an inclusion of
the first into the second, under which both interpretations are
preserved.

\begin{definition}
\label{def:brouwer-to-E}
The map $\bemb : \Brouwer \to \Ecodes$ is defined by induction, together
with the family $A_b$ of the limit clause, whose index type
$\us{\Disc\eOmegaPlusOne}$ is $\N+\One$:
\[
  \bemb\,\bZ = \eOne, \qquad
  \bemb\,(\bS\,b) = \bemb\,b \eAdd \eOne, \qquad
  \bemb\,(\bL\,b) = \eSig\,\eOmegaPlusOne\,A_b ,
\]
\[
  A_b\,(\mathrm{inl}\,n) = \bemb\,(b\,n), \qquad
  A_b\,(\mathrm{inr}\,\star) = \eOne .
\]
\AgdaRefs{\Agda{Ordinals.BrouwerCodesIntoInductiveRecursiveCodes}{B-to-E}.}
\end{definition}

\begin{theorem}
\label{thm:brouwer-to-E} \leavevmode
\begin{enumerate}
\item\label{item:E-is-set} The type $\Ecodes$ of codes is a set.
\item\label{item:bemb-embedding} The map $\bemb : \Brouwer \to \Ecodes$ is an embedding.
\item\label{item:bemb-discrete} For every Brouwer code $b$ we have $\alttrichint b \iseq \Disc(\bemb\,b)$.
\item\label{item:bemb-compact} For every Brouwer code $b$ we have $\compactsepint b \iseq \Comp(\bemb\,b)$.
\end{enumerate}
\AgdaRefs{\Agda{Ordinals.InductiveRecursiveCodesInterpretations}{E-is-set};
\Agda{W.Properties}{W-is-set};
\Agda{Ordinals.BrouwerCodesIntoInductiveRecursiveCodes}{B-to-E-is-embedding},
\texttt{Δ-agreement}, \texttt{Κ-agreement}.}
\end{theorem}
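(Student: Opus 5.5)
For~(\labelcref{item:E-is-set}) we use the standard encoding of inductive-recursive types, following the Agda pointer to \texttt{W-is-set}. The code type $\Ecodes$ will be presented as a $\mathsf{W}$-type, or a subtype of one, over a shape type and a position family. The shapes are the finitely many constructor labels together with, for $\eSig$, the previously built code $\nu$. The positions are $\Zero$, $\Two$, or $\us{\Disc\nu}$ for $\eSig\,\nu$. A $\mathsf{W}$-type whose shape type is a set is a set, since encode–decode reduces equality of trees to equality of shapes plus function extensionality on subtrees. The shape type here mentions $\Ecodes$ itself, so we cannot argue directly. Instead we do the encode–decode argument by induction on codes: define a code-equality family $\mathsf{Eq} : \Ecodes \to \Ecodes \to \UU$ by double induction, where distinct constructors give $\Zero$ and equal constructors give equality of the corresponding arguments. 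For $\eSig\,\nu\,A$ and $\eSig\,\nu'\,A'$ this is $\mathsf{Eq}\,\nu\,\nu'$ together with pointwise $\mathsf{Eq}$ of $A$ and $A'$ after transport along the induced identification $\us{\Disc\nu} = \us{\Disc\nu'}$. We then show by the same induction that $\mathsf{Eq}$ is proposition valued and equivalent to identity. This last step is where I expect the main obstacle, because of the dependency of the index type on $\Disc\nu$. If it proves too tangled, the fallback is to pass through an explicit $\mathsf{W}$-type encoding and its sethood lemma.

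For~(\labelcref{item:bemb-embedding}), $\Brouwer$ is a set (a $\mathsf{W}$-type over a set of shapes) and so is $\Ecodes$, so it suffices to show that $\bemb$ is left cancellable. We prove $\bemb\,b = \bemb\,b' \to b = b'$ by induction on $b$ and case analysis on $b'$. Distinct constructors go to distinct constructors of $\Ecodes$: $\eOne$, $\eAdd$, and $\eSig$ are disjoint, via the code-equality family above. For $\bS$ we use injectivity of $\eAdd$ in its first argument. For $\bL$, injectivity of $\eSig$ gives $A_b = A_{b'}$, where the transport along $\eOmegaPlusOne = \eOmegaPlusOne$ is trivial because $\Ecodes$ is a set. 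Evaluating at $\mathrm{inl}\,n$ and applying the induction hypothesis gives $b\,n = b'\,n$, and function extensionality then gives $b = b'$.

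For~(\labelcref{item:bemb-discrete}) and~(\labelcref{item:bemb-compact}) we use induction on $b$, with the order equivalences built from \Cref{lem:sum-map,lem:sum-map-order}. At $\bZ$ both sides are $\OneT$. At $\bS$ both sides are $(-)\oplusT\OneT$ of order-equivalent ordinals. At $\bL$ on the discrete side, $\Disc(\eSig\,\eOmegaPlusOne\,A_b)$ is the sum over $\succT\omegaO$ of $\Disc\circ A_b$. By the induction hypothesis this is fiberwise order equivalent to $\alttrichint{b\,n}$ at $\mathrm{inl}\,n$ and equal to $\OneT$ at the added point, which is precisely the extension along $\mathrm{inl}$ computed in the proof of \Cref{lem:successor-sum}, using \Cref{lem:extension-restricts} and \Cref{lem:extension-property}(\labelcref{item:ext-off-image}).

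At $\bL$ on the compact side, $\Comp(\bemb(\bL\,b))$ is the sum over $\NInfO$ of $(\Comp\circ A_b)\extend\iota_1$, while $\compactsepint{\bL\,b}$ is the sum over $\NInfO$ of $\compactsepint{b\,-}\extend\iota$. We compare the fibers at each $u:\NInf$ as products over $\fib{\iota_1}{u}$ and over $\fib{\iota}{u}$. Since $\fib{\iota_1}{u}$ is equivalent to $\fib{\iota}{u} + (u=\infty)$, its product splits as the product over $\fib\iota u$ times the product over $u = \infty$ of $\OneT$, and the latter is trivial. Combined with the induction hypothesis on the factors, this gives a fiberwise order equivalence, which is order preserving and reflecting because the orders on products over propositions are defined pointwise at the witness. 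The summed equivalence then follows from \Cref{lem:sum-map-order}.
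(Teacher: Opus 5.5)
Your arguments for (2)--(4) are essentially the paper's. In (2) you are even more explicit than the paper about why the transport along $\eOmegaPlusOne=\eOmegaPlusOne$ is trivial. Item (1), however, has a genuine gap, exactly at the step you flag. With your recursive $\mathsf{Eq}$, the $\eSig$ clause cannot be written down without a decoding map $\mathsf{Eq}\,\nu\,\nu'\to(\nu=\nu')$ along which to transport $\us{\Disc\nu}$. So $\mathsf{Eq}$ and decoding must be defined by simultaneous recursion. Worse, already the reflexivity witness at $\eSig\,\nu\,A$, which you need for encoding and hence for the equivalence with identity, requires that transport along the decoded self-identification $\nu=\nu$ act trivially on $\us{\Disc\nu}$. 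That coherence is essentially the sethood being proved, and the proposal neither threads it through the induction nor avoids it. The fallback does not help as stated: as you observe yourself, the natural $\mathsf{W}$-encoding has arities $\us{\Disc\nu}$, and hence shapes that mention $\Ecodes$.

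The missing idea, which is how the paper proves (1), is to use \Cref{thm:E-delta}. There $\us{\Disc\nu}$ is a retract of $\N$, with section $\sigma_\nu$ and retraction $\rho_\nu$, so a family indexed by $\us{\Disc\nu}$ is determined by its composite with $\rho_\nu$. This makes the arity of $\eSig$ independent of $\nu$. One then maps $\Ecodes$ into the ordinary $\mathsf{W}$-type whose shapes are just the five constructor labels, with arities $\Zero$, $\Zero$, $\One+\One$, $\One+\One$ and $\One+\N$, by $f\,(\eSig\,\nu\,A)=(\eSig,[f\,\nu,\,f\circ A\circ\rho_\nu])$ and the evident other clauses. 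Only left cancellability of $f$ is needed, not a characterization of identity types. At $\eSig$ the induction hypothesis gives $\nu=\nu'$, path induction on it reduces the rest to $A\,(\rho_\nu\,n)=A'\,(\rho_\nu\,n)$ for every $n$, and precomposing with $\sigma_\nu$ gives $A=A'$. The $\mathsf{W}$-type is a set because its shape type is, and a type with a left-cancellable map into a set is a set by \Cref{thm:hedberg}.

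Your own route can alternatively be repaired by replacing the recursive $\mathsf{Eq}$ with the non-recursive no-confusion encode--decode. This gives $(\eSig\,\nu\,A=\eSig\,\nu'\,A')\simeq((\nu,A)=(\nu',A'))$ by case analysis alone. Then prove by induction on codes that every code is homotopy isolated; the resulting sum of identity types is a proposition by the induction hypotheses for $\nu$ and for the $A\,x$. The same no-confusion argument also supplies the disjointness and injectivity of constructors that you take from $\mathsf{Eq}$ in (2).
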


\begin{proof}
  (\labelcref{item:E-is-set}) Let $W$ be the
  $\mathsf{W}$-type~\cite[Section~5.3]{HoTTBook} whose shapes are the
  five constructors of $\Ecodes$, of arities $\Zero$, $\Zero$,
  $\One+\One$, $\One+\One$ and $\One+\N$, an element of which is a
  pair of a shape and a family of elements of $W$ indexed by the arity
  of that shape.
  By~\Cref{thm:E-delta} the type $\us{\Disc\nu}$ is a retract of~$\N$,
  with section $\sigma_\nu : \us{\Disc\nu} \to \N$ and retraction
  $\rho_\nu : \N \to \us{\Disc\nu}$. A family indexed by
  $\us{\Disc\nu}$ is determined by its composite with $\rho_\nu$, so
  that $\N$ can serve as the arity of the shape of $\eSig$.
  Define $f : \Ecodes \to W$ by induction on codes, writing $[\;]$ for
  the empty family, $[x,y]$ for the family over $\One+\One$ with
  values $x$ and $y$, and $[x,t]$ for the family over $\One+\N$ with
  value $x$ at the point of $\One$ and $t$ on $\N$:
  \begin{align*}
    f\,\eOne &= (\eOne, [\;]),
    &
    f\,\eOmegaPlusOne &= (\eOmegaPlusOne, [\;]),
    \\
    f\,(\nu_0\eAdd\nu_1) &= (\eAdd, [f\,\nu_0, f\,\nu_1]),
    &
    f\,(\nu_0\eMul\nu_1) &= (\eMul, [f\,\nu_0, f\,\nu_1]),
    \\
    f\,(\eSig\,\nu\,A) &= (\eSig, [f\,\nu, f \circ A \circ \rho_\nu]).
  \end{align*}
  The map $f$ is left cancellable, by induction on codes. Two codes
  with the same image have the same shape, hence are given by the same
  constructor, and, the type of shapes being a set, their branching
  families agree. At $\eSig$ this gives, by the induction hypothesis,
  $\nu = \nu'$ and $A\,(\rho_\nu\,n) = A'\,(\rho_\nu\,n)$ for every
  $n : \N$, and then
  $A\,x = A\,(\rho_\nu\,(\sigma_\nu\,x)) =
  A'\,(\rho_\nu\,(\sigma_\nu\,x)) = A'\,x$ for every
  $x : \us{\Disc\nu}$, so that $A = A'$.
  A $\mathsf{W}$-type whose type of shapes is a set is a set, and a
  type admitting a left-cancellable map into a set is a set, which
  shows that $\Ecodes$ is a set.

(\labelcref{item:bemb-embedding}) In view of
(\labelcref{item:E-is-set}), it is enough to show that $\bemb$ is left
cancellable.  The constructors of $\Ecodes$ have disjoint images and
are left cancellable in each argument, so if $\bemb\,b = \bemb\,b'$
then $b$ and $b'$ are given by the same constructor of $\Brouwer$, and
we argue by induction. At $\bZ$ there is nothing to prove. At $\bS$
the hypothesis identifies $\bemb\,b\eAdd\eOne$ with
$\bemb\,b'\eAdd\eOne$, hence $\bemb\,b$ with $\bemb\,b'$, so that
$b = b'$ by the induction hypothesis. At $\bL$ it identifies
$\eSig\,\eOmegaPlusOne\,A_b$ with $\eSig\,\eOmegaPlusOne\,A_{b'}$,
hence $A_b$ with $A_{b'}$, and evaluation at $\mathrm{inl}\,n$ gives
$b\,n = b'\,n$ for every $n$, again by the induction hypothesis.

(\labelcref{item:bemb-discrete}) and~(\labelcref{item:bemb-compact})
The two agreements are proved by induction on $b$. At $\bZ$ both sides
are the one-point ordinal. At $\bS$ both sides are additions of $\OneT$, that
is, sums over $\One+\One$ (\Cref{sec:ordinal-arithmetic}) with the same
index, so it is enough to compare their two summands, the left ones by
the induction hypothesis and the right ones being $\OneT$ on both
sides. Order equivalent families over a common index have order
equivalent sums,
by~\Cref{lem:sum-map}(\labelcref{item:sum-map-equiv})
and~\Cref{lem:sum-map-order} with the identity on the index.

At $\bL$ both sides are again sums over a common index, so again it is
enough to compare the fibers.

In~(\labelcref{item:bemb-discrete}) the index is $\succT\omegaO$, whose
underlying type is $\N+\One$. The fibers on the left are the extension
of the family $n \mapsto \alttrichint{b\,n}$ along
$\mathrm{inl} : \N\hookrightarrow\N+\One$ (\Cref{def:extended-and-successor-sum}(\labelcref{item:successor-sum})),
and those on the right are the values of $\Disc\circ A_b$
(\Cref{def:E}). At $\mathrm{inl}\,n$ the extension gives
$\alttrichint{b\,n}$ back (\Cref{lem:extension-restricts}), which the
induction hypothesis compares with $\Disc(\bemb\,(b\,n))$. At
$\mathrm{inr}\,\star$ the fiber of $\mathrm{inl}$ is empty, so the
extension is the one-point ordinal
(\Cref{lem:extension-property}(\labelcref{item:ext-off-image})), and so
is $\Disc\,\eOne$.

In~(\labelcref{item:bemb-compact}) the index is $\NInfO$, but the two
families are extensions along different embeddings into $\NInf$, namely
$\iota : \N\hookrightarrow\NInf$ on the left (\Cref{def:extended-and-successor-sum}(\labelcref{item:extended-sum}))
and $\iemb_{\eOmegaPlusOne} : \N+\One\hookrightarrow\NInf$ on the right
(\Cref{def:E-kappa}). Write $h_n$ for the order equivalence
$\compactsepint{b\,n} \iseq \Comp(\bemb\,(b\,n))$ given by the
induction hypothesis. The comparison at a point $u : \NInf$ is defined
on the points of the members of the two families at $u$. A point of
the member on the left is a function $\varphi$ assigning to each $(n,p) : \fib\iota u$ a point
of $\compactsepint{b\,n}$, and it is
sent to the function assigning to
$(\mathrm{inl}\,n, p) : \fib{\iemb_{\eOmegaPlusOne}}u$ the point
$h_n(\varphi\,(n,p))$, and to
$(\mathrm{inr}\,\star, p)$ the point of the one-point ordinal
$\Comp\,\eOne$. This is an equivalence, its inverse restricting a
function on $\fib{\iemb_{\eOmegaPlusOne}}u$ along
$(n,p) \mapsto (\mathrm{inl}\,n, p)$ and applying $h_n^{-1}$. It is
order-preserving and order-reflecting, because a witness of the order
relation is given at a point of the fiber of the embedding, and at
$(\mathrm{inr}\,\star, p)$ there is none, the order of the one-point
ordinal being empty.
\AgdaRefs{\Agda{Ordinals.Closure}{∑-≃ₒ};
\Agda{Ordinals.Injectivity}{↗-out-of-range}, \texttt{↗-propertyₒ}.}
\end{proof}

As opposed to the $\compactsepint{-}$ interpretation of Brouwer codes,
the $\Comp$ interpretation of $\Ecodes$ codes fails to be totally
separated constructively.

\begin{proposition}
\label{prop:kappa-not-tot-sep}
If $\Comp\nu$ is totally separated for every code $\nu$, then
$\neg\neg\WLPO$ holds.
\AgdaRefs{\Agda{Ordinals.InductiveRecursiveCodesInterpretations}{Κ-totally-separated-gives-¬¬WLPO},
  \texttt{⌜ℕ∞₂⌝} for the code below, and
  \texttt{Κ⌜ℕ∞₂⌝-totally-separated-gives-¬¬WLPO} for what it gives.}
\end{proposition}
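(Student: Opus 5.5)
We plan to exhibit a single code whose compact interpretation is, up to order equivalence, the type $\Sigma_{x:\NInf}\,\Two^{x=\infty}$ of \Cref{ex:tot-sep-failures}(\labelcref{item:two-infinities-not-tot-sep}), or a type having it as a retract. The natural candidate is
\[
  \nu \;:\equiv\; \eSig\,\eOmegaPlusOne\,A,
  \qquad
  A\,(\mathrm{inl}\,n) = \eOne,
  \qquad
  A\,(\mathrm{inr}\,\pt) = \eOne\eAdd\eOne .
\]
Here the index code is interpreted discretely as $\N+\One$, with $\iemb_{\eOmegaPlusOne} = \iota_1$. We assign one point to each finite index and two points to the added point.

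By \Cref{def:E-kappa}, $\us{\Comp\nu} = \Sigma_{u:\NInf}\,\prod_{(z,p):\fib{\iota_1}{u}}\us{\Comp(A\,z)}$. The first step is to compute the fibres. At $u$, the fibre of $\iota_1$ is a proposition, since $\iota_1$ is an embedding. For $(\mathrm{inl}\,n,p)$ the factor is $\One$, and for $(\mathrm{inr}\,\pt,p)$, with $p : \infty = u$, the factor is $\us{\Comp(\eOne\eAdd\eOne)}$, which is $\Sigma_{i:\One+\One}\One \simeq \Two$. Splitting the fibre as a sum, the product becomes
\[
  \Big(\textstyle\prod_{n:\N}(\underline n = u)\to\One\Big)\times\big((\infty = u)\to\Two\big)
  \;\simeq\; \Two^{\,u=\infty}.
\]
These equivalences are natural in $u$. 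Hence, by \Cref{lem:sum-map}(\labelcref{item:sum-map-equiv}) with the identity on the index, we get $\us{\Comp\nu} \simeq \Sigma_{u:\NInf}\Two^{u=\infty}$. Only the underlying types matter here, so no order bookkeeping is needed.

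The conclusion then follows. If $\Comp\nu$ is totally separated, then so is $\Sigma_{u:\NInf}\Two^{u=\infty}$, since total separatedness is closed under equivalence (\Cref{prop:tot-sep-closure}(\labelcref{item:tot-sep-retract})). \Cref{ex:tot-sep-failures}(\labelcref{item:two-infinities-not-tot-sep}) then gives $\neg\neg\WLPO$.

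The main obstacle is the fibrewise computation. It needs care with the product indexed by the fibre of $\iota_1$, which is a sum of the two fibres of $\iota_1$ restricted to $\N$ and to $\One$, because $\prod$ over a sum is a product of $\prod$s. It also needs the fact that $\infty = u$ and $u = \infty$ are equivalent, which holds trivially by path inversion. If the direct equivalence is awkward, I would instead build a retraction. Define $(u,\psi) \mapsto (u,\lambda q.\,\psi(\mathrm{inr}\,\pt,q))$ together with the evident section, which inserts $\pt$ at finite fibre points. A retract suffices, since total separatedness passes to retracts.
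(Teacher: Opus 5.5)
Your proposal is correct and follows the paper's proof: it uses the same code $\eSig\,\eOmegaPlusOne\,A$, with $A$ sending the finite points to $\eOne$ and the added point to $\eOne\eAdd\eOne$, the same identification $\us{\Comp\nu}\simeq\Sigma_{u:\NInf}\Two^{u=\infty}$, and the same appeal to \Cref{ex:tot-sep-failures} together with closure of total separatedness under equivalence. Your explicit splitting of the product over $\fib{\iota_1}{u}$ into its $\N$ and $\One$ parts spells out what the paper asserts in one line, and the fallback retraction is not needed.
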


\begin{proof}
Take $\nu = \eSig\,\eOmegaPlusOne\,A$, where $A$ sends the finite
points of $\Disc\eOmegaPlusOne$ to $\eOne$ and its added point to
$\eOne \eAdd \eOne$. By~\Cref{def:E-kappa}, the ordinal $\Comp\nu$ is
the sum over $\NInfO$ of the extension of $\Comp \circ A$ along
$\iemb_{\eOmegaPlusOne} : \N+\One \hookrightarrow \NInf$, so a point of
its underlying type is a point $u : \NInf$ together with a function
assigning to each element of $\fib{\iemb_{\eOmegaPlusOne}}u$ a point of
the corresponding member of the family. The elements with first
component $\mathrm{inl}\,n$ contribute the one-point type
$\us{\Comp\,\eOne}$, and the element with first component
$\mathrm{inr}\,\star$, which is available exactly when $u = \infty$,
contributes $\us{\Comp(\eOne \eAdd \eOne)}$, a sum over $\One+\One$ of
one-point ordinals (\Cref{sec:ordinal-arithmetic}), which we identify
with the type $\Two$. Hence
\[
  \us{\Comp\nu} \;\simeq\;
  \textstyle\Sigma_{u:\NInf}\,\Two^{u = \infty},
\]
the type
of~\Cref{ex:tot-sep-failures}(\labelcref{item:two-infinities-not-tot-sep}),
whose total separatedness gives $\neg\neg\WLPO$. Total separatedness is
closed under equivalence
(\Cref{prop:tot-sep-closure}(\labelcref{item:tot-sep-retract})), so the
total separatedness of $\Comp\nu$ gives $\neg\neg\WLPO$ too.
\end{proof}

\begin{remark}
\label{rem:kappa-tot-sep-contrast}
Since the compact interpretation $\compactsepint{-}$ of a Brouwer code
is totally separated (\Cref{thm:four-interp-props}(\labelcref{item:compactsep-interp}))
and $\Comp$ agrees with it along the inclusion
of~\Cref{thm:brouwer-to-E}, the ordinals witnessing the failure are
not in general denoted by Brouwer codes, so that $\Ecodes$ denotes
strictly more compact ordinals.
\end{remark}

\subsection{Isolated points and limit points}

A point in the image of the embedding
$\iemb_\nu : \Disc \nu \to \Comp \nu$ can be isolated in $\Comp\nu$ or
a limit point. A boolean valued function on $\us{\Disc\nu}$, defined
by induction on the code~$\nu$, determines which of the two holds at
each point.
Everything in this subsection holds verbatim for the
Brouwer codes of~\Cref{sec:discrete-compact}, with the same proofs, and
we develop it here because the inductive-recursive codes are the more
general system.
\claim{limit-points-for-brouwer-codes}
\AgdaRefs{\Agda{Ordinals.BrouwerCodesDiscreteAndCompactInterpretations}{ℓ},
\texttt{ℓ-isolated}, \texttt{ℓ-limit},
\texttt{isolatedness-decision}, \texttt{isolatedness-decision'}.}

In classical topology, a limit point is a non-isolated point. Assuming
excluded middle in our type theory, every point of any set is
isolated, and our ordinals are all sets, so that a point of our
ordinals is never non-isolated. In order to overcome this difficulty
in the absence of excluded middle but remaining compatible with
excluded middle, we define an element to be a limit point when its
isolatedness implies $\WLPO$. We choose this taboo because
$\neg \WLPO$ is a weak continuity principle, as discussed
in~\Cref{rem:weak-topological-topos}.

\begin{definition}[Topological limit point]
\label{def:limit-point}
A point $x$ of a type is a \emph{limit point} if its isolatedness
implies $\WLPO$:
\[
  \text{$x$ is a limit point} \;:\equiv\; (\text{$x$ is isolated} \to \WLPO) .
\]
\AgdaRefs{\Agda{TypeTopology.LimitPoints}{is-limit-point}.}
\end{definition}
For example, at the code $\eOmegaPlusOne$, whose
compact interpretation is $\NInfO$, the finite points are isolated
while the added point $\infty$ is a limit point, its isolatedness
being $\WLPO$
(\Cref{ex:isolated}(\labelcref{item:finite-isolated,item:infty-isolated-wlpo})).
\begin{definition}[Limit point indicator]
\label{def:limitfn}
The limit point indicator of a code~$\nu$ is the function
$\limitfn_\nu : \us{\Disc\nu} \to \Two$ defined by induction on $\nu$ by
\begin{align*}
\limitfn_{\eOne}(\pt) &= 0
&
\limitfn_{\nu_0\eAdd\nu_1}(\mathrm{inl}\,\pt,x_0) &= \limitfn_{\nu_0}(x_0)
\\
\limitfn_{\eOmegaPlusOne}(\mathrm{inl}\,n) &= 0
&
\limitfn_{\nu_0\eAdd\nu_1}(\mathrm{inr}\,\pt,x_1) &= \limitfn_{\nu_1}(x_1)
\\
\limitfn_{\eOmegaPlusOne}(\mathrm{inr}\,\pt) &= 1
&
\limitfn_{\nu_0\eMul\nu_1}(x_0,x_1) &= \max(\limitfn_{\nu_0}(x_0),\limitfn_{\nu_1}(x_1))
\\
& &
\limitfn_{\eSig\,\nu\,A}(x,y) &= \max(\limitfn_\nu(x),\limitfn_{A\,x}(y))
\end{align*}
\AgdaRefs{\Agda{Ordinals.InductiveRecursiveCodesInterpretations}{ℓ}.}
\end{definition}

\begin{theorem}
\label{thm:limitfn}
For every $\nu:\Ecodes$ and $x:\us{\Disc\nu}$,
\begin{enumerate}
\item\label{item:limitfn-isolated} if $\limitfn_\nu(x) = 0$ then
  $\iemb_\nu\,x$ is isolated in $\Comp\nu$,
\item\label{item:limitfn-limit} if $\limitfn_\nu(x) = 1$ then $\iemb_\nu\,x$
  is a limit point of $\Comp\nu$,
\item\label{item:limitfn-dichotomy} every point in the
  image of $\iemb_\nu$ is either isolated or a limit point,
\item\label{item:limitfn-decidable} if $\neg\WLPO$ holds,
  isolatedness of $\iemb_\nu\,x$ is decidable.
\end{enumerate}
\AgdaRefs{\Agda{Ordinals.InductiveRecursiveCodesInterpretations}{ℓ-isolated};
\texttt{ℓ-limit}; \texttt{isolatedness-decision};
\texttt{isolatedness-decision'}; \texttt{ℓ-limit} is strengthened
loc.\ cit.\ to \texttt{ℓ-limit⁺}, which concludes
\Agda{TypeTopology.LimitPoints}{is-limit-point⁺}, the implication from
weak isolatedness to $\WLPO$.}
\end{theorem}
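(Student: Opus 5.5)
The plan is to prove items~(\labelcref{item:limitfn-isolated}) and~(\labelcref{item:limitfn-limit}) together by induction on $\nu$, following the clauses of~\Cref{def:limitfn} and~\Cref{def:E-kappa}. Items~(\labelcref{item:limitfn-dichotomy}) and~(\labelcref{item:limitfn-decidable}) then follow without further induction. For~(\labelcref{item:limitfn-dichotomy}), a point in the image is $\iemb_\nu\,x$ in an unspecified way. Since $\iemb_\nu$ is an embedding, $x$ is unique and can be recovered, or we can argue under the truncation, because the goal is a disjunction that we decide by inspecting the boolean $\limitfn_\nu(x)$. For~(\labelcref{item:limitfn-decidable}), under $\neg\WLPO$ a limit point is not isolated, so the value of $\limitfn_\nu(x)$ decides isolatedness.

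\emph{Base cases.} At $\eOne$ the one point of $\OneT$ is isolated. At $\eOmegaPlusOne$ we have $\iemb = \iota_1$, and the claims are that $\underline n$ is isolated and that isolatedness of $\infty$ gives $\WLPO$. Both are~\Cref{ex:isolated}(\labelcref{item:finite-isolated},\labelcref{item:infty-isolated-wlpo}).

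\emph{Inductive cases.} All remaining cases are sums, namely $\eAdd$ over $\One+\One$, $\eMul$ over a constant family, and $\eSig$. In each case $\iemb$ is a map of sums with index map $\iemb$ of a smaller code (or the identity) and fiber maps given by $\iemb$ of smaller codes, composed with the equivalence $\varphi_x^{-1}$ in the $\eSig$ case. Suppose $\limitfn = 0$. Then $\max$ gives $0$ in both components, so by the induction hypothesis both components of the image point are isolated. Equivalences preserve isolatedness (\Cref{prop:discrete-closure}(\labelcref{item:discrete-reflected})), so $\varphi_x^{-1}(\iemb_{A\,x}\,z)$ is isolated, and the pair is isolated by~\Cref{prop:discrete-closure}(\labelcref{item:discrete-sigma-isolated}). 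Suppose instead $\limitfn = 1$. Inspecting the two booleans, one of them is $1$. If it is the index component, we assume the pair is isolated and obtain isolatedness of $\iemb_\nu\,x$ by~\Cref{lem:sigma-isolated}. This needs the fibers $\Comp_{\nu,A}\,y$ (respectively $\Comp\nu_1$, or the two summands) to be compact, which holds by~\Cref{thm:E-props}(\labelcref{item:E-comp-infima}) and ($\dagger$) in its proof. The induction hypothesis then yields $\WLPO$. If it is the fiber component, the index type $\us{\Comp\nu}$ is a set, being an ordinal (\Cref{lem:ext-gives-set}), so isolatedness of the pair gives isolatedness of the second component by~\Cref{prop:discrete-closure}(\labelcref{item:discrete-sigma-isolated}). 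Transporting back along $\varphi_x$ then gives isolatedness of $\iemb_{A\,x}\,z$, and hence $\WLPO$. For $\eMul$ we can instead use the $\times$ clauses, which need no set hypothesis.

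The main obstacle is the $\eSig$ case. There the second component of $\iemb_{\eSig\,\nu\,A}(x,z)$ lives in the extended fiber $\Comp_{\nu,A}(\iemb_\nu\,x)$ rather than in $\Comp(A\,x)$ itself, and we must check carefully that isolatedness passes both ways through $\varphi_x$. The compactness of that extended fiber, needed for~\Cref{lem:sigma-isolated}, must also be taken from the micro-Tychonoff argument rather than from the induction hypothesis directly.
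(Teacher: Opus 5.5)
Your proposal is correct and follows the paper's proof essentially step for step. The paper runs (1) and (2) as two separate inductions rather than one, but it uses the same base cases and the same appeals to \Cref{prop:discrete-closure}(\labelcref{item:discrete-sigma-isolated},\labelcref{item:discrete-reflected}) and \Cref{lem:sigma-isolated}, with compactness of the extended fibers obtained through a micro-Tychonoff argument, and it derives (3) and (4) exactly as you do. One caution for (3): only your first route works, namely recovering $x$ from the fiber of the embedding $\iemb_\nu$, because the untruncated sum ``isolated $+$ limit point'' is not a proposition (under $\WLPO$ both disjuncts can hold), so you cannot eliminate the truncation directly into it.
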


\begin{proof}
(\labelcref{item:limitfn-isolated}) By induction on $\nu$, using
throughout that a pair is isolated as soon as both of its components
are
(\Cref{prop:discrete-closure}(\labelcref{item:discrete-sigma-isolated})).

At $\eOne$ the unique point is isolated, the type $\One$ being
discrete. At $\eOmegaPlusOne$ the hypothesis leaves
$x = \mathrm{inl}\,n$, whose image is the finite conatural
number~$\underline n$, which is isolated
(\Cref{ex:isolated}(\labelcref{item:finite-isolated})). At
$\eAdd$ the image is a pair whose index is $\mathrm{inl}\,\pt$ or
$\mathrm{inr}\,\pt$, isolated in $\One+\One$
(\Cref{prop:discrete-closure}(\labelcref{item:discrete-plus})), and
whose second component is isolated by the induction hypothesis, the
value of $\limitfn$ at the pair being its value at that component. At
$\eMul$ and $\eSig$ the value of $\limitfn$ is the maximum of the two
values of~\Cref{def:limitfn}, so the hypothesis makes both of them $0$,
and the induction hypothesis applies to both components.
This finishes the case $\eMul$, and in the case $\eSig$ the second
component of the image is $\varphi_x^{-1}(\iemb_{A\,x}\,y)$
(\Cref{def:E-kappa}), which is isolated because $\iemb_{A\,x}\,y$ is
and equivalences preserve isolatedness
(\Cref{prop:discrete-closure}(\labelcref{item:discrete-reflected})).
\AgdaRefs{\Agda{TypeTopology.SigmaDiscrete}{Σ-isolated};
\Agda{UF.DiscreteAndSeparated}{equivs-preserve-isolatedness}.}

(\labelcref{item:limitfn-limit}) By induction on $\nu$, assuming in
each case that $\iemb_\nu\,x$ is isolated and deriving $\WLPO$.

At $\eOne$ the claim holds vacuously, as the hypothesis
$\limitfn_\nu(x) = 1$ cannot hold, $\limitfn_{\eOne}$ being
constantly~$0$. At $\eOmegaPlusOne$ the hypothesis leaves
$x = \mathrm{inr}\,\pt$, whose image is $\infty$, and the isolatedness of $\infty$ is $\WLPO$
(\Cref{ex:isolated}(\labelcref{item:infty-isolated-wlpo})). The
remaining cases pass the assumed isolatedness down to the component
that $\limitfn$ flags, where the induction hypothesis converts it into
$\WLPO$.

At $\eAdd$ that component is the second one, and isolatedness of a pair
gives isolatedness of its second component when the index type is a
set
(\Cref{prop:discrete-closure}(\labelcref{item:discrete-sigma-isolated})),
which $\One+\One$ is. At $\eMul$ and $\eSig$ the value of $\limitfn$ is
again that maximum, so the hypothesis makes one of the two values $1$,
and the flagged component is the first or the second accordingly. At
$\eMul$ the sum is a product, whose components are both isolated with
no further hypothesis
(same item). At $\eSig$ the first component needs the members of the
extended family to be compact
(\Cref{lem:sigma-isolated}), which they are
by~\Cref{thm:micro-tychonoff}, being products indexed by propositions
of the ordinals $\Comp(A\,x)$, compact pointed
by~\Cref{thm:E-props}(\labelcref{item:E-comp-infima})
and~\Cref{rem:inf-gives-compact}, and the second component is isolated because $\us{\Comp\nu}$ is a set
(\Cref{lem:ext-gives-set}), which gives the isolatedness of
$\varphi_x^{-1}(\iemb_{A\,x}\,y)$ and hence of $\iemb_{A\,x}\,y$,
equivalences reflecting isolatedness
(\Cref{prop:discrete-closure}(\labelcref{item:discrete-reflected})).
\AgdaRefs{\Agda{TypeTopology.CompactTypes}{Σ-isolated-left};
\Agda{Ordinals.InductiveRecursiveCodesInterpretations}{𝓚-Compact};
\Agda{TypeTopology.SigmaDiscrete}{Σ-isolated-right},
\texttt{×-isolated-left}, \texttt{×-isolated-right};
\Agda{UF.DiscreteAndSeparated}{equivs-reflect-isolatedness};
\Agda{UF.DiscreteAndSeparated}{is-isolated-gives-is-isolated'}.}

(\labelcref{item:limitfn-dichotomy}) Decide the value of
$\limitfn_\nu(x)$, which is a point of $\Two$, and
apply~(\labelcref{item:limitfn-isolated})
or~(\labelcref{item:limitfn-limit}).

(\labelcref{item:limitfn-decidable})
By~(\labelcref{item:limitfn-dichotomy}) the point $\iemb_\nu\,x$ is
isolated, in which case isolatedness is decided, or a limit point, in
which case its isolatedness implies $\WLPO$, and contraposing
against $\neg\WLPO$ gives that it is not isolated.
\end{proof}

\begin{remark}
\label{rem:limit-points-do-not-agree}
A point of an ordinal is an \emph{order limit point} when it is
neither the least element nor a successor. This notion does not agree
with the topological notion of limit point (\Cref{def:limit-point}).
The point $(\infty,\underline 1)$ of the ordinal
$\Comp(\eOmegaPlusOne\eMul\eOmegaPlusOne) =
\osum{\NInfO}{}(\lambda\_.\,\NInfO)$ is the successor of
$(\infty,\underline 0)$, and hence is not an order limit point, but it
is a topological limit point, since its isolatedness gives $\WLPO$,
through its first coordinate $\infty$, a limit point of~$\NInfO$.
\AgdaRefs{\Agda{Ordinals.LimitPoints}{is-successor-of},
  \texttt{is-order-limit-point},
  \texttt{example-of-topological-limit-point-which-is-not-order-limit}.}
\end{remark}

\section{Discussion}
\label{sec:discussion}

\subsection{Summary of results}
\label{sec:conclusion}

\begin{table}[htbp]
\caption{The interpretations of the two notation systems.}
\label{tab:interpretations}
\footnotesize
\setlength{\tabcolsep}{3pt}
\begin{tabular}{|L{2.6cm}|L{1.8cm}|L{3.9cm}|L{3.4cm}|}
\toprule
Interpretation & Reading of the limit constructor &
\holds{Holds} for every code & \failsc{Fails} constructively \\
\midrule
\multicolumn{4}{|l|}{Brouwer codes~$b$ (\Cref{def:brouwer-codes}),
  limit constructor~$\bL$} \\
\midrule
standard $\stdint b$\newline in $\Ord[]$\newline (\Cref{def:std-interp})
  & supremum
  & nothing beyond being an ordinal
  & \failsc{trichotomy}, giving $\LPO$
    (\Cref{prop:failure-trichotomy}), and \failsc{compactness}, giving
    $\LPO$ (\Cref{prop:std-compact-lpo}) \\
\midrule
$\compactsepint b$\newline in $\OrdT[]$\newline
(\Cref{def:non-standard-interpretations})
  & extended sum
  & \holds{compactness} and \holds{total separatedness}
    (\Cref{thm:four-interp-props}), being a
    \holds{retract of $\Cantor$} (\Cref{thm:four-interp-props}), and
    \holds{infima of complemented subsets} (\Cref{thm:kappa-infs}),
    hence the \holds{least element property} (\Cref{cor:kappa-least})
  & \failsc{discreteness}, giving $\WLPO$
    (\Cref{prop:delta-kappa-fail}),
    and hence \failsc{trichotomy} \\
\midrule
$\compactint b$\newline in $\Ord[]$\newline (\Cref{def:non-standard-interpretations})
  & supremum of the extension
  & \holds{compactness} (\Cref{thm:four-interp-props})
  & \failsc{total separatedness}, giving $\neg\neg\WLPO$
    (\Cref{prop:failure-tot-sep}) \\
\midrule
$\trichint b$\newline in $\OrdThree[]$\newline (\Cref{def:non-standard-interpretations})
  & sum over $\omegaO$
  & \holds{trichotomy} (\Cref{thm:four-interp-props}), hence
    \holds{discreteness} (\Cref{lem:trichotomy-gives-discrete})
  & \failsc{compactness}, giving $\LPO$
    (\Cref{prop:trich-compact-lpo}) \\
\midrule
$\alttrichint b$\newline in $\OrdT[]$\newline (\Cref{def:delta-kappa})
  & successor sum
  & \holds{trichotomy} and being a \holds{retract of $\N$}, hence
    \holds{discreteness} (\Cref{thm:delta-kappa-props})
  & \failsc{compactness} (\Cref{prop:delta-kappa-fail}) and the
    \failsc{least element property} (\Cref{prop:delta-least-lpo}), each
    equivalent to $\LPO$ \\
\midrule
\multicolumn{4}{|l|}{Inductive-recursive codes~$\nu$ (\Cref{def:E}),
  sum constructor~$\eSig$} \\
\midrule
$\Disc\nu$\newline in $\OrdT[]$\newline (\Cref{def:E})
  & sum over $\Disc\nu$
  & \holds{trichotomy} and being a \holds{retract of $\N$}, hence
    \holds{discreteness} (\Cref{thm:E-delta})
  & \failsc{compactness} and the \failsc{least element property}, each
    equivalent to $\LPO$ (\Cref{prop:E-delta-lpo,prop:E-delta-least-lpo}) \\
\midrule
$\Comp\nu$\newline in $\OrdT[]$\newline (\Cref{def:E-kappa})
  & sum over $\Comp\nu$ of the extension
  & \holds{compactness}, with \holds{infima of complemented subsets}
    (\Cref{thm:E-props}), hence the \holds{least element property}
    (\Cref{cor:E-least})
  & \failsc{total separatedness}, giving $\neg\neg\WLPO$
    (\Cref{prop:kappa-not-tot-sep}), and \failsc{discreteness}, giving
    $\WLPO$
    (\Cref{prop:E-iota-equiv-discrete}),
    and hence \failsc{trichotomy} \\
\bottomrule
\end{tabular}
\end{table}

\begin{table}[htbp]
\caption{The embedding of the discrete into the compact
interpretation.}
\label{tab:iota}
\footnotesize
\setlength{\tabcolsep}{4pt}
\begin{tabular}{|L{3.6cm}|L{3.9cm}|L{3.9cm}|}
\toprule
Embedding & \holds{Holds} for every code & \failsc{Fails} constructively \\
\midrule
$\iemb_b : \us{\alttrichint b} \hookrightarrow \us{\compactsepint b}$\newline
for Brouwer codes~$b$\newline (\Cref{def:iota})
  & \holds{extreme density}, \holds{order preservation} and
    \holds{order reflection} (\Cref{thm:iota})
  & \failsc{surjectivity}, equivalent to $\LPO$ (\Cref{prop:brouwer-iota-equiv-LPO}) \\
\midrule
$\iemb_\nu : \us{\Disc\nu} \hookrightarrow \us{\Comp\nu}$\newline
for $\Ecodes$-codes~$\nu$\newline (\Cref{def:E-kappa})
  & \holds{extreme density}, \holds{order preservation} and
    \holds{order reflection} (\Cref{thm:E-props})
  & \failsc{surjectivity}, equivalent to $\LPO$ (\Cref{prop:E-iota-equiv-discrete}) \\
\bottomrule
\end{tabular}
\end{table}

We constructed infinite types that are exhaustively searchable, and
called them \emph{compact} because in many respects they behave like the
topological notion of the same name (\Cref{sec:compact-types-theory}).
We then noticed that the constructions carry natural well-orders and
preserve them, and the ordinals thus obtained measure the logical
complexity of the compact types we produced. We gave two notation
systems for them, each with several interpretations as ordinals.

The interpretations differ mainly in the reading of the limit
constructor, and they are collected in~\Cref{tab:interpretations},
which records for each of them what holds and what fails
constructively. Total
separatedness, one of the properties listed there, is a boolean
Leibniz principle
corresponding to the topological notion of the same name
(\Cref{sec:totally-separated-types}). Two of the interpretations are
related by an embedding of the discrete one into the compact one
(\Cref{def:iota,thm:iota}), whose properties are collected
in~\Cref{tab:iota}. That embedding
needs no assumption, whereas the comparisons among the other
interpretations need excluded middle (\Cref{thm:comparisons}), which
for every Brouwer code~$b$ gives
\[
\begin{array}{ccc}
\stdint{b} & \oemb & \trichint{b} \\[1ex]
\oembdown & & \oembdown \\[1ex]
\compactint{b} & \oemb & \compactsepint{b}\rlap{$.$}
\end{array}
\]

The inductive-recursive universe $\Ecodes$ carries a discrete and a
compact interpretation, $\Disc$ and $\Comp$, related by the embedding
$\iemb$, as the Brouwer
codes do, but its sum constructor is indexed by any previously
constructed discrete ordinal rather than by $\N$
(\Cref{def:E,def:E-kappa}). This is possible because the codes and
their discrete interpretation are defined simultaneously, so that a
code may be indexed by the ordinal denoted by an earlier code. The
Brouwer codes are included in it, and the inclusion preserves both
interpretations up to order equivalence (\Cref{thm:brouwer-to-E}).
Total separatedness of the compact interpretation $\Comp$ for every
code of $\Ecodes$ gives $\neg\neg\WLPO$
(\Cref{prop:kappa-not-tot-sep}), whereas the compact interpretation
$\compactsepint{-}$ of every Brouwer code is totally separated.
The ordinals witnessing the failure are not in general denoted by
Brouwer codes.

In the compact ordinals of both systems, every non-empty complemented
subset has a least element~(\Cref{cor:kappa-least,cor:E-least}).
The isolated points and the limit points in the image of the embedding
are distinguished by an explicit boolean valued function on the
discrete side (\Cref{def:limitfn,thm:limitfn}), and the embedding is
an equivalence for every code precisely when $\LPO$ holds
(\Cref{prop:brouwer-iota-equiv-LPO}
and~\Cref{prop:E-iota-equiv-discrete}(\labelcref{item:E-iota-equiv})).

The above constructions rest on the closure properties of compactness
(\Cref{sec:compact-closure}), the micro-Tychonoff theorem for families
indexed by a proposition (\Cref{thm:micro-tychonoff}), the extension of
a family along an embedding and the sums built from it
(\Cref{sec:extension,sec:extended-sums}), and suprema of families of
ordinals (\Cref{sec:extended-suprema}).

\subsection{Reaching large compact ordinals}

In~\cite[\S11]{EscardoOmniscientJSL2013} we reach compact totally
separated ordinals of every rank below $\eps$ in G\"odel's~$T$, by
iterating squashed sums in the Cantor space, and conjecture that this is
the best we can do, which Normann proved~\cite{Normann2016}. If a closed
subset of the Baire space has a search functional definable in~$T$, then
it is countable and its Cantor--Bendixson rank lies below $\eps$.

Coquand, Hancock and Setzer~\cite{CoquandHancockSetzer1997} add the type
$\Brouwer$ of~\Cref{def:brouwer-codes} to~$T$ as a base type, with its
constructors but without its elimination principle, and ask which
ordinals are denoted by the closed terms of type $\Brouwer$. They call a
point of a type $X$ together with an endomap of $X$ and a map
$(\N \to X) \to X$ a \emph{limit structure}, the generic one being
$\Brouwer$ itself with $\bZ$, $\bS$ and $\bL$. Our interpretations are
limit structures in this sense, their main difference being in the
third component, which is the supremum for $\stdint{-}$
(\Cref{def:std-interp}), the extended sum for $\compactsepint{-}$ and
the successor sum for $\alttrichint{-}$ (\Cref{def:delta-kappa}).

In system~$T$ they build closed terms denoting $\omega$,
$\omega^{\omega}$, $\omega^{\omega^{\omega}}$ and so on, so that the
ordinal of the system is at least $\eps$, using a device of Gentzen
that turns a limit structure on $X$ into one on $X \to X$. In the
extension of~$T$ by one universe of small types, a similar device,
which they call a lens, reaches every ordinal below $\varphi_{\eps}0$,
which is the easy half of a conjecture of Hancock that one universe
reaches exactly those ordinals (see e.g.~\cite{Palmgren1998}). We can
reproduce these constructions in our type theory and evaluate the
codes by $\compactsepint{-}$ to get compact totally separated ordinals
that classically dominate the ordinals the codes denote
(\Cref{thm:comparisons}), and so dominate every ordinal below
$\varphi_{\eps}0$.

Our system $\Ecodes$ of inductive-recursive codes
(\Cref{sec:inductive-recursive}) extends the Brouwer codes, and
in general it denotes strictly more compact ordinals
(\Cref{rem:kappa-tot-sep-contrast}).

In a different but related direction, Cherubini, Coquand, Geerligs and
Moeneclaey~\cite{CherubiniCoquandGeerligsMoeneclaey2025} develop
synthetic foundations for Stone duality, using homotopy type theory as
an internal language for the topos of light condensed sets, extended
with axioms strong enough to prove Markov's principle and $\LLPO$. In
their synthetic topology every map is continuous, and they give a
synthetic proof of Brouwer's fixed-point theorem. Since a Stone space
is classically the same as a compact totally separated space, their
work and ours give type-theoretic accounts of the same classical
notion from different perspectives.

\subsection{Formalization}

Every result above was formalized in Agda~\cite{Agda}, before this paper was
written, in the \textsf{TypeTopology} repository~\cite{TypeTopology},
checked with the \texttt{--safe} and \texttt{--without-K} flags, and
hence with no postulates, and with the necessary HoTT/UF principles as
explicit assumptions of the results that need them. The formalization
is spread over the \textsf{TypeTopology} and \textsf{Ordinals}
directories in the repository, with auxiliary results in many other
directories.
For easy reference, we produced a companion Agda
file~\cite{EscardoCompactCompanion} for this paper, which formalizes
every definition, lemma, proposition, theorem, example and remark, in
the same order and under the same numbering, together with the
mathematical claims in running prose outside proofs. Its entries name
existing code.

\section*{Acknowledgements}

Mike Shulman supplied, in personal communication, the counterexample
of~\Cref{rem:lex-not-extensional}, that extensionality of lexicographic
sums of ordinals implies excluded middle. Tom de Jong constructed
suprema of families of ordinals by set quotients
(\Cref{sec:extended-suprema}), and formalized both his construction and
ours~\cite{deJongThesis2023}. He also proved and formalized that
requiring every discrete ordinal to be trichotomous implies excluded
middle~(\Cref{rem:trichotomy-EM}).

\makeatletter
\let\old@contentsline\contentsline
\renewcommand{\contentsline}[4]{%
  \def\@tempa{#1}%
  \def\@tempb{subsection}%
  \ifx\@tempa\@tempb
    \old@contentsline{#1}%
      {\hyperlink{#4}{\textcolor{teal!70!black}{#2}}}%
      {\hyperlink{#4}{\textcolor{teal!70!black}{#3}}}%
      {}%
  \else
    \old@contentsline{#1}{#2}{#3}{#4}%
  \fi
}
\makeatother
\setcounter{tocdepth}{2}
\clearpage
\phantomsection
\label{toc:full}
\markboth{Full table of contents}{Full table of contents}
\begingroup
\renewcommand{\contentsname}{Full table of contents}
\makeatletter
\@starttoc{toc}\contentsname
\makeatother
\endgroup


\begin{thebibliography}{10}

\bibitem{Beeson1985}
Michael~J. Beeson.
\newblock {\em Foundations of Constructive Mathematics: Metamathematical
  Studies}.
\newblock Ergebnisse der Mathematik und ihrer Grenzgebiete, 3. Folge. Springer,
  Berlin, Heidelberg, 1985.

\bibitem{Bishop1967}
Errett Bishop.
\newblock {\em Foundations of Constructive Analysis}.
\newblock McGraw-Hill, 1967.

\bibitem{bishop_bridges_1985}
Errett Bishop and Douglas Bridges.
\newblock {\em Constructive Analysis}, volume 279 of {\em Grundlehren der
  mathematischen Wissenschaften}.
\newblock Springer-Verlag, Berlin, Heidelberg, New York, Tokyo, 1985.

\bibitem{BridgesVita2006}
Douglas Bridges and Lumini{\c{t}}a~Simona V{\^\i}{\c{t}}{\u{a}}.
\newblock {\em Techniques of Constructive Analysis}.
\newblock Universitext. Springer, 2006.

\bibitem{Capretta_2005}
Venanzio Capretta.
\newblock General recursion via coinductive types.
\newblock {\em Logical Methods in Computer Science}, Volume 1, Issue 2, 2005.

\bibitem{CherubiniCoquandGeerligsMoeneclaey2025}
Felix Cherubini, Thierry Coquand, Freek Geerligs, and Hugo Moeneclaey.
\newblock A foundation for synthetic stone duality.
\newblock In Rasmus~Ejlers M{\o}gelberg and Benno van~den Berg, editors, {\em
  30th International Conference on Types for Proofs and Programs (TYPES 2024)},
  volume 336 of {\em LIPIcs}, pages 3:1--3:20. Schloss Dagstuhl --
  Leibniz-Zentrum f\"ur Informatik, 2025.

\bibitem{CoquandHancockSetzer1997}
Thierry Coquand, Peter Hancock, and Anton Setzer.
\newblock Ordinals in type theory.
\newblock Invited talk at Computer Science Logic (CSL), Aarhus, 1997.
\newblock \url{https://www.cse.chalmers.se/~coquand/ordinal.pdf}, also archived
  as
  \url{https://web.archive.org/web/20251203190938/https://www.cse.chalmers.se/~coquand/ordinal.ps}.

\bibitem{CoquandMannaa2017}
Thierry Coquand and Bassel Mannaa.
\newblock The independence of {M}arkov's principle in type theory.
\newblock {\em Logical Methods in Computer Science}, 13(3), 2017.

\bibitem{deJongThesis2023}
Tom de~Jong.
\newblock {\em Domain theory in constructive and predicative univalent
  foundations}.
\newblock PhD thesis, University of Birmingham, 2023.
\newblock \url{https://etheses.bham.ac.uk/id/eprint/13401/}, also available as
  \href{https://arxiv.org/abs/2301.12405}{arXiv:2301.12405}.

\bibitem{DeJong:Escardo:2026}
Tom de~Jong and Mart\'in~H\"otzel Escard\'o.
\newblock Examples and counterexamples of injective types.
\newblock {\em Annals of Pure and Applied Logic}, 178(2), 2027.
\newblock Available online 8 September 2026.

\bibitem{Dybjer2000}
Peter Dybjer.
\newblock A general formulation of simultaneous inductive-recursive definitions
  in type theory.
\newblock {\em The Journal of Symbolic Logic}, 65(2):525--549, 2000.

\bibitem{DybjerSetzer1999}
Peter Dybjer and Anton Setzer.
\newblock A finite axiomatization of inductive-recursive definitions.
\newblock In Jean-Yves Girard, editor, {\em Typed Lambda Calculi and
  Applications ({TLCA} 1999)}, volume 1581 of {\em Lecture Notes in Computer
  Science}, pages 129--146. Springer, 1999.

\bibitem{dybjersetzer:IndexedInductionRecursion:2006}
Peter Dybjer and Anton Setzer.
\newblock Indexed induction-recursion.
\newblock {\em Journal of Logic and Algebraic Programming}, 66:1--49, 2006.

\bibitem{EscardoSynthetic2004}
Mart\'in~H\"otzel Escard\'o.
\newblock Synthetic topology of data types and classical spaces.
\newblock {\em Electronic Notes in Theoretical Computer Science}, 87:21--156,
  2004.

\bibitem{EscardoFastSearch2007}
Mart\'in~H\"otzel Escard\'o.
\newblock Infinite sets that admit fast exhaustive search.
\newblock In {\em 22nd Annual {IEEE} Symposium on Logic in Computer Science
  ({LICS} 2007)}, pages 443--452. {IEEE}, 2007.

\bibitem{EscardoExhaustible2008}
Mart{\'i}n~H{\"o}tzel Escard{\'o}.
\newblock Exhaustible sets in higher-type computation.
\newblock {\em Logical Methods in Computer Science}, 4(3):3:3, 37, 2008.

\bibitem{EscardoOmniscientJSL2013}
Mart{\'i}n~H{\"o}tzel Escard{\'o}.
\newblock Infinite sets that satisfy the principle of omniscience in any
  variety of constructive mathematics.
\newblock {\em The Journal of Symbolic Logic}, 78(3):764--784, 2013.
\newblock Revised version:
  \url{https://martinescardo.github.io/papers/omniscient-journal-revised.pdf}.

\bibitem{EscardoContinuity2015}
Mart\'in~H\"otzel Escard\'o.
\newblock Constructive decidability of classical continuity.
\newblock {\em Mathematical Structures in Computer Science}, 25(7):1578--1589,
  2015.

\bibitem{EscardoTypes2019Abstract}
Mart{\'i}n~H{\"o}tzel Escard{\'o}.
\newblock Compact, totally separated and well-ordered types in univalent
  mathematics.
\newblock In {\em Types for Proofs and Programs ({TYPES} 2019)}, 2019.
\newblock 4-page abstract,
  \url{https://martinescardo.github.io/papers/compact-ordinals-Types-2019-abstract.pdf}.

\bibitem{EscardoInjective2021}
Mart{\'i}n~H{\"o}tzel Escard{\'o}.
\newblock Injective types in univalent mathematics.
\newblock {\em Mathematical Structures in Computer Science}, 2021.

\bibitem{EscardoCompactCompanion}
Mart{\'i}n~H{\"o}tzel Escard{\'o}.
\newblock {TypeTopology.CompactTotallySeparatedTypesArticle}: an {A}gda
  companion to the preprint \emph{{C}ompact totally separated types}.
\newblock
  \url{https://www.cs.bham.ac.uk/~mhe/TypeTopology/TypeTopology.CompactTotallySeparatedTypesArticle.html},
  2026.
\newblock Part of the {TypeTopology} repository.

\bibitem{TypeTopology}
Mart{\'i}n~H{\"o}tzel Escard{\'o} et~al.
\newblock {TypeTopology}: {Agda} development of univalent mathematics, mainly
  {T}ype {T}opology.
\newblock \url{https://github.com/martinescardo/TypeTopology}.
\newblock Agda repository, continuously developed since 2011.

\bibitem{EscardoKnapp2017}
Mart\'in~H\"otzel Escard\'o and Cory~M. Knapp.
\newblock Partial elements and recursion via dominances in univalent type
  theory.
\newblock In Valentin Goranko and Mads Dam, editors, {\em 26th {EACSL} Annual
  Conference on Computer Science Logic ({CSL} 2017)}, volume~82 of {\em
  LIPIcs}, pages 21:1--21:16. Schloss Dagstuhl -- Leibniz-Zentrum f\"ur
  Informatik, 2017.

\bibitem{EscardoOliva2010}
Mart{\'i}n~H{\"o}tzel Escard{\'o} and Paulo Oliva.
\newblock Selection functions, bar recursion and backward induction.
\newblock {\em Mathematical Structures in Computer Science}, 20(2):127--168,
  2010.

\bibitem{EscardoStreicherUniverses2016}
Mart{\'i}n~H{\"o}tzel Escard{\'o} and Thomas Streicher.
\newblock The intrinsic topology of {M}artin-{L}\"of universes.
\newblock {\em Annals of Pure and Applied Logic}, 167(9):794--805, 2016.
\newblock Fourth Workshop on Formal Topology (4WFTop).

\bibitem{Hedberg1998}
Michael Hedberg.
\newblock A coherence theorem for {Martin-L\"of's} type theory.
\newblock {\em Journal of Functional Programming}, 8(4):413--436, 1998.

\bibitem{Hofmann1995Extensional}
Martin Hofmann.
\newblock Extensional concepts in intensional type theory.
\newblock Technical report ECS-LFCS-95-327, Laboratory for Foundations of
  Computer Science, University of Edinburgh, 1995.
\newblock PhD thesis, freely available from
  \url{https://www.lfcs.inf.ed.ac.uk/reports/95/ECS-LFCS-95-327/}.

\bibitem{Hofmann1997Extensional}
Martin Hofmann.
\newblock {\em Extensional Constructs in Intensional Type Theory}.
\newblock Distinguished Dissertations. Springer, 1997.

\bibitem{Hyland1982}
J.~M.~E. Hyland.
\newblock The effective topos.
\newblock In A.~S. Troelstra and D.~van Dalen, editors, {\em The L. E. J.
  Brouwer Centenary Symposium}, volume 110 of {\em Studies in Logic and the
  Foundations of Mathematics}, pages 165--216. North-Holland, 1982.

\bibitem{Johnstone1979}
Peter~T. Johnstone.
\newblock On a topological topos.
\newblock {\em Proceedings of the London Mathematical Society}, 38(2):237--271,
  1979.

\bibitem{johnstone:stonespaces}
P.T. Johnstone.
\newblock {\em Stone Spaces}.
\newblock Cambridge University Press, Cambridge, 1982.

\bibitem{Kleene1959}
Stephen~C. Kleene.
\newblock Countable functionals.
\newblock {\em Constructivity in Mathematics}, pages 81--100, 1959.

\bibitem{KleeneVesley1965}
Stephen~C. Kleene and Richard~E. Vesley.
\newblock {\em The Foundations of Intuitionistic Mathematics, Especially in
  Relation to Recursive Functions}.
\newblock Studies in Logic and the Foundations of Mathematics. North-Holland,
  Amsterdam, 1965.

\bibitem{KrausEscardoCoquandAltenkirch2013}
Nicolai Kraus, Mart{\'i}n~H{\"o}tzel Escard{\'o}, Thierry Coquand, and Thorsten
  Altenkirch.
\newblock Generalizations of {H}edberg's theorem.
\newblock In {\em Typed Lambda Calculi and Applications ({TLCA} 2013)}, volume
  7941 of {\em Lecture Notes in Computer Science}, pages 173--188. Springer,
  2013.

\bibitem{KrausEscardoCoquandAltenkirch2017}
Nicolai Kraus, Mart{\'i}n~H{\"o}tzel Escard{\'o}, Thierry Coquand, and Thorsten
  Altenkirch.
\newblock Generalizations of {H}edberg's theorem.
\newblock {\em Logical Methods in Computer Science}, 13(1):15:1--15:15, 2017.

\bibitem{ForsbergKrausXu2021}
Nicolai Kraus, Fredrik~Nordvall Forsberg, and Chuangjie Xu.
\newblock Connecting constructive notions of ordinals in homotopy type theory.
\newblock In {\em 46th International Symposium on Mathematical Foundations of
  Computer Science ({MFCS} 2021)}, LIPIcs, 2021.

\bibitem{LongleyNormann2015}
John Longley and Dag Normann.
\newblock {\em Higher-Order Computability}.
\newblock Theory and Applications of Computability. Springer, 2015.

\bibitem{Agda}
Ulf Norell, Nils~Anders Danielsson, Jesper Cockx, Andreas Abel, et~al.
\newblock Agda.

\bibitem{Normann2016}
Dag Normann.
\newblock On the {C}antor--{B}endixson rank of a set that is searchable in
  {G}\"odel's {T}.
\newblock {\em Computability}, 5(1):61--74, 2016.
\newblock Accepted version at \url{https://hdl.handle.net/11250/4673359}.

\bibitem{NormannTait2017}
Dag Normann and William Tait.
\newblock On the computability of the fan functional.
\newblock In {\em Feferman on Foundations}, Outstanding Contributions to Logic.
  Springer, 2017.

\bibitem{Palmgren1998}
Erik Palmgren.
\newblock On universes in type theory.
\newblock In Giovanni Sambin and Jan~M. Smith, editors, {\em Twenty-Five Years
  of Constructive Type Theory}, volume~36 of {\em Oxford Logic Guides}, pages
  191--204. Oxford University Press, 1998.

\bibitem{pitts2026setoidsintensionaltypetheory}
Andrew~M. Pitts.
\newblock Setoids in intensional type theory.
\newblock \url{https://arxiv.org/abs/2607.23671}, 2026.

\bibitem{PradicBrown2019}
C{\'e}cilia Pradic and Chad~E. Brown.
\newblock Cantor-{B}ernstein implies excluded middle, 2019.
\newblock \url{https://arxiv.org/abs/1904.09193}.

\bibitem{HoTTBook}
The {Univalent}~{Foundations} {Program}.
\newblock {\em Homotopy Type Theory: Univalent Foundations of Mathematics}.
\newblock \url{https://homotopytypetheory.org/book}, Institute for Advanced
  Study, 2013.

\bibitem{rijke2017joinconstruction}
Egbert Rijke.
\newblock The join construction, 2017.

\bibitem{rijke_2025}
Egbert Rijke.
\newblock {\em Introduction to Homotopy Type Theory}.
\newblock Cambridge Studies in Advanced Mathematics. Cambridge University
  Press, 2025.

\bibitem{shulman2019infty1toposesstrictunivalentuniverses}
Michael Shulman.
\newblock All $(\infty,1)$-toposes have strict univalent universes.
\newblock \url{https://arxiv.org/abs/1904.07004}, 2019.

\bibitem{Smyth1983}
Michael~B. Smyth.
\newblock Power domains and predicate transformers: A topological view.
\newblock In Josep D{\'\i}az, editor, {\em Automata, Languages and Programming,
  10th Colloquium, Barcelona, Spain, July 18--22, 1983, Proceedings}, volume
  154 of {\em Lecture Notes in Computer Science}, pages 662--675. Springer,
  1983.

\bibitem{Smyth1992}
Michael~B. Smyth.
\newblock Topology.
\newblock In S.~Abramsky, Dov~M. Gabbay, and T.~S.~E. Maibaum, editors, {\em
  Handbook of Logic in Computer Science, Volume 1: Background: Mathematical
  Structures}, pages 641--761. Oxford University Press, 1992.

\bibitem{Troelstra1973}
A.~S. Troelstra, editor.
\newblock {\em Metamathematical Investigation of Intuitionistic Arithmetic and
  Analysis}, volume 344 of {\em Lecture Notes in Mathematics}.
\newblock Springer, 1973.

\bibitem{vanOosten2008}
Jaap van Oosten.
\newblock {\em Realizability: An Introduction to its Categorical Side}, volume
  152 of {\em Studies in Logic and the Foundations of Mathematics}.
\newblock Elsevier, Amsterdam, 2008.

\bibitem{Voevodsky2015}
Vladimir Voevodsky.
\newblock An experimental library of formalized mathematics based on the
  univalent foundations.
\newblock {\em Mathematical Structures in Computer Science}, 25(5):1278--1294,
  2015.

\end{thebibliography}
\end{document}